\definecolor{darkgreen}{rgb}{0,0.4,0.1}
\def\a{\alpha}
\def\b{\beta}
\def\g{\gamma}
\def\d{\delta}
\def\s{\sigma}
\def\leq{\leqslant}
\def\geq{\geqslant}
\numberwithin{equation}{section}
\newtheorem{nono-thm}{Theorem}
\newtheorem{thm}{Theorem}[section]
\newtheorem{lemma}[thm]{Lemma}
\newtheorem{cor}[thm]{Corollary}
\newtheorem{prop}[thm]{Proposition}
\newtheorem{defn}[thm]{Definition}
\newtheorem{rem}[thm]{Remark}
\def\wt{\widetilde}
\def\Z{{\mathbb Z}}
\def\C{{\mathbb C}}
\def\N{{\mathbb N}}
\def\Jtr{{\rm tr}_d}
\def\Jtru{\wt{\rm tr}_d}
\def\YH{{\rm Y}_{d,n}}
\def\Jtrs{{\rm tr}_{d,D}}
\def\Jtrsu{\wt{\rm tr}_{d,D}}
\def\Jtrm{{\rm tr}_{d,m}}
\def\Jtrmu{\wt{\rm tr}_{d,m}}
\def\zet{z}
\begin{document}

\newsavebox{\claspA}
\savebox{\claspA}(55,12)[bl]{
\put(0,0){\line(3,1){14.8}}
\put(0,5){\line(1,0){14.4}}
\put(14.8,5){\line(-3,1){14.8}}
\put(11.5,2.3){$\bullet$}

\put(16,5){\line(1,0){22}}
\put(23,8){\tiny $r_1$}

\put(37.5,2.3){$\bullet$}
\put(55.2,0){\line(-3,1){14.8}}
\put(40.5,5){\line(1,0){14.4}}
\put(40.5,5){\line(3,1){14.8}}
}

\newsavebox{\claspB}
\savebox{\claspB}(55,12)[bl]{
\put(0,0){\line(3,1){14.8}}
\put(0,5){\line(1,0){14.4}}
\put(14.8,5){\line(-3,1){14.8}}
\put(11.5,2.3){$\bullet$}

\put(16,5){\line(1,0){22}}
\put(15,9){\tiny $r_1-1$}

\put(37.5,2.3){$\bullet$}
\put(55.2,0){\line(-3,1){14.8}}
\put(40.5,5){\line(1,0){14.4}}
\put(40.5,5){\line(3,1){14.8}}
}

\newsavebox{\claspC}
\savebox{\claspC}(55,12)[bl]{
\put(0,0){\line(3,1){14.8}}
\put(0,5){\line(1,0){14.4}}
\put(14.8,5){\line(-3,1){14.8}}
\put(11.5,2.3){$\bullet$}

\put(16,5){\line(1,0){22}}
\put(25,7){\tiny $1$}

\put(37.5,2.3){$\bullet$}
\put(55.2,0){\line(-3,1){14.8}}
\put(40.5,5){\line(1,0){14.4}}
\put(40.5,5){\line(3,1){14.8}}
}

\newsavebox{\claspD}
\savebox{\claspD}(40,12)[bl]{
\put(0,0){\line(3,1){14.8}}
\put(0,5){\line(1,0){14.4}}
\put(14.8,5){\line(-3,1){14.8}}
\put(11.5,2.3){$\bullet$}

\put(22.5,2.3){$\bullet$}
\put(40.2,0){\line(-3,1){14.8}}
\put(25.5,5){\line(1,0){14.4}}
\put(25.5,5){\line(3,1){14.8}}
}

\newsavebox{\claspE}
\savebox{\claspE}(30,12)[bl]{
\put(0,0){\line(3,1){14.8}}
\put(0,5){\line(1,0){14.4}}
\put(14.8,5){\line(-3,1){14.8}}
\put(11.5,2.3){$\bullet$}

\put(29.2,0){\line(-3,1){14.8}}
\put(14.5,5){\line(1,0){14.4}}
\put(14.5,5){\line(3,1){14.8}}
}

\newsavebox{\claspF}
\savebox{\claspF}(32,10)[bl]{
\put(0,0.3){$\bullet$}
\put(4.5,3){\line(1,0){22}}
\put(14.5,4){\tiny $r$}
\put(26,0.3){$\bullet$}
}

\title[Identifying the invariants for classical knots and links from the Yokonuma--Hecke algebras]
  {Identifying the invariants for classical knots and links from the Yokonuma--Hecke algebras}

\author{M.~Chlouveraki}
\address{Laboratoire de Math\'{e}matiques, UVSQ, B\^{a}timent Fermat, 45 avenue des \^{E}tats--Unis, 78035 Versailles cedex, France}
\email{maria.chlouveraki@uvsq.fr}

\author{J.~Juyumaya}
\address{Instituto de Matem\'aticas, Universidad de Valpara\'{\i}so \\
Gran Breta\~na 1091, Valpara\'{\i}so, Chile.}
\email{juyumaya@uvach.cl}

\author{K.~Karvounis}
\address{Institut f\"{u}r Mathematik,
Universit\"{a}t Z\"{urich},
Winterthurerstrasse 190, CH-8057 Z\"{u}rich, Switzerland.}
\email{konstantinos.karvounis@math.uzh.ch}

\author{S.~Lambropoulou\\with an appendix by W.B.R. Lickorish}
\address{Department of Mathematics,
National Technical University of Athens,
Zografou campus, GR-157 80 Athens, Greece.}
\email{sofia@math.ntua.gr}
\urladdr{http://www.math.ntua.gr/~sofia}

\address{Department of Pure Mathematics and Mathematical Statistics,
Wilberforce Road,
Cambridge, CB3 0WB, UK.}
\email{wbrl@dpmms.cam.ac.uk}

\thanks{This research  has been co-financed by the European Union (European Social Fund - ESF) and Greek national funds through the Operational Program "Education and Lifelong Learning" of the National Strategic Reference Framework (NSRF) - Research Funding Program: THALES: Reinforcement of the interdisciplinary and/or inter-institutional research and innovation.}

\keywords{Framed braids, Yokonuma--Hecke algebras, Markov trace, E--condition, framed links, classical links, Homflypt polynomial, skein relation, skein invariants, generalization of the Homflypt polynomial.}

\subjclass[2010]{57M27, 57M25, 20F36, 20F38, 20C08}

\begin{abstract}
In this paper we announce the existence of a family of new $2$-variable polynomial invariants for oriented classical links defined via a Markov trace on the Yokonuma--Hecke algebra of type $A$. Yokonuma--Hecke algebras are generalizations of Iwahori--Hecke algebras, and this family contains the Homflypt polynomial, the famous $2$-variable invariant for classical links arising from the Iwahori--Hecke algebra of type $A$. 
We show that these invariants are topologically equivalent to the Homflypt polynomial on \textit{knots}, but not on \textit{links}, by providing pairs of Homflypt-equivalent links that are distinguished by our invariants. In order to do this, we prove that 
our invariants can be defined diagrammatically via a special skein relation involving {\it only crossings between different components}. We further generalize this family of invariants to a new $3$-variable skein link invariant which is stronger than the Homflypt polynomial. Finally, we present a closed formula for this invariant, by W.B.R. Lickorish, which uses Homflypt polynomials of sublinks and linking numbers of a given oriented link.
\end{abstract}

\maketitle
\setcounter{tocdepth}{1}
{\hypersetup{linkcolor=black}
\tableofcontents
}
\section*{Introduction} \label{s:intro}

One of the greatest accomplishments in knot theory and low-dimensional topology is the pioneering construction of the Jones polynomial by V.~F.~R.~Jones in 1984.
This is true for many reasons: The Jones polynomial was a new strong invariant of classical knots and links, so it advanced spectacularly the tabulation of knots. It made use for the first time of the Artin braid groups and their Markov equivalence via a Markov trace on the Temperley--Lieb algebras. It was easily computable thanks to the new diagrammatic skein methods developed by L.~H.~Kauffman. It made unexpected connections with the area of statistical mechanics. Finally, it led to the Witten--Reshetikhin--Turaev invariants of $3$-manifolds.

The Jones polynomial was immediately generalized to a $2$-variable invariant of classical knots and links, the Homflypt polynomial, specializations of which yield both the Jones polynomial and the Alexander polynomial. This invariant was constructed with the use of the Ocneanu trace  defined on the Iwahori--Hecke algebras of type $A$ \cite{jo}. The Iwahori--Hecke algebra  ${\rm H}_n(q)$ of type $A$, where $q$ is a non-zero complex number, is a quotient of the group algebra of the classical braid group $B_n$ of type $A$ over a quadratic relation. Every classical knot or link can be represented as a braid, and so each knot and link has an image in an Iwahori--Hecke algebra.  The Ocneanu trace $\tau$, which depends on a second parameter $z$, can be thus applied to knots and links. Re-scaling according to the positive and negative stabilization of the braid equivalence yields the Homflypt polynomial $P(q,z)$. The polynomial $P(q,z)$ can be also defined diagrammatically via a skein relation.

Now, in the 1960's, T.~Yokonuma \cite{yo}  introduced some new algebras as generalizations of the Iwahori--Hecke algebras in the context of Chevalley groups. Instead of considering the endomorphism rings of the permutation representation with respect to a Borel subgroup, he considered the endomorphism rings of the permutation representation with respect to a maximal unipotent subgroup. These algebras, which are particular cases of unipotent Hecke algebras, are called Yokonuma--Hecke algebras. 
The interest in these algebras was rekindled in the past years
in relation to their presentation \cite{ju1,ju2},
their interesting representation theory \cite{thi, chpa} and their topological applications \cite{jula1}--\cite{jula5}.

Let $d,n \in \N$ and let $q$ be a non-zero complex number. The Yokonuma--Hecke algebra $\YH(q)$ of type $A$ can be obtained as a quotient of the group algebra over $\C$ of the modular framed braid group $(\Z/d\Z)^n \rtimes B_n$ by the quadratic relation
$$
g_i^2 = 1 +(q-q^{-1})e_i g_i \quad \text{for all } i=1,\ldots,n-1,
$$
where $g_1,\ldots,g_{n-1}$ are the images of the ``braiding'' generators of $B_n$, $t_1,\ldots,t_n$ denote the ``framing'' generators of $(\Z/d\Z)^n$ and
$$e_i:=\frac{1}{d}\sum_{s=0}^{d-1}t_i^st_{i+1}^{d-s}\,\,, \quad \text{ for all  }i=1,\ldots,n-1,$$ are idempotents in $\C[(\Z/d\Z)^n ]$. For $d=1$, we have $e_i=1$ and the algebra  ${\rm Y}_{1,n}(q)$ coincides with the Iwahori--Hecke algebra ${\rm H}_n(q)$. For $q=\pm 1$,  ${\rm Y}_{d,n}(\pm 1)$ is the group algebra over $\C$ of the complex reflection group $(\Z/d\Z)^n \rtimes \mathfrak{S}_n$, where $\mathfrak{S}_n$ denotes the symmetric group on $n$ letters.
Note that the above presentation of the Yokonuma--Hecke algebra was introduced in \cite{chpa}, while
in the papers \cite{ju1,ju2,ju} and \cite{jula1}--\cite{jula5} a different presentation for the Yokonuma--Hecke algebra was used with the following quadratic relation:
$$
\wt{g_i}^2 = 1 + (u-1)e_i+(u-1)e_i \wt{g_i} \quad \text{for all } i=1,\ldots,n-1,
$$
where $u:=q^2$ and $\wt{g_i}:=g_i+(q-1)e_ig_i$. 
In order to differentiate between the two presentations of the algebra, we will denote the Yokonuma--Hecke algebra with the second presentation by $\YH(u)$.

In \cite{ju} J.~Juyumaya defined a Markov trace $\Jtru$ on  $\YH(u)$, thus making the Yokonuma--Hecke algebras natural candidates for the construction of invariants for framed knots and links. The trace $\Jtru$ depends on a parameter $\wt{z}$, similarly to the Ocneanu trace, but also on $d-1$ parameters  $x_1, \ldots, x_{d-1}$ corresponding to the framing generators. Trying to repeat a process similar to the construction of the Homflypt polynomial, it turned out \cite{jula2} that $\Jtru$ 
does not re-scale  according to the positive and negative stabilization of the framed braid equivalence, 
making it the only Markov trace known in the literature  that does not have straightforward re-scaling. The problem lies in the presence of the idempotents $e_i$ in the quadratic relations for the $\wt{g}_i$. In order for $\Jtru$ to re-scale, the framing parameters $x_1, \ldots, x_{d-1}$  had to satisfy a non-linear system of equations, the so-called ``{\rm E}--system'' \cite{jula2}; 
we call this the {\rm E}--condition.
As it was shown by P.~G\'{e}rardin, the solutions of the {\rm E}--system are parametrized by the non-empty subsets of $\Z/d\Z$ \cite[Appendix]{jula2}.
Hence, for each solution of the {\rm E}--system parametrized by a non-empty subset $D$ of $\Z/d\Z$, J.~Juyumaya and S.~Lambropoulou defined an invariant $\Gamma_{d,D}(u,\wt{z})$ for framed links \cite[Theorem~8]{jula2}.
Further, since $B_n$ embeds in  $(\Z/d\Z)^n \rtimes B_n$, and so classical links are contained in the set of framed links (they correspond to the framed links with all framings equal to $0$), the invariants $\Gamma_{d,D}(u,\wt{z})$ restrict to invariants $\Delta_{d,D}(u,\wt{z})$ for classical knots and links \cite[Theorem~6]{jula3}.
For $d=1$, we have ${\rm{\wt{tr}}}_1=\tau$ and $\Delta_{1,\{0\}}(u,\wt{z})=P(u,\wt{z})$.

For the past years, we have been trying to compare the invariants  $\Delta_{d,D}$, for $d>1$, with the Homflypt polynomial. In a first attempt, M.~Chlouveraki and S.~Lambropoulou showed in \cite{chla} that there is no suitable choice of parameters that will make $\Delta_{d,D}$ coincide with the Homflypt polynomial, unless $u=1$ or $\Jtru(e_i)=1$. Both conditions negate the consequences of the appearance of the idempotent $e_i$ in the quadratic relation for the braiding generator $\wt{g}_i$. Note that, even though classical knots and links do not involve framing generators when seen inside the modular framed braid group, computing the value of $\Jtru$ on their images in $\YH(u)$ will make use of the quadratic relation and thus make framing generators appear (in the form of the idempotents $e_i$). In \cite{chla}, we even showed that there is no algebra homomorphism between the algebra $\YH(u)$ and the algebra ${\rm H}_n(u)$ which respects the trace, unless again $\Jtru(e_i) = 1$. However, despite the results in \cite{chla}, the invariants $\Delta_{d,D}$ could still be topologically equivalent to the Homflypt polynomial, in the sense that they distinguish the same pairs of knots and links. Moreover, although we had a defining skein relation for the  framed link invariants $\Gamma_{d,D}$ \cite{jula2}, this skein relation could not apply to the invariants $\Delta_{d,D}$ since it contained framed links. This fact has rendered a diagrammatic comparison with the Homflypt  very difficult until now.

In \cite{chpa}, M.~Chlouveraki and L.~Poulain d'Andecy studied the representation theory of the Yokonuma--Hecke algebra and transformed its presentation to the one used in this paper.
On the algebra $\YH(q)$ we adapt the trace $\Jtru$ to a trace $\Jtr$ with parameters $z,x_1,\ldots,x_{d-1}$.
Invariants for framed and classical links, denoted by $\Phi_{d,D}(q,z)$ and $\Theta_{d,D}(q,z)$ respectively, can be then defined by imposing on $\Jtr$ the same E--condition as on $\Jtru$. One would expect that only a single invariant should arise, independently of the algebra presentation, similarly to the Homflypt polynomial, where the change of presentation corresponds to a simple change of variables. However, this is not the case for the invariants arising from the Yokonuma--Hecke algebra, and so
the invariants $\Gamma_{d,D}(u,\wt{z})$ (respectively $\Delta_{d,D}(u,\wt{z})$) and $\Phi_{d,D}(q,z)$ (respectively  $\Theta_{d,D}(q,z)$) may not be topologically equivalent. Again, for $d=1$, we have ${\rm{tr}}_1=\tau$ and $\Theta_{1,\{0\}}(q,z)=P(q,z)$.

In the mean time, computational packages were developed in order to compute the values of the invariants  on specific knots and links (see \cite{chjakala}). We note that the new quadratic relation reduced significantly the complexity of the problem.
Computational data on several Homflypt-equivalent pairs of knots and links indicated that the invariants $\Theta_{d,D}$  do not distinguish those pairs either, leading us to believe that the invariants $\Theta_{d,D}$  are topologically equivalent 
to the Homflypt polynomial. This belief was strengthened by the fact that Yokonuma--Hecke algebras are  natural generalizations of Iwahori--Hecke algebras and the invariants  $\Theta_{d,D}$ include the Homflypt polynomial as a particular case. Consequently, in the case of {\it knots}, S.~Jablan and K.~Karvounis were able to formulate a concrete conjecture (cf.~\cite{chjakala}), which is now Theorem~\ref{conjHomflypt} in this paper. Namely, 

\begin{nono-thm} \label{theorem1}
If $K$ is a \textit{knot}, then
\begin{center}
$\Theta_{d,D} (q,z)(K) = \Theta_{1,\{0\}} (q,z|D|) (K)= P(q,z|D|)(K).$
\end{center}
\end{nono-thm}

The proof of the above theorem requires the comparison of the Ocneanu trace with the specialized trace  $\Jtrs$, where $\Jtrs$ is the notation we use for $\Jtr$ when the parameters $x_1,\ldots,x_{d-1}$ are specialized to the solution of  the {\rm E}--system  parametrized by the non-empty subset $D$ of $\Z/d\Z$. Since we are only interested in classical knots and links, we need to compute $\Jtrs$ only on the images of the elements of $B_n$ in the algebra $\YH(q)$.  This process, as mentioned above, makes the framing generators $t_1,\ldots,t_n$ appear only in the form of the idempotents $e_i$.
This led to  Theorem~\ref{specialtrace}, another important result of this paper, which was conjectured by J.~Juyumaya (cf.~\cite{AJ}).
This theorem states the following:

\begin{nono-thm}\label{theorem2}
When computing $\Jtrs$ on images of classical braids, the trace rule involving the framing generators,
$$
\Jtrs (a \,t_{n+1}^k) = x_k\,\Jtrs (a) \qquad a\in\YH(q) \quad (1\leq k\leq d-1),
$$
can be replaced by two rules involving the idempotents $e_i$,
 $$\begin{array}{rcll}
\Jtrs (a e_n) &=& E_D\, \Jtrs (a) &\quad a\in \YH(q)
\\
\Jtrs (a e_n g_n) &=& z\, \Jtrs (a) &\quad a\in \YH(q),
\end{array}
$$
where $E_D:=\Jtrs(e_i) = 1/|D|$. 
\end{nono-thm}
So $\Jtrs$ depends only on parameters $q$, $z$ and $E_D=1/|D|$ when computed on images of classical braids.
As a consequence, we obtain that the invariants $\Theta_{d,D}$ are in fact parametrized by the natural numbers, so they can be simply denoted as $\Theta_d$,
with $\Theta_d:=\Theta_{d,\Z/d\Z}$ and $E_{\Z/d\Z}:=1/d$.
This result is also valid for the invariants $\Delta_{d,D}$.  

Theorem~\ref{theorem2} enabled the development of a program for computing the invariants $\Theta_d$ with much lower complexity \cite{ka} and it is available at \url{http://www.math.ntua.gr/~sofia/yokonuma}.

We next investigate the behaviour of the invariants $\Theta_d$ on \textit{links}.  Surprisingly, Theorem~\ref{theorem1} (Theorem~\ref{conjHomflypt} in the paper)
does not hold for links, except in the case of disjoint unions of knots, where an analogous result holds (Theorem~\ref{disj_links}), namely,

\begin{nono-thm}
If $L$ is a disjoint union of $k$ knots, we have 
\begin{center}
$\Theta_d (q,z)(L) = E_D^{1-k} \Theta_1 (q,{z}/{E_D})(L) = E_D^{1-k} P(q,{z}/{E_D})(L)$.
\end{center}
\end{nono-thm}

Now, we have already mentioned that the framed link invariants $\Gamma_{d,D}$ satisfy a defining skein relation. With the change of the quadratic relation for the braiding generators of $\YH(q)$, the analogous skein relation for the framed link invariants $\Phi_{d,D}$ is given by Equation \eqref{framed_skein}. Using this, we were able to prove that the invariants $\Theta_d$ satisfy a special skein relation, which can \emph{only} be applied on \emph{crossings of different components}. Namely, 

\begin{nono-thm}\label{intro_skein}
The following special skein relation holds for $\Theta_d$:
$$
\frac{1}{\sqrt{\lambda_D}} \Theta_d( \includegraphics[scale=0.5]{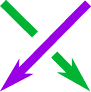} ) - \sqrt{\lambda_D} \Theta_d(\includegraphics[scale=0.5]{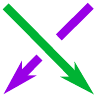} ) = (q-q^{-1})\, \Theta_d(\includegraphics[scale=0.5]{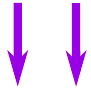}),
$$
where different colors represent different components of a link and $\lambda_D:=\frac{z - (q-q^{-1})E_D}{z}$.
\end{nono-thm}

We note that the above skein relation (Proposition~\ref{skein_ei}) could not have been derived for the invariants $\Delta_d$ from the skein relation of the invariants $\Gamma_{d,D}$ with the old quadratic relation. Note also that it is identical to the skein relation of the Homflypt polynomial $P$ \eqref{skein of hom} considered at variables $(q, \lambda_D)$. The above result led to the reveal of the behaviour of the invariants $\Theta_d$ on links. Namely, it led to Theorem~\ref{links}, which states the following:

\begin{nono-thm} \label{theorem5}
The value of $\Theta_d$ on a link $L$ is a linear combination of the Homflypt polynomials of disjoint unions of knots obtained by the skein relation. 
\end{nono-thm}

As a consequence of the above results, the invariants $\Theta_d$ can be computed on any classical link diagram $L$ via skein relations 
by applying the following procedure:
\begin{enumerate}[Step 1.]
\item Apply the skein relation of Theorem~\ref{intro_skein} (Proposition~\ref{skein_ei}) on crossings between different components until the link $L$ is decomposed into disjoint unions of knots.
 \smallbreak
\item Following Theorem~\ref{theorem5} (Theorem~\ref{links}), write $\Theta_d(L)$ as a linear combination of the Homflypt polynomials of these disjoint unions of knots. \smallbreak
\item Apply the skein relation of the Homflypt polynomial \eqref{skein of hom} to calculate the latter. \smallbreak
\end{enumerate}

The intrinsic difference from the Homflypt polynomial on a link lies in the different values of $\Theta_d$ on the unlinks with more than one component, in which the value $E_D$ appears. Theorem~\ref{theorem5} (Theorem~\ref{links}) was a strong indication that the invariants $\Theta_d$ might not be topologically equivalent to the Homflypt polynomial after all. 

Finally, we reach the end of our quest, which is not the one that we expected when we started this paper.

\begin{nono-thm}
The classical link invariants $\Theta_d$ for $d \geq 2$ are \textit{not} topologically equivalent to the Homflypt polynomial.
\end{nono-thm}

Indeed, in \cite{chli} one can find all 4.188 links with up to 11 crossings and the values of the polynomial $P$ on them. We singled out 89 pairs with the same Homflypt value which are not the same as unoriented links. We computed the invariants $\Theta_d$ on all these pairs and we found that they distinguish six of them, given in Table~\ref{pairs}. For one of these pairs, namely,
\begin{table}[H]
\centering
{\renewcommand{\arraystretch}{1.2}
\begin{tabular}{|c|c|}
\hline
Link notation & Braid word\\\hline
$L11n358\{0,1\}$ & $\s_1 \s_2^{-1} \s_3^{-1} \s_4^{-1} \s_3^2 \s_5^{-1} \s_4 \s_3^{-1} \s_2 \s_1^{-1} \s_3^{-1} \s_2^{-1} \s_4^{-1} \s_3 \s_2^{-3} \s_5 \s_4 \s_3^{-1}$\\\hline
$L11n418\{0,0\}$ & $\s_1^{-1} \s_2^{-1} \s_3 \s_2^{-1} \s_3^{-1} \s_2 \s_1^{-1} \s_3^{-2} \s_2 \s_3^{-1}$\\\hline
\end{tabular}}
\end{table}
\noindent
we give a diagrammatic proof using the special skein relation. Similar diagrammatic proofs can be given for the remaining $5$ pairs.

\smallbreak 
To summarize, the family of invariants $\{\Theta_d(q,\lambda_D)\}_{d\in\N}$ include the Homflypt polynomial $P$ for $d=1$ and they are different from $P$ for $d > 1$.
The invariants $\Theta_d$ are also different from the Kauffman polynomial, since they coincide with the Homflypt polynomial on knots, and there exists at least one pair of Kauffman-equivalent knots which are distinguished by $P$. 
Furthermore, the invariants $\Theta_d$ can be defined diagrammatically via a skein theoretical approach and this fact is very important, since there are very few link invariants defined through skein relations. Regarding properties, these invariants behave similarly to $P$ under reversing orientation, split links, connected sums, mirror imaging (see \cite{chjakala}) and mutation (see Proposition~\ref{mutants}).

\smallbreak
Letting further $E$ be a parameter in place of $E_D$ (which so far equals $1/d$), we show that our family of invariants $\{\Theta_d(q,\lambda_D)\}_{d \in \N}$ generalizes to a \emph{new 3-variable skein link invariant} $\Theta(q,\lambda,E)$ (Theorem~\ref{invE}):

\begin{nono-thm}\label{theorem7}
Let $q,\,\lambda,\, E$ be indeterminates.
There exists a unique isotopy invariant  of classical oriented links $\Theta: \mathcal{L} \rightarrow \C[q^{\pm 1}, \lambda^{\pm 1}, E^{\pm 1}]$ defined by the following rules:
\begin{enumerate}
\item For a disjoint union $L$ of $k$ knots, with $k \geq 1$, it holds that:
$$\Theta(L) =  E^{1-k} \,P(L).$$
\item On crossings involving different components the following skein relation holds:
$$
\frac{1}{\sqrt{\lambda}} \, \Theta(L_+) - \sqrt{\lambda} \, \Theta(L_-) = (q-q^{-1}) \, \Theta(L_0),
$$
where $L_+$, $L_-$, $L_0$ is a Conway triple. \smallbreak
\end{enumerate}
\end{nono-thm}

In particular, the invariant $\Theta(q,\lambda,E)$ specializes to $P$ and is stronger than $P$ on \textit{links}. Moreover, $\Theta$ retains all the properties satisfied by the invariants $\Theta_d$, since it is defined diagrammatically by the special skein relation of $\Theta_d$ (Theorem~\ref{intro_skein}) and by its values on disjoint unions of knots. We prove the well-definedness of $\Theta$ by comparing it to an invariant $\overline{\Theta}$ of tied links. F.~Aicardi and J.~Juyumaya have defined in \cite{AJ2} an invariant $\overline{\Delta}$ of tied links via a Markov trace on the algebra of braids and ties. In this paper we construct the invariant $\overline{\Theta}$ analogously by changing the presentation of the algebra of braids and ties using the new quadratic relation (see Section~\ref{threeVar}). The invariant $\overline{\Delta}$ is related to the invariants $\Delta_d$ and there are computational indications \cite{Ai} that these invariants may be distinct from the invariants $\overline{\Theta}$ and $\Theta_d$ respectively. A diagrammatic skein-theoretic proof of Theorem~\ref{theorem7} is given in \cite{kala}.

Finally, in the Appendix~\ref{lickorish}, W.B.R.~Lickorish proves a closed formula for the 3-variable invariant $\Theta$, showing that it is in fact a complicated mixture of linking numbers and the Homflypt polynomial of sublinks (Theorem~\ref{theta_linking_P}). Namely:

\begin{nono-thm}[W.B.R. Lickorish]\label{thm_lickorish}
Let $L$ be an oriented link with $n$ components.  Then
$$
\Theta (L) = \sum_{k=1}^n \mu^{k-1}E_k \sum_\pi \lambda^{\nu(\pi)}P(\pi L)
$$
where the second summation is over all partitions $\pi$ of the components of $L$ into $k$ (unordered) subsets and $P(\pi L)$ denotes the product of the Homflypt polynomials of the $k$ sublinks of $L$ defined by $\pi$. Furthermore, $\nu(\pi)$ is the sum of all linking numbers of pairs of components of $L$ that are in distinct sets of $\pi$, $E_k = (E^{-1} - 1)(E^{-1} - 2) \dots (E^{-1} - k + 1)$, with $E_1 =1$, and $\mu = \frac{\lambda^{-{1/2}} - \lambda^{{1/ 2}}}{q - q^{-1}}$.
\end{nono-thm}

From the above, the strength of $\Theta$ over the Homflypt polynomial lies on the fact that $\Theta$ takes into account the Homflypt polynomials of sublinks and also linking numbers.  Note that Theorem~\ref{thm_lickorish} provides us with a new way of defining the invariant $\Theta$ (see Theorem~\ref{theta_new_def}). Concluding, the invariant $\Theta$ can be defined algebraically via the trace $\Jtrs$ on $\YH$, skein-theoretically using the special skein relation \cite{kala} and combinatorially using Theorem~\ref{thm_lickorish}. An immediate consequence of Theorem~\ref{thm_lickorish} is that the 2-variable invariants $\Theta_d(L)$, where $L$ is an $n$-component link, are topologically equivalent for all $d \geq n$ (Theorem~\ref{theta_d_n}). The same result has been proved independently in \cite{pawa} using representation theory techniques.

\smallbreak
Our results lead to various interesting research directions. To mention some: one could explore further how the invariants $P$, $\Theta_d$, $\Theta$ and $\Delta_d$, $\overline{\Delta}$ compare among themselves. Moreover, the corresponding Jones-type invariants related to the framization of the Temperley--Lieb algebra are introduced in \cite{gojukolaf}, where they are explored in comparison to the Jones polynomial and proved to be topologically non-equivalent to it. In another direction, in \cite{kala} state sum models are constructed using the skein theoretical methods of this paper. Further, under the light of the new results for classical link invariants arising from the Yokonuma--Hecke algebra, it makes sense to revisit the framed and the $p$-adic framed link invariants of \cite{jula2}, as well as those that are related to the framization of the Temperley--Lieb algebra \cite{gojukolaf}, and to explore the possibility of obtaining $3$-manifold invariants and possibly new results in the domain of transverse links \cite{chjakala}. Finally, starting from a knot algebra and the link invariant related to it, one could study further the corresponding framization algebra \cite{jula5,chpa2} with the objective to construct new stronger link invariants.

\smallbreak
The paper is organized as follows. Section~\ref{yh} contains introductory material on the Yokonuma--Hecke algebras $\YH(q)$. We use the new quadratic relation introduced in \cite{chpa}, which is different from the one that we have been using so far and more computation-friendly, and we adjust all equations we need. 
Section~\ref{traces} is devoted to the three traces, the Ocneanu trace $\tau$, the trace  $\Jtr$ and the specialized trace  $\Jtrs$. The passage from $\Jtr$ to $\Jtrs$ is via the E--system, which is also presented in this section.
 We continue with Section~\ref{yhinvts}, where we define the invariants for framed and classical knots and links that we obtain from the Yokonuma--Hecke algebras; the formulae are again adapted to the new quadratic relation. Section~\ref{spclassical} is about Theorem~\ref{specialtrace}, whose proof requires some technical lemmas and some properties of the trace $\Jtrs$. The focus of Section~\ref{DeltaHomflypt} is Theorem~\ref{conjHomflypt}, which comprises one of the main results of this paper, that is, the topological equivalence of the invariants $\Theta_d$ and the Homflypt polynomial on knots. We then investigate in Section~\ref{slinks} the behaviour of the invariants $\Theta_d$ on arbitrary links. We start with simple cases of links, such as disjoint unions of knots and $2$-component links. Then, in Proposition~\ref{skein_ei}, we produce the special skein relation for the invariants $\Theta_d$ for classical links on crossings of different components. Next, we formulate the immediate consequences of this special skein relation, leading to one of the main results of Section~\ref{slinks}, Theorem~\ref{links}, which is the concrete expression of the invariants $\Theta_d$ with respect to the Homflypt polynomial. We further show that any invariant $\Theta_{d'}$ has a similar concrete expression with respect to any other invariant $\Theta_d$. As a consequence of the results of Section~\ref{slinks}, we obtain that the invariants  $\Theta_d$ can be defined diagrammatically via skein relations. 
Section~\ref{distinguish} contains the most important result of this paper, Theorem~\ref{ThetaNotP}, which states that the invariants $\Theta_d$ are  not topologically equivalent to the Homflypt polynomial. This is proved by providing computational data and an explicit diagrammatic proof via the special skein relation. In Section~\ref{distinguish}, we also show Theorem~\ref{2comp}, which is a derivation of Theorem~\ref{links}, about $2$-component Homflypt-equivalent links: a pair of  links on two components with the same Homflypt value is distinguished by an invariant $\Theta_d$ for some $d$ with $d \geq 2$ if and only if it is distinguished by all invariants $\Theta_d$ (excluding the Homflypt). 
We conclude Section~\ref{distinguish} by proving the invariance of $\Theta_d$ under mutation. 
In Section~\ref{threeVar} we generalize the invariants $\Theta_d$ to the $3$-variable skein invariant $\Theta$ (Theorem~\ref{invE}), using the algebra of braids and ties and adapting the results of \cite{AJ2}. We also present the closed formula of W.B.R. Lickorish for the invariant $\Theta$ and we discuss some consequences of this result.
Finally, in Section~\ref{new research directions} we discuss new research directions. 
Throughout the paper, we make remarks about the algebras $\YH(u)$ with the old quadratic relation, the trace $\Jtru$ defined on them and the invariants $\Delta_d$ (see Remarks~\ref{oldquadr}, \ref{oldquadr2}, \ref{oldquadr3}, \ref{oldquadr4}, \ref{oldquadr5}, \ref{invs_nosame}, \ref{E as a param u}, \ref{oldquadr6}, \ref{oldquadr7}, \ref{oldquadr8}, \ref{tiedinvs_nosame} and \ref{oldquadr9}).

\smallbreak
The last author acknowledges with pleasure inspiring discussions with Louis H. Kauffman about the importance of the skein approach to our invariants. We are also indebted to W.B.R. Lickorish for his contribution to this paper. Finally, it gives us pleasure to mention a list of places and meetings where we worked on these matters. These are: Valpara\'{i}so, Athens, Oberwolfach (MFO), Belgrade, Z\"{u}rich, Paris/Versailles, Lausanne, and Moscow.

\section{The Yokonuma--Hecke algebra}\label{yh}

In this section we recall the definition of the Yokonuma--Hecke algebra of type $A$ as a quotient of the (modular) framed braid group algebra and of its canonical basis.

\subsection{The framed braid group  and the modular framed braid group}
Let $n \in \N$ and let $B_n$ denote the classical braid group on $n$ strands. 
The {\it framed braid group}, ${\mathcal F}_n \cong \Z^n\rtimes B_n$, is the group defined by the standard {\it braiding} generators  $\s_1, \ldots, \s_{n-1}$ of $B_n$ 
together with the {\it framing}  generators  $t_1, \ldots, t_n$,
 subject to the relations:
\begin{equation}\label{framedgp}
\begin{array}{ccrclcl}
\mathrm{(b_1)}& & \s_i\s_j\s_i & = & \s_j\s_i\s_j && \mbox{for $ \vert i-j\vert = 1$}\\
\mathrm{(b_2)}& & \s_i\s_j & = & \s_j\s_i & & \mbox{for $\vert i-j\vert > 1$}\\
\mathrm{(f_1)}& & t_i t_j & =  &  t_j t_i &&  \mbox{for all $ i,j$}\\
\mathrm{(f_2)}& & t_j \s_i & = & \s_i t_{s_i(j)} && \mbox{for all $ i,j$}
\end{array}
\end{equation}
where $s_i$ denotes the transposition $(i,i+1)$.
Relations $\mathrm{(b_1)}$ and $\mathrm{(b_2)}$ are the usual braid relations, while relations $\mathrm{(f_1)}$ and $\mathrm{(f_2)}$ involve the framing generators. Further, for any natural number $d$, we can define the {\it modular framed braid group}, ${\mathcal F}_{d,n} \cong (\Z/d\Z)^n\rtimes B_n$, as the group generated by the elements $\s_1, \ldots, \s_{n-1}, t_1,\ldots,t_n$ satisfying relations (\ref{framedgp}) together with the extra ``modular'' relations: 
\begin{equation}\label{modular}
\begin{array}{ccrclcl}
& & t_j^d   & =  &  1 && \mbox{for all $j$.}
\end{array}
\end{equation}

Thanks to relations $\mathrm{(f_1)}$ and $\mathrm{(f_2)}$, every element $\alpha$ of  ${\mathcal F}_{n}$ (respectively ${\mathcal F}_{d,n}$) can be written in the form
$t_1^{k_1} \ldots t_n^{k_n} \s$, where $k_1,\ldots,k_n \in \Z$ (respectively  $k_1,\ldots,k_n \in \Z/d\Z$) and   $\s$ involves only the generators $\s_1,\ldots,\s_{n-1}$ (that is, $\s \in B_n$). This is called the {\it split form} of $\alpha$, with $t_1^{k_1} \ldots t_n^{k_n}$ being the {\it framing part} of $\alpha$ and $\s$ being the {\it braiding part} of $\alpha$. 
Diagrammatically, $\alpha$ can be pictured as the classical braid on $n$ strands corresponding to $\s$ with an integer (respectively an integer modulo $d$), the \emph{framing}, attached to each strand: $k_j$ is the integer attached to the $j$-th strand, for all $j=1,\ldots,n$.

For a fixed $d \in \N$, we define the following elements $e_i$ in the group algebra ${\C}{\mathcal F}_{d,n}$:
\begin{equation}\label{ei}
e_i := \frac{1}{d}\sum_{s=0}^{d-1}t^s_i t^{d-s}_{i+1} \qquad (1\leq i\leq n-1) \ .
\end{equation}
One can easily check that $e_i$ is an idempotent, \emph{i.e.,} $e_i^2=e_i$,  and that $e_i\s_i=\s_ie_i$ for all $i$.

\subsection{The Yokonuma--Hecke algebra}
Let $d\in \N$ and let $q \in \C \backslash \{0\}$  fixed. The {\it Yokonuma--Hecke algebra} (of type $A$), denoted by $\YH(q)$, is defined as the quotient of
$\C {\mathcal F}_{d,n}$ by
the ideal generated by the expressions:
$ \s_i^2 - 1 - (q - q^{-1})e_i \s_i$ for $1\leq i\leq n-1$.
We shall denote by $g_i$ the element in the algebra $\YH(q)$ corresponding to $\s_i$, while we keep the same notation for the $t_j$
(even though, in the following sections, we will sometimes identify ``algebra monomials'', that is, products of the generators $g_i$ and $t_j$, 
 with the corresponding framed braid words).
So, in $\YH(q)$ we have the following quadratic relations:
\begin{equation}\label{quadr}
g_i^2 = 1 + (q - q^{-1}) \, e_i \, g_i \qquad (1\leq i\leq n-1).
\end{equation}
The elements $g_i\in\YH(q)$ are invertible, with
\begin{equation}\label{invrs}
g_i^{-1} = g_i - (q - q^{-1}) \, e_i \qquad (1\leq i\leq n-1).
\end{equation}
Further, the elements $g_i \in \YH(q)$ satisfy the following relations: 

\begin{lemma}\label{gipower}
Let $i \in \{1,\ldots,n-1\}$ and let $r \in \Z$.
\begin{enumerate}[(a)]
\item If $r$ is odd, we have
$$g_i^r = (1-e_i)\,g_i + \left( \frac{q^r+q^{-r}}{q+q^{-1}} \right)e_ig_i +\left(  \frac{q^{r-1}-q^{-r+1}}{q+q^{-1}}\right) e_i \, .$$
\item If $r$ is even, we have
$$g_i^r = 1-e_i + \left( \frac{q^r-q^{-r}}{q+q^{-1}} \right)e_ig_i +\left(  \frac{q^{r-1}+q^{-r+1}}{q+q^{-1}}\right) e_i \, .$$
\end{enumerate}
\end{lemma}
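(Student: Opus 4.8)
The statement to prove is Lemma~\ref{gipower}, giving closed formulas for the powers $g_i^r$ in the Yokonuma--Hecke algebra.

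\textbf{Plan of proof.} The plan is to proceed by induction on $r$ (for $r \geq 0$), using the quadratic relation \eqref{quadr} as the base mechanism, and then handle negative $r$ separately via the inverse formula \eqref{invrs}. The key structural observation is that the subalgebra generated by $e_i$ and $g_i$ is controlled by the relation $g_i^2 = 1 + (q-q^{-1})e_i g_i$ together with $e_i^2 = e_i$ and $e_i g_i = g_i e_i$; in particular, every $g_i^r$ will lie in the span of $\{1, e_i, g_i, e_i g_i\}$, and since $1 - e_i$, $e_i$, $e_i g_i$, $(1-e_i)g_i$ is a more convenient spanning set (because $e_i(1-e_i) = 0$), I would write the ansatz
$$
g_i^r = A_r (1 - e_i) g_i + B_r\, e_i g_i + C_r\, (1-e_i) + D_r\, e_i
$$
and determine the coefficients. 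Multiplying such an expression by $g_i$ and reducing $g_i^2$ via \eqref{quadr} gives recursions for the coefficients. One quickly sees that on the ``$(1-e_i)$ part'' the generator $g_i$ behaves as if $g_i^2 = 1$ (since $(1-e_i)e_i = 0$ kills the correction term), which forces $A_r, C_r$ to just alternate between $0$ and $1$ according to the parity of $r$; on the ``$e_i$ part'' the generator behaves like a Hecke-algebra generator with $g_i^2 = 1 + (q-q^{-1})g_i$ (i.e. eigenvalues $q$ and $-q^{-1}$), so $B_r$ and $D_r$ satisfy a two-term linear recursion whose characteristic roots are $q$ and $-q^{-1}$.

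\textbf{Carrying it out.} Concretely, I would first record the two easy cases $r=0$ ($g_i^0 = 1 = (1-e_i) + e_i$, matching the even formula) and $r=1$ ($g_i^1 = (1-e_i)g_i + e_i g_i$, matching the odd formula with $\frac{q+q^{-1}}{q+q^{-1}} = 1$ and $\frac{q^0 - q^0}{q+q^{-1}} = 0$). Then, assuming the formula for $g_i^r$, multiply by $g_i$: the terms $(1-e_i)g_i \cdot g_i$ and $e_i g_i \cdot g_i$ require substituting $g_i^2 = 1 + (q-q^{-1})e_i g_i$, while $(1-e_i)\cdot g_i$ and $e_i \cdot g_i$ are already reduced. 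Collecting coefficients of $1-e_i$, $e_i$, $(1-e_i)g_i$, $e_i g_i$ yields: the parity-alternation for the $(1-e_i)$ coefficients, and for the $e_i$ block the recursion $D_{r+1} = B_r$ and $B_{r+1} = D_r + (q-q^{-1})B_r$, i.e. $B_{r+1} = (q-q^{-1})B_r + B_{r-1}$. Solving this linear recursion with the initial data from $r=0,1$ gives $B_r = \frac{q^r - (-q^{-1})^r}{q + q^{-1}}$ and $D_r = B_{r-1} = \frac{q^{r-1} - (-q^{-1})^{r-1}}{q+q^{-1}}$; expanding $(-q^{-1})^r$ according to the parity of $r$ reproduces exactly the four stated coefficients $\frac{q^r \pm q^{-r}}{q+q^{-1}}$ and $\frac{q^{r-1} \mp q^{-r+1}}{q+q^{-1}}$. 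Finally, for $r < 0$ I would either run the same induction downward using $g_i^{-1} = g_i - (q-q^{-1})e_i$ from \eqref{invrs}, or simply verify that the closed formulas are stable under $r \mapsto r-1$ using that same inverse relation; a symmetry check $q \leftrightarrow q^{-1}$ combined with $g_i^{-r} = (g_i^{-1})^r$ is another quick route.

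\textbf{Main obstacle.} None of this is deep; the only thing requiring care is the bookkeeping of signs when passing between the uniform expression $\frac{q^r - (-q^{-1})^r}{q+q^{-1}}$ and the case-split form stated in the lemma, and making sure the recursion's initial conditions are pinned down correctly (a mismatch by one index in $D_r$ versus $B_r$ is the easiest slip). I would therefore present the proof in the uniform-eigenvalue form first and then simply unwind the parities at the end, which makes the verification essentially mechanical once the recursion $B_{r+1} = (q-q^{-1})B_r + B_{r-1}$ is established.
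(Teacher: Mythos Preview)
Your proposal is correct and follows essentially the same approach as the paper: induction on $r \geq 0$ using the quadratic relation \eqref{quadr}, and then handling $r < 0$ via the observation that $g_i^{-1}$ satisfies the same quadratic relation with $q$ replaced by $q^{-1}$ (which is exactly your ``symmetry check $q \leftrightarrow q^{-1}$ combined with $g_i^{-r} = (g_i^{-1})^r$''). Your idempotent splitting into the $(1-e_i)$ and $e_i$ parts is a clean way to organize the induction that the paper leaves implicit, but the substance is the same.
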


\begin{proof}
For $r \in \Z_{>0}$, we prove the above formulas with the use of induction and the quadratic relation \eqref{quadr}  for the generators $g_i$.

Now, it is easy to check that the inverses of the elements $g_i$ satisfy the following quadratic relation:
$$(g_i^{-1})^2 = 1 + (\kappa - \kappa^{-1}) e_ig_i^{-1},$$
where $\kappa:=q^{-1}$. Replacing $g_i$ by $g_i^{-1}$ and $q$ by $\kappa$ in the above formulas, and then using \eqref{invrs}, yields the desired result for $r \in \Z_{<0}$.
\end{proof}

Yokonuma--Hecke algebras were originally introduced  by T.~Yokonuma \cite{yo}  in the context of finite Chevalley groups as natural generalizations of Iwahori--Hecke algebras. 
Note that, for $d=1$, the  algebra ${\rm Y}_{1,n}(q)$ coincides with the   Iwahori--Hecke algebra ${\rm H}_n(q)$ of type $A$. Indeed all framings are zero, so the corresponding elements of ${\mathcal F}_n$ are identified with elements in $B_n$; moreover we have $e_i=1$, so the quadratic relation~\eqref{quadr} becomes the well--known quadratic relation of the algebra ${\rm H}_n(q)$:
$$
g_i^2 = 1 + (q - q^{-1}) \, g_i \qquad (1\leq i\leq n-1).
$$

Following \cite[\S 3]{ju}, 
the algebra $\YH(q)$ has linear dimension $d^nn! $ and the set
$$\mathcal{B}_n^{\rm can}=\left\{t_1^{k_1}\ldots t_n^{k_n}(g_{i_1}\ldots g_{i_1-r_1})(g_{i_2}\ldots g_{i_2-r_2})\cdots (g_{i_p}\ldots g_{i_p-r_p})\,\left|\,\begin{array}{l} k_1,\ldots,k_n \in \Z /d \Z\\
1\leq i_1 <\cdots<i_p \leq n-1\end{array}\right\}\right.
$$ is a $\C$-linear basis for $\YH(q)$. This basis is called the \emph{canonical} basis of $\YH(u)$. Note that, in each element of the standard basis, the highest index generator $g_{n-1}$ appears at most once.

Now,  the natural inclusions ${\mathcal F}_n \subset {\mathcal F}_{n+1}$  give rise to the algebra inclusions $\C {\mathcal F}_n \subset \C {\mathcal F}_{n+1}$, which in turn induce the algebra inclusions $\YH(q) \subset {\rm Y}_{d,n+1}(q)$ for $n \in \N$ (setting $\C {\rm Y}_{d,0}(q): = \C$). We can construct an inductive basis $\mathcal{B}_n^{\rm ind}$ for $\YH(q)$ in the following way: we set
$B_0^{\rm ind}:=\{1\}$\ and $$\mathcal{B}_{n+1}^{\rm ind}:= \{w_n g_n g_{n-1} \ldots g_it_i^k,\,\, w_nt_{n+1}^k \,\,|\,\, 1 \leq i \leq n,\,\, k \in \Z/d\Z,\,\,w_n \in 
\mathcal{B}_{n}^{\rm ind} \},$$
for all $n \in \N$. 

\begin{rem}\label{oldquadr} \rm 
In the papers \cite{ju}, \cite{jula1}--\cite{jula5} and \cite{chla}, the Yokonuma--Hecke algebra is defined with a parameter $u$ instead of $q$. The algebra $\YH(u)$ is generated by the elements $\wt{g}_1, \ldots, \wt{g}_{n-1}$ and $t_1, \ldots, t_n$, satisfying relations \eqref{framedgp} (with $\wt{g}_i$ corresponding to $\sigma_i$), (\ref{modular}) and the quadratic relations:
\begin{equation}\label{THE oldquadr}
\wt{g_i}^2 = 1 + (u - 1) \, e_i + (u - 1) \, e_i \wt{g_i} \qquad (1\leq i\leq n-1).
\end{equation}
The new presentation of $\YH(q)$ used in this paper was obtained in \cite{chpa} by taking $u:=q^2$ and $g_i := \wt{g_i}+ (q^{-1}-1)\, e_i \wt{g_i}$ (or, equivalently, $\wt{g_i} := g_i + (q-1) \, e_i g_i$).

The above results on the bases have been proved with the old quadratic relations, but the proofs work exactly the same with the new ones. For an alternative proof using directly the new quadratic relations, the reader may refer to \cite[\S 4]{chpa2}, where bases are constructed for cylotomic Yokonuma--Hecke algebras, which include $\YH(q)$ as a particular case. 
\end{rem}

\section{Markov traces}\label{traces}

We will now discuss a Markov trace defined on the Yokonuma--Hecke algebra $\YH(q)$. This trace generalizes the Ocneanu trace defined on the Iwahori--Hecke algebra ${\rm H}_n(q)$. Further, we recall the conditions on the trace parameters for defining invariants for framed and classical links.

\subsection{The Ocneanu trace}
As stated in the previous section,  the Iwahori--Hecke algebra ${\rm H}_n(q)$ of type $A$ is isomorphic to  ${\rm Y}_{1,n}(q)$.  We will denote by $G_1,\ldots,G_{n-1}$ the braiding generators of  $ {\rm H}_n(q)$.
The natural inclusions $B_n  \subset B_{n+1}$ give rise to  the algebra inclusions ${\C}B_n \subset {\C}B_{n+1}$ (setting ${\C}B_0:={\C}$), which in turn induce the algebra inclusions
$ {\rm H}_n(q)  \, \subset {\rm H}_{n+1}(q)$ 
(setting ${\rm H}_0(q):=\C$), for $n \in  \N$. 
We then have the following result (cf.~\cite[Theorem 5.1]{jo}):

\begin{thm}
Let $\zet$ be an indeterminate over $\C$. There exists a unique linear Markov trace
$$
\tau :  \bigcup_{n \geq 0} {\rm H}_{n}(q) \longrightarrow  \C [\zet]
$$
defined inductively on ${\rm H}_n(q)$, for all $n \geq 0$, by the following rules:
$$
\begin{array}{rcll}
\tau(ab) & = &\tau(ba)  & \qquad a,b \in {\rm H}_{n}(q) \\
\tau(1) & = & 1 & \qquad 1 \in {\rm H}_{n}(q)  \\
\tau(a G_n) & = & \zet \, \tau(a)  & \qquad a\in {\rm H}_{n}(q) \quad (\text{Markov  property} ).
\end{array}
$$
\end{thm}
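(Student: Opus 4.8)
This is the classical existence/uniqueness of the Ocneanu trace, and I would prove it by the standard induction on $n$, exploiting the left-module structure of $\mathrm{H}_{n+1}(q)$ over $\mathrm{H}_n(q)$. Recall (analogously to the inductive basis $\mathcal B_n^{\rm ind}$ described for $\YH(q)$, specialized to $d=1$) that $\mathrm{H}_{n+1}(q)$ is free as a left $\mathrm{H}_n(q)$-module with basis $\{1,\ G_nG_{n-1}\cdots G_i \mid 1\le i\le n\}$; equivalently $\mathrm{H}_{n+1}(q)=\mathrm{H}_n(q)\oplus \mathrm{H}_n(q)\,G_n\,\mathrm{H}_n(q)$.

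For \emph{uniqueness}, I would argue that the three rules determine $\tau$ on every element of $\mathrm{H}_{n+1}(q)$ once $\tau$ is known on $\mathrm{H}_n(q)$: by linearity it suffices to evaluate $\tau$ on $a\,G_n\,b$ with $a,b\in\mathrm{H}_n(q)$, and the trace property together with the Markov property give $\tau(a\,G_n\,b)=\tau(b\,a\,G_n)=\zet\,\tau(ba)$, reducing to $\mathrm{H}_n(q)$. Since $\tau(1)=1$ pins down the base case $\mathrm{H}_0(q)=\mathrm{H}_1(q)=\C$, this forces $\tau$ to be unique.

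For \emph{existence}, I would \emph{define} $\tau_{n+1}$ on $\mathrm{H}_{n+1}(q)$ using the free left $\mathrm{H}_n(q)$-module basis above — which makes well-definedness automatic — by setting $\tau_{n+1}(a):=\tau_n(a)$ for $a\in\mathrm{H}_n(q)$ and $\tau_{n+1}\big(a\,G_nG_{n-1}\cdots G_i\big):=\zet\,\tau_n\big(a\,G_{n-1}\cdots G_i\big)$ for $a\in\mathrm{H}_n(q)$ and $1\le i\le n$ (with the tail read as the empty word, giving $\zet\,\tau_n(a)$, when $i=n$). From this definition $\tau_{n+1}(1)=1$ and the Markov property $\tau_{n+1}(x\,G_n)=\zet\,\tau_n(x)$ for $x\in\mathrm{H}_n(q)$ are immediate. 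The real content is verifying the \emph{trace property} $\tau_{n+1}(xy)=\tau_{n+1}(yx)$, and this is the main obstacle. I would reduce it to checking $\tau_{n+1}(G_i\,w)=\tau_{n+1}(w\,G_i)$ for every $i\in\{1,\dots,n\}$ and every element $w$ of the canonical basis of $\mathrm{H}_{n+1}(q)$ (a standard reduction: if $\tau$ commutes with each generator, an induction on word length shows it is a trace). The cases $i\le n-1$ should follow from the inductive trace property on $\mathrm{H}_n(q)$ after sliding $G_n$ past the lower-index part (using that $G_n$ commutes with $\mathrm{H}_{n-1}(q)$) and re-expressing things in the module basis; the delicate case is $i=n$, where one must use the quadratic relation $G_n^2=1+(q-q^{-1})G_n$ and the braid relation $G_nG_{n-1}G_n=G_{n-1}G_nG_{n-1}$ to rewrite $G_n\,w$ and $w\,G_n$ back into the standard form and compare the resulting $\zet$-expressions. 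This last bookkeeping — tracking how the quadratic and braid relations redistribute $G_n$ across the basis — is where essentially all the work lies; everything else is formal.
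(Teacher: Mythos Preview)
Your outline is the standard argument for the Ocneanu trace and is correct in its broad strokes. Note, however, that the paper does not supply its own proof of this statement: it simply cites \cite[Theorem~5.1]{jo} and moves on. So there is no ``paper's proof'' to compare against here; what you have sketched is essentially the original argument of Jones (and the same template underlies the proof of Theorem~\ref{trace} for $\Jtr$ in \cite{ju}, which the paper also cites rather than reproves).
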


The trace $\tau$ is the \emph{Ocneanu trace} with parameter $\zet$. 
Using the natural $\C$-algebra epimorphism from $\C B_n$ onto ${\rm H}_{n}(q)$ given by $\sigma_i \mapsto G_i$, and abusing notation, we can define the trace $\tau$ on the elements of $\C B_n$, and thus, in particular, on the elements of $B_n$.
Then, diagrammatically, in the second rule, $1$ corresponds to the identity braid for any number of strands. The third rule is the Markov property of the trace, which corresponds to adding an extra strand with a positive crossing to a braid in $B_n$.
The trace $\tau$  was used by V.~F.~R.~Jones \cite{jo} for constructing the $2$-variable Jones or Homflypt polynomial for classical knots and links, see \S\ref{classic}.

In the rest of this paper, when we need to specify the values of $q$ and $\zet$, we will write $\tau(q,\zet)$ instead of simply $\tau$.

\subsection{The trace $\Jtr$}

An important property of the Yokonuma--Hecke algebra is that it also supports a Markov trace defined for all values of $n$. More precisely,  due to the inclusions $\YH(q) \subset {\rm Y}_{d,n+1}(q)$ and with the use of the inductive bases of the algebras $\YH(q)$, we obtain (cf.~\cite[Theorem 12]{ju}):

\begin{thm}\label{trace}
Let $z$, $x_1, \ldots, x_{d-1}$ be indeterminates over $\C$.
There exists a unique linear Markov trace
$$
\Jtr : \bigcup_{n \geq 0} \YH(q) \longrightarrow \C [z, x_1, \ldots, x_{d-1}]
$$
defined inductively on $\YH(q)$, for all $n \geq 0$, by the following rules:
$$
\begin{array}{crcll}
(1) & \Jtr(a b) & = & \Jtr(b a) & \qquad a, b\in\YH(q) \\
(2) & \Jtr(1) &  = & 1 & \qquad 1 \in\YH(q) \\
(3) & \Jtr(a g_n) & = & z\, \Jtr( a) & \qquad a\in\YH(q) \quad (\text{Markov  property} ) \\
(4) & \Jtr(a t_{n+1}^k) & = & x_k\, \Jtr( a) & \qquad a\in\YH(q) \quad (1\leq k\leq d-1).
\end{array}
$$
\end{thm}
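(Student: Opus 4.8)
\emph{Strategy and uniqueness.} We argue by induction on $n$. Throughout we write ${\rm Y}_{d,m}(q)$ for the Yokonuma--Hecke algebra on $m$ strands, with ${\rm Y}_{d,0}(q)=\C$, and we identify ${\rm Y}_{d,m}(q)$ with its image in ${\rm Y}_{d,m+1}(q)$. Uniqueness is the easy part: given such a trace, rules $(1)$ and $(3)$ force its value on every element of the inductive basis $\mathcal{B}_{n+1}^{\rm ind}$ in terms of its values on ${\rm Y}_{d,n}(q)$. Indeed, a basis element $w_n t_{n+1}^k$ has value $x_k\Jtr(w_n)$ by $(4)$ (with the convention $x_0:=1$), while by cyclicity and $(3)$
$$\Jtr\big(w_n\,g_n g_{n-1}\cdots g_i t_i^k\big)=\Jtr\big(g_{n-1}\cdots g_i t_i^k\, w_n\, g_n\big)=z\,\Jtr\big(g_{n-1}\cdots g_i t_i^k\, w_n\big),$$
where $g_{n-1}\cdots g_i t_i^k\, w_n\in{\rm Y}_{d,n}(q)$. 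Since $\Jtr(1)=1$, induction gives uniqueness.

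\emph{Existence.} Assume $\Jtr$ has been constructed on ${\rm Y}_{d,n}(q)$ so as to satisfy $(1)$--$(4)$, and extend it to ${\rm Y}_{d,n+1}(q)$ via a bimodule decomposition in the spirit of Jones's construction of the Ocneanu trace. The set $\{g_{n-1}g_{n-2}\cdots g_i t_i^k : 1\le i\le n,\ 0\le k\le d-1\}$ is a basis of ${\rm Y}_{d,n}(q)$ as a free left ${\rm Y}_{d,n-1}(q)$-module (the case $i=n$ being the empty $g$-product $t_n^k$), so the inductive basis of ${\rm Y}_{d,n+1}(q)$ exhibits a vector-space direct sum
$${\rm Y}_{d,n+1}(q)\ =\ \Big(\bigoplus_{k=0}^{d-1}{\rm Y}_{d,n}(q)\,t_{n+1}^k\Big)\ \oplus\ {\rm Y}_{d,n}(q)\,g_n\,{\rm Y}_{d,n}(q).$$
As $g_n$ commutes with ${\rm Y}_{d,n-1}(q)$, the map $a\otimes b\mapsto a g_n b$ descends to a surjection of $({\rm Y}_{d,n}(q),{\rm Y}_{d,n}(q))$-bimodules ${\rm Y}_{d,n}(q)\otimes_{{\rm Y}_{d,n-1}(q)}{\rm Y}_{d,n}(q)\to{\rm Y}_{d,n}(q)\,g_n\,{\rm Y}_{d,n}(q)$, which a dimension count based on $\dim {\rm Y}_{d,n}(q)=d^n n!$ shows to be an isomorphism. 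One then \emph{defines}
$$\Jtr\big(a\,t_{n+1}^k\big):=x_k\,\Jtr(a),\qquad \Jtr\big(a\,g_n\,b\big):=z\,\Jtr(ab)\qquad(a,b\in{\rm Y}_{d,n}(q),\ x_0:=1);$$
the second formula is well defined because $a\otimes b\mapsto z\,\Jtr(ab)$ is balanced over ${\rm Y}_{d,n-1}(q)$, and the two formulas agree for $k=0$, so $\Jtr$ restricts on ${\rm Y}_{d,n}(q)$ to the given trace. Using $t_{n+1}^{\,j}g_n=g_n t_n^{\,j}$ one checks $e_n g_n=\tfrac1d\sum_{s=0}^{d-1}t_n^s g_n t_n^{d-s}$, which yields the identity $\Jtr\big(w\,e_n g_n\big)=z\,\Jtr(w)$ for all $w\in{\rm Y}_{d,n}(q)$, together with an analogous explicit formula for $\Jtr(w\,e_n)$.

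\emph{Checking the rules.} Rules $(2)$, $(3)$, $(4)$ hold by construction; the content is the trace property $(1)$. Put $T:=\{\beta\in{\rm Y}_{d,n+1}(q):\Jtr(\alpha\beta)=\Jtr(\beta\alpha)\ \text{for all }\alpha\}$. Then $T$ is a unital subalgebra (if $\beta_1,\beta_2\in T$ then $\Jtr(\alpha\beta_1\beta_2)=\Jtr(\beta_2\alpha\beta_1)=\Jtr(\beta_1\beta_2\alpha)$), so it suffices to show that $T$ contains the generating set ${\rm Y}_{d,n}(q)\cup\{g_n,t_{n+1}\}$. That ${\rm Y}_{d,n}(q)\subseteq T$ follows by writing an arbitrary $\alpha$ in the decomposition above, commuting $\beta\in{\rm Y}_{d,n}(q)$ past the $t_{n+1}^k$ (which commute with ${\rm Y}_{d,n}(q)$), and applying the inductive trace property on ${\rm Y}_{d,n}(q)$; that $t_{n+1}\in T$ follows by a direct check on the spanning set $\{a\,t_{n+1}^k,\ a\,g_n\,b\}$ using $t_{n+1}g_n=g_n t_n$ and $g_n t_{n+1}=t_n g_n$. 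Finally, using ${\rm Y}_{d,n}(q)\subseteq T$ and $t_{n+1}\in T$, the statement $g_n\in T$ reduces to the \emph{symmetry} of the bilinear form $(a,b)\mapsto\Jtr(a\,g_n\,b\,g_n)$ on ${\rm Y}_{d,n}(q)$. To analyse it, expand $b=\sum_k u_k t_n^k+\sum_j u_j'\,g_{n-1}\,v_j'$ with $u_k,u_j',v_j'\in{\rm Y}_{d,n-1}(q)$; pushing the two outer $g_n$'s inward via $e_n g_n=g_n e_n$, $t_n^k g_n=g_n t_{n+1}^k$, the quadratic relations $g_n^2=1+(q-q^{-1})e_n g_n$ and $g_{n-1}^2=1+(q-q^{-1})e_{n-1}g_{n-1}$, and the braid relation $g_n g_{n-1}g_n=g_{n-1}g_n g_{n-1}$, and then applying the definition of $\Jtr$ and the identity $\Jtr(w e_n g_n)=z\,\Jtr(w)$, produces an explicit closed formula for $\Jtr(a\,g_n\,b\,g_n)$. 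Its invariance under $a\leftrightarrow b$ is checked using the inductive trace property on ${\rm Y}_{d,n}(q)$ and explicit formulas for $\Jtr$ on products involving $t_n^k$, $e_{n-1}$ and $g_{n-1}$.

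\emph{Main obstacle.} The crux is exactly this last symmetry. To carry it out one must strengthen the induction by a handful of auxiliary identities describing how $\Jtr$ on ${\rm Y}_{d,n}(q)$ absorbs a top-strand framing $t_n^k$ or idempotent $e_{n-1}$, and the presence of the idempotents $e_i$ — which have no analogue in the Iwahori--Hecke case — makes this bookkeeping considerably heavier than in Jones's original argument for the Ocneanu trace. Everything else (uniqueness, the decomposition, well-definedness, and rules $(2)$--$(4)$) is routine.
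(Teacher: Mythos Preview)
The paper does not give its own proof of this theorem; it cites \cite[Theorem~12]{ju} and remarks (Remark~\ref{oldquadr2}) that Juyumaya's argument via the inductive bases carries over unchanged to the new quadratic relation, also pointing to \cite[\S5]{chpa2} for an alternative. Your sketch is precisely the Jones--Ocneanu strategy underlying \cite{ju}: the bimodule decomposition ${\rm Y}_{d,n+1}(q)=\bigoplus_k{\rm Y}_{d,n}(q)\,t_{n+1}^k\oplus{\rm Y}_{d,n}(q)\,g_n\,{\rm Y}_{d,n}(q)$, the inductive definition of $\Jtr$ on the two summands, and the reduction of cyclicity to the symmetry of $(a,b)\mapsto\Jtr(a g_n b g_n)$. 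You have correctly identified this last symmetry as the only nontrivial step; its explicit verification --- which in the Yokonuma--Hecke setting requires tracking the idempotents $e_i$ and framing generators through a substantial case analysis --- is exactly the content of \cite{ju} that you have left as an outline.
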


Using the natural $\C$-algebra epimorphism from $\C \mathcal{F}_n$ onto $\YH(q)$ given by $\sigma_i \mapsto g_i$ and $t_j^k  \mapsto t_j^{k( {\rm mod}\, d)}$, and abusing notation, we can define the trace $\Jtr$ on the elements of $\C \mathcal{F}_n$, and thus, in particular, on the elements of $\mathcal{F}_n$.

Note that, for $d=1$, the trace ${\rm tr}_1$ is defined by only the first three rules. Thus,   ${\rm tr}_1$ coincides with the  Ocneanu trace $\tau$ on the Iwahori--Hecke algebra ${\rm H}_n(q) \cong {\rm Y}_{1,n}(q)$.

In the rest of this paper, when we need to specify the values of $q$ and $z$, we will write $\Jtr(q,\zet)$ instead of simply $\Jtr$.

\begin{rem} \rm \label{oldquadr2}
As mentioned in Remark \ref{oldquadr}, in \cite{ju}, J.~Juyumaya works on the Yokonuma--Hecke algebra $\YH(u)$, with $u=q^2$ and generators $\wt{g}_1, \ldots, \wt{g}_{n-1}$ satisfying the old quadratic relations \eqref{THE oldquadr}. He proves that
 there exists a unique linear Markov trace $\Jtru$ on 
$\bigcup_{n \geq 0} \YH(u)$ defined inductively by the four rules of Theorem \ref{trace}, where rule (3) is replaced by:
 $$
\begin{array}{crcll}
(\wt{3}) & \Jtru(a \wt{g}_n) & = & \wt{z}\, \Jtru( a) & \qquad a\in\YH(u) \quad (\text{\it Markov  property} ) 
\end{array}
$$
for some indeterminate $\wt{z}$ over $\C$. Since his proof uses the inductive bases of $\YH(u)$, it also works with the new quadratic relations, thus yielding Theorem \ref{trace}. For an alternative proof of Theorem \ref{trace} using directly the new quadratic relations, the reader may refer to \cite[\S 5]{chpa2}, where Markov traces are constructed on cylotomic Yokonuma--Hecke algebras, which include $\YH(q)$ as a particular case. 

Now, using the natural $\C$-algebra epimorphism from $\C \mathcal{F}_n$ onto $\YH(u)$ given by $\sigma_i \mapsto \wt{g}_i$ and $t_j^k  \mapsto t_j^{k( {\rm mod}\, d)}$, and abusing notation, we can define the trace $\Jtru$ on the elements of $\C \mathcal{F}_n$, and thus, in particular, on the elements of $\mathcal{F}_n$. 
\end{rem}

\subsection{The {\rm E}--system} 

Let ${\mathcal L}_f$ denote the set of oriented  framed links. For any braid $\a$, we will denote by $\widehat{\a}$ the link obtained as the closure of $\a$. By the Alexander Theorem, we have ${\mathcal L}_f = \cup_n\{\widehat{\a} \,|\, \alpha \in \mathcal{F}_n\}$. Further, by the Markov theorem, isotopy of framed links is generated by conjugation in $\mathcal{F}_n$ and by positive and negative stabilization and destabilization ($\a \sim \a \s_n^{\pm 1}$), for any $n$ (see, for example, \cite{KS}). In view of this and using the natural epimorphism of the framed braid group $\mathcal{F}_n$ onto $\YH(u)$, in \cite{jula2} the authors obtained  a topological invariant for framed knots and links after the method of V.~F.~R.~Jones \cite{jo}. 
Here we will do the same thing using the natural epimorphism of $\mathcal{F}_n$ onto $\YH(q)$.
This means that $\Jtr$  has to be normalized, so that the closures of the framed braids $\a$ and $\a \s_n$ $(\a\in {\mathcal F}_n)$ be assigned the same value of the invariant, and re-scaled, so that the closures of the framed braids $\a \s_n^{-1}$ and $\a \s_n$  $(\a\in {\mathcal F}_n)$ be  assigned the same value  of the invariant.  However, $\Jtr(\a \s_n^{-1})$ does not factor through $\Jtr(\a)$ as in the classical case, that is,
\begin{center}
$
\Jtr(\a \s_n^{-1}) 
\stackrel{(\ref{invrs})}{=}
\Jtr(\a \s_n) - (q - q^{-1})\, \Jtr(\a e_n) \neq \Jtr(\s_n^{-1}) \Jtr(\a).
$
\end{center}
The reason is that, although $\Jtr(\a \s_n) = z\, \Jtr(\a)= \Jtr(\s_n) \Jtr(\a)$, 
 $\Jtr(\a e_{n})$ does not always factor through $\Jtr(\a)$, that is,
\begin{center}
$\Jtr(\a e_{n}) \neq \Jtr(e_{n}) \Jtr(\a),$
\end{center}
which in turn, is due to the fact that,
\begin{center}
$\Jtr(\a t_n^k) \neq \Jtr(t_n^k) \Jtr(\a) \quad k=1,\ldots,d-1.$
\end{center}
Forcing the so-called \emph{${\rm E}$--condition}
\begin{equation}\label{aeneq}
\Jtr(\a e_{n}) = \Jtr(e_{n}) \Jtr(\a)
\end{equation}
yields that the trace parameters $x_1, \ldots, x_{d-1}$ have to satisfy the following non-linear system of equations in $\C$, called the ${\rm E}$--\emph{system}:
\begin{equation}\label{Esystem}
\sum_{s=0}^{d-1}{x}_{k+s}{x}_{d-s}  = {x}_k \,\sum_{s=0}^{d-1}{x}_{s}{x}_{d-s} \qquad (1\leq k \leq d-1)\ ,
\end{equation}
where the sub-indices on the ${x}_j$'s are regarded modulo $d$ and ${x}_0:=1$. 
Note that
$$
\frac{1}{d}\sum_{s=0}^{d-1}{x}_{s}{x}_{d-s} = \Jtr(e_{i}) \quad \text{for all }i.
$$

\begin{rem} \rm \label{oldquadr3}
The ${\rm E}$--condition and the ${\rm E}$--{system} as presented above are first defined and used in 
\cite{jula2} in order to re-scale $\Jtru$.
\end{rem}

As it was shown  by P.~G\'{e}rardin (in the Appendix of \cite{jula2}), the solutions of the {\rm E}--system are parametrized by the non-empty subsets of $\Z/d\Z$. For example, for every singleton subset $\{ m \}$ of $\Z/d\Z$, we have a solution of the ${\rm E}$--system given by:
\begin{equation}\label{Solution singleton}
{\rm x}_1 ={\rm exp}({{2 \pi m \sqrt{-1}}/{d}}) \qquad \text{ and } \qquad {\rm x}_k = {\rm x}_1^k  \,\,\,\,\, \text{ for }  k=2,\ldots, d-1.
\end{equation}
At the other extreme, the whole set $\Z/d\Z$ parametrizes the ``trivial'' solution of the ${\rm E}$--system, which is given by
\begin{equation}\label{trivial solution}
 {\rm x}_1={\rm x}_2 = \cdots={\rm x}_{d-1}=0.
\end{equation}

Finally, we note that the solutions of the ${\rm E}$--system can be interpreted as a generalization of the Ramanujan sum. Namely, by considering the subset $P$ of  $\mathbb{Z}/d{\mathbb{Z}}$ consisting  
of the numbers coprime to $d$, then the solution of the ${\rm E}$--system parametrized by $P$ is, up to the factor $\vert P\vert$,  the Ramanujan sum $c_d(k)$ \cite{ra}.

\subsection{The specialized trace $\Jtrs$}\label{strace} Let  $X_D := ({\rm x}_1, \ldots , {\rm x}_{d-1})$ be a solution of the {\rm E}--system parametrized by the non-empty subset $D$ of $\Z /d \Z$.

\begin{defn}\label{trdD} {\rm
We shall call {\it specialized trace} with parameter $z$, and denote by  $\Jtrs$,  the trace $\Jtr$ with the parameters $x_1,\ldots,x_{d-1}$ specialized to the complex numbers ${\rm x}_1, \ldots , {\rm x}_{d-1}$. More precisely,
$$
\Jtrs : \bigcup_{n \geq 0}\YH(q) \longrightarrow \C [z] 
$$
is a Markov trace defined inductively on $\YH(q)$, for all $n \geq 0$, by the following rules:
$$
\begin{array}{crcll}
(1) & \Jtrs(a b) & = & \Jtrs(b a) & \qquad a,b\in\YH(q) \\
(2) & \Jtrs(1) &  = & 1 & \qquad 1 \in\YH(q) \\
(3) & \Jtrs(a g_n) & = & z\, \Jtrs(a) & \qquad a\in\YH(q) \quad (\text{\it Markov  property} ) \\
(4^\prime) & \Jtrs (a \,t_{n+1}^k) & = & {\rm x}_k\,\Jtrs (a) & \qquad a\in\YH(q) \quad (1\leq k\leq d-1).
\end{array}
$$}
\end{defn}

The rules (1)--(3) are the same as in Theorem~\ref{trace}, while rule (4) is replaced by the rule ($4^\prime$). As it turns out \cite{jula3}:  
\begin{equation}\label{ED}
E_D := \Jtrs(e_{i}) = \frac{ 1}{\vert D\vert} \quad \text{ for all }i,
\end{equation}
where $\vert D\vert$ is the cardinality of the subset $D$. We also have (cf.~ \cite[Theorem 7 \& Lemma 8]{jula2}):
\begin{equation}\label{later for classical}
\Jtrs(a e_n)  =  E_D\, \Jtrs(a) \quad \text{and} \quad
\Jtrs(a e_ng_n)  =  z\, \Jtrs(a)  \quad \text{for all }a\in\YH(q).
\end{equation}

 Note that, for $d=1$, the specialized trace ${\rm tr}_{1, \{ 0\}}$  coincides with ${\rm tr}_1$, which in turn coincides with the Ocneanu trace $\tau$.

In the rest of this paper, when we need to specify the values of $q$ and $z$, we will write $\Jtrs(q,\zet)$ instead of simply $\Jtrs$.

\begin{rem}\rm
Following (\ref{Solution singleton}), we have
\begin{equation}
E_D=1 \Leftrightarrow |D|=1 \Leftrightarrow {\rm x}_1^d=1 \text{ and } {\rm x}_k= {\rm x}_1^k\,\,\, (1\leq k\leq d-1).
\end{equation}
On the other hand, following (\ref{trivial solution}), we have
\begin{equation}
E_D=\frac{1}{d} \Leftrightarrow D= \Z/d\Z \Leftrightarrow {\rm x}_k= 0\,\,\, (1\leq k\leq d-1).
\end{equation}
\end{rem}

\begin{rem} \rm  \label{oldquadr4}
In \cite[Definition 3]{chla} the specialized trace $\Jtrsu$ with parameter $\wt{z}$ is defined on 
$\bigcup_{n \geq 0}\YH(u)$, satisfying the analogous rules: ($1$), ($2$), ($\wt{3}$) and ($4^\prime$).
\end{rem}

\section{Framed and classical link invariants from the Yokonuma--Hecke algebras}\label{yhinvts}

Let $d \in \N$. Let  $D$ be a non-empty subset of $\Z/d\Z$ and let $X_D := ({\rm x}_1, \ldots , {\rm x}_{d-1})$ be the corresponding solution of the {\rm E}--system. With the use of the specialized  trace $\Jtrsu$, J.~Juyumaya and S.~Lambropoulou have defined invariants for various types of knots and links, such as framed, classical and singular \cite{jula2,jula3,jula4}.
We shall now recall briefly the definition of these invariants, in view also of the new presentation of $\YH(q)$ used in this paper.
We set
\begin{equation}\label{CapitalLambda}
\lambda_D := \frac{z - (q-q^{-1})E_D}{z} \quad \text{and} \quad \Lambda_D:=\frac{1}{z \sqrt{\lambda_D}}.
\end{equation} 
Recall that $E_D= 1/|D|$. We also set $\mathcal{R}_D:=\C[z^{\pm 1}, \sqrt{\lambda_D}^{\pm 1}]$.

\subsection{Framed links} Let ${\mathcal L}_f$ denote the set of oriented  framed links.
By the Alexander Theorem, we have ${\mathcal L}_f = \cup_n\{\widehat{\a} \,|\, \alpha \in \mathcal{F}_n\}$. Denote by $\g: \C {\mathcal F}_n \rightarrow \YH(q)$ the natural surjection defined by $\s_i \mapsto g_i$ and  $t_j \mapsto t_j$. 
Normalizing and re-scaling the specialized trace $\Jtrs$ (which is possible due to the E--condition) yields the following (cf.~\cite[Theorem 8]{jula2}):

\begin{thm}\label{invariant Phi}
For any framed braid $\a \in {\mathcal  F}_{n}$, we define
$$
\Phi_{d,D} (\widehat{\a}) := \Lambda_D^{n-1} (\sqrt{\lambda_D})^{\epsilon(\a)}\, (\Jtrs \circ \g) (\a)\ ,
$$
where $\epsilon(\a)$ is the sum of the exponents of the braiding generators $\s_i$ in the word  $\a$.
Then the map  
$$\Phi_{d,D}(q,z): {\mathcal L}_f \rightarrow \mathcal{R}_D, \,\,L \mapsto \Phi_{d,D}(L)$$  is a $2$-variable isotopy invariant of oriented framed links.
\end{thm}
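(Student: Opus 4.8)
The plan is to follow the classical Jones recipe \cite{jo} adapted to the framed setting, exactly as in \cite{jula2}, but now working with the new quadratic relation \eqref{quadr}. The key point is that we already have all the ingredients in place: the specialized trace $\Jtrs$ of Definition~\ref{trdD}, which is a well-defined Markov trace on $\bigcup_{n\geq 0}\YH(q)$, and the E--condition \eqref{aeneq}, which by construction holds for $\Jtrs$ since $X_D$ is a solution of the E--system. By the Alexander and Markov theorems for framed links (see, e.g., \cite{KS}), it suffices to check that the quantity $\Phi_{d,D}(\widehat{\a}) := \Lambda_D^{n-1}(\sqrt{\lambda_D})^{\epsilon(\a)}\,(\Jtrs\circ\g)(\a)$ is invariant under (i) conjugation in $\mathcal{F}_n$ and (ii) positive and negative stabilization $\a \sim \a\s_n^{\pm 1}$ for $\a\in\mathcal{F}_n$.

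First I would verify invariance under conjugation. If $\a = \b\g'\b^{-1}$ in $\mathcal{F}_n$, then $\g(\a)$ and $\g(\g')$ are conjugate in $\YH(q)$, so rule (1) of $\Jtrs$ gives $(\Jtrs\circ\g)(\a) = (\Jtrs\circ\g)(\g')$; moreover $n$ is unchanged and $\epsilon$ is a homomorphism to $\Z$, hence $\epsilon(\a) = \epsilon(\g')$. Thus $\Phi_{d,D}$ is a well-defined function on conjugacy classes. Next I would check positive stabilization: passing from $\a\in\mathcal{F}_n$ to $\a\s_n\in\mathcal{F}_{n+1}$ raises $n$ by $1$ and $\epsilon$ by $1$, while by rule (3), $(\Jtrs\circ\g)(\a\s_n) = \Jtrs(\g(\a)g_n) = z\,\Jtrs(\g(\a))$. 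Therefore $\Phi_{d,D}(\widehat{\a\s_n}) = \Lambda_D^{n}(\sqrt{\lambda_D})^{\epsilon(\a)+1}z\,(\Jtrs\circ\g)(\a)$, and this equals $\Phi_{d,D}(\widehat{\a})$ precisely because $\Lambda_D\sqrt{\lambda_D}\,z = 1$ by the definition \eqref{CapitalLambda} of $\Lambda_D$. This identity is exactly why $\Lambda_D$ was defined that way.

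The main obstacle — the step where the E--condition is genuinely used — is negative stabilization: showing $\Phi_{d,D}(\widehat{\a\s_n^{-1}}) = \Phi_{d,D}(\widehat{\a})$ for $\a\in\mathcal{F}_n$. Here I would use \eqref{invrs} to write $g_n^{-1} = g_n - (q-q^{-1})e_n$, so that $\Jtrs(\g(\a)g_n^{-1}) = z\,\Jtrs(\g(\a)) - (q-q^{-1})\,\Jtrs(\g(\a)e_n)$. The crucial input is the first identity in \eqref{later for classical}, $\Jtrs(\g(\a)e_n) = E_D\,\Jtrs(\g(\a))$, which is a consequence of the E--condition and the fact that $\g(\a)\in\YH(q)$ lies in the subalgebra on the first $n$ strands; this gives $\Jtrs(\g(\a)g_n^{-1}) = \bigl(z-(q-q^{-1})E_D\bigr)\Jtrs(\g(\a)) = z\lambda_D\,\Jtrs(\g(\a))$ by the definition of $\lambda_D$. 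Since $\a\s_n^{-1}\in\mathcal{F}_{n+1}$ has $\epsilon$ decreased by $1$ and strand number increased by $1$, we get $\Phi_{d,D}(\widehat{\a\s_n^{-1}}) = \Lambda_D^{n}(\sqrt{\lambda_D})^{\epsilon(\a)-1}z\lambda_D\,(\Jtrs\circ\g)(\a)$, which equals $\Phi_{d,D}(\widehat{\a})$ because $\Lambda_D\,(\sqrt{\lambda_D})^{-1}z\lambda_D = \Lambda_D\sqrt{\lambda_D}\,z = 1$ again.

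Finally, uniqueness of the assignment and the fact that $\Phi_{d,D}$ takes values in $\mathcal{R}_D = \C[z^{\pm1},\sqrt{\lambda_D}^{\pm1}]$ follow immediately: $\Jtrs$ takes values in $\C[z]$, the factors $\Lambda_D^{n-1}$ and $(\sqrt{\lambda_D})^{\epsilon(\a)}$ involve only $z^{\pm 1}$ and $\sqrt{\lambda_D}^{\pm 1}$ (note $\Lambda_D = 1/(z\sqrt{\lambda_D})$), and well-definedness forces the value on a link $L$ to be independent of the chosen braid representative. The whole argument is a routine transcription of the classical construction once \eqref{later for classical} is invoked; the only conceptual content — isolating where the non-factorization of $\Jtr(\a e_n)$ is repaired by the E--condition — occurs in the negative stabilization step, so that is where I would concentrate the write-up.
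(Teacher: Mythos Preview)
Your proof is correct and is exactly the standard argument the paper has in mind: the paper does not spell out a proof of Theorem~\ref{invariant Phi} but states (Remark~\ref{oldquadr5}) that it is ``standard and completely analogous'' to the proof of \cite[Theorem~8]{jula2}, i.e., the Jones recipe of checking Markov-move invariance, with the E--condition \eqref{later for classical} supplying the factorization needed for negative stabilization. Your write-up reproduces precisely that verification with the new quadratic relation, so there is nothing to add.
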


In this paper we  write $\Jtrs(\a)$ instead of $\Jtrs(\g(\a))$, where $\a \in \mathcal{F}_n$, by abusing the notation.

\begin{rem}\label{2^d}
{\rm Note that, for every $d\in \mathbb{N}$, we have $2^d-1$ distinct solutions of the E--system, so the above construction yields $2^d -1$ distinct isotopy invariants for framed links.}\end{rem}

\begin{rem}\label{change to lambda}
\rm Using \eqref{CapitalLambda}, we can obtain defining equations for $z$ and $\Lambda_D$ with respect to $\lambda_D$, namely, 
\begin{equation}\label{eq change to lambda}
z:= \frac{(q-q^{-1})\,E_D}{1-\lambda_D}  \qquad \text{ and } \qquad \Lambda_D: =\frac{1}{z \sqrt{\lambda_D}}.
\end{equation}
Accordingly, we can use the notation $\Phi_{d,D}(q,\lambda_D)$ instead of $\Phi_{d,D}(q,z)$ . 
\end{rem}

\begin{rem}\rm \label{oldquadr5}
Using the natural surjection of $\C {\mathcal F}_n$ onto $\YH(u)$ defined by $\s_i \mapsto \wt{g_i}$ and $t_j \mapsto t_j$, and normalizing and re-scaling the specialized trace $\Jtrsu$, invariants $\Gamma_{d,D}(u,\wt{z})$ for oriented framed links are defined in \cite[Theorem 8]{jula2}. The proof of existence and invariance of $\Phi_{d,D}$ is standard and completely analogous to the proof for $\Gamma_{d,D}$. For an alternative proof of Theorem \ref{invariant Phi} using directly the new quadratic relations, the reader may refer to \cite[\S 6]{chpa2}, where invariants for framed links, and more generally framed links in the solid torus, are constructed with the use of cyclotomic Yokonuma--Hecke algebras.

In \cite[Proposition~7]{jula2} a skein relation is found for the invariant $\Gamma_{d,D}$, involving the braiding and the framing generators. It reads:
\begin{equation}\label{framed_skein_old} 
\frac{1}{\sqrt{\wt{\lambda}_D}} \Gamma_{d,D}(L_+) - \sqrt{\wt{\lambda}_D} \Gamma_{d,D}(L_-) = \frac{1-u^{-1}}{d} \sum_{s=0}^{d-1} \Gamma_{d,D}(L_s) + \frac{1-u^{-1}}{d \sqrt{\wt{\lambda}_D}} \sum_{s=0}^{d-1} \Gamma_{d,D}(L_{s\times})\,
\end{equation}
where 
$$\wt{\lambda}_D = \frac{\wt{z} - (u-1)E_D}{u\,\wt{z}}$$ 
and  the links $L_+$, $L_-$, $L_s$ and $L_{s\times}$ are illustrated in Figure~\ref{fig_framed_skein}.

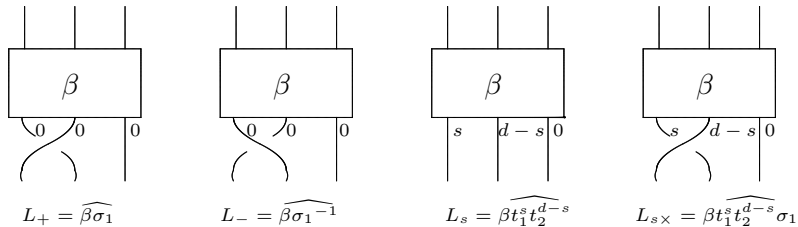
\begin{figure}[H]
\begin{center}
\begin{picture}(296,80)
\put(20,53){$\beta$}

\qbezier(6,70)(6,78)(6,86) 
\qbezier(25,70)(25,78)(25,86)
\qbezier(44,70)(44,78)(44,86)

\qbezier(5,20)(4,22)(5,24) 
\qbezier(25,20)(26,22)(25,24) 

\qbezier(0,44)(0,59)(0,70)
\qbezier(50,44)(50,59)(50,70)

\qbezier(0,70)(25,70)(50,70)
\qbezier(0,44)(25,44)(50,44)

\qbezier(5,24)(5,29)(15,34)
\qbezier(15,34)(25,39)(25,44)
\qbezier(25,24)(25,28)(20,31)
\qbezier(10,37)(5,40)(5,44)

\qbezier(44,20)(44,32)(44,44)

\put(100,53){$\beta$}

\qbezier(86,70)(86,78)(86,86) 
\qbezier(105,70)(105,78)(105,86)
\qbezier(124,70)(124,78)(124,86)

\qbezier(85,20)(84,22)(85,24) 
\qbezier(105,20)(106,22)(105,24) 

\qbezier(80,44)(80,59)(80,70)
\qbezier(130,44)(130,59)(130,70)

\qbezier(80,70)(105,70)(130,70)
\qbezier(80,44)(105,44)(130,44)

\qbezier(85,24)(85,28)(90,31)
\qbezier(100,37)(105,40)(105,44)
\qbezier(105,24)(105,29)(95,34)
\qbezier(95,34)(85,39)(85,44)

\qbezier(124,20)(124,32)(124,44)

\put(180,53){$\beta$}

\qbezier(166,70)(166,78)(166,86) 
\qbezier(185,70)(185,78)(185,86)
\qbezier(204,70)(204,78)(204,86)

\qbezier(160,44)(160,59)(160,70)
\qbezier(210,44)(210,59)(210,70)

\qbezier(160,70)(185,70)(210,70)
\qbezier(160,44)(215,44)(210,44)

\qbezier(166,20)(166,32)(166,44) 
\qbezier(185,20)(185,32)(185,44)
\qbezier(204,20)(204,32)(204,44)

\put(260,53){$\beta$}

\qbezier(246,70)(246,78)(246,86) 
\qbezier(265,70)(265,78)(265,86)
\qbezier(284,70)(284,78)(284,86)

\qbezier(245,20)(244,22)(245,24) 
\qbezier(265,20)(266,22)(265,24) 

\qbezier(240,44)(240,59)(240,70)
\qbezier(290,44)(290,59)(290,70)

\qbezier(240,70)(265,70)(290,70)
\qbezier(240,44)(265,44)(290,44)

\qbezier(245,24)(245,29)(255,34)
\qbezier(255,34)(265,39)(265,44)
\qbezier(265,24)(265,28)(260,31)
\qbezier(250,37)(245,40)(245,44)

\qbezier(284,20)(284,32)(284,44)

\put(10,37){\tiny{$0$}}
\put(25,37){\tiny{$0$}}
\put(46,37){\tiny{$0$}}

\put(90,37){\tiny{$0$}}
\put(105,37){\tiny{$0$}}
\put(125,37){\tiny{$0$}}

\put(168,37){\tiny{$s$}}
\put(185,37){\tiny{$d-s$}}
\put(206,37){\tiny{$0$}}

\put(250,37){\tiny{$s$}}
\put(265,37){\tiny{$d-s$}}
\put(286,37){\tiny{$0$}}

\put(5,5){\tiny{$L_{+}=\widehat{\beta \sigma_1}$}}
\put(80,5){\tiny{$L_{-}=\widehat{\beta {\sigma_1}^{-1}}$}}
\put(165,5){\tiny{$L_s=\widehat{\beta t_1^st_2^{d-s}}$}}
\put(237,5){\tiny{$L_{s\times}=\widehat{\beta t_1^st_2^{d-s}  \sigma_1}$}} 

\end{picture}
\caption{The framed links in the skein relation in open braid form.}
\label{fig_framed_skein}
\end{center}
\end{figure}
\end{rem}

Analogously, with the new quadratic relations, the invariant $\Phi_{d,D}$ satisfies the following skein relation:
\begin{equation}\label{framed_skein}
\frac{1}{\sqrt{\lambda_D}} \Phi_{d,D}(L_+) - \sqrt{\lambda_D} \Phi_{d,D}(L_-) = \frac{q-q^{-1}}{d} \sum_{s=0}^{d-1} \Phi_{d,D}(L_s) \,
\end{equation}
where the links $L_+$, $L_-$ and $L_s$ are illustrated in Figure~\ref{fig_framed_skein}.
The fact that the skein relation for $\Phi_{d,D}$ is very different from the skein relation for $\Gamma_{d,D}$ is 
an indication that the two invariants may not be topologically equivalent.

\subsection{Classical links}\label{classic}

Let  ${\mathcal L}$ denote the set of oriented classical links.  By the Alexander Theorem, we have ${\mathcal L} = \cup_{n} \{\widehat{\a} \,|\, \alpha \in B_n\}$. The classical braid group $B_n$ injects into the framed braid group ${\mathcal F}_n \cong \Z^n \rtimes B_n$, whereby elements of $B_n$ are viewed as framed braids with all framings equal to zero. So, by the classical Markov braid equivalence, comprising conjugation in the groups $B_n$ and positive and negative stabilizations and destabilizations, and by treating the $t_j$'s  as formal generators,
$\Phi_{d,D}(q,z)$ becomes an isotopy invariant of oriented classical links when restricted to ${\mathcal L}$ (see also \cite{jula3}).
This invariant of classical links
 will be denoted by $\Theta_{d,D}(q,z)$. 
Accordingly, for any classical braid $\a \in B_{n}$, we have
$$
\Theta_{d,D} (\widehat{\a}) := \Lambda_D^{n-1} (\sqrt{\lambda_D})^{\epsilon(\a)}\, (\Jtrs \circ \d) (\a)\ ,
$$
where  $\lambda_D$ and $\Lambda_D$ are given by \eqref{CapitalLambda},
$\epsilon(\a)$ is the sum of the exponents of the braiding generators $\s_i$ in the word  $\a$ and
$\d: \C B_n \rightarrow \YH(q)$ denotes the restriction of $\g$ to $\C B_n$. 
As in the case of framed links, we write $\Jtrs(\a)$ instead of $\Jtrs(\d(\a))$.

\begin{rem}
{\rm Following Remark \ref{2^d}, the above construction yields $2^d -1$ seemingly distinct isotopy invariants for classical links. 
However, we shall prove in Section~\ref{spclassical}  (Proposition~\ref{noinvts}) that, for classical links,  we only obtain one invariant for every $d \in \N$.}
\end{rem}

The invariants  $\Theta_{d,D}(q,z)$ need to be compared with known invariants of classical links, especially with the Homflypt polynomial. The Homflypt (or $2$-variable Jones) polynomial $P (q,\zet)$ is a $2$-variable isotopy invariant of oriented classical links that was constructed from the Iwahori--Hecke algebras ${\rm H}_n(q)$ and the Ocneanu trace $\tau$ after re--scaling and normalizing $\tau$ \cite{jo}. In this paper, we define $P (q,\zet)$ via the invariants $\Theta_{d,D}(q,z)$, since, for $d=1$, the algebras  ${\rm H}_n(q)$ and ${\rm Y}_{1,n}(q)$ coincide, while the traces $\tau$, ${\rm tr}_1$ and ${\rm tr}_{1, \{ 0\}}$ also coincide. For any classical braid $\a \in B_n$, we define
$$
P(\widehat{\a}): = \Theta_{1,\{0\}}(\widehat{\a}) = \left( \frac{1}{z \sqrt{\lambda_{\rm H}}} \right)^{n-1} (\sqrt{\lambda_{\rm H}})^{\epsilon(\a)}\, ({\rm tr}_{1, \{ 0\}} \circ \d)\,(\a) \ ,
$$
where $$\lambda_{\rm H} := \frac{z-(q-q^{-1})}{z} = \lambda_{\{0\}},$$
$\epsilon(\a)$ is the sum of the exponents of the braiding generators $\s_i$ in the word  $\a$ and $\d$ is the surjection $\C B_n \rightarrow {\rm Y}_{1,n}(q) \cong {\rm H}_n(q)$.
 Further, the Homflypt polynomial satisfies the following skein relation \cite{jo}:
\begin{equation}\label{skein of hom}
\frac{1}{\sqrt{\lambda_{\rm H}}} \, P(L_+) - \sqrt{\lambda_{\rm H}} \, P(L_-) = (q-q^{-1}) \, P(L_0)
\end{equation}
where $L_+, L_-, L_0$ is a Conway triple.

Contrary to the case of framed links, the skein relation of the invariant $\Phi_{d,D}(q,z)$ has no topological interpretation in the case of classical links, since they introduce framings.  This makes it very difficult to compare the invariants $\Theta_{d,D} (q,z)$ with the Homflypt polynomial using diagrammatic methods.  
On the algebraic level, there are no algebra homomorphisms connecting the algebras and the traces, see \cite{chla}.  Further, for generic values of the parameters $q,z$ the invariants $\Theta_{d,D}(q,z)$ do not coincide with the Homflypt polynomial; they only coincide in  the trivial cases where $q= \pm 1$ or when $E_D=1$ (cf.~\cite[Theorem 5]{chla}).

\begin{rem}\label{invs_nosame} \rm 
Similarly to $\Phi_{d,D}(q,z)$, the invariant $\Gamma_{d,D}(u,\wt{z})$ becomes an isotopy invariant of oriented classical links when restricted to ${\mathcal L}$. This invariant  of classical links is denoted by $\Delta_{d,D}(u,\wt{z})$ and it is the one studied in \cite{jula3} and \cite{chla}.
Again, there is no reason that the invariants $\Delta_{d,D}(u,\wt{z})$ and $\Theta_{d,D}(q,z)$ are topologically equivalent. In fact, as we shall see in Section~\ref{threeVar}, it seems that they are not. Note though that, for $d=1$, $\Delta_{1,\{0\}}(u,\wt{z}) = \Theta_{1,\{0\}}(q,z)$ if we take $u=q^2$ and $\wt{z}=qz$ (this can be easily seen by writing down the skein relation for $\Delta_{1,\{0\}}$
and for $\Theta_{1,\{0\}}$), so both families of invariants include the Homflypt polynomial as a special case.

Everything that we have said above concerning the comparison of $\Theta_{d,D}(q,z)$ and the Homflypt polynomial also applies
to $\Delta_{d,D}(u,\wt{z})$,
Further, as shown in \cite{jula3}, in $\YH(u)$ a ``closed'' cubic relation is satisfied, which  involves only the braiding generators, and which factors through the quadratic relation of the Iwahori--Hecke algebra ${\rm H}_n(u) \cong {\rm Y}_{1,n}(u)$. Namely, 
$\wt{g_i}^3 - u \wt{g_i}^2 - \wt{g_i} + u = (\wt{g_i} - 1) \big(\wt{g_i}^2 - (u - 1) \wt{g_i} - u\big)=0$. However, the skein relation for classical links coming from the cubic relation is not sufficient for determining the invariants $\Delta_{d,D}(u,\wt{z})$ diagrammatically with a simple set of initial conditions. Moreover, the above factoring does not give information toward the comparison of the invariants either.
\end{rem}

\section{The specialized trace $\Jtrs$ for classical links}\label{spclassical}

In the rest of the paper we restrict our attention to classical knots and links. In this section we show that in this case, and assuming the E--condition, the specialized trace $\Jtrs$, which gives rise to the invariants $\Theta_{d,D}$, can be computed only via rules involving the traces of the elements $g_i$ and $e_i$.

\subsection{A new set of defining rules for $\Jtrs$ and its consequences} 
Assume that we are given an oriented classical link in the form of the closure of a braid $\alpha \in B_{n}$. 
Let $\d:  \C B_n \rightarrow  \YH(q)$ be the natural $\C$-algebra homomorphism given by $\sigma_i \mapsto g_i$.
Then $\d(\a)$ involves only the braiding generators $g_1,\ldots,g_{n-1}$. In fact, 
we have the following:

\begin{prop} The image of $\d$ is the 
  subalgebra $\YH(q)^{\rm (br)}$ of $\YH(q)$ generated by $g_1,\ldots,g_{n-1}$.
\end{prop}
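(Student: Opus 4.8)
\emph{Plan.} I would prove the two inclusions separately; the inclusion $\YH(q)^{\rm (br)} \subseteq \mathrm{im}(\d)$ is formal, and the reverse one reduces to a single short computation inside $\YH(q)$.

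\emph{Step 1: $\YH(q)^{\rm (br)} \subseteq \mathrm{im}(\d)$.} For each $i$ the element $\d(\sigma_i) = g_i$ lies in $\mathrm{im}(\d)$, and the image of a $\C$-algebra homomorphism is a unital subalgebra of $\YH(q)$. Hence $\mathrm{im}(\d)$ contains the (unital) subalgebra generated by $g_1,\ldots,g_{n-1}$, which is exactly $\YH(q)^{\rm (br)}$ by definition.

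\emph{Step 2: $\mathrm{im}(\d) \subseteq \YH(q)^{\rm (br)}$.} Since $\C B_n$ is generated as a unital $\C$-algebra by the $\sigma_i$ together with their inverses $\sigma_i^{-1}$, the image $\mathrm{im}(\d)$ is generated as an algebra by the $g_i$ and the $g_i^{-1}$. So it suffices to show $g_i^{-1} \in \YH(q)^{\rm (br)}$ for each $i$. By \eqref{invrs}, $g_i^{-1} = g_i - (q-q^{-1})e_i$, so everything comes down to showing that $e_i$ is a polynomial in $g_i$. This is where the quadratic relation \eqref{quadr} does the work: multiplying \eqref{quadr} on the left by $e_i$ and using $e_i^2=e_i$ gives $e_i g_i^2 = e_i + (q-q^{-1})e_ig_i = e_i + (g_i^2-1)$, while rewriting \eqref{quadr} as $e_ig_i = (q-q^{-1})^{-1}(g_i^2-1)$ and multiplying on the right by $g_i$ gives $e_ig_i^2 = (q-q^{-1})^{-1}(g_i^3-g_i)$. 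Equating the two expressions yields
$$
e_i = \frac{g_i^3 - g_i}{q - q^{-1}} - g_i^2 + 1 \ \in\ \C[g_i] \subseteq \YH(q)^{\rm (br)},
$$
and therefore $g_i^{-1} = g_i - (q-q^{-1})e_i \in \YH(q)^{\rm (br)}$ as well. Combining Steps 1 and 2 gives $\mathrm{im}(\d) = \YH(q)^{\rm (br)}$.

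\emph{Main obstacle.} There is no serious obstacle; the only point that is not purely formal is that the subalgebra generated by the $g_i$ is automatically stable under inverting the $g_i$, and the display above makes this explicit. (Alternatively one could invoke finiteness: $\YH(q)$ is finite-dimensional over $\C$, so the invertible element $g_i$ has its inverse inside the commutative subalgebra $\C[g_i]$ — its minimal polynomial has nonzero constant term, otherwise $g_i$ would be a zero divisor — which again gives $g_i^{-1} \in \YH(q)^{\rm (br)}$.)
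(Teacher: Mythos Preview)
Your proof is correct and follows essentially the same route as the paper: reduce to showing $g_i^{-1}\in\YH(q)^{\rm (br)}$, and obtain this from the quadratic relation by expressing $(q-q^{-1})e_i$ (and hence $g_i^{-1}=g_i-(q-q^{-1})e_i$) as a polynomial in $g_i$. The only cosmetic difference is that the paper works directly with $(q-q^{-1})e_i$ and never divides by $(q-q^{-1})$, whereas your displayed formula for $e_i$ itself implicitly assumes $q\neq\pm1$; this is harmless since for $q=\pm1$ one has $g_i^2=1$ and $g_i^{-1}=g_i$ trivially (and your finite-dimensionality remark also covers it).
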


\begin{proof} The equality $\d(\C B_n)=\YH(q)^{\rm (br)}$ 
derives 
from the fact that $$\delta(\sigma_{j_1}^{k_1}\sigma_{j_2}^{k_2}\ldots \sigma_{j_r}^{k_r}) =g_{j_1}^{k_1}g_{j_2}^{k_2}\ldots g_{j_r}^{k_r}
\in \YH(q)^{\rm (br)}$$ for all $j_1,j_2,\ldots,j_r \in \{1,\ldots,n-1\}$ and $k_1,k_2,\ldots,k_r \in \Z$. For this, it is enough to show that $g_i^{-1} \in \YH(q)^{\rm (br)}$ for all $i=1,\ldots,n-1$. Indeed, we have
\begin{equation}\label{ties1}
(q-q^{-1}) \,e_i g_i = g_i^{2} -1 \in  \YH(q)^{\rm (br)},
\end{equation}
which yields that
\begin{equation}\label{ties2}
 (q-q^{-1}) \,e_i = g_i^{3} -g_i -  (q-q^{-1})^2 e_i g_i \in  \YH(q)^{\rm (br)},
 \end{equation}
and so,
$$g_i^{-1} = g_i - (q-q^{-1}) e_i  \in  \YH(q)^{\rm (br)},$$
as desired.
\end{proof}

\begin{rem}\label{conj braids and ties}
 \rm If $q \neq \pm 1$, then combining \eqref{ties1} and \eqref{ties2} yields
\begin{equation}
e_i =  \frac{1}{q-q^{-1}}\left(g_i^{3} -g_i\right) -  \left(g_i^2-1\right)
\end{equation}
for all $i=1,\ldots,n-1$. So, in this case, $\YH(q)^{\rm (br)}$ coincides with the subalgebra of $\YH(q)$ generated by the elements $g_1,\ldots,g_{n-1}, e_1,\ldots,e_{n-1}$, which in turn should be isomorphic, when $d \geq n$, to the \emph{algebra of braids and ties} $\mathcal{E}_n(q)$ studied in \cite{Ry, AJ}\footnote{A bit after the completion of this paper, J.~Espinoza and S.~Ryom-Hansen proved that the natural homomorphism from $\mathcal{E}_n(q)$ to $\YH(q)$ is injective for $d \geq n$ \cite[Theorem 8]{EsRy}. Since the image of this monomorphism is $\YH(q)^{\rm (br)}$, their result implies that $\mathcal{E}_n(q)$ is isomorphic to $\YH(q)^{\rm (br)}$ for $d \geq n$.}.
\end{rem}

Now, during the calculation of the specialized trace $\Jtrs$ on $\a \in B_n$, the framing generators appear only when the quadratic relation~\eqref{quadr} and the inverse relation~\eqref{invrs} are applied, and then only in the form of the idempotents $e_i$. So, it would make sense in this setting to substitute rule
$(4')$ of Definition~\ref{trdD} by rules involving only the $e_i$'s, such as \eqref{later for classical}.

Indeed, our aim in this section will be to prove the following result:

\begin{thm}\label{specialtrace}
Let $m \in \{1,\ldots,d\}$ and set $E_m:=1/m$. 
Let $z$ be an indeterminate over $\C$.
There exists a unique linear Markov trace
$$
\Jtrm : \bigcup_{n \geq 0} \YH(q)^{\rm (br)} \longrightarrow \C [z]
$$
defined inductively on $\YH(q)^{\rm (br)}$, for all $n \geq 0$, by the following rules:
$$
\begin{array}{crcll}
{\rm (i)}  & \Jtrm(a b) & = & \Jtrm(b a) & \qquad a, b\in\YH(q)^{\rm (br)} \\
{\rm (ii)}  & \Jtrm(1) &  = & 1 & \qquad 1 \in\YH(q)^{\rm (br)} \\
{\rm (iii)}  & \Jtrm(a g_n) & = & z\, \Jtrm( a) & \qquad a\in\YH(q)^{\rm (br)} \quad (\text{Markov  property} ) \\
{\rm (iv)}  & \Jtrm(a e_n) & = & E_m\, \Jtrm( a) & \qquad a\in\YH(q)^{\rm (br)}\\
{\rm (v)}  & \Jtrm(a e_ng_n) & = & z\, \Jtrm( a) & \qquad a\in\YH(q)^{\rm (br)} .
\end{array}
$$
For all $a \in  \bigcup_{n \geq 0} \YH(q)^{\rm (br)}$, we  have $\Jtrm(a) =\Jtrs(a)$ where $D$ is any subset of $\Z/d\Z$ such that $|D|=m$. Note that, in this case, $E_m = E_D$.
\end{thm}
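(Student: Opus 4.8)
The plan is to establish existence and uniqueness of $\Jtrm$ separately, and then to identify it with the restriction of $\Jtrs$ to $\YH(q)^{\rm (br)}$. For \emph{uniqueness}, I would argue that rules (i)--(v) suffice to compute $\Jtrm$ on any element written in a suitable inductive basis of $\YH(q)^{\rm (br)}$. Concretely, I would use the description of elements of $\YH(q)^{\rm (br)}$ coming from the canonical basis $\mathcal{B}_n^{\rm can}$ of $\YH(q)$: after deleting framing parts, a spanning set for $\YH(q)^{\rm (br)}$ is obtained from products of the ``descending'' blocks $g_{i}g_{i-1}\cdots g_{i-r}$ together with the idempotents $e_j$ (using Remark~\ref{conj braids and ties} and \eqref{ties1}, \eqref{ties2} to see that the $e_j$ genuinely live in $\YH(q)^{\rm (br)}$). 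One then reduces the top index $n-1$: an arbitrary basis element either does not involve $g_{n-1}$ (so, after using the trace property (i) to cycle, it lies in $\YH(q)^{\rm (br)}_{n-1}$ and we recurse on $n$), or it involves $g_{n-1}$ exactly once, in which case — again after cyclic rearrangement via (i) — it has the form $a\,g_{n-1}$ or $a\,e_{n-1}g_{n-1}$ or $a\,e_{n-1}$ with $a \in \YH(q)^{\rm (br)}_{n-1}$, and rules (iii), (v), (iv) respectively peel off the last strand. This shows the value of any trace satisfying (i)--(v) is forced, hence uniqueness.

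For \emph{existence}, rather than re-running Juyumaya's induction from scratch, the cleanest route is to \emph{define} $\Jtrm$ as the restriction of $\Jtrs$ (with $|D|=m$) to $\YH(q)^{\rm (br)}$, and then verify that this restriction satisfies rules (i)--(v). Rules (i), (ii), (iii) are immediate from the corresponding rules for $\Jtrs$ in Definition~\ref{trdD} (note $\g(\C B_n) = \YH(q)^{\rm (br)}$ is closed under the relevant operations, and $g_n \in \YH(q)^{\rm (br)}$ when we are inside $\YH(q)^{\rm (br)}_{n+1}$). Rules (iv) and (v) are precisely the two identities \eqref{later for classical}, namely $\Jtrs(a e_n) = E_D\,\Jtrs(a)$ and $\Jtrs(a e_n g_n) = z\,\Jtrs(a)$, which hold for all $a \in \YH(q)$ and in particular for $a \in \YH(q)^{\rm (br)}$, once we recall from \eqref{ED} that $E_D = 1/|D| = 1/m = E_m$. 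Thus the restriction of $\Jtrs$ is a trace satisfying (i)--(v); combined with the uniqueness argument, this forces $\Jtrm = \Jtrs|_{\YH(q)^{\rm (br)}}$, which is exactly the final assertion of the theorem. The independence of the right-hand side from the particular choice of $D$ with $|D|=m$ is then automatic, since the abstract trace $\Jtrm$ depends only on $m$ and $z$.

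The subtle point — and the one I expect to be the main obstacle — is making the uniqueness/spanning argument fully rigorous: one must check that the reduction process is well-founded and, crucially, that rules (i)--(v) are \emph{consistent} on $\YH(q)^{\rm (br)}$, i.e. that different ways of bringing a given element into reduced form yield the same value. This is exactly the kind of coherence check that usually requires the honest inductive construction along an explicit basis (the analogue of Juyumaya's proof of Theorem~\ref{trace}), verifying that the defining rules respect the algebra relations \eqref{quadr}, \eqref{invrs}, the braid relations $\mathrm{(b_1)},\mathrm{(b_2)}$, and the identities $e_i^2 = e_i$, $e_ig_i = g_ie_i$. The technical lemmas alluded to in the section introduction ("whose proof requires some technical lemmas and some properties of the trace $\Jtrs$") presumably supply the auxiliary identities — e.g. values of $\Jtrm$ on elements of the form $a\,e_i g_i g_{i+1}\cdots$, or commutation of $e_j$ past descending blocks — needed to push the reduction through. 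Once existence is obtained \emph{via} the restriction of $\Jtrs$, however, consistency comes for free, so I would emphasize that route and use the uniqueness argument only to pin down the identification $\Jtrm = \Jtrs|_{\YH(q)^{\rm (br)}}$.
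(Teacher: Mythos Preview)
Your overall strategy---existence by restricting $\Jtrs$ to $\YH(q)^{\rm (br)}$ and checking rules (i)--(v) from \eqref{later for classical} and \eqref{ED}, then uniqueness by an inductive reduction---is sound and is essentially how the paper proceeds (the paper intertwines the two, proving uniqueness and the identification $\Jtrm=\Jtrs$ in one pass, but the logic is the same). Your observation that existence via restriction makes consistency ``come for free'' is correct and is a clean way to organise things.

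The genuine gap is in your uniqueness sketch. After cycling via rule (i), an element of ${\rm Y}_{d,n+1}(q)^{\rm (br)}$ does \emph{not} in general reduce to one of the three forms $a\,g_n$, $a\,e_n g_n$, $a\,e_n$ with $a \in \YH(q)^{\rm (br)}$. The reason is that when you push idempotents $e_j$ past braiding generators you produce the more general elements $e_{i,k}$ (relation \eqref{id_2}), and after the paper's reduction to the ``adapted split form'' \eqref{adapted} what sits to the right is a product
\[
a\,e_{i_1,n+1}e_{i_2,n+1}\cdots e_{i_p,n+1}\quad\text{or}\quad a\,e_{i_1,n+1}\cdots e_{i_p,n+1}\,g_n
\]
with $a\in\YH(q)^{\rm (br)}$ and $i_1<\cdots<i_p\leq n$. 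Rules (iv) and (v) by themselves cannot peel off a product of \emph{several} top-index idempotents.

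The technical core that you correctly sensed was missing is Lemma~\ref{many_ei}, the algebraic identity
\[
e_{i_1,n+1}e_{i_2,n+1}\cdots e_{i_p,n+1}=e_{i_1,i_2}e_{i_2,i_3}\cdots e_{i_{p-1},i_p}\,e_{i_p,n+1},
\]
which collapses all but one top-index idempotent into idempotents supported on $\{1,\ldots,n\}$. This, together with Lemmas~\ref{trace_one_ei} and \ref{trace_many_ei} (which by Remark~\ref{all good} use only rules (i)--(v), so are available for $\Jtrm$ and not just for $\Jtrs$), reduces the computation to a single application of rule (iv) or (v) followed by the inductive hypothesis on $\YH(q)^{\rm (br)}$. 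So the ``auxiliary identities'' you guessed at are not about commuting $e_j$ past descending blocks, but about this multiplicative collapse of the $e_{i,k}$; once you have that lemma, the three-case analysis in \S5.3 goes through exactly as the paper does it.
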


For $\alpha \in B_n$, we set $\Jtrm(\a):=\Jtrm(\d(\a))$. Theorem \ref{specialtrace} implies that the specialized trace $\Jtrs$ on classical knots and links depends only on $|D|$ and not on the solution $X_D$ of the E--system. It does not even depend on $d$, since we have the following:

\begin{cor} \label{noinvts1} Let $d,d'$ be positive integers with $d \leq d'$.
For all $\a \in B_n$, we have $${\rm tr}_{d',d}(\alpha) = {\rm tr}_{d,d}(\alpha) = {\rm tr}_{d,\,\Z/d\Z}(\alpha).$$ 
\end{cor}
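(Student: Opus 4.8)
The plan is to deduce Corollary~\ref{noinvts1} directly from Theorem~\ref{specialtrace}, treating the theorem as a black box. The key observation is that the trace $\Jtrm$ constructed in Theorem~\ref{specialtrace} depends only on the parameter $m$ (through $E_m = 1/m$) and the indeterminate $z$, and \emph{not} on the ambient modulus $d$: its defining rules $\mathrm{(i)}$--$\mathrm{(v)}$ involve only the generators $g_i$ and the idempotents $e_i$ of the braiding subalgebra $\YH(q)^{\rm (br)}$, and Lemma~\ref{gipower} together with the relations $g_i^2 = 1 + (q-q^{-1})e_ig_i$ and $e_i^2 = e_i$ show that $\YH(q)^{\rm (br)}$ has a presentation in these generators that is insensitive to $d$ (as long as $d$ is large enough, i.e.\ $d \geq m$, for a subset $D$ of size $m$ to exist inside $\Z/d\Z$). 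So the essential point is to exploit the uniqueness clause of Theorem~\ref{specialtrace}.

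First I would fix positive integers $d \leq d'$ and set $m := d$, so that $E_m = E_d = 1/d$. I would note that, since $\d(\C B_n) = \YH(q)^{\rm (br)}$ by the Proposition preceding Theorem~\ref{specialtrace}, it suffices to prove the equality of traces on $\YH(q)^{\rm (br)}$. Applying Theorem~\ref{specialtrace} with modulus $d'$ and $m = d \leq d'$ produces the trace $\mathrm{tr}_{d',d}$, which by the last sentence of the theorem coincides with $\mathrm{tr}_{d',D'}$ for \emph{any} subset $D'$ of $\Z/d'\Z$ with $|D'| = d$. Applying Theorem~\ref{specialtrace} with modulus $d$ and $m = d$ produces $\mathrm{tr}_{d,d}$, which coincides with $\mathrm{tr}_{d,D}$ for any subset $D$ of $\Z/d\Z$ with $|D| = d$ --- the only such subset being $\Z/d\Z$ itself, so $\mathrm{tr}_{d,d} = \mathrm{tr}_{d,\,\Z/d\Z}$, which gives the second claimed equality for free.

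For the first equality $\mathrm{tr}_{d',d} = \mathrm{tr}_{d,d}$, the idea is that both are linear Markov traces on $\bigcup_n \YH(q)^{\rm (br)}$ satisfying \emph{the same} five rules $\mathrm{(i)}$--$\mathrm{(v)}$ of Theorem~\ref{specialtrace} with the same value $E_m = 1/d$ of the relevant parameter; by the uniqueness assertion in that theorem they must be equal. One subtlety to address carefully is that Theorem~\ref{specialtrace} is stated with the parameter $m$ ranging in $\{1,\dots,d\}$ for a \emph{fixed} ambient $d$; I would make explicit that the constructed trace $\Jtrm$ is a genuine function on the abstract algebra $\YH(q)^{\rm (br)}$ (equivalently, on $\d(\C B_n)$), whose definition via rules $\mathrm{(i)}$--$\mathrm{(v)}$ makes no reference to $d$, so that $\mathrm{tr}_{d',d}$ and $\mathrm{tr}_{d,d}$ are literally the \emph{same} inductively defined object $\mathrm{tr}_{d,m}$ with $m = d$. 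Then evaluating on $\a \in B_n$ via $\mathrm{tr}_{d,m}(\a) = \mathrm{tr}_{d,m}(\d(\a))$ yields the corollary.

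The main obstacle --- really the only one --- is the bookkeeping needed to confirm that the object produced by Theorem~\ref{specialtrace} is independent of the ambient $d$: one must check that the inductive construction (using the inductive basis $\mathcal{B}_n^{\rm ind}$ restricted to $\YH(q)^{\rm (br)}$, or equivalently decomposing a word in the $g_i$ via Lemma~\ref{gipower} and the relations $e_i g_i = g_i e_i$, $g_j e_i = e_i g_j$ for $|i-j|>1$, etc.) never appeals to the modular relation $t_j^d = 1$ once everything is expressed in terms of $g_i$ and $e_i$. Since Theorem~\ref{specialtrace} has already established well-definedness and uniqueness of $\Jtrm$ from rules $\mathrm{(i)}$--$\mathrm{(v)}$ alone, this amounts to observing that those rules, and the algebra relations they are applied against, are the same for all $d \geq m$ --- a short argument rather than a computation.
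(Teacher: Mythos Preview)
Your proposal is correct and is essentially the approach the paper intends: the corollary is stated immediately after Theorem~\ref{specialtrace} with no separate proof, as a direct consequence of the fact that the rules $\mathrm{(i)}$--$\mathrm{(v)}$ defining $\Jtrm$ involve only $E_m=1/m$ and the relations among the $g_i$ and $e_i$, none of which depend on the ambient modulus $d$. You have simply spelled out in more detail the bookkeeping that the paper leaves implicit.
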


\begin{rem}\rm
Recall that $\Z/d\Z$ parametrizes the trivial solution of the E--system, that is, the one given by ${\rm x}_1 = {\rm x}_2=\cdots ={\rm x}_{d-1}=0$.
\end{rem}

The above way of calculating the trace $\Jtrs$ allows us to prove new results for the invariants of classical knots and links $\Theta_{d,D}$ and to compare them with the Homflypt polynomial.   Here is the first observation. The construction of the invariants $\Theta_{d,D}$ yielded seemingly $2^d-1$ invariants for every choice $d$. However, an immediate consequence of Theorem \ref{specialtrace} and Corollary \ref{noinvts1}  is the following:

\begin{prop} \label{noinvts} 
The values of the isotopy invariants $\Theta_{d,D}$ for classical links depend only on the cardinality $\vert D \vert$ of $D$. 
Hence, 
for a fixed $d$, we only obtain $d$ invariants. Further, for $d,d'$ positive integers with $d \leq d'$, we have $\Theta_{d,D} = \Theta_{d',D'}$ as long as $|D|=|D'|$.
We deduce that, if $|D'|=d$, then  $\Theta_{d',D'} = \Theta_{d,\,\Z/d\Z}$. 
Therefore, the invariants $\Theta_{d,D}$ can be parametrized by the natural numbers, setting
$\Theta_d := \Theta_{d,\,\Z/d\Z}$ for all $d \in \Z_{>0}$.
\end{prop}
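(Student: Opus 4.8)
The plan is to read off Proposition~\ref{noinvts} directly from Theorem~\ref{specialtrace} and Corollary~\ref{noinvts1}, together with the defining formula for $\Theta_{d,D}$. Recall that for a classical braid $\alpha \in B_n$ one has
$$\Theta_{d,D}(\widehat{\alpha}) = \Lambda_D^{\,n-1}(\sqrt{\lambda_D})^{\epsilon(\alpha)}\,(\Jtrs \circ \delta)(\alpha),$$
so $\Theta_{d,D}(\widehat{\alpha})$ is assembled from the braid data $n$ and $\epsilon(\alpha)$, which do not involve $d$ or $D$; the rescaling factors $\lambda_D,\Lambda_D$ of \eqref{CapitalLambda}; and the trace value $(\Jtrs\circ\delta)(\alpha)$. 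First I would observe that, since $E_D = 1/|D|$, the quantities $\lambda_D = \frac{z-(q-q^{-1})E_D}{z}$ and $\Lambda_D = \frac{1}{z\sqrt{\lambda_D}}$ are functions of $|D|$ alone (and of $q,z$), with a fixed choice of $\sqrt{\lambda_D}$; in particular, subsets of the same cardinality, even sitting in different groups $\Z/d\Z$ and $\Z/d'\Z$, give identical rescaling factors.

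Next I would invoke Theorem~\ref{specialtrace}: setting $m := |D|$, it gives $(\Jtrs\circ\delta)(\alpha) = \Jtrm(\delta(\alpha))$, and $\Jtrm$ is pinned down by the rules ${\rm (i)}$--${\rm (v)}$, which mention only $q$, $z$ and $E_m = 1/m = E_D$. Hence, for fixed $d$, $(\Jtrs\circ\delta)(\alpha)$ depends only on $|D|$, and therefore so does $\Theta_{d,D}$; since $|D|$ ranges exactly over $\{1,\dots,d\}$ for a non-empty $D \subseteq \Z/d\Z$, the family collapses to at most $d$ invariants. To compare across different $d$, I would use Corollary~\ref{noinvts1}: applied to the pairs $(m,d)$ and $(m,d')$ (legitimate since $m=|D|=|D'|\le \min(d,d')$), it yields ${\rm tr}_{d,m}(\alpha) = {\rm tr}_{m,m}(\alpha) = {\rm tr}_{d',m}(\alpha)$ for every braid $\alpha$, so the trace value does not depend on the ambient $d$ either. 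Combining this with the first paragraph, whenever $|D| = |D'|$ we get $\Theta_{d,D}(\widehat{\alpha}) = \Theta_{d',D'}(\widehat{\alpha})$ for all $\alpha$, hence $\Theta_{d,D} = \Theta_{d',D'}$ as link invariants. Taking $D' = \Z/d\Z$ (so $|D'| = d$) then gives $\Theta_{d',D'} = \Theta_{d,\Z/d\Z}$, and we may define $\Theta_d := \Theta_{d,\Z/d\Z}$.

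I do not expect a genuine obstacle here: the statement is bookkeeping once Theorem~\ref{specialtrace} and Corollary~\ref{noinvts1} are available, all the real work being in the proof of Theorem~\ref{specialtrace}. The only points requiring (minor) care are verifying that $\lambda_D$ and $\Lambda_D$ are genuinely functions of $|D|$ alone, that the square root $\sqrt{\lambda_D}$ is taken consistently throughout the comparison, and that $\Theta_{d,D}$ is already known (from the classical-link restriction of Theorem~\ref{invariant Phi}) to be a well-defined invariant independent of the chosen braid representative, so that equality of values on all braid representatives of a link suffices to conclude equality of the invariants.
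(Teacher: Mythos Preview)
Your proposal is correct and follows exactly the approach indicated in the paper: the proposition is stated as an immediate consequence of Theorem~\ref{specialtrace} and Corollary~\ref{noinvts1}, and you have simply spelled out why, noting that the rescaling factors $\lambda_D,\Lambda_D$ depend only on $E_D=1/|D|$ and that the trace values agree by those two results.
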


\begin{rem} \rm
Note that, in general, $\Jtrs$ is sensitive to the choice of the set $D$, since for two sets $D_1$ and $D_2$ of the same cardinality, we get different solutions of the E--system ($X_{D_1} \neq X_{D_2}$). Theorem~\ref{specialtrace} implies that the trace $\Jtrs$ on $\YH(q)^{\rm (br)}$ depends only on the cardinality $\vert D \vert$, since rules ${\rm (iv)}$ and ${\rm (v)}$ substitute completely rule $(4')$ of Definition~\ref{trdD}. However,  this may not be the case on $\YH(q)$ (\emph{i.e.,} for framed links), since rules ${\rm (iv)}$ and ${\rm (v)}$ are not sufficient to calculate $\Jtrs$ on any word.
\end{rem}

\begin{rem} \rm 
Note that there is no analogue of Theorem~\ref{specialtrace} for the trace $\Jtr$, since condition \eqref{aeneq} is not satisfied by $\Jtr$.
\end{rem}

\subsection{Some useful relations in $\YH(q)$}
For the proof of Theorem~\ref{specialtrace}, we will need some lemmas. Let $i,k \in \{1,\ldots,n\}$. We consider the elements $e_{i,k} \in \YH(q)$ defined as follows:
$$
e_{i,k} = \frac{1}{d} \sum_{s=0}^{d-1} t_i^s t_k^{d-s}.
$$
The elements $e_{i,k}$ are idempotents and clearly, $e_{i,i} = 1$ and $e_{i,i+1} = e_i$. Moreover, $e_{i,k} = e_{k,i}$, so we may assume from now on that $i < k$. Furthermore, the following relations hold in $\YH(q)$, as a result of defining relations $\mathrm{(f_1)}$ and $\mathrm{(f_2)}$:
\begin{equation}\label{id_1}
e_{i,k} e_{i',k'}  = e_{i',k'} e_{i,k}
\end{equation}
and 
\begin{equation}\label{id_2}
e_{i,k}\, g_j = g_j  \,e_{s_j(i),s_j(k)}
\end{equation}
The following two lemmas are direct consequences of the above relations.

\begin{lemma} \label{ei_conj}
The following relations hold in $\YH(q)$:
\begin{center}
$
e_{i-1} = g_i g_{i-1} e_i (g_i g_{i-1})^{-1} \quad \quad \mbox{for all \quad $ 1 < i < n$.} 
$
\end{center}
In particular, $e_i$ is conjugate to $e_k$ for all $1 \leq i < k \leq n-1$. 
\end{lemma}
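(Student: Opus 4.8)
The plan is to prove the identity by a direct computation, pushing the braiding generators $g_{i-1}$ and $g_i$ past the idempotent $e_i$ one at a time using relation~\eqref{id_2}, while keeping track of how the transpositions $s_{i-1}$ and $s_i$ act on the relevant pair of indices. Writing $e_i = e_{i,i+1}$ in the notation just introduced, I would first apply \eqref{id_2} with $j=i-1$: since $s_{i-1}$ interchanges $i-1$ and $i$ and fixes $i+1$, one gets
$$g_{i-1}\, e_{i,i+1} = e_{i-1,i+1}\, g_{i-1}.$$
Then I would apply \eqref{id_2} again with $j=i$: since $s_i$ interchanges $i$ and $i+1$ and fixes $i-1$, one gets
$$g_i\, e_{i-1,i+1} = e_{i-1,i}\, g_i = e_{i-1}\, g_i.$$
Composing the two steps yields $g_i g_{i-1}\, e_i = e_{i-1}\, g_i g_{i-1}$, and since $g_i g_{i-1}$ is invertible by \eqref{invrs}, rearranging gives the claimed formula, valid precisely for $1<i<n$ so that all the indices $i-1,i,i+1$ lie in $\{1,\dots,n\}$.

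For the final assertion, the identity just established shows that $e_{i-1}$ is conjugate to $e_i$ in $\YH(q)$ for every $i$ with $2\leq i\leq n-1$; hence $e_1,e_2,\dots,e_{n-1}$ all lie in a single conjugacy class, and in particular $e_i$ is conjugate to $e_k$ for all $1\leq i<k\leq n-1$, by composing the intermediate conjugations (the conjugating element being a suitable product of the $g_jg_{j-1}$).

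The argument is entirely routine; the only point that requires a little care — the ``main obstacle'', such as it is — is the bookkeeping of the images $s_j(i),s_j(k)$ of the index pair under the transpositions, to confirm that the pair $(i,i+1)$ is transported exactly to $(i-1,i)$ and not elsewhere. Beyond this, no genuine difficulty arises, so I would keep the written proof to the two displayed lines above.
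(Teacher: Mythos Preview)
Your proof is correct and follows exactly the approach the paper intends: the paper states that this lemma is a direct consequence of relations \eqref{id_1} and \eqref{id_2}, and your two applications of \eqref{id_2} (tracking how $s_{i-1}$ and $s_i$ transport the index pair $(i,i+1)$ to $(i-1,i)$) constitute precisely that direct verification.
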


\begin{lemma} \label{eik_conj} Let $1 \leq i < j<k \leq n$. The following relations hold in $\YH(q)$:
\begin{equation}\label{eik_conj_ei}
e_{i,k} = (g_{k-1} \ldots g_{j})\, e_{i,j} \,(g_{k-1} \ldots g_{j})^{-1} , 
\end{equation}
and 
\begin{equation}\label{eik_conj_ek}
e_{i,k} = (g_i \ldots g_{j-1}) \,e_{j,k}\, (g_i \ldots g_{j-1})^{-1}  .
\end{equation}
In particular, the element $e_{i,k}$ is conjugate to $e_{i,j}$ and $e_{j,k}$. 
\end{lemma}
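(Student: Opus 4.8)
The plan is to reduce both identities to a single elementary conjugation rule for the idempotents $e_{a,b}$ and then iterate it.

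First I would record the basic fact that, for every $m\in\{1,\ldots,n-1\}$ and every pair $a\neq b$ in $\{1,\ldots,n\}$,
\[
g_m\, e_{a,b}\, g_m^{-1} \;=\; e_{s_m(a),\,s_m(b)} \;=\; g_m^{-1}\, e_{a,b}\, g_m .
\]
Indeed, the right-hand equality is just \eqref{id_2} rewritten as $g_m^{-1} e_{a,b}\, g_m = e_{s_m(a),s_m(b)}$; applying this to the pair $(s_m(a),s_m(b))$ and using $s_m^2=\mathrm{id}$ yields the left-hand equality. (Here I use $e_{a,b}=e_{b,a}$, so the formula makes sense irrespective of which index is the larger one.) Iterating, for any word $A=g_{i_1}g_{i_2}\cdots g_{i_r}$ in the generators, a straightforward induction on $r$ gives
\[
A\, e_{a,b}\, A^{-1} \;=\; e_{w(a),\,w(b)}, \qquad w:=s_{i_1}s_{i_2}\cdots s_{i_r}\in\mathfrak{S}_n .
\]

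To prove \eqref{eik_conj_ei} I would then take $A=g_{k-1}g_{k-2}\cdots g_{j}$, so $w=s_{k-1}s_{k-2}\cdots s_{j}$, and track the action of $w$ on the pair $(i,j)$: reading the transpositions from the right, $s_j$ sends $j\mapsto j+1$, then $s_{j+1}$ sends $j+1\mapsto j+2$, and so on until $s_{k-1}$ sends $k-1\mapsto k$, so $w(j)=k$; meanwhile every $s_m$ with $j\le m\le k-1$ fixes $i$, because $i<j\le m$ forces $i\notin\{m,m+1\}$, so $w(i)=i$. Hence $A\,e_{i,j}\,A^{-1}=e_{i,k}$, which is \eqref{eik_conj_ei}. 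For \eqref{eik_conj_ek} the argument is symmetric, with $A=g_i g_{i+1}\cdots g_{j-1}$ and $w=s_i s_{i+1}\cdots s_{j-1}$: this $w$ fixes $k$ (all its transpositions move only elements $\le j$, and $k>j$) and sends $j$ successively $j\mapsto j-1\mapsto\cdots\mapsto i$, so $A\,e_{j,k}\,A^{-1}=e_{i,k}$. The ``in particular'' assertion is then immediate, the conjugating elements being $g_{k-1}\cdots g_j$ and $g_i\cdots g_{j-1}$ respectively. I would note that the same basic conjugation rule also yields Lemma~\ref{ei_conj} (take $A=g_i g_{i-1}$, $w=s_i s_{i-1}$, applied to $e_i=e_{i,i+1}$).

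There is no real obstacle here; the only points that need a little care are keeping the direction of conjugation consistent with the left-to-right order of the transpositions composing $w$, and checking at each stage that the index meant to stay fixed ($i$ in the first identity, $k$ in the second) is genuinely untouched by every transposition that occurs — which is precisely where the hypothesis $i<j<k$ enters.
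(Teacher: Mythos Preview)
Your proof is correct and follows exactly the route the paper intends: both lemmas are stated there as direct consequences of the relation \eqref{id_2}, and you have simply made that iteration explicit, tracking the permutation $w$ associated to the conjugating word and checking it sends $(i,j)$ to $(i,k)$ (respectively $(j,k)$ to $(i,k)$). There is nothing to add.
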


For $j=i+1$ and $k>j$, Lemma \ref{eik_conj} yields that the element $e_{i,k}$ is conjugate to $e_i$. 
Following Lemma \ref{ei_conj}, we deduce that $e_{i,k}$ is conjugate to $e_j$, for all $i,k,j \in \{1,\ldots,n\}$.
Rule (1) of the definition of $\Jtrs$ implies the following:

\begin{cor} 
 Given a solution $X_D$ of the {\rm E}--system, we have
$$\Jtrs (e_{i,k}) = \Jtrs (e_i) = E_D = \frac{1}{|D|}$$
for all $1 \leq i < k \leq n$.
\end{cor}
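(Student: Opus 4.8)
The plan is to reduce the statement to the already-established value $\Jtrs(e_i) = E_D = 1/|D|$ of \eqref{ED}, using the conjugacy relations of Lemmas~\ref{ei_conj} and~\ref{eik_conj} together with rule (1) of Definition~\ref{trdD}. The first step is to observe that rule (1), namely $\Jtrs(ab) = \Jtrs(ba)$, forces $\Jtrs$ to be invariant under conjugation in the following sense: if $y = x w x^{-1}$ for some element $w$ and some invertible $x \in \YH(q)$ (and each $g_j$ is invertible by \eqref{invrs}), then $\Jtrs(y) = \Jtrs\bigl((xw)x^{-1}\bigr) = \Jtrs\bigl(x^{-1}(xw)\bigr) = \Jtrs(w)$. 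Hence conjugate elements, in particular conjugate idempotents, always receive the same value under $\Jtrs$.

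Given $1 \le i < k \le n$, I would then split into two cases. If $k = i+1$, then $e_{i,k} = e_{i,i+1} = e_i$ by definition and there is nothing to do. If $k > i+1$, I would apply Lemma~\ref{eik_conj} with $j := i+1$, so that $i < j < k$ and equation~\eqref{eik_conj_ei} exhibits $e_{i,k}$ as the conjugate of $e_{i,j} = e_{i,i+1} = e_i$ by the invertible element $g_{k-1}\cdots g_j$. Combining this with the conjugation-invariance of $\Jtrs$ noted above gives $\Jtrs(e_{i,k}) = \Jtrs(e_i)$, and then \eqref{ED} yields $\Jtrs(e_i) = E_D = 1/|D|$, as claimed. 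Alternatively, one can simply invoke the remark immediately preceding the corollary, where it is already recorded that every $e_{i,k}$ is conjugate to every $e_j$; the corollary is then the conjunction of that remark with rule (1) and \eqref{ED}.

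There is no genuine difficulty in this argument; it is a formal consequence of material already in place. The only point that deserves a moment's attention is the boundary case $k = i+1$, where Lemma~\ref{eik_conj} does not literally apply because there is no integer strictly between $i$ and $i+1$ --- but there the conclusion is immediate, since $e_{i,i+1}$ is by definition equal to $e_i$.
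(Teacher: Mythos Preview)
Your proof is correct and follows essentially the same approach as the paper: the paper states that the corollary follows from the remark immediately preceding it (that every $e_{i,k}$ is conjugate to every $e_j$) together with rule (1) of Definition~\ref{trdD}, and your argument spells out precisely this, with the added care of treating the boundary case $k=i+1$ explicitly.
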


In the next lemma a relation in $\YH(q)$ involving the elements $e_{i,k}$ is stated. This relation helps us to prove some results about $\Jtrs$.

\begin{lemma}\label{many_ei}
Let $p \in \N$ and let $1 \leq i_1 < i_2 < \ldots < i_p < k \leq n$. The following relation holds in $\YH(q)$:
\begin{equation}\label{many_ei_eq}
e_{i_1,k} e_{i_2,k} \ldots e_{i_p,k} = e_{i_1,i_2} e_{i_2,i_3} \ldots e_{i_{p-1},i_p} e_{i_p,k}.
\end{equation}
\end{lemma}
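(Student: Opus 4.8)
The plan is to prove the relation \eqref{many_ei_eq} by induction on $p$, using the commutativity relation \eqref{id_1} and the multiplicative behaviour of the idempotents $e_{i,k}$ in the group algebra $\C{\mathcal F}_{d,n}$. The base case $p=1$ is trivial, since both sides equal $e_{i_1,k}$ (the right-hand side has an empty initial product). For the inductive step, the key computational fact is the following identity: for $i<j<k$,
\begin{equation}\label{key_triple}
e_{i,k}\,e_{j,k} = e_{i,j}\,e_{j,k}.
\end{equation}
Indeed, expanding in terms of the $t_l$'s and using that $t_j^d = 1$, one checks directly that
$$
e_{i,k}\,e_{j,k} = \frac{1}{d^2}\sum_{s,r=0}^{d-1} t_i^s t_j^r t_k^{2d-s-r} = \frac{1}{d^2}\sum_{s,r=0}^{d-1} t_i^s t_j^{-s}\, t_j^{s+r} t_k^{-(s+r)} = e_{i,j}\,e_{j,k},
$$
after re-indexing the double sum (replace $r$ by $r-s$ modulo $d$ in the second factor). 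This is the main lemma, and really the only nontrivial point.

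Granting \eqref{key_triple}, the induction proceeds as follows. Write $A := e_{i_1,k} e_{i_2,k} \cdots e_{i_{p-1},k}$, so that the left-hand side of \eqref{many_ei_eq} is $A\, e_{i_p,k}$. By the induction hypothesis applied to the indices $i_1 < \cdots < i_{p-1} < k$, we have $A = e_{i_1,i_2} e_{i_2,i_3} \cdots e_{i_{p-2},i_{p-1}} e_{i_{p-1},k}$. Now I would isolate the last factor $e_{i_{p-1},k}$ of this expression and combine it with the new factor $e_{i_p,k}$; since $i_{p-1} < i_p < k$, identity \eqref{key_triple} gives $e_{i_{p-1},k}\,e_{i_p,k} = e_{i_{p-1},i_p}\,e_{i_p,k}$. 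Because all the $e_{i_a,i_{a+1}}$ commute among themselves and with $e_{i_{p-1},i_p}$ by \eqref{id_1}, no reordering issues arise, and we obtain
$$
A\, e_{i_p,k} = e_{i_1,i_2} \cdots e_{i_{p-2},i_{p-1}} \big(e_{i_{p-1},k}\, e_{i_p,k}\big) = e_{i_1,i_2} \cdots e_{i_{p-2},i_{p-1}} e_{i_{p-1},i_p} e_{i_p,k},
$$
which is exactly the right-hand side of \eqref{many_ei_eq}. This completes the induction.

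The main obstacle, such as it is, is purely bookkeeping in the verification of \eqref{key_triple}: one must be careful that the exponents are taken modulo $d$ (so that the substitution $t_k^{2d-s-r}$ is legitimate and the reindexing is a genuine bijection of $\Z/d\Z$), and that the $t_l$ with distinct indices commute, which is relation $\mathrm{(f_1)}$. Everything else is formal. An alternative, perhaps cleaner, route to \eqref{key_triple} is to use the conjugation relations of Lemma~\ref{eik_conj} together with the fact that $e_{i,k}$ and $e_{j,k}$ are both idempotents lying in the commutative subalgebra $\C[(\Z/d\Z)^n]$; but the direct sum manipulation above is the most transparent, so that is what I would write out.
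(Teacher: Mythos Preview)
Your proof is correct and follows essentially the same approach as the paper: both arguments reduce to the triple identity $e_{i,k}e_{j,k}=e_{i,j}e_{j,k}$ for $i<j<k$, after which the general statement follows by iteration. The only cosmetic difference is that the paper packages your reindexing computation as the single-variable identity $t_k\,e_{i,k}=t_i\,e_{i,k}$ (from which $e_{i,k}e_{j,k}=\frac{1}{d}\sum_s t_i^s t_k^{d-s}e_{j,k}=\frac{1}{d}\sum_s t_i^s t_j^{d-s}e_{j,k}=e_{i,j}e_{j,k}$ is immediate), whereas you carry out the equivalent double-sum substitution directly.
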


\begin{proof} One can easily check that \cite[Lemma 1]{chla}:
\begin{equation}\label{good to know}
t_k e_{i,k} = t_i e_{i,k}  \quad \quad \mbox{for all \quad $ 1  \leq  i < k \leq n$.} 
\end{equation}
Equation \eqref{many_ei_eq} is a direct consequence of \eqref{good to know}.
\end{proof}

\begin{cor}
For $1 \leq i \leq k  \leq n$, we have
$$
e_{i,k+1} e_{i+1,k+1} \ldots e_{k-1,k+1}e_k  = e_i e_{i+1}\ldots e_k.
$$
\end{cor}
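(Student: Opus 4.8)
The plan is to obtain this as an immediate specialization of Lemma~\ref{many_ei}. The first observation is that $e_k = e_{k,k+1}$, so the left-hand side of the asserted identity is precisely the product $e_{i,k+1}\,e_{i+1,k+1}\cdots e_{k-1,k+1}\,e_{k,k+1} = \prod_{j=i}^{k} e_{j,k+1}$, i.e.\ a product of $e_{j,k+1}$ with first index running over the consecutive block $i,i+1,\dots,k$.

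First I would apply Lemma~\ref{many_ei} with $p := k-i+1$, with the indices $i_1 = i < i_2 = i+1 < \cdots < i_p = k$, and with the element labelled ``$k$'' in that lemma replaced by $k+1$ (if one wants to stay literally inside $\YH(q)$ this uses $k+1 \leq n$; alternatively one passes to ${\rm Y}_{d,n+1}(q)$ via the natural inclusion, which leaves the elements $e_{i,k}$ and their products unchanged). This yields
$$
e_{i,k+1}\,e_{i+1,k+1}\cdots e_{k-1,k+1}\,e_{k,k+1}
= e_{i,i+1}\,e_{i+1,i+2}\cdots e_{k-1,k}\,e_{k,k+1}.
$$
Then I would invoke the identity $e_{j,j+1} = e_j$, recorded right after the definition of the $e_{i,k}$, to rewrite the right-hand side as $e_i\,e_{i+1}\cdots e_{k-1}\,e_k$, which is exactly the claimed expression.

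I do not expect any genuine obstacle: the statement is a one-line consequence of Lemma~\ref{many_ei}. The only points requiring a little care are the index bookkeeping — verifying that, under the substitution $i_j = i+j-1$, the telescoping pattern $e_{i_1,i_2}\,e_{i_2,i_3}\cdots e_{i_{p-1},i_p}\,e_{i_p,k+1}$ produced by Lemma~\ref{many_ei} becomes the consecutive product $e_i\,e_{i+1}\cdots e_k$ — and the boundary case $p=1$ (that is, $i=k$), where the product on the left collapses to the single factor $e_k$ and the identity reads $e_k = e_k$. Both are routine.
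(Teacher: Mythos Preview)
Your proposal is correct and takes essentially the same approach as the paper: the corollary is stated without proof immediately after Lemma~\ref{many_ei}, and your argument---specializing that lemma to consecutive indices $i_j = i+j-1$ with the role of ``$k$'' played by $k+1$, then rewriting $e_{j,j+1}$ as $e_j$---is exactly the intended one-line deduction. Your remarks on the boundary case $i=k$ and on passing to ${\rm Y}_{d,n+1}(q)$ when $k=n$ are appropriate and do not affect the argument.
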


Let $a \in \YH(q)$. By \eqref{later for classical}, we have 
$$\Jtrs(a e_n)  =  E_D\, \Jtrs(a) \quad \text{and} \quad
\Jtrs(a e_ng_n)  =  z\, \Jtrs(a).$$
 This result can be extended to the elements $e_{i,n+1}$, with $1 \leq i \leq n$, as follows:

\begin{lemma} \label{trace_one_ei}
Let $a \in \YH(q)$ and let $X_D$ be a solution of the {\rm E}--system. Then we have
$$\Jtrs(a \, e_{i,n+1})  =  E_D\, \Jtrs(a) \quad \text{and} \quad
\Jtrs(a \, e_{i,n+1}g_n)  =  z\, \Jtrs(a \, e_{i,n})$$
for all $1\leq i \leq n$ (recall that $e_{n,n}=1$).
\end{lemma}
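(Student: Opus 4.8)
The plan is to prove both identities by induction on $n$, exploiting the conjugacy relations for the elements $e_{i,k}$ together with the trace property (rule (1)) and the defining rules for $\Jtrs$ — that is, rule (3) and the identities in \eqref{later for classical}, which handle the cases $i=n$ and $i=n+1$ (where $e_{n,n+1}=e_n$ and $e_{n+1,n+1}=1$). So it suffices to treat $1 \leq i \leq n-1$, and for this I would first reduce $e_{i,n+1}$ to $e_n$ via conjugation: by Lemma~\ref{eik_conj} (specifically \eqref{eik_conj_ek} with $j=n$, $k=n+1$), we have $e_{i,n+1} = (g_i \cdots g_{n-1})\, e_n\, (g_i \cdots g_{n-1})^{-1}$.

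For the first identity, write $w := g_i \cdots g_{n-1}$, so that $a\, e_{i,n+1} = a\, w\, e_n\, w^{-1}$. Using rule (1), $\Jtrs(a\, w\, e_n\, w^{-1}) = \Jtrs(w^{-1} a\, w\, e_n)$. Now $w^{-1} a\, w \in \YH(q)$ but lives, roughly speaking, in the subalgebra on strands up to $n$ — more precisely, since $w$ involves only $g_i,\ldots,g_{n-1}$ and $a \in {\rm Y}_{d,n}(q)$, the element $w^{-1} a w$ is still in $\YH(q)$ (it does not involve $g_n$ or $t_{n+1}$), so \eqref{later for classical} applies directly: $\Jtrs(w^{-1} a w\, e_n) = E_D\, \Jtrs(w^{-1} a w) = E_D\, \Jtrs(a)$, the last step again by rule (1). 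This gives the first claim. The subtle point to get right is the bookkeeping that $w^{-1} a w$ genuinely lies in the copy of $\YH(q)$ inside ${\rm Y}_{d,n+1}(q)$, so that the Markov-type rule \eqref{later for classical} is legitimately applicable; I would verify this carefully, perhaps noting that $a \in \YH(q)$ and $w \in \YH(q)$ by construction.

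For the second identity, I proceed similarly: $\Jtrs(a\, e_{i,n+1}\, g_n) = \Jtrs(a\, w\, e_n\, w^{-1} g_n)$. The difficulty here is that $g_n$ does not commute past $w^{-1}$ cleanly — $w^{-1}$ contains $g_{n-1}$, which does not commute with $g_n$. The right move is to use the relation \eqref{id_2} (equivalently $e_{i,k} g_j = g_j e_{s_j(i), s_j(k)}$) to slide the idempotent and keep track of indices, or alternatively to first move $g_n$ to a more convenient position. Concretely, I would write $w^{-1} = g_{n-1}^{-1} w'^{-1}$ where $w' = g_i \cdots g_{n-2}$ commutes with $g_n$, so $e_n w^{-1} g_n = e_n g_{n-1}^{-1} w'^{-1} g_n = e_n g_{n-1}^{-1} g_n w'^{-1}$; then use that $e_n g_{n-1}^{-1} g_n$ can be handled via braid relations and the fact that $e_n$ is central-ish with respect to $g_n$ ($e_n g_n = g_n e_n$). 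After cyclic permutation and an application of rule (3) to extract a factor of $z$, the remaining trace should collapse, by the first identity applied one level down, to $z\, \Jtrs(a\, e_{i,n})$.

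I expect the main obstacle to be precisely this index manipulation in the second identity: correctly tracking how $g_n$ interacts with the conjugating word and the idempotent $e_{i,n+1}$, and recognizing when a subexpression of the form $e_{j,n}$ (for some $j$) emerges, so that the inductive hypothesis or \eqref{later for classical} closes the argument. A clean alternative, which I would pursue if the direct slide gets messy, is to induct on $n-i$: the base case $i=n$ is \eqref{later for classical}, and the inductive step reduces $e_{i,n+1}$ to $e_{i+1,n+1}$ using \eqref{eik_conj_ei} (conjugation by $g_i$) together with rule (1) and the braid relations relating $g_i$ and $g_n$ — since $i < n$, these commute unless $i = n-1$, and the $i=n-1$ case can be treated separately using $e_{n-1,n+1} = (g_n)\, e_{n-1}\, (g_n)^{-1}$ from \eqref{eik_conj_ei}. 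Either route is routine once the correct conjugation is identified; the art is in choosing it.
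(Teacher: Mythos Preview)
Your argument for the first identity is correct and is exactly the paper's proof: conjugate $e_{i,n+1}$ to $e_n$ via $w = g_i\cdots g_{n-1}$, cycle, apply \eqref{later for classical}, cycle back.

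For the second identity, however, you are working much harder than necessary. You mention \eqref{id_2} as an option but then abandon it for conjugation-and-braid manipulations or an induction on $n-i$. In fact a \emph{single} application of \eqref{id_2} with $j=n$ and $(i,k)=(i,n+1)$ gives the answer in one line: since $i<n$, the transposition $s_n$ fixes $i$ and sends $n+1$ to $n$, so
\[
e_{i,n+1}\,g_n \;=\; g_n\,e_{s_n(i),\,s_n(n+1)} \;=\; g_n\,e_{i,n}.
\]
Hence $\Jtrs(a\,e_{i,n+1}g_n)=\Jtrs(a\,g_n\,e_{i,n})$; now cycle $e_{i,n}$ to the front (rule (1)), observe $e_{i,n}a\in\YH(q)$, and apply rule (3) to strip off $g_n$, giving $z\,\Jtrs(e_{i,n}a)=z\,\Jtrs(a\,e_{i,n})$. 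This is precisely the paper's proof. Your conjugation route via $w e_n w^{-1}$ is the wrong decomposition here: it introduces $g_{n-1}$, which genuinely obstructs commuting with $g_n$, and the ensuing braid bookkeeping---while probably salvageable---is entirely avoidable. The lesson is that for the $g_n$-identity you want to move $g_n$ \emph{through} the idempotent (changing its indices), not move the idempotent away first.
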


\begin{proof}
By Lemma~\ref{eik_conj} and rule (1) of the definition of the $\Jtrs$, we obtain
\begin{center}
$\Jtrs (a \, e_{i,n+1}) \stackrel{\eqref{eik_conj_ek}}{=} \Jtrs (a (g_i \ldots g_{n-1}) e_n (g_i \ldots g_{n-1})^{-1}) = \Jtrs ((g_i \ldots g_{n-1})^{-1} a (g_i \ldots g_{n-1}) e_n).$
\end{center}
Since $(g_i \ldots g_{n-1})^{-1} a (g_i \ldots g_{n-1}) \in \YH(q)$, by \eqref{later for classical}, we have
\begin{eqnarray*}
\Jtrs (a \, e_{i,n+1}) &=& \Jtrs ((g_i \ldots g_{n-1})^{-1} a (g_i \ldots g_{n-1}) e_n)
\\
&=&E_D \, \Jtrs ((g_i \ldots g_{n-1})^{-1} a (g_i \ldots g_{n-1}))
\\
&=&E_D \, \Jtrs ((g_i \ldots g_{n-1}) (g_i \ldots g_{n-1})^{-1} a )
\\
&=& E_D \, \Jtrs (a).
\end{eqnarray*}

Now, for $i=n$, we have $\Jtrs(a e_ng_n)  =z\, \Jtrs(a)=  z\, \Jtrs(ae_{n,n})$. For $i<n$, by 
\eqref{id_2} and  rules (1) and (3) of the definition of the $\Jtrs$, we obtain
\begin{center}
$\Jtrs(a \, e_{i,n+1}g_n)  = \Jtrs(a \, g_n e_{i,n}) = z\, \Jtrs(a \,e_{i,n})$.
\end{center}

\end{proof}

A similar result for words of the form $a e_{i_1,n+1} \ldots e_{i_p,n+1}$, with $1\leq i_1 < \ldots <i_p \leq n$, can be proved. Namely, we have:

\begin{lemma} \label{trace_many_ei}
Let $a \in \YH(q)$, and and let $X_D$ be a solution of the {\rm E}--system. 
Let $p \in \N$   and let $1\leq i_1 < \ldots <i_p \leq n$.
Then we have
\begin{equation} \label{trace_many_ei_eq1}
\Jtrs (a \, e_{i_1,n+1} \ldots e_{i_p,n+1}) = E_D \, \Jtrs (a \, e_{i_1,i_2} \ldots e_{i_{p-1},i_p}) 
\end{equation}
and
\begin{equation} \label{trace_many_ei_eq2}
\Jtrs (a \, e_{i_1,n+1} \ldots e_{i_p,n+1} g_n) = z \, \Jtrs (a \, e_{i_1,i_2} \ldots e_{i_{p-1},i_p} e_{i_p,n}) .
\end{equation}
\end{lemma}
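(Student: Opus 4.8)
The plan is to reduce both identities to the single-idempotent case already handled in Lemma~\ref{trace_one_ei}, using Lemma~\ref{many_ei} to collapse the chain $e_{i_1,n+1}, \ldots, e_{i_p,n+1}$ into a single idempotent $e_{i_p,n+1}$ multiplied by a factor lying in $\YH(q)$ (on $n$ strands).

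First I would invoke Lemma~\ref{many_ei} inside ${\rm Y}_{d,n+1}(q)$ with top index $k = n+1$, which is legitimate since $i_p < n+1$, to get
$$e_{i_1,n+1} e_{i_2,n+1} \ldots e_{i_p,n+1} = e_{i_1,i_2} e_{i_2,i_3} \ldots e_{i_{p-1},i_p}\, e_{i_p,n+1}.$$
Put $b := a\, e_{i_1,i_2} e_{i_2,i_3} \ldots e_{i_{p-1},i_p}$. Every framing index occurring in $b$ is at most $i_p \leq n$, so $b \in \YH(q)$, and $a\, e_{i_1,n+1} \ldots e_{i_p,n+1} = b\, e_{i_p,n+1}$. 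Now the first identity of Lemma~\ref{trace_one_ei}, applied with $b$ in the role of $a$ and $i = i_p$, yields
$$\Jtrs(a\, e_{i_1,n+1} \ldots e_{i_p,n+1}) = \Jtrs(b\, e_{i_p,n+1}) = E_D\, \Jtrs(b) = E_D\, \Jtrs(a\, e_{i_1,i_2} \ldots e_{i_{p-1},i_p}),$$
which is \eqref{trace_many_ei_eq1}; and the second identity of Lemma~\ref{trace_one_ei} gives
$$\Jtrs(a\, e_{i_1,n+1} \ldots e_{i_p,n+1}\, g_n) = \Jtrs(b\, e_{i_p,n+1}\, g_n) = z\, \Jtrs(b\, e_{i_p,n}) = z\, \Jtrs(a\, e_{i_1,i_2} \ldots e_{i_{p-1},i_p}\, e_{i_p,n}),$$
which is \eqref{trace_many_ei_eq2}. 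For $p = 1$ the products $e_{i_1,i_2} \ldots e_{i_{p-1},i_p}$ are empty and one recovers Lemma~\ref{trace_one_ei} verbatim, so the formulas are consistent at the base case.

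There is essentially no real obstacle here: the only points needing care are that Lemma~\ref{many_ei} is being used with $k = n+1$ (fine because $i_p \leq n$) and that $b$ lands in $\YH(q)$ rather than merely in ${\rm Y}_{d,n+1}(q)$, which is exactly what makes Lemma~\ref{trace_one_ei} applicable with $b$ playing the role of $a$; both are immediate from $i_p \leq n$. An alternative route would be a direct induction on $p$, peeling off $e_{i_p,n+1}$ via Lemma~\ref{trace_one_ei} and re-expressing the residual $e_{i_p,n}$ using the conjugation relations of Lemma~\ref{eik_conj}, but going through Lemma~\ref{many_ei} is shorter and avoids that bookkeeping.
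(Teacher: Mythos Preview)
Your proof is correct and follows essentially the same approach as the paper: both invoke Lemma~\ref{many_ei} to rewrite the product of idempotents as $e_{i_1,i_2}\cdots e_{i_{p-1},i_p}\,e_{i_p,n+1}$ and then apply Lemma~\ref{trace_one_ei} to the element $a\,e_{i_1,i_2}\cdots e_{i_{p-1},i_p}\in\YH(q)$. Your extra care in verifying that this element indeed lies in $\YH(q)$ (rather than only in ${\rm Y}_{d,n+1}(q)$) is a welcome clarification that the paper leaves implicit.
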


\begin{proof}
First note that, by Lemma \ref{many_ei}, we have
$$ e_{i_1,n+1} \ldots e_{i_p,n+1} = e_{i_1,i_2} \ldots e_{i_{p-1},i_p}e_{i_{p},n+1}.$$
Then, by Lemma \ref{trace_one_ei}, we obtain
$$\Jtrs (a \, e_{i_1,n+1} \ldots e_{i_p,n+1}) =  \Jtrs (a \, e_{i_1,i_2} \ldots e_{i_{p-1},i_p}e_{i_{p},n+1})=E_D \, \Jtrs (a \, e_{i_1,i_2} \ldots e_{i_{p-1},i_p})$$ 
and
$$\Jtrs (a e_{i_1,n+1} \ldots e_{i_p,n+1}g_n) =  \Jtrs (a \, e_{i_1,i_2} \ldots e_{i_{p-1},i_p}e_{i_{p},n+1}g_n)=z \, \Jtrs (a \, e_{i_1,i_2} \ldots e_{i_{p-1},i_p}e_{i_p,n}).$$ 
\end{proof}

\begin{rem}\label{all good}
{\rm Note that, in the proofs of Lemmas \ref{trace_one_ei} and \ref{trace_many_ei}, we never used rule $(4')$ of Definition~\ref{trdD}. We just used the equalities given by (\ref{later for classical}). Since these equalities correspond to rules (iv) and (v) of the definition of $\Jtrm$, we deduce that the properties of $\Jtrs$ described in these lemmas are also valid for $\Jtrm$.}
\end{rem}
	
\subsection{Proof of Theorem~\ref{specialtrace}} 
We shall now prove the main result of this section, Theorem~\ref{specialtrace}. Let $m \in \{1,\ldots,d\}$ and set $E_m:=1/m$. Let $D$ be a subset of $\Z/d\Z$ such that $|D|=m$.
We will show that the linear map $\Jtrm$ is uniquely defined on $\YH(q)^{\rm (br)} = \delta(\C B_n)$ by rules (i)--(v) and that we have
$\Jtrm=\Jtrs$. We will proceed by induction on $n$.

For $n=1$, the statement is obvious. Now assume that the statement holds on $\YH(q)^{\rm (br)}$ and let $a \in  {\rm Y}_{d,n+1}(q)^{\rm (br)}$. We will show that
$\Jtrm(a)$ is uniquely defined by  rules (i)--(v) and that $\Jtrm(a)=\Jtrs(a)$. Since $\Jtrm$ is a linear map, it is enough to prove the statement when 
$a$ is a product of the braiding generators $g_1,\ldots,g_{n}$.

In order to calculate $\Jtrm(a)$, we apply an algorithm similar to the one described in \cite{chjakala} for the calculation of $\Jtrs$ on any element of the Yokonuma--Hecke algebra.
First, all negative exponents are eliminated through the inverse relation \eqref{invrs}. This produces a $\C$-linear combination of new elements 
in ${\rm Y}_{d,n+1}(q)^{\rm (br)}$ which are products of the generators $g_1,\ldots,g_{n}$ with only positive exponents and the idempotents $e_1,\ldots,e_{n}$.
Then all exponents greater than one in these elements are eliminated with the use of the quadratic relation \eqref{quadr}. The result is a $\C$-linear combination of elements which are products of the generators $g_1,\ldots,g_{n-1}$ with all exponents equal to $1$ and the idempotents $e_1,\ldots,e_{n-1}$.
Thanks to \eqref{id_1} and \eqref{id_2}, each such element can be brought to its \emph{adapted split form}
\begin{equation}\label{adapted}
e_{i_1,k_1}e_{i_2,k_2} \ldots e_{i_p,k_p} g_{j_1}g_{j_2} \ldots g_{j_r} \ ,
\end{equation}
with $i_1,i_2,\ldots,i_p   \in \{1, \ldots,n\}$, $2 \leq k_1 \leq k_2 \leq \cdots \leq k_p \leq n+1$,   $i_l < i_{l+1}$ whenever $k_l =k_{l+1}$ and $j_1,j_2,\ldots,j_r \in \{1,\ldots,n\}$.
If more squares of the braiding generators appear during this process, we apply again the quadratic relation. We repeat the above procedure until $a$ is expressed as a   $\C$-linear combination of elements which are in their  adapted split form and do not contain any braiding generator with exponent other than $1$.

Next, by applying the braid relations $(\mathrm{b}_1)$ and $(\mathrm{b}_2)$, and breaking any squares that appear in the process with the use of the quadratic relation, 
$a$ is eventually expressed as a $\C$-linear combination of elements in ${\rm Y}_{d,n+1}(q)^{\rm (br)}$ of the form $\eqref{adapted}$, where
$j_l \neq j_{l+1}$ and $g_{{\rm max}\{j_1,\dots,j_r\}}$ appears only once.

We conclude that $a$ can be expressed as  a $\C$-linear combination of elements in ${\rm Y}_{d,n+1}(q)^{\rm (br)}$ with each such element $y$ being in its adapted form
\begin{equation}\label{adapted plus}
y=e_{i_1,k_1}e_{i_2,k_2} \ldots e_{i_p,k_p} g_{j_1}g_{j_2} \ldots g_{j_r} \ ,
\end{equation}
where 
$i_1,i_2,\ldots,i_p   \in \{1, \ldots,n\}$, $2 \leq k_1 \leq k_2 \leq \cdots \leq k_p \leq n+1$,  $i_l < i_{l+1}$ whenever $k_l =k_{l+1}$,  
$j_1,j_2,\ldots,j_r \in \{1,\ldots,n\}$,
$j_l \neq j_{l+1}$ and $g_{{\rm max}\{j_1,\dots,j_r\}}$ appears only once. 
Therefore, in order to compute $\Jtrm(a)$, it is enough to know the value of the trace $\Jtrm$ on all elements $y$
as in \eqref{adapted plus}.

Let $y$ be as above.
Let $s$ be the unique element in $\{1,\ldots,r\}$ with the property $j_s={\rm max}\{j_1,\dots,j_r\}$
and let $\ell$ be the unique element in $\{1,\ldots,p\}$ with the property $k_\ell = k_p$ and $k_{\ell-1}<k_p$.
If $k_p < n+1$ and $j_s < n$, then we are covered by the induction hypothesis.
If not, then we distinguish three cases:
\begin{itemize}
\item $k_p <n+1$ and $j_s=n$. Set $y':=g_{j_{s+1}} \ldots g_{j_r}e_{i_1,k_1}e_{i_2,k_2} \ldots e_{i_p,k_p} g_{j_1}g_{j_2}\ldots g_{j_{s-1}} \in \YH(q)^{\rm (br)}$.
Combining rules (i) and (iii) yields:  
\begin{center}
$\Jtrm(y)= \Jtrm (y'g_n)= z \, \Jtrm (y')$
\end{center}
The induction hypothesis  implies that $\Jtrm (y')$ is uniquely defined by rules (i)--(v). Further, we have
$\Jtrm(y')=\Jtrs(y')$, and so, 
\begin{center}
$\Jtrm(y)=z \, \Jtrm(y') = z\,\Jtrs(y')=\Jtrs(y)$.
\end{center}
 \smallbreak

\item $k_p = n+1$ and $j_s<n$.  
Set $y':=g_{j_1}g_{j_2} \ldots g_{j_r}e_{i_1,k_1}e_{i_2,k_2} \ldots e_{i_{\ell-1},k_{\ell-1}} \in \YH(q)^{\rm (br)}$.
Following Remark \ref{all good}, we have, by rule (i) and Lemma \ref{trace_many_ei}, that
\begin{center}
$\Jtrm(y)=\Jtrm(y'e_{i_\ell,n+1} \ldots e_{i_p,n+1}) = E_m\, \Jtrm(y'e_{i_\ell,i_{\ell+1}} \ldots e_{i_{p-1},i_p})$,
\end{center}
since we have $i_\ell < i_{\ell+1} <\cdots < i_p$. Moreover, $i_p < k_p = n+1$, and so $y'':=y'e_{i_\ell,i_{\ell+1}} \ldots e_{i_{p-1},i_p}$ belongs to $\YH(q)^{\rm (br)}$.
By the induction hypothesis, $\Jtrm (y'')$ is uniquely defined by rules (i)--(v). Further, we have 
$\Jtrm(y'')=\Jtrs(y'')$, and so,  by  Lemma \ref{trace_many_ei}, 
\begin{center}
$\Jtrm(y)=E_m \, \Jtrm(y'') = E_D\,\Jtrs(y'')=\Jtrs(y)$.
\end{center}
\smallbreak

\item $k_p = n+1$ and $j_s=n$.  Following \eqref{id_2}, there exist $i_\ell',i_{\ell+1}',\ldots,i_p' \in \{1,\ldots,n\}$ such that 
$$e_{i_\ell,n+1} \ldots e_{i_{p},n+1} g_{j_1}g_{j_2}\ldots g_{j_{s-1}} = g_{j_1}g_{j_2}\ldots g_{j_{s-1}}e_{i_\ell',n+1} \ldots e_{i_{p}',n+1}.$$
Due to the commutativity of the idempotents 
$e_{i,k}$, we can rearrange the product $e_{i_\ell',n+1} \ldots e_{i_{p}',n+1}$ so that $i_\ell' < i_{\ell+1}' < \cdots < i_p'$.
Set $y':=g_{j_{s+1}} \ldots g_{j_r}e_{i_1,k_1}e_{i_2,k_2} \ldots e_{i_{\ell-1},k_{\ell-1}} g_{j_1}g_{j_2}\ldots g_{j_{s-1}} \in \YH(q)^{\rm (br)}$.
Following Remark \ref{all good}, we have, by rule (i) and Lemma \ref{trace_many_ei}, that
\begin{center}
$\Jtrm(y)=\Jtrm(y'e_{i_\ell',n+1} \ldots e_{i_{p}',n+1} g_n) =  z\, \Jtrm (y' e_{i_\ell',i_{\ell+1}'} \ldots e_{i_{p-1}',i_p'}). $
\end{center}
The element $y'':=y' e_{i_\ell',i_{\ell+1}'} \ldots e_{i_{p-1}',i_p'}$ belongs to $\YH(q)^{\rm (br)}$.
By the induction hypothesis, $\Jtrm (y'')$ is uniquely defined by rules (i)--(v). Moreover, we have
$\Jtrm(y'')=\Jtrs(y'')$, and so,  by  Lemma \ref{trace_many_ei}, 
\begin{center}
$\Jtrm(y)=z \, \Jtrm(y'') = z\,\Jtrs(y'')=\Jtrs(y)$.
\end{center}
\end{itemize}

We conclude that rules (i)--(v) suffice for the computation of $\Jtrm(a)$ and that $\Jtrm(a)=\Jtrs(a)$. Hence, we have proved that the specialized trace $\Jtrs$ can be 
defined inductively on $\YH(q)^{\rm (br)}$, and thus on $B_n$, by the following $5$ rules:
$$
\begin{array}{crcll}
{\rm (i)} & \Jtrs(a b) & = & \Jtrs(b a) & \qquad a, b\in\YH(q)^{\rm (br)} \\
{\rm (ii)} & \Jtrs(1) &  = & 1 & \qquad 1 \in\YH(q)^{\rm (br)} \\
{\rm (iii)} & \Jtrs(a g_n) & = & z\, \Jtrs( a) & \qquad a\in\YH(q)^{\rm (br)} \quad (\text{Markov  property} ) \\
{\rm (iv)} & \Jtrs(a e_n) & = & E_D\, \Jtrs( a) & \qquad a\in\YH(q)^{\rm (br)}\\
{\rm (v)} & \Jtrs(a e_ng_n) & = & z\, \Jtrs( a) & \qquad a\in\YH(q)^{\rm (br)} .
\end{array}
$$
\qed

Recall that, following Proposition~\ref{noinvts}, the invariants $\Theta_{d,D}$ are parametrized by the natural numbers. So from now on $D$ will always be $\Z/d\Z$, implying $E_D = 1/d$, and the invariants  will be simply denoted by $\Theta_d$. However, we will keep on using our initial notation for $E_D$, $\lambda_D$ and for the traces $\Jtr$ and $\Jtrs$ in order not to confuse the reader. Note that $\Theta_1$ is the Homflypt polynomial $P$.

\begin{rem} \label{new_program}\rm
Theorem~\ref{specialtrace} enabled the development of a program for computing the invariants $\Theta_d$ using the algorithm described in the proof, which is a modification of the algorithm used for the computation of the invariant $\Gamma_{d,D}$ (see \cite{chjakala}). This leads to much lower computational complexity, since we obtain only two new terms when applying the quadratic relation \eqref{quadr}.   Such a program has been developed by K. Karvounis \cite{ka} and it is available at \url{http://www.math.ntua.gr/~sofia/yokonuma}.
\end{rem}

\begin{rem}\label{E as a param}\rm
In Theorem~\ref{specialtrace}, we could have taken $E_m$ to be simply an indeterminate. However, since we do not have a basis for the algebra $\YH(q)^{\rm (br)}$, we have not been able to show that $\Jtrm$ is well-defined in this case.
One way to overcome this difficulty would be to use the result in \cite{EsRy} that, for $d \geq n$,
$\YH(q)^{\rm (br)}$ is isomorphic to the algebra of braids and ties $\mathcal{E}_n(q)$, 
for which we have a basis.
Then $\Jtrm$ would coincide with the Markov trace on $\mathcal{E}_n(q)$ defined in Section~\ref{threeVar} similarly to the one constructed in \cite{AJ} with the use of the old quadratic relations. Nevertheless, we will see later that the invariants $\Theta_d$
can be also defined via a skein relation, where $E_D$ can be taken to be an indeterminate without any problem (see Section~\ref{threeVar}). Note, though, that we do not expect that this generalization has much of an impact on the strength of the family of the invariants $\{\Theta_d\}_{d \in \N}$.
\end{rem}

\begin{rem} \label{E as a param u} \rm
Theorem~\ref{specialtrace}, and all its consequences, hold also for the specialized trace $\Jtrsu$. In particular, the values of the classical link invariants $\Delta_{d,D}$ depend only on the cardinality $\vert D \vert$ of $D$, so they can be parametrized by the natural numbers, setting $\Delta_d := \Delta_{d,\,\Z/d\Z}$ for all $d \in \Z_{>0}$. Moreover, in analogy to Remark~\ref{E as a param}, $E_m$ could be taken to be an indeterminate, and $\Jtrmu$  would be well-defined due to the isomorphism of $\YH(u)^{\rm (br)}$ with the algebra of braids and ties $\mathcal{E}_n(u)$ for $d \geq n$; then $\Jtrmu$ would coincide with the Markov trace on $\mathcal{E}_n(u)$ defined in \cite{AJ}.
\end{rem}

\section{Comparison of the invariants $\Theta_d$ with the Homflypt polynomial on classical knots}\label{DeltaHomflypt}

In this section, we will show that the invariants $\Theta_d$, constructed in \S \ref{classic},  are all topologically equivalent to the Homflypt polynomial when restricted to classical \textit{knots}, that is, links with only one component.

\subsection{Behaviour on knots}
While trying to compare computationally the invariants $\Theta_d$ with the Homflypt polynomial, we noticed that the values of the invariants on knots
were connected in the following way:
\begin{equation}
\Theta_d (q,z)(K) = \Theta_1 (q,{z}/{E_D})(K) = P(q,{z}/{E_D})(K) \qquad \text{where $K$ is a knot.}
\end{equation}
In this section, we will prove the above equality for all knots (Theorem \ref{conjHomflypt}).
In order to do this, we will first compare the specialized trace $\Jtrs:=\Jtrs(q,z)$ to the Ocneanu trace $\tau:=\tau(q,z/E_D)$. Let $n \in \N$. We will show that
\begin{equation}\label{CONJ}
\Jtrs(\alpha)  = E_D^{n-1} \,\tau (\alpha)
\end{equation} 
for all $\a \in B_n$ such that $\widehat{\a}$ is a knot.
Abusing the language, from now on, we will say that $\alpha \in B_{n}$ is a \emph{knot} if  $\widehat{\alpha}$ is a knot. 
Here are some useful remarks about knots:

\begin{rem}\label{Mark}
{\rm Let $\alpha, \alpha' \in B_n$. 
By the Markov theorem we have that ${\alpha\alpha'}$ is a knot if and only if ${\alpha'\alpha}$ is a knot.
Also, ${\alpha}$ is a knot if and only if ${\alpha\sigma_n^{\pm 1}}$ is a knot.}
\end{rem}

\begin{rem}\label{square}{\rm
We have that ${\alpha}$ is a knot if and only if $\pi(\alpha)$ is an $n$-cycle, where $\pi$ is the natural surjection from $B_{n}$ to $\mathfrak{S}_{n}$. 
As a consequence, ${\alpha}$ is a knot if and only if ${\alpha\sigma_i^{2k}}$ is a knot for all $i=1,\ldots,n-1$ and for all $k \in \mathbb{Z}$.}
\end{rem}

Thanks to the above remarks, we have the following lemma:

\begin{lemma}\label{LEMM}
Let $n \in \N$ and let $\alpha \in B_n$ be a knot.
If \eqref{CONJ} holds for all knots in $B_n$, then, for all $ i=1,\ldots,n-1$, we have
\begin{enumerate}[(a)]
\item $\Jtrs(\alpha \s_ie_i)  = E_D^{n-1} \,\tau (\alpha \s_i) $ ; \smallbreak
\item $\Jtrs(\alpha e_i)  = E_D^{n-1} \tau (\alpha) $ ;\smallbreak
\item $\Jtrs(\alpha \s_i^{-1}e_i)  = E_D^{n-1} \,\tau (\alpha \s_i^{-1}) .$  \smallbreak
\end{enumerate}
\end{lemma}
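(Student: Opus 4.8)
The plan is to deduce all three statements from the hypothesis that \eqref{CONJ} holds for all knots in $B_n$, using the new rules for $\Jtrs$ from Theorem~\ref{specialtrace} together with the analogous (classical) rules for $\tau$. First I would prove part~(b), since parts~(a) and (c) will follow from it. The idea is to conjugate $\alpha$ so that the idempotent $e_i$ becomes $e_{n-1}$: by Lemma~\ref{ei_conj}, $e_i = w e_{n-1} w^{-1}$ for a suitable word $w$ in the braiding generators (so $w \in \YH(q)^{\rm (br)}$), and hence by rule~(i),
$$
\Jtrs(\alpha e_i) = \Jtrs(\alpha w e_{n-1} w^{-1}) = \Jtrs(w^{-1} \alpha w \, e_{n-1}).
$$
Now I would argue that $w^{-1}\alpha w$ is the image under $\delta$ of a braid $\beta \in B_n$ which is a knot (by Remark~\ref{Mark}, since $\beta$ is conjugate to $\alpha$), and that $\beta$ can be taken to lie in $B_{n-1}$: indeed, after conjugation the factor $e_{n-1}$ forces the analysis onto the first $n-1$ strands. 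More carefully, writing $\beta = \beta' \sigma_{n-1}^{k}$-type decompositions and pushing squares of $\sigma_{n-1}$ through the quadratic relation, one reduces $\Jtrs(\beta e_{n-1})$ via rules~(iv) and (v) of $\Jtrm=\Jtrs$; the key numerical input is rule~(iv): $\Jtrs(a e_{n-1}) = E_D\,\Jtrs(a)$. On the Hecke side, the parallel computation with $\tau$ and the Ocneanu rule $\tau(a G_{n-1}) = (z/E_D)\tau(a)$ gives the matching recursion, and one invokes \eqref{CONJ} for knots in $B_{n-1}$ (or re-applies the hypothesis in $B_n$ together with the inductive structure). Matching the powers of $E_D$: passing from $\YH(q)^{\rm (br)}$ on $n$ strands down to $n-1$ strands through an $e$-rule costs one factor of $E_D$, exactly accounting for the exponent $n-1$ versus $n-2$.

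Granting~(b), part~(a) is immediate from rule~(v): $\Jtrs(\alpha \sigma_i e_i) = \Jtrs(\alpha e_i g_i)$ (using $e_ig_i = g_ie_i$), and after conjugating $e_ig_i$ to $e_{n-1}g_{n-1}$ as above, rule~(v) gives $\Jtrs(\,\cdot\, e_{n-1}g_{n-1}) = z\,\Jtrs(\,\cdot\,)$. The matching statement $\tau(\alpha\sigma_i) = (z/E_D)\tau(\text{reduction})$ from the Markov property of $\tau$ then yields $\Jtrs(\alpha\sigma_i e_i) = E_D^{n-1}\tau(\alpha\sigma_i)$, where the bookkeeping $z$ versus $z/E_D$ absorbs one factor of $E_D$ just as the drop in strand number does. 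For part~(c), I would use the inverse relation \eqref{invrs}: $g_i^{-1}e_i = g_ie_i - (q-q^{-1})e_i$, so
$$
\Jtrs(\alpha \sigma_i^{-1} e_i) = \Jtrs(\alpha g_i e_i) - (q-q^{-1})\,\Jtrs(\alpha e_i) = E_D^{n-1}\bigl(\tau(\alpha\sigma_i) - (q-q^{-1})\tau(\alpha)\bigr),
$$
using~(a) and~(b); and on the Hecke side the identity $G_i^{-1} = G_i - (q-q^{-1})$ gives exactly $\tau(\alpha\sigma_i^{-1}) = \tau(\alpha\sigma_i) - (q-q^{-1})\tau(\alpha)$, so the two sides agree. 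Throughout one must check that every braid appearing remains a knot, which is guaranteed by Remarks~\ref{Mark} and~\ref{square} (conjugation, $\sigma_n^{\pm1}$-stabilization, and multiplication by $\sigma_i^{2k}$ all preserve knottedness), and that the relevant reductions land in $B_{n-1}$ so that the hypothesis~\eqref{CONJ} is applicable with the correct exponent.

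\textbf{Main obstacle.} The delicate point is part~(b): after conjugating $e_i$ to $e_{n-1}$, the element $w^{-1}\alpha w \in \YH(q)^{\rm (br)}$ need not be (the image of) a braid supported on the first $n-1$ strands, and in general $g_{n-1}$ or $e_{n-1}$ may occur inside it. One has to run the reduction algorithm from the proof of Theorem~\ref{specialtrace} to peel off all occurrences of the top generators using rules~(iii)--(v), and simultaneously track the parallel reduction of $\tau(\alpha)$ using the Hecke rules — the bookkeeping of the powers of $E_D$ (and of the substitution $z \leftrightarrow z/E_D$) across each application of a $g_n$- or $e_n$-rule is where the argument could go wrong. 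A clean way to organize this is to prove~(b) by the same induction on $n$ that drives the proof of Theorem~\ref{conjHomflypt} itself, so that the statement "\eqref{CONJ} holds for knots in $B_{n-1}$" is available as part of the inductive hypothesis; this is presumably why the lemma is phrased conditionally ("if \eqref{CONJ} holds for all knots in $B_n$, then ...") rather than absolutely.
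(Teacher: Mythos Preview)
Your approach diverges substantially from the paper's, and the gap you yourself flag in the ``Main obstacle'' paragraph is real and unfilled. After conjugating $e_i$ to $e_{n-1}$ you land at $\Jtrs(\beta\,e_{n-1})$ with $\beta = w^{-1}\alpha w$ a knot in $B_n$. But rule~(iv) of Theorem~\ref{specialtrace} requires the factor in front of $e_{n-1}$ to lie in ${\rm Y}_{d,n-1}(q)^{\rm (br)}$, i.e.\ to be supported on strands $1,\ldots,n-1$. A knot on $n$ strands necessarily involves $g_{n-1}$, so you cannot apply rule~(iv) directly, and your sketch of ``running the reduction algorithm while tracking the parallel reduction for $\tau$'' is not a proof: each reduction step produces new words that are not braids, and there is no clean invariant guaranteeing that the bookkeeping of powers of $E_D$ versus the substitution $z\mapsto z/E_D$ stays aligned. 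Your suggestion to fold this into the outer induction on $n$ so that~\eqref{CONJ} is available for $B_{n-1}$ changes the logical structure of the lemma, which as stated is purely conditional on~\eqref{CONJ} in $B_n$.

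The paper's argument sidesteps all of this with a short linear-algebra trick that uses only the hypothesis in $B_n$ itself. By Remark~\ref{square}, $\alpha$, $\alpha\sigma_i^2$ and $\alpha\sigma_i^4$ are all knots in $B_n$, so~\eqref{CONJ} applies to each. Now expand $g_i^2 = 1 + (q-q^{-1})e_ig_i$ and (via Lemma~\ref{gipower}) $g_i^4 = 1 + (q-q^{-1})^2 e_i + \tfrac{q^4-q^{-4}}{q+q^{-1}}\,e_ig_i$, together with the Hecke analogues $G_i^2 = 1+(q-q^{-1})G_i$ and $G_i^4 = 1+(q-q^{-1})^2 + \tfrac{q^4-q^{-4}}{q+q^{-1}}G_i$. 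Applying $\Jtrs$ and $\tau$ to $\alpha\cdot(\,)$ and using the three instances of~\eqref{CONJ} gives two linear equations in the two unknowns $\Jtrs(\alpha\,e_ig_i)$ and $\Jtrs(\alpha\,e_i)$, from which~(a) and~(b) drop out immediately. Your derivation of~(c) from~(a) and~(b) via $g_i^{-1}e_i = g_ie_i - (q-q^{-1})e_i$ is correct and matches the paper.
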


\begin{proof}
Let $i \in \{1,\ldots,n-1\}$.
If $\alpha \in B_n$ is a knot, then following Remark \ref{square}, we have that  $\alpha \sigma_i^2$ and $\alpha \sigma_i^4$ are knots.
So, using the assumption,
we obtain
\begin{equation}\label{first}
\Jtrs(\alpha)  = E_D^{n-1} \,\tau (\alpha),\,\, \Jtrs(\alpha \s_i^2)  = E_D^{n-1} \,\tau (\alpha \s_i^2),\,\,\Jtrs(\alpha \s_i^4)  = E_D^{n-1} \,\tau (\alpha \s_i^4).
\end{equation}

Now, by \eqref{quadr}, we have
 \begin{equation}\label{second}
 g_i^2 = 1 + (q-q^{-1}) g_ie_i \quad \text{ and } \quad G_i^2 = 1 +  (q-q^{-1}) G_i \,
 \end{equation} 
while, by Lemma \ref{gipower}, we have
 \begin{equation}\label{second2}
  g_i^4 = 1 + (q-q^{-1})^2 e_i + \left( \frac{q^4-q^{-4}}{q+q^{-1}} \right)g_ie_i  \quad \text{ and } \quad  G_i^4 = 1 + (q-q^{-1})^2 + \left( \frac{q^4-q^{-4}}{q+q^{-1}} \right)G_i.
 \end{equation} 
Combining (\ref{first}) with (\ref{second}) and (\ref{second2}) yields $(a)$ and $(b)$.

Finally, by \eqref{invrs}, we have
\begin{center}
$
g_i^{-1}e_i = g_ie_i - (q-q^{-1}) e_i \quad \text{ and } \quad G_i^{-1} = G_i -  (q-q^{-1}).
$
\end{center}
 Thus, $(c)$ is a direct consequence of $(a)$ and $(b)$.
\end{proof}

Note that the above lemma is also true if we replace everywhere $B_n$ by the \emph{braid monoid} $B_n^+$, that is, the set of braid words with non-negative exponents.
In order to prove \eqref{CONJ} for all knots in $B_n$, we will first prove it for all knots in $B_n^+$. For this, we will make use of the following result \cite[Lemma 3]{chla}.
Recall that, for any $\a \in B_n$, we denote by $\epsilon(\a)$ the sum of the exponents of the braiding generators $\s_i$ in the word  $\a$.

\begin{lemma}\label{lemma from chla}
Let $\a \in B_n^{+}$. Then one of the following hold:
\begin{enumerate}[(a)]
\item there exist $\a_1,\a_2 \in B_n^{+}$ such that $\s_n\a\s_n=\a_1 \s_n \a_2$ and $\epsilon(\s_n\a\s_n)=\epsilon(\a_1 \s_n \a_2)$, or \smallbreak
\item there exist $\beta_1,\beta_2 \in B_{n+1}^{+}$ and $1 \leq i \leq n$ such that $\s_n\a\s_n=\beta_1 \s_i^2 \beta_2$ and $\epsilon(\s_n\a\s_n)=\epsilon(\beta_1 \s_i^2 \beta_2)$.
\end{enumerate} 
\end{lemma}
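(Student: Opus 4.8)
The plan is to argue by a two-level induction: an outer induction on $n$, and, for each $n$, an inner induction on the number $k$ of occurrences of $\s_{n-1}$ in the word $\a$. Since every defining relation of the positive braid monoid preserves word length, and for a positive braid word the exponent sum equals its length, every positive rewriting I perform automatically preserves $\epsilon$; hence the equalities of $\epsilon$ in (a) and (b) come for free and the only real task is to produce the claimed forms. The outer base $n=1$ is immediate: $B_1^+=\{1\}$, so $\s_1\a\s_1=\s_1^2$ and (b) holds with $i=1$. For the inner base cases: if $k=0$ then $\s_n$ commutes past every letter of $\a$, so $\s_n\a\s_n=\a\,\s_n^2$ and (b) holds with $i=n$; if $k=1$, write $\a=a\s_{n-1}b$ with $a,b\in B_{n-1}^+$, commute the two $\s_n$'s inward, and apply the braid relation to get $\s_n\a\s_n=a\,\s_{n-1}\s_n\s_{n-1}\,b$, which is of the form (a) with $\a_1=a\s_{n-1}$, $\a_2=\s_{n-1}b\in B_n^+$.

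For the inductive step $k\geq 2$, I would single out the \emph{last} two occurrences of $\s_{n-1}$ in $\a$ and write $\a=a\,\s_{n-1}\,c\,\s_{n-1}\,b$, where $b,c\in B_{n-1}^+$ (the portions after, and between, those two occurrences) and $a\in B_n^+$ contains exactly $k-2$ occurrences of $\s_{n-1}$. The central block $\s_{n-1}\,c\,\s_{n-1}$ with $c\in B_{n-1}^+$ is an instance of the statement with $n$ replaced by $n-1$, so by the outer induction hypothesis it can be positively rewritten either $(\mathrm{b}')$ as $\b_1'\s_j^2\b_2'$ with $1\leq j\leq n-1$ and $\b_1',\b_2'\in B_n^+$, or $(\mathrm{a}')$ as $\a_1'\s_{n-1}\a_2'$ with $\a_1',\a_2'\in B_{n-1}^+$. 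In case $(\mathrm{b}')$, substituting into $\a$ yields $\s_n\a\s_n=\s_n a\,\b_1'\s_j^2\b_2'\,b\,\s_n$, which is of the form (b). In case $(\mathrm{a}')$, substituting yields $\s_n\a\s_n=\s_n\bigl(a\a_1'\,\s_{n-1}\,\a_2' b\bigr)\s_n$; the new middle word $a\a_1'\s_{n-1}\a_2'b$ lies in $B_n^+$ and now contains only $k-1$ occurrences of $\s_{n-1}$, so the inner induction hypothesis applied to it finishes the argument.

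The step I expect to be the real obstacle is precisely this $k\geq 2$ case. The naive approach — peel off the final $\s_{n-1}$ and slide one $\s_n$ inward — stalls as soon as the remaining prefix still contains a $\s_{n-1}$, and there is no obvious length or degree that decreases. The way out, as above, is to recognize the sandwiched block $\s_{n-1}c\s_{n-1}$ as a lower-rank copy of the lemma itself: it either already exhibits a square (and we are done via (b)), or it fuses two $\s_{n-1}$'s into a single one, which strictly lowers $k$ and drives the inner induction. The rest is bookkeeping — checking at each substitution that the factors remain in the correct submonoid ($B_n^+$ for the middle word, $B_{n+1}^+$ for $\b_1,\b_2$) and that word lengths, hence exponent sums, are preserved — which is routine.
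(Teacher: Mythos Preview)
Your argument is correct. The two-level induction (outer on $n$, inner on the number $k$ of occurrences of $\s_{n-1}$ in the word $\a$) is sound: the base cases are verified directly, and in the step $k\geq 2$ you correctly invoke the outer hypothesis on the sandwiched block $\s_{n-1}c\s_{n-1}$ with $c\in B_{n-1}^+$, which either produces a square immediately (case (b)) or collapses two occurrences of $\s_{n-1}$ into one, strictly decreasing $k$. All rewritings used are positive braid relations, so they preserve word length and hence $\epsilon$, and the factors land in the required submonoids.

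There is nothing to compare against in this paper: the lemma is not proved here but quoted verbatim from \cite[Lemma~3]{chla}. Your self-contained double induction is the natural proof one would expect for such a statement, and it fills in what the paper leaves as a citation.
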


We are now ready to prove the following.

\begin{prop}\label{step 1}
Let $\a \in B_n^{+}$ be a knot. Then
$$\Jtrs(\alpha)  = E_D^{n-1} \,\tau (\alpha).$$
\end{prop}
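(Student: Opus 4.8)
The plan is to prove $\Jtrs(\a) = E_D^{n-1}\tau(\a)$ for all positive knot braids $\a \in B_n^+$ by induction on the number of strands $n$, and within each stage by a secondary induction on the exponent sum $\epsilon(\a)$ (equivalently, the word length). For $n=1$ the statement is trivial since $B_1$ is trivial and both traces give $1 = E_D^0$. For the inductive step, assume the result holds for all positive knot braids on fewer than $n$ strands, and for all positive knot braids on $n$ strands of smaller length.

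First I would take a positive knot braid $\a \in B_n^+$. If $\a$ does not actually involve the generator $\s_{n-1}$, then $\a \in B_{n-1}^+$; but then $\widehat\a$ has at least two components (the $n$-th strand closes to a disjoint unknot), contradicting that $\a$ is a knot. Hence $\s_{n-1}$ appears in $\a$, and we may cyclically permute the word — which preserves both the knot property (Remark~\ref{Mark}) and the value of each trace (rule (1)/(i)) — to write $\a = \s_{n-1}\b$ or, after absorbing, to isolate an occurrence. The cleanest route is: if $\s_{n-1}$ appears exactly once in $\a$, write $\a$ up to conjugation as $\b_1 \s_{n-1} \b_2$ with $\b_1,\b_2 \in B_{n-1}^+$, so $\a$ is conjugate to $\b \s_{n-1}$ with $\b = \b_2\b_1 \in B_{n-1}^+$; then by the Markov property $\Jtrs(\a) = z\,\Jtrs(\b)$ and $\tau(\a) = z\,\tau(\b)$, and since $\widehat{\b}$ is a knot on $n-1$ strands, the $n$-strand induction hypothesis gives $\Jtrs(\b) = E_D^{n-2}\tau(\b)$, whence $\Jtrs(\a) = z E_D^{n-2}\tau(\b) = E_D^{n-1}\tau(\a)$, using $E_D^{n-1} = E_D \cdot E_D^{n-2}$... wait, that produces an extra factor of $E_D$, so I must be more careful: in fact the Ocneanu side has $\tau(a G_n) = z\,\tau(a)$ with no $E_D$, and the identity to prove already carries $E_D^{n-1}$ on the right, so matching $E_D^{(n)-1} = E_D^{(n-1)-1}\cdot E_D$ forces the trace relation to absorb exactly one factor $E_D$ per added strand — which is precisely what rule (iv), $\Jtrs(a e_n) = E_D \Jtrs(a)$, provides, not rule (iii). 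So the single-occurrence case needs the appearance of an $e_{n-1}$; this is exactly where Lemma~\ref{LEMM} and Lemma~\ref{lemma from chla} enter.

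The real argument: if $\s_{n-1}$ occurs more than once, or to handle the general reduction, apply Lemma~\ref{lemma from chla} to a suitable sub-word of the form $\s_{n-1}\a'\s_{n-1}$ obtained after conjugating $\a$ so that it begins and ends with $\s_{n-1}$. In case (a) of that lemma we rewrite $\s_{n-1}\a'\s_{n-1} = \a_1\s_{n-1}\a_2$ with the exponent sum unchanged and $\s_{n-1}$ appearing strictly fewer times, reducing to a braid we can conjugate into $B_{n-1}^+ \cdot \s_{n-1}$ form or into a shorter word, to which the secondary induction and Lemma~\ref{LEMM}(a)(b) apply. In case (b) we get $\s_{n-1}\a'\s_{n-1} = \b_1\s_i^2\b_2$; here $\s_i^2$ can be pushed out and $\b_1\b_2$ is a braid on $n+1$ strands... actually on $n$ strands after noting the indices, to which Remark~\ref{square} (that $\a$ is a knot iff $\a\s_i^{2k}$ is) and Lemma~\ref{LEMM} apply, letting us trade the square $g_i^2 = 1 + (q-q^{-1})g_ie_i$ and $G_i^2 = 1+(q-q^{-1})G_i$ term-by-term, with the $e_i$ on the Yokonuma side exactly accounting for the $E_D$-discrepancy as quantified in Lemma~\ref{LEMM}. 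Throughout, one uses that the quadratic/inverse relations and the braid relations produce only lower-complexity terms, so the two nested inductions are well-founded.

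The main obstacle, as I see it, is the bookkeeping that makes Lemma~\ref{lemma from chla} interface cleanly with the two traces: one must check that every rewriting used — conjugation, the braid relations, and the substitutions from Lemma~\ref{lemma from chla} — either preserves the knot property (so Lemma~\ref{LEMM} is applicable to the pieces) or, when it fails to, appears only in combinations where the non-knot pieces cancel or are multiplied by the right power of $E_D$. In other words, the delicate point is not any single identity but the simultaneous control of (i) the knot/link type of all intermediate braid words, (ii) the exponent sum so the secondary induction terminates, and (iii) the exact power of $E_D$ accumulated, which must come out to $E_D^{n-1}$ and not $E_D^{n}$ or $E_D^{n-2}$. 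I expect the write-up to reduce, after these checks, to applying Lemma~\ref{LEMM}(a),(b),(c) to the output of Lemma~\ref{lemma from chla} in each of its two cases, together with the Markov property to strip the top generator when it occurs once.
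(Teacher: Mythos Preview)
Your overall strategy --- double induction on $n$ and on the exponent sum $\epsilon(\alpha)$, reducing via Lemma~\ref{lemma from chla} to either a square $\sigma_i^2$ or a single occurrence of the top generator --- is exactly what the paper does. But there is one genuine error that derails your $E_D$-bookkeeping, and your attempted repair goes in the wrong direction.

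The Ocneanu trace $\tau$ appearing in this statement is $\tau(q, z/E_D)$, not $\tau(q,z)$; this convention is fixed in the paragraph containing \eqref{CONJ}. Hence its Markov rule reads $\tau(\beta\,G_{n-1}) = (z/E_D)\,\tau(\beta)$, not $z\,\tau(\beta)$ as you wrote. With this correction your single-occurrence case closes cleanly: $\Jtrs(\alpha) = z\,\Jtrs(\beta) = z\,E_D^{\,n-2}\,\tau(\beta) = E_D^{\,n-1}\,(z/E_D)\,\tau(\beta) = E_D^{\,n-1}\,\tau(\alpha)$. There is no missing $e_{n-1}$ and no need to invoke rule~(iv) here; your ``extra factor of $E_D$'' is precisely the $1/E_D$ supplied by $\tau$'s parameter. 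Once this is understood, the rest of your plan aligns with the paper: the square case uses the quadratic relation on both sides together with the induction hypothesis on $\epsilon$ --- and here the paper carries, alongside \eqref{theconj}, the auxiliary identities of Lemma~\ref{LEMM} (labelled \eqref{theconj2}--\eqref{theconj4}) \emph{simultaneously} through the $\epsilon$-induction, which is exactly what lets the $e_i$ term on the Yokonuma side match the $G_i$ term on the Hecke side --- and Lemma~\ref{lemma from chla} reduces all remaining positive words to one of these two cases.
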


\begin{proof}
We will proceed by induction on $n$ and use the rules of definition of $\Jtrs$ (and thus, also of $\tau={\rm tr}_{1,\{0\}}$) described by Theorem \ref{specialtrace}.  For $n=1$, we have $\Jtrs(1)=1=\tau(1)$. 

Now assume that  \eqref{CONJ} holds for all knots in $B_{n}^+$. Note that, in this case, equalities $(a), (b), (c)$ of Lemma \ref{LEMM} hold for these knots. 
Our aim will be to prove that the relation holds for all knots $\alpha \in B_{n+1}^+$. 
In order to show this, we will use induction on $\epsilon(\a)$ to prove simultaneously the following four statements :
\begin{equation}\label{theconj}
\Jtrs(\alpha)  = E_D^n \,\tau (\alpha).
\end{equation}
and, for all $i=1,\ldots,n$, 
\begin{equation}\label{theconj2}
\Jtrs(\alpha \s_ie_i)  = E_D^n \,\tau (\alpha \s_i) 
\end{equation}
\begin{equation}\label{theconj3}
\Jtrs(\alpha e_i)  = E_D^n\,\tau (\alpha ) 
\end{equation}
\begin{equation}\label{theconj4}
\Jtrs(\alpha \s_i^{-1}e_i)  = E_D^n \,\tau (\alpha \s_i^{-1}) .
\end{equation}

As we have already seen, ${\alpha}$ is a knot if and only if $\pi(\alpha)$ is an $(n+1)$-cycle, where $\pi$ is the natural surjection from $B_{n+1}$ to $\mathfrak{S}_{n+1}$.
So the minimum value for $\epsilon(\alpha)$ is $n$, and it is achieved when all generators $\sigma_i$ appear in $\alpha$ with exponent $1$ exactly once each, that is,
$$\alpha = \sigma_{i_1} \ldots  \sigma_{i_n}, \quad \text{ where } \{i_1,\ldots,i_n\} = \{1,\ldots,n\}.$$
Because of rules (i) and (iii) in the definition of the traces, we may assume without loss of generality that $\alpha = \sigma_1\sigma_2\ldots\sigma_n$.
We have
$$\Jtrs(\alpha) = z^n = E_D^n \frac{z^n}{E_D^n} = E_D^n \,\tau(\alpha).$$
Moreover, we have
$$\begin{array}{rcl}
\Jtrs(\alpha \s_ie_i) &=& z^{n-i}\, \Jtrs(\s_1\ldots \s_{i-1}\s_i^2e_i)   \\ \smallbreak
& =& z^{n-i} \left( \Jtrs(\s_1\ldots \s_{i-1}e_i) +(q-q^{-1}) \Jtrs(\s_1\ldots \s_{i-1}\s_ie_i)\right) \\ \smallbreak
 &= &z^{n-i}  (E_D\, \Jtrs(\s_1\ldots \s_{i-1}) +(q-q^{-1}) z\, \Jtrs(\s_1\ldots \s_{i-1})) \\ \smallbreak
 &= &  E_D z^{n-1} + (q-q^{-1})\, z^n \\ \smallbreak
 &=  & E_D^n \left( \frac{z^{n-1}}{E_D^{n-1}} + (q-q^{-1})  \frac{z^{n}}{E_D^{n}}\right) \\ \smallbreak
 &=  & E_D^n \tau (\alpha \s_i)
 \end{array} $$
and
$$\Jtrs(\alpha e_i) = z^{n-i} \Jtrs(\s_1\ldots \s_{i-1}\s_ie_i) = z^{n-i+1}\Jtrs(\s_1\ldots \s_{i-1}) = z^n =E_D^n\, \tau(\alpha).$$
Finally, with the use of \eqref{invrs}, we obtain
$$\Jtrs(\a \s_i^{-1}e_i)=\Jtrs(\a \s_i e_i) - ( q-q^{-1}) \Jtrs(\a e_i) = E_D^n \tau (\alpha \s_i) -  ( q-q^{-1}) E_D^n\, \tau(\alpha) = E_D^n\, \tau(\alpha).$$

We will now prove the four statements for $\epsilon(\alpha)=m$ assuming that they hold for smaller values of $\epsilon(\alpha)$.
If there exists $\sigma_i$ such that $\alpha = \beta_1 \sigma_i^2 \beta_2$ for some $\beta_1,\beta_2 \in B_n^+$, then, following Remarks \ref{Mark} and \ref{square}, 
${\beta}:=\beta_2 \beta_1$ is a knot in $B_{n+1}^+$. Then $\epsilon(\beta) = \epsilon(\alpha)-2 < \epsilon(\alpha)$. By the induction hypothesis on $\epsilon(\alpha)$, we have
$$\Jtrs(\beta)  = E_D^n \,\tau (\beta)
\quad \text{ and } \quad \Jtrs(\beta \s_ie_i)  = E_D^n \,\tau (\beta \s_i).$$
We deduce that
$$\Jtrs(\alpha) = \Jtrs(\beta) + (q-q^{-1}) \Jtrs(\beta \s_ie_i) =  E_D^n( \tau (\beta) +(q-q^{-1})  \tau (\beta \s_i)) =E_D^n\,\tau(\alpha).$$
If there exists no such $\sigma_i$, then all generators appear in $\alpha$ with exponent $1$. 
If $\sigma_n$ appears only once in $\alpha$ and $\alpha = \beta_1 \sigma_n \beta_2$ for some $\beta_1,\beta_2 \in B_n^+$, then, by Remark \ref{Mark}, ${\beta}:=\beta_2 \beta_1$ is a knot in $B_n^+$. Following the induction hypothesis on $n$, we 
obtain
$$\Jtrs(\alpha) = \Jtrs(\beta \s_n) = z \, \Jtrs(\beta) = z \, E_D^{n-1} \, \tau(\beta) = E_D^n \, \frac{z}{E_D} \, \tau(\beta) =E_D^n\, \tau(\alpha).$$
If all generators appear with exponent $1$ and $\sigma_n$ appears more than once in $\alpha$, then, due to Lemma \ref{lemma from chla}, we can apply the braid relations and obtain an expression for $\alpha$ with the same $\epsilon(\a)$ such that either a generator appears with exponent greater than $1$ or  $\sigma_n$ appears only once.

We have thus proved  \eqref{theconj} for all knots $\alpha \in B_{n+1}^+$. Lemma \ref{LEMM} (restricted to the braid monoid $B_{n+1}^+$) yields (\ref{theconj2}), (\ref{theconj3}) and (\ref{theconj4}) for $\alpha \in B_{n+1}^+$. 
\end{proof}

Using Proposition \ref{step 1}, we will prove \eqref{CONJ} for all knots in $B_n$.

\begin{prop}\label{step 2}
Let $\a \in B_n$ be a knot. Then
$$\Jtrs(\alpha)  = E_D^{n-1} \,\tau (\alpha).$$
\end{prop}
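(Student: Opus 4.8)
The plan is to reduce everything to Proposition~\ref{step 1} by induction on the number $k$ of negative crossings appearing in a braid word for $\alpha$, i.e.\ the number of letters $\s_i^{-1}$. When $k=0$ we have $\alpha\in B_n^{+}$ and the statement is exactly Proposition~\ref{step 1}. For the inductive step, the key preliminary observation is that the proof of Lemma~\ref{LEMM} invokes the hypothesis \eqref{CONJ} only for the braids $\alpha$, $\alpha\s_i^2$ and $\alpha\s_i^4$, and these all have the same number of negative crossings as $\alpha$; consequently the conclusions $(a)$, $(b)$, $(c)$ of Lemma~\ref{LEMM} hold for a knot $\alpha\in B_n$ as soon as \eqref{CONJ} is known for all knots in $B_n$ with \emph{at most as many} negative crossings as $\alpha$. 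In particular the induction hypothesis ``\eqref{CONJ} holds for all knots in $B_n$ with at most $k$ negative crossings'' automatically yields, at the same level $k$, the identity $\Jtrs(\gamma e_i)=E_D^{n-1}\tau(\gamma)$ for every knot $\gamma\in B_n$ with at most $k$ negative crossings.

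Now let $\alpha\in B_n$ be a knot with $k+1\geq 1$ negative crossings. I would pick an occurrence of some $\s_j^{-1}$ in a word for $\alpha$ and, using the cyclicity rule of $\Jtrs$ and of $\tau$ together with Remark~\ref{Mark} (being a knot is a cyclic/conjugation property), cyclically rotate the word so that $\alpha=\gamma\,\s_j^{-1}$ with $\gamma\in B_n$ having exactly $k$ negative crossings, without changing $\Jtrs(\alpha)$, $\tau(\alpha)$ or $\widehat{\alpha}$. By Remark~\ref{Mark} again, since $\alpha=\gamma\s_j^{-1}$ is a knot, $\gamma$ is a knot and so is $\gamma\s_j$, each still having $k$ negative crossings. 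Applying the inverse relation \eqref{invrs}, $g_j^{-1}=g_j-(q-q^{-1})e_j$, gives
\[
\Jtrs(\alpha)\;=\;\Jtrs(\gamma\,g_j)\;-\;(q-q^{-1})\,\Jtrs(\gamma\,e_j).
\]
By the induction hypothesis applied to the knot $\gamma\s_j$ (which has $k$ negative crossings) and by the observation above applied to the knot $\gamma$, the right-hand side equals
\[
E_D^{n-1}\bigl(\tau(\gamma\,\s_j)-(q-q^{-1})\,\tau(\gamma)\bigr)\;=\;E_D^{n-1}\,\tau(\gamma\,\s_j^{-1})\;=\;E_D^{n-1}\,\tau(\alpha),
\]
where I used $G_j^{-1}=G_j-(q-q^{-1})$ in ${\rm H}_n(q)$. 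This completes the induction on $k$ and proves the proposition.

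The only genuinely delicate point is avoiding circularity: the term $\Jtrs(\gamma\,e_j)$ needs part $(b)$ of Lemma~\ref{LEMM} for $\gamma$, and that lemma is stated with the global hypothesis that \eqref{CONJ} holds for \emph{all} knots in $B_n$. The remedy, as indicated above, is that the proof of Lemma~\ref{LEMM} is ``local in $k$'' — it never introduces new negative crossings — so it may be fed the weaker, level-$k$ induction hypothesis. Everything else is routine bookkeeping: the cyclic rotations and the passages between ``is a knot'' and ``the underlying permutation is an $n$-cycle'' via Remarks~\ref{Mark} and \ref{square}, and the elementary manipulations of the quadratic and inverse relations in $\YH(q)$ and ${\rm H}_n(q)$.
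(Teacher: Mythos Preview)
Your overall strategy --- induct on the number of negative letters and use Lemma~\ref{LEMM} locally at each level --- is exactly the paper's approach (the paper uses the closely related quantity $S(\alpha)$ instead, but the reduction step is identical). Your ``local in $k$'' observation about Lemma~\ref{LEMM} is correct and is the heart of the matter.

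However, there is a genuine error in your inductive step. You assert, citing Remark~\ref{Mark}, that if $\alpha=\gamma\s_j^{-1}$ is a knot then $\gamma$ is a knot. Remark~\ref{Mark} says only that $\alpha\in B_n$ is a knot iff $\alpha\s_n^{\pm1}\in B_{n+1}$ is a knot (stabilisation with a \emph{new} strand); it says nothing about removing a letter $\s_j^{\pm1}$ with $j<n$. And indeed the claim is false: for instance $\alpha=\s_2\s_1^{-1}\in B_3$ is a knot, but $\gamma=\s_2\in B_3$ has underlying permutation $(2\,3)$ and closes to a two-component link. Since $\gamma$ need not be a knot, you cannot invoke the level-$k$ version of Lemma~\ref{LEMM}\,(b) for $\gamma$ to obtain $\Jtrs(\gamma e_j)=E_D^{n-1}\tau(\gamma)$.

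The fix is immediate and is precisely what the paper does. You have already observed (via Remark~\ref{square}) that $\gamma\s_j=\alpha\s_j^{2}$ \emph{is} a knot with $k$ negative crossings. Apply the level-$k$ version of Lemma~\ref{LEMM}\,(c) to the knot $\gamma\s_j$:
\[
\Jtrs\bigl((\gamma\s_j)\,\s_j^{-1}e_j\bigr)=E_D^{n-1}\,\tau\bigl((\gamma\s_j)\,\s_j^{-1}\bigr),
\quad\text{i.e.}\quad
\Jtrs(\gamma e_j)=E_D^{n-1}\,\tau(\gamma).
\]
With this correction your computation $\Jtrs(\alpha)=\Jtrs(\gamma g_j)-(q-q^{-1})\Jtrs(\gamma e_j)=E_D^{n-1}\tau(\alpha)$ goes through, and the argument coincides with the paper's (their $\beta:=\beta_2\beta_1\s_i$ plays the role of your $\gamma\s_j$, and their use of \eqref{theconj4} for $\beta$ is exactly Lemma~\ref{LEMM}\,(c)).
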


\begin{proof}
Again, we will use  induction on $n$ and  the rules of definition of $\Jtrs$  described by Theorem \ref{specialtrace}.  For $n=1$, we have $\Jtrs(1)=1=\tau(1)$. 

Now assume that  \eqref{CONJ} holds for all knots in $B_{n}$. Note that, in this case, equalities $(a), (b), (c)$ of Lemma \ref{LEMM} hold for these knots. 
Our aim will be to prove that the relation holds for all knots $\alpha \in B_{n+1}$. 
In order to show this, we will use induction on
$$S(\alpha):= \text{Sum of positive exponents of } \alpha  -2 \cdot \text{Sum of negative exponents of } \alpha \, $$
to prove simultaneously statements \eqref{theconj},  \eqref{theconj2}, \eqref{theconj3} and \eqref{theconj4} for all knots $\a \in B_n$.

As we have already seen, ${\alpha}$ is a knot if and only if $\pi(\alpha)$ is an $(n+1)$-cycle, where $\pi$ is the natural surjection from $B_{n+1}$ to $\mathfrak{S}_{n+1}$.
So the minimum value for $S(\alpha)$ is $n$, and it is achieved when all generators $\sigma_i$ appear in $\alpha$ with exponent $1$ exactly once each, that is,
$$\alpha = \sigma_{i_1} \ldots  \sigma_{i_n}, \quad \text{ where } \{i_1,\ldots,i_n\} = \{1,\ldots,n\}.$$
This case has already  been  covered by Proposition \ref{step 1}.

We will now prove the four statements for $\epsilon(\alpha)=m$ assuming that they hold for smaller values of $\epsilon(\alpha)$.
We have already proved them  for $\alpha \in B_{n+1}^+$.
So let us assume that  $\alpha \in B_{n+1} \setminus B_{n+1}^+$. Then there exists $i \in \{1,\ldots,n\}$ such that 
$\alpha = \beta_1 \sigma_i^{-1} \beta_2$ for some $\beta_1,\beta_2 \in B_n$. Following Remarks \ref{Mark} and \ref{square}, 
${\beta}:= \beta_2\beta_1\sigma_i $ is a knot in $B_{n+1}$.  We have 
$S(\beta) = S(\alpha)-1 < S(\alpha)$. By induction hypothesis, we obtain
$$\Jtrs(\beta )  = E_D^n \,\tau (\beta) \quad \text{ and } \quad \Jtrs(\beta \sigma_i^{-1} e_i )  = E_D^n \,\tau (\beta \sigma_i^{-1}).$$ 
Note that $\beta \sigma_i^{-1}=\beta_2\beta_1$. By \eqref{invrs}, we have
$$\Jtrs(\alpha)= \Jtrs(\beta ) - (q-q^{-1}) \Jtrs(\beta\sigma_i^{-1} e_i )
= E_D^n (\tau(\beta) - (q-q^{-1}) \tau(\beta \s_i^{-1})) = E_D^n \,\tau(\alpha).$$

We have thus proved \eqref{theconj} for all knots $\alpha \in B_{n+1}$. Lemma \ref{LEMM} implies (\ref{theconj2}), (\ref{theconj3}) and (\ref{theconj4}) for $\alpha \in B_{n+1}$.
\end{proof}

The following result is an immediate consequence of Proposition \ref{step 2} and Lemma \ref{LEMM}.

\begin{cor}\label{CORR}
Let $\alpha \in B_n$ be a knot.
Then, for all $ i=1,\ldots,n-1$, we have
\begin{enumerate}[(a)]
\item $\Jtrs(\alpha \s_ie_i)  = E_D^{n-1} \,\tau (\alpha \s_i) $ ; \smallbreak
\item $\Jtrs(\alpha e_i)  = E_D^{n-1} \tau (\alpha) $ ;\smallbreak
\item $\Jtrs(\alpha \s_i^{-1}e_i)  = E_D^{n-1} \,\tau (\alpha \s_i^{-1}) .$  \smallbreak
\end{enumerate}
\end{cor}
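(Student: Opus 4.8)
The plan is simply to feed the output of Proposition~\ref{step 2} into the hypothesis of Lemma~\ref{LEMM}. Recall that Lemma~\ref{LEMM} is stated conditionally: \emph{if} equality \eqref{CONJ}, i.e.\ $\Jtrs(\beta) = E_D^{n-1}\tau(\beta)$, holds for all knots $\beta \in B_n$, \emph{then} statements $(a)$, $(b)$, $(c)$ hold for any given knot $\alpha \in B_n$ and all $i = 1,\ldots,n-1$. So the only thing left to check is that the hypothesis of Lemma~\ref{LEMM} is now unconditionally available.

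First I would observe that Proposition~\ref{step 2} asserts exactly this: for \emph{every} knot $\beta \in B_n$ we have $\Jtrs(\beta) = E_D^{n-1}\tau(\beta)$, which is precisely \eqref{CONJ} for all knots in $B_n$. Thus the premise of Lemma~\ref{LEMM} is satisfied for this value of $n$. Applying Lemma~\ref{LEMM} to the knot $\alpha \in B_n$ then yields $(a)$, $(b)$ and $(c)$ verbatim. (Equivalently, one could note that $(a)$ and $(b)$ were proved inside the induction in Proposition~\ref{step 1}/\ref{step 2} using the relations $g_i^2 = 1 + (q-q^{-1})g_ie_i$, $g_i^4 = 1 + (q-q^{-1})^2 e_i + \frac{q^4-q^{-4}}{q+q^{-1}}g_ie_i$ and their Iwahori--Hecke counterparts, with $(c)$ following via $g_i^{-1}e_i = g_ie_i - (q-q^{-1})e_i$; but routing it through Lemma~\ref{LEMM} is cleaner.)

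I do not expect any obstacle here: the corollary is a purely formal consequence of the two preceding results, obtained by discharging the conditional hypothesis of Lemma~\ref{LEMM} using Proposition~\ref{step 2}. The ``hard part'' — the simultaneous induction on $n$ and on $\epsilon(\alpha)$ / $S(\alpha)$ establishing \eqref{CONJ} together with \eqref{theconj2}--\eqref{theconj4} — has already been carried out in Propositions~\ref{step 1} and \ref{step 2}, so nothing new is needed.

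\begin{proof}
By Proposition~\ref{step 2}, equality \eqref{CONJ} holds for all knots in $B_n$. Hence the hypothesis of Lemma~\ref{LEMM} is satisfied, and Lemma~\ref{LEMM} applied to $\alpha$ gives $(a)$, $(b)$ and $(c)$.
\end{proof}
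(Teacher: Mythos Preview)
Your proposal is correct and matches the paper's own proof exactly: the paper states that the corollary is ``an immediate consequence of Proposition~\ref{step 2} and Lemma~\ref{LEMM}'', which is precisely what you do by discharging the hypothesis of Lemma~\ref{LEMM} via Proposition~\ref{step 2}.
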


Now, Proposition \ref{step 2} implies the main result of this section, which is the following.

\begin{thm} \label{conjHomflypt}
Given a solution $X_D$ of the {\rm E}--system, for any braid $\a \in B_{n}$ such that $\widehat{\a}$ is a knot, we have:
$$\Theta_d(q,z) (\widehat{\a}) = \Theta_1  (q,{z}/{E_D}) (\widehat{\a}) = P(q,{z}/{E_D}) (\widehat{\a}).$$
\end{thm}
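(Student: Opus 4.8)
The plan is to deduce Theorem~\ref{conjHomflypt} directly from Proposition~\ref{step 2}, which is the real content: the trace identity $\Jtrs(\alpha) = E_D^{n-1}\tau(\alpha)$ for every $\alpha \in B_n$ with $\widehat{\alpha}$ a knot. Given this, everything that remains is bookkeeping with the normalization factors in the definitions of $\Theta_d$ and $P$. First I would recall that, for a braid $\alpha \in B_n$ with $\widehat{\alpha}$ a knot, the definition in \S\ref{classic} gives
$$
\Theta_d(q,z)(\widehat{\alpha}) = \Lambda_D^{n-1}(\sqrt{\lambda_D})^{\epsilon(\alpha)}\,\Jtrs(\alpha),
\qquad
\Theta_1(q,z/E_D)(\widehat{\alpha}) = \Lambda_{\rm H}'^{\,n-1}(\sqrt{\lambda_{\rm H}'})^{\epsilon(\alpha)}\,\tau_{z/E_D}(\alpha),
$$
where $\tau_{z/E_D}$ is the Ocneanu trace with parameter $z/E_D$, and $\lambda_{\rm H}'$, $\Lambda_{\rm H}'$ are the quantities $\lambda_{\rm H}$, $\Lambda_{\rm H}$ evaluated at second parameter $z/E_D$.

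The key computation is to check that the re-scaling factor $\lambda_D$ attached to $\Theta_d(q,z)$ equals the Homflypt factor $\lambda_{\rm H}$ attached to the variable $z/E_D$. From \eqref{CapitalLambda} we have $\lambda_D = \bigl(z - (q-q^{-1})E_D\bigr)/z$, while by definition $\lambda_{\rm H}$ at second argument $z/E_D$ is
$$
\frac{(z/E_D) - (q-q^{-1})}{z/E_D} = \frac{z - (q-q^{-1})E_D}{z} = \lambda_D.
$$
Hence $\lambda_D = \lambda_{\rm H}'$, and consequently the crossing-weight factors $(\sqrt{\lambda_D})^{\epsilon(\alpha)}$ agree on both sides and cancel. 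It remains to compare the framing factors $\Lambda_D^{n-1}$ and $\Lambda_{\rm H}'^{\,n-1}$ together with the traces. We have $\Lambda_D = 1/(z\sqrt{\lambda_D})$ and $\Lambda_{\rm H}' = 1/\bigl((z/E_D)\sqrt{\lambda_D}\bigr) = E_D/(z\sqrt{\lambda_D}) = E_D\,\Lambda_D$. Therefore
$$
\Lambda_{\rm H}'^{\,n-1}\,\tau_{z/E_D}(\alpha) = E_D^{n-1}\,\Lambda_D^{n-1}\,\tau_{z/E_D}(\alpha).
$$

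Finally I would invoke Proposition~\ref{step 2}: since $\widehat{\alpha}$ is a knot, $\Jtrs(\alpha) = E_D^{n-1}\tau_{z/E_D}(\alpha)$, where the Ocneanu trace here is taken at parameter $z/E_D$ because that is precisely the comparison performed in \S\ref{DeltaHomflypt} (the statement \eqref{CONJ} compares $\Jtrs(q,z)$ with $\tau(q,z/E_D)$). Substituting,
$$
\Theta_d(q,z)(\widehat{\alpha}) = \Lambda_D^{n-1}(\sqrt{\lambda_D})^{\epsilon(\alpha)}\,E_D^{n-1}\,\tau_{z/E_D}(\alpha)
= \Lambda_{\rm H}'^{\,n-1}(\sqrt{\lambda_{\rm H}'})^{\epsilon(\alpha)}\,\tau_{z/E_D}(\alpha)
= \Theta_1(q,z/E_D)(\widehat{\alpha}),
$$
and the last equals $P(q,z/E_D)(\widehat{\alpha})$ by the definition of $P$ as $\Theta_{1,\{0\}}$. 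The one subtlety to be careful about — and the only place where "knot" is essential — is that Proposition~\ref{step 2} fails for links, so the argument genuinely uses that $\widehat{\alpha}$ has a single component; the normalization manipulations themselves are valid verbatim for any braid, but the trace identity $\Jtrs(\alpha)=E_D^{n-1}\tau(\alpha)$ is what forces the restriction. There is essentially no hard step left here: the content was already spent in Propositions~\ref{step 1} and \ref{step 2}, and this theorem is the clean corollary packaging it in topological language.
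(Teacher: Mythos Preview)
Your proof is correct and follows essentially the same approach as the paper: both compute that $\lambda_{\rm H}$ at parameter $z/E_D$ equals $\lambda_D$, invoke Proposition~\ref{step 2} for the trace identity $\Jtrs(\alpha)=E_D^{n-1}\tau(\alpha)$, and then unwind the normalizations. The paper's version is slightly more terse, starting from the $P$ side rather than writing out both $\Theta_d$ and $\Theta_1$ and meeting in the middle, but the content is identical.
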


\begin{proof} Let $\a \in B_n$ be a knot.
Applying the transformation $z \mapsto z/E_D$ on the Homflypt polynomial $P(q,z)$, we get
$$
P(q, z/E_D) (\widehat{\a})= {\left( \frac{E_D}{z \sqrt{\lambda_{\rm H}}} \right)}^{n-1}  \sqrt{\lambda_{\rm H}}^{\epsilon(\a)} \tau(q,z/E_D)(\a)
$$
where
$$\lambda_{\rm H}= \frac{z-(q-q^{-1})E_D}{z} = \lambda_D.$$
By Proposition \ref{step 2}, we have
$$ \Jtrs(q,z)(\a) = E_D^{n-1}\tau(q,z/E_D)(\a).$$
We deduce that
$$P(q, z/E_D) (\widehat{\a}) = {\left( \frac{1}{z \sqrt{\lambda_D}} \right)}^{n-1}  \sqrt{\lambda_D}^{\epsilon(\a)}  \Jtrs(q,z)(\a) = \Theta_d(q,z) (\widehat{\a}).$$
\end{proof}

\begin{rem} \rm
The transformation $z  \mapsto {z}/{E_D}$ corresponds to the transformation $\lambda_{\rm H} \mapsto \lambda_D$ on the Homflypt polynomial at variables $(q,\lambda_{\rm H})$. Hence, we have equivalently that $\Theta_d(q,\lambda_D) = P(q,\lambda_D)$, where the variable $\lambda_{\rm H}$ of the Homflypt polynomial has been substituted with the variable $\lambda_D$. Consequently, $E_D$ does not appear in the values of the invariants $\Theta_d$ at variables $(q,\lambda_D)$ when computed on knots.
\end{rem}

\begin{rem}\label{kauffman polynomial}\rm
It is worth adding here that the invariants $\Theta_d$ are not topologically equivalent to the Kauffman polynomial \cite{kau}, since there is at least one pair of knots which are distinguished by the Homflypt polynomial but not by the Kauffman polynomial. 
\end{rem}

\section{Obtaining the invariants $\Theta_d$ for classical links via the Homflypt skein relation}\label{slinks}

In this section we investigate the behaviour of the invariants $\Theta_d$ 
on classical links with at least two components. Let $L = L_1 \sqcup \ldots \sqcup L_m$ a split link, where $L_1, \ldots, L_m$ are links. By the multiplicative property of the invariants $\Theta_d$, we have (see \cite[Proposition 3.3]{chjakala}):
\begin{equation}\label{mult prop}
\Theta_d(L) = \Lambda_D^{m-1} \Theta_d(L_1) \ldots \Theta_d(L_m).
\end{equation}
So by this property, we only need to examine non-split links.

Again, we consider the Ocneanu trace $\tau$ and the Homflypt polynomial $P$ at variables $(q,{z}/{E_D})$ (or equivalently $q,\lambda_D$). 
In general, it does not seem that an analogue of Theorem~\ref{conjHomflypt} holds for links. For example, for the simplest possible non-split link, the Hopf link $H = \widehat{\s_1^2}$, we have:
$$
\tau(\s_1^2) = 1 + (q-q^{-1}) \frac{z}{E_D},
$$
and
\begin{center}
$\Jtrs(\s_1^2) = \Jtrs(1 + (q-q^{-1})g_1 e_1) = 1 + (q-q^{-1}) z = 1 - E_D + E_D \, \tau(\s_1^2).$
\end{center}
Therefore,
$$
\Theta_d(q,z)(H) \neq P(q,z/E_D)(H).
$$

However, we have an analogue of Theorem~\ref{conjHomflypt} for disjoint union of knots (Theorem~\ref{disj_links}). Furthermore, by a special skein relation (see Proposition~\ref{skein_ei}) we will see that we can reduce the general case of links to the case of disjoint unions of knots. We will conclude that the invariants $\Theta_d$ can be defined with the use of the Homflypt skein relation.

\subsection{Behaviour on disjoint unions of knots}
We shall now consider links whose components are not linked. Then a variant of Theorem~\ref{conjHomflypt} holds.
Indeed, this is clear from Theorem~\ref{conjHomflypt}, using the multiplicative property \eqref{mult prop} and the fact that a disjoint union of knots is a special case of a split link. However, we provide below a self-contained proof using the results of this paper. Indeed, for the trace $\Jtrs$ we have:

\begin{prop}\label{unlink_lemma}
Let $\a \in B_n$ be such that $\widehat{\a}$ is a link with $k$ components which are not linked with each other, that is, $\widehat{\alpha}$ is a disjoint union of $k$ knots. Then
$$
\Jtrs(\a) = E_D^{n-k} \tau(\a).
$$
\end{prop}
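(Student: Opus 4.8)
The plan is to reduce the identity to the case of a \emph{block sum} of braids representing the individual knots, using the observation that, although the equality $\Jtrs(\a)=E_D^{n-k}\tau(\a)$ (with $\tau=\tau(q,z/E_D)$ throughout this section) is not itself an isotopy invariant, its validity is preserved under the Markov moves. Conjugation invariance is immediate from rule (i) of $\Jtrs$ in Theorem~\ref{specialtrace} and from the trace property of $\tau$. For a positive stabilization $\a\mapsto\a\s_n$ the number of components stays $k$ while the number of strands becomes $n+1$; rule (iii) gives $\Jtrs(\a g_n)=z\,\Jtrs(\a)$ and $\tau(\a G_n)=(z/E_D)\,\tau(\a)$, and since $z=E_D\cdot(z/E_D)$ the identity for $\a\s_n$ is equivalent to the one for $\a$. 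For a negative stabilization $\a\mapsto\a\s_n^{-1}$ I would use \eqref{invrs} together with rules (iii) and (iv) to get $\Jtrs(\a g_n^{-1})=\bigl(z-(q-q^{-1})E_D\bigr)\Jtrs(\a)$, and $G_n^{-1}=G_n-(q-q^{-1})$ to get $\tau(\a G_n^{-1})=\bigl(z/E_D-(q-q^{-1})\bigr)\tau(\a)$; once more the powers of $E_D$ match. By Markov's theorem it therefore suffices to establish the identity for a single braid representative of each disjoint union of $k$ knots.

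Given knots $K_1,\dots,K_k$, I would pick braid words $\a_j\in B_{n_j}$ with $\widehat{\a_j}=K_j$ and form the block sum $\a=\a_1\oplus\cdots\oplus\a_k\in B_n$, $n=n_1+\cdots+n_k$, where $\a_j$ acts on the strands indexed $n_1+\cdots+n_{j-1}+1,\dots,n_1+\cdots+n_j$; its closure is $K_1\sqcup\cdots\sqcup K_k$. For such a braid the crucial input is the multiplicativity
$$
\Jtrs(\b\oplus\g)=\Jtrs(\b)\,\Jtrs(\g)\qquad\text{and}\qquad\tau(\b\oplus\g)=\tau(\b)\,\tau(\g)
$$
for $\b\in B_m$ and $\g\in B_{n'}$. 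The statement for the Ocneanu trace $\tau$ is classical (and follows from its Markov property). For $\Jtrs$ I would argue by induction on $n'$, running the normalization/peeling algorithm of the proof of Theorem~\ref{specialtrace} on the product $\b\cdot\g^{[m]}$, where $\g^{[m]}$ is $\g$ shifted onto the strands $m+1,\dots,m+n'$: all generators $g_i$ and idempotents $e_i$ occurring in $\g^{[m]}$ have index $>m$, so stripping off the top strand via rules (iii)--(v) only ever touches the ``$\g$-part'' and carries $\b$ along unchanged; inductively the $\g$-part contributes precisely the factor $\Jtrs(\g)$. (Alternatively, this multiplicativity is the disjoint-union case of the already-known property \eqref{mult prop}.)

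Finally I would combine these with Proposition~\ref{step 2} applied to each knot $\a_j$ to conclude
$$
\Jtrs(\a)=\prod_{j=1}^{k}\Jtrs(\a_j)=\prod_{j=1}^{k}E_D^{n_j-1}\,\tau(\a_j)=E_D^{\sum_{j}(n_j-1)}\,\tau(\a)=E_D^{n-k}\,\tau(\a),
$$
which is the asserted formula.

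The step I expect to be the main obstacle is the block multiplicativity of $\Jtrs$: one must check carefully that the reduction algorithm of Theorem~\ref{specialtrace}, applied to an element supported on two disjoint blocks of strands, never mixes the blocks, so that the trace genuinely factors. The Markov invariance of the identity and the final assembly are routine; and if one is content to invoke \eqref{mult prop} (from \cite{chjakala}), this obstacle disappears, leaving only Proposition~\ref{step 2} and the classical multiplicativity of $\tau$.
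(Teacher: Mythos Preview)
Your argument is correct and, in fact, more complete than the paper's. The paper proceeds directly by induction on $k$, writing $\a=\a_1\a_2\cdots\a_{k+1}$ with the $\a_j$ supported on disjoint blocks of strands, and then uses the block multiplicativity $\Jtrs(\a)=\Jtrs(\a_1)\,\Jtrs(\a_2\cdots\a_{k+1})$ (and the analogous identity for $\tau$) together with Proposition~\ref{step 2}. This is exactly the endgame of your proof; however, the paper tacitly assumes that the given braid $\a$ already comes in block form, which is not part of the hypothesis. Your extra step---checking that the relation $\Jtrs(\a)=E_D^{n-k}\tau(\a)$ is preserved under conjugation and positive/negative stabilization, and hence that it suffices to verify it on one Markov representative---fills precisely this gap. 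Your verification of the three moves is correct.

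Regarding the block multiplicativity of $\Jtrs$: your sketch via the peeling algorithm of Theorem~\ref{specialtrace} is the natural argument and works, since all generators and idempotents appearing in the shifted factor $\g^{[m]}$ have index $>m$ and so each application of rules (iii)--(v) leaves the $\b$-factor untouched. Invoking \eqref{mult prop} is also legitimate (it unwinds to exactly this trace multiplicativity), though one should be aware that its proof in \cite{chjakala} ultimately rests on the same fact for $\Jtr$, so the direct argument is cleaner.
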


\begin{proof}
We prove the statement by induction on the number of components $k$. For $k=1$, the statement is Proposition \ref{step 2}. Assume now that the statement holds for links with up to  $k$ disjoint components and let $\a$ be a link with $k+1$ disjoint components $\a_1,\a_2,\ldots,\a_{k+1}$. We have $\a = \a_1\a_2\ldots \a_{k+1}$, and  the $\a_j$ commute with each other.
Suppose that $\a_1$ is the component expressed as a braid word in the generators $\s_1,\ldots, \s_{m-1}$ for some $m \leq n$.
Consequently, $\a_2\ldots \a_{k+1}$ is expressed as a braid word in the generators $\s_{m+1},\ldots, \s_{n-1}$ (we will also write $\a_2\ldots \a_{k+1} \in B_n \setminus B_{m+1}$).
Therefore, in computing $\Jtrs(\a)$ and $\tau(\a)$, we will first exhaust the word $\a_2\ldots \a_{k+1}$. We thus obtain
$$\Jtrs(\a)= \Jtrs( \a_1)\, \Jtrs(\a_2 \ldots \a_{k+1}) \quad \text{and} \quad 
\tau(\a)=\tau( \a_1) \,\tau(\a_2 \ldots \a_{k+1}).$$
By the induction hypothesis, we have
$$ \Jtrs(\a_1) = E_D^{m-1}\tau(\a_1) \quad \text{and} \quad \Jtrs(\a_2 \ldots \a_{k+1}) E_D^{n-m-k} \tau(\a_2 \ldots \a_{k+1}).$$
We deduce that
$$\Jtrs(\a)= E_D^{n-(k+1)} \tau(\a),$$
as required.
\end{proof}

Now we can extend Theorem \ref{conjHomflypt} to disjoint unions of knots as follows:

\begin{thm}\label{disj_links}
Given a solution $X_D$ of the {\rm E}--system, for any braid $\a \in B_{n}$ such that $\widehat{\a}$ is a disjoint union of $k$ knots, we have 
\begin{center}
$\Theta_d (q,z)(\widehat{\a}) = E_D^{1-k} \Theta_1 (q,{z}/{E_D})(\widehat{\a}) = E_D^{1-k} P(q,{z}/{E_D})(\widehat{\a})$.
\end{center}
\end{thm}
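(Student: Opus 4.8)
The plan is to derive Theorem~\ref{disj_links} directly from Proposition~\ref{unlink_lemma} by the same rescaling computation used in the proof of Theorem~\ref{conjHomflypt}, only keeping track of the extra power of $E_D$ that appears for a $k$-component disjoint union. So first I would fix a braid $\a \in B_n$ whose closure $\widehat{\a}$ is a disjoint union of $k$ knots, and recall the definition
$$
\Theta_d(q,z)(\widehat{\a}) = \Lambda_D^{n-1}(\sqrt{\lambda_D})^{\epsilon(\a)}\,\Jtrs(q,z)(\a),
$$
with $\Lambda_D = \frac{1}{z\sqrt{\lambda_D}}$ and $\lambda_D = \frac{z-(q-q^{-1})E_D}{z}$.

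Next I would substitute $z \mapsto z/E_D$ into the Homflypt polynomial. Since $\lambda_{\rm H}(z) = \frac{z-(q-q^{-1})}{z}$ satisfies $\lambda_{\rm H}(z/E_D) = \frac{z/E_D - (q-q^{-1})}{z/E_D} = \frac{z - (q-q^{-1})E_D}{z} = \lambda_D$, we get
$$
P(q,z/E_D)(\widehat{\a}) = \left(\frac{E_D}{z\sqrt{\lambda_D}}\right)^{n-1}(\sqrt{\lambda_D})^{\epsilon(\a)}\,\tau(q,z/E_D)(\a) = E_D^{\,n-1}\,\Lambda_D^{n-1}(\sqrt{\lambda_D})^{\epsilon(\a)}\,\tau(q,z/E_D)(\a).
$$
Now apply Proposition~\ref{unlink_lemma}, namely $\Jtrs(q,z)(\a) = E_D^{\,n-k}\,\tau(q,z/E_D)(\a)$ (here $\tau$ is taken at variables $(q,z/E_D)$, exactly as in Section~\ref{DeltaHomflypt}). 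Plugging this into the formula for $\Theta_d$ gives
$$
\Theta_d(q,z)(\widehat{\a}) = \Lambda_D^{n-1}(\sqrt{\lambda_D})^{\epsilon(\a)}\,E_D^{\,n-k}\,\tau(q,z/E_D)(\a) = E_D^{\,n-k}\cdot\frac{P(q,z/E_D)(\widehat{\a})}{E_D^{\,n-1}} = E_D^{\,1-k}\,P(q,z/E_D)(\widehat{\a}),
$$
and since $P = \Theta_1$ (the $d=1$ case, where $E_D=1$), this also equals $E_D^{1-k}\,\Theta_1(q,z/E_D)(\widehat{\a})$, which is the assertion.

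There is essentially no obstacle here: the entire content is already packed into Proposition~\ref{unlink_lemma}, whose proof is the genuine inductive argument (on the number of components, reducing to Proposition~\ref{step 2} for a single knot and using that disjoint components are expressed in disjoint sets of braid generators so that both $\Jtrs$ and $\tau$ factor multiplicatively). The only care needed in the present proof is bookkeeping of the exponents of $E_D$: the $E_D^{n-k}$ from the trace comparison partially cancels the $E_D^{n-1}$ coming from the rescaling constant $\Lambda_D$ versus the Homflypt rescaling constant, leaving the net factor $E_D^{1-k}$. One should also remark, as the text does, that this statement is also immediate from Theorem~\ref{conjHomflypt} together with the multiplicative property \eqref{mult prop}, since a disjoint union of $k$ knots is a split link with $m=k$ pieces, giving $\Theta_d(L) = \Lambda_D^{k-1}\prod_{j}\Theta_d(L_j) = \Lambda_D^{k-1}\prod_j P(q,z/E_D)(L_j)$; matching this with $E_D^{1-k}P(q,z/E_D)(L)$ uses $\Lambda_D^{k-1} = E_D^{1-k}\lambda_D^{(1-k)/2}$ together with the fact that the Homflypt polynomial of a split link at variables $(q,\lambda_D)$ carries exactly the compensating factor $\lambda_D^{(k-1)/2}$. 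Either route works; I would present the direct trace computation as the main argument and mention the second as an alternative.
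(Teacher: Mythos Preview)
Your main argument is correct and is exactly the paper's proof: apply the change of variable $z\mapsto z/E_D$ to $P$, observe $\lambda_{\rm H}(z/E_D)=\lambda_D$, invoke Proposition~\ref{unlink_lemma} to replace $\tau(q,z/E_D)(\a)$ by $E_D^{k-n}\Jtrs(q,z)(\a)$, and track the powers of $E_D$.

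One small slip in your parenthetical alternative: the identity $\Lambda_D^{k-1}=E_D^{1-k}\lambda_D^{(1-k)/2}$ is not right, since $\Lambda_D=1/(z\sqrt{\lambda_D})$. The correct matching is that the Homflypt split-link constant at parameter $z/E_D$ is $\bigl((z/E_D)\sqrt{\lambda_D}\bigr)^{-1}=E_D\Lambda_D$, so $\prod_j P(L_j)=P(L)/(E_D\Lambda_D)^{k-1}$ and hence $\Theta_d(L)=\Lambda_D^{k-1}\prod_j P(L_j)=E_D^{1-k}P(L)$. This does not affect your primary proof.
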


\begin{proof} Let $\a \in B_n$ be such that $\widehat{\a}$ is a disjoint union of $k$ knots.
Applying the transformation $z \mapsto z/E_D$ on the Homflypt polynomial $P(q,z)$, we get
$$
P(q, z/E_D) (\widehat{\a})= {\left( \frac{E_D}{z \sqrt{\lambda_{\rm H}}} \right)}^{n-1}  \sqrt{\lambda_{\rm H}}^{\epsilon(\a)} \tau(q,z/E_D)(\a)
$$
where
$$\lambda_{\rm H}= \frac{z-(q-q^{-1})E_D}{z} = \lambda_D.$$
By Proposition \ref{unlink_lemma}, we have
$$ \Jtrs(q,z)(\a) = E_D^{n-k}\tau(q,z/E_D)(\a).$$
We deduce that
$$P(q, z/E_D) (\widehat{\a}) = E_D^{k-1}\, {\left( \frac{1}{z \sqrt{\lambda_D}} \right)}^{n-1}  \sqrt{\lambda_D}^{\epsilon(\a)} \Jtrs(q,z)(\a) = E_D^{k-1}\,\Theta_d(q,z) (\widehat{\a}).$$\end{proof}

\subsection{Behaviour on the links $\widehat{\a \s_n^{2k} \b}$}\label{asn2kb}
The general case, which remains to be examined, is that of links  whose components are linked together (some of them, if not all).
Let us consider first this simple family of 2-component links which are only interconnected by $|k|$ 
consecutive clasps, and which includes the Hopf link as the basic case: let $\a \in B_n$ and $\b \in B_{m}$, for some $n, m \in \N$, such that $\widehat{\a}$ and $\widehat{\b}$ are disjoint unions of $s$ and $t$ knots respectively (see Figures~\ref{braid_clapsed} and \ref{3clasps}).

\begin{figure}[H]
\begin{center}
\scalebox{0.75}{
\begin{picture}(107,170)
\put(20,133){$\alpha$}

\qbezier(6,150)(6,158)(6,166) 
\qbezier(25,150)(25,158)(25,166)
\qbezier(44,150)(44,158)(44,166)

\qbezier(0,124)(0,139)(0,150)
\qbezier(50,124)(50,139)(50,150)

\qbezier(0,150)(25,150)(50,150)
\qbezier(0,124)(25,124)(50,124)

\qbezier(6,0)(6,112)(6,124)
\qbezier(25,0)(25,112)(25,124)
\color[RGB]{153,0,230} \qbezier(44,100)(44,112)(44,124)

\color[RGB]{0,0,0} 
\put(77,133){$\beta$}

\qbezier(63,150)(63,158)(63,166) 
\qbezier(82,150)(82,158)(82,166)
\qbezier(101,150)(101,158)(101,166)

\qbezier(57,124)(57,139)(57,150)
\qbezier(107,124)(107,139)(107,150)

\qbezier(57,150)(82,150)(107,150)
\qbezier(57,124)(82,124)(107,124)

\color[RGB]{0,179,51}\qbezier(63,100)(63,112)(63,124)
\color[RGB]{0,0,0} \qbezier(82,0)(82,112)(82,124)
\qbezier(101,0)(101,112)(101,124)

\color[RGB]{0,179,51} \qbezier(44,80)(44,85)(54,90)
\qbezier(54,90)(63,95)(63,100)
\color[RGB]{153,0,230} \qbezier(63,80)(63,84)(59,87)
\qbezier(49,93)(44,96)(44,100)

\color[RGB]{153,0,230} \qbezier(44,60)(44,65)(54,70)
\qbezier(54,70)(63,75)(63,80)
\color[RGB]{0,179,51} \qbezier(63,60)(63,64)(59,67)
\qbezier(49,73)(44,76)(44,80)

\color[RGB]{0,0,0} 
\put(48,45){$\vdots \, k$}

\color[RGB]{0,179,51} \qbezier(44,20)(44,25)(54,30)
\qbezier(54,30)(63,35)(63,40)
\color[RGB]{153,0,230} \qbezier(63,20)(63,24)(59,27)
\qbezier(49,33)(44,36)(44,40)

\color[RGB]{153,0,230} \qbezier(44,0)(44,5)(54,10)
\qbezier(54,10)(63,15)(63,20)
\color[RGB]{0,179,51} \qbezier(63,0)(63,4)(59,7)
\qbezier(49,13)(44,16)(44,20)
\end{picture}
}
\caption{The braid $\b \a \s_n^{2k}$, for $k>0$.}
\label{braid_clapsed}
\end{center}
\end{figure}
 
If we consider the braid $\b$ shifted into the group $B_{n+m}$ (so that $\b \in B_{n+m} \setminus B_{n+1}$), the trace of $\b$ does not change. So, for all $k \in \Z$, the closure of the braid $\a \s_n^{2k} \b \in B_{n+m}$ is a link with 
$(s+t)$ components. We then have
$$\Jtrs(\a \s_n^{2k} \b) = \Jtrs(\a) \, \Jtrs(\s_n^{2k}) \, \Jtrs(\b).$$ 
By Proposition \ref{unlink_lemma}, we have
$$\Jtrs(\a)=E_D^{n-s} \tau(\a) \quad\text{and}\quad \Jtrs(\b)=E_D^{m-t} \tau(\b),$$
while, due to Lemma \ref{gipower}, we have 
$$\Jtrs(\s_n^{2k}) = E_D \, \tau(\s_n^{2k})+1-E_D.$$
Thus, we obtain
$$\Jtrs(\a \s_n^{2k} \b)  = E_D^{n+m-(s+t-1)}\tau(\a \s_n^{2k} \b)  +  E_D^{n+m-(s+t)}(1-E_D)\,\tau(\a\b),$$
whence
$$\begin{array}{rcl}
\Theta_d (\widehat{\a \s_n^{2k} \b}) &=& E_D^{1-(s+t-1)}P(\widehat{\a \s_n^{2k} \b})+ \lambda_D^kE_D^{1-(s+t)}(1 - E_D) P(\widehat{\a \b})\\ & &\\
& = & E_D^{1-(s+t)} \left[ E_D \, P(\widehat{\a \s_n^{2k} \b}) + \lambda_D^k (1 - E_D) P(\widehat{\a \b}) \right].
\end{array}
$$

Hence, the invariants $\Theta_d$ appear to depend not only on the Homflypt polynomial of the same link (as expected), but also on the Homflypt polynomial of the link comprising the two components unlinked. Further, we note that  the linking number between the two components appears on the coefficient $\lambda_D^k$ of $P(\widehat{\a \b})$.

\subsection{Behaviour of $\Jtrs$ on the elements $\a e_i$}\label{aei} The behaviour of the invariants $\Theta_d$ on arbitrary links depends, as we shall see, on a property of the specialized trace $\Jtrs$ in relation to the elements $e_i$, and a skein relation that this implies. Indeed, let $\a \in B_n$.
The behaviour of the specialized trace $\Jtrs$ on the elements $\a e_i$ seems to depend on whether the strands $i$ and $i+1$ belong to the same component of the link $\widehat{\a}$ or not. For example, an immediate consequence of Proposition \ref{step 2} and Corollary \ref{CORR} is  the following:

\begin{lemma}\label{lem and conj}
If $\a \in B_n$ is a braid such that $\widehat{\a}$ is a knot, then
\begin{equation}\label{good for knots}
\Jtrs(\a e_i) = \Jtrs(\a) \quad \text{for all }i =1,\ldots,n-1.
\end{equation}
\end{lemma}

However, Lemma~\ref{lem and conj} does not  hold for links. The simplest case is the disjoint union of knots. Indeed, let $\widehat{\a}$ be the disjoint union of the knots $\widehat{\a_j}$, for $j=1,\ldots,k$. We have $\a=\a_1 \a_2 \ldots \a_k \in B_n$, where $\a_j \in B_{i_j} \setminus B_{i_{j-1}+1}$ for some $1 \leq i_1 < \cdots < i_k \leq n$ with $i_j - i_{j-1} >1$ (we take $i_0:=-1$ and $B_0:=\emptyset$).  Then we have the following:

\begin{lemma}\label{trdis} Let $\a=\a_1 \ldots \a_k \in B_n$ be a disjoint union of $k$ knots as above. Let $i \in \{1,\ldots,n-1\}$. Then
$$\Jtrs(\a e_i) = \left\{
\begin{array}{ll}
\Jtrs(\a) &\text{ if } i \notin \{i_1, \ldots, i_k\} ; \\
E_D \, \Jtrs(\a) &\text{ if } i \in \{i_1, \ldots, i_k\}.
\end{array}\right.$$
\end{lemma}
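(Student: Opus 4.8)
The plan is to split the braid $\a$ according to which of the components $\a_l$ the strands $i$ and $i+1$ belong to, and then to read off each piece from three results already in hand: the multiplicativity of $\Jtrs$ over disjoint strand blocks (used in the proof of Proposition~\ref{unlink_lemma}), Lemma~\ref{lem and conj}, and Lemma~\ref{trace_one_ei}. Write $B_l$ for the set of strands carrying $\a_l$; the $B_l$ partition $\{1,\dots,n\}$, they occur in increasing order, and $i_l=\max B_l$, so in particular $i_k=n$. From relations $\mathrm{(f_1)}$, $\mathrm{(f_2)}$ and $\mathrm{(b_2)}$, the idempotent $e_i$ commutes with every $\a_l$ whose block is disjoint from $\{i,i+1\}$; using $\Jtrs(ab)=\Jtrs(ba)$ together with disjoint-block multiplicativity, I would peel those components off both $\Jtrs(\a e_i)$ and $\Jtrs(\a)=\prod_l\Jtrs(\a_l)$, reducing the statement to two local identities.

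If $i\notin\{i_1,\dots,i_k\}$, then (the $i_l$ being precisely the right endpoints of the blocks) $\{i,i+1\}\subset B_m$ for a unique $m$, and it suffices to show $\Jtrs(\a_m e_i)=\Jtrs(\a_m)$ in the braid group on $B_m$; since $\widehat{\a_m}$ is a knot this is Lemma~\ref{lem and conj}, and one concludes $\Jtrs(\a e_i)=\prod_l\Jtrs(\a_l)=\Jtrs(\a)$. If instead $i=i_j$, then $i\in B_j$ and $i+1\in B_{j+1}$, and after relabelling so that $\a_j$ sits on strands $1,\dots,p$ and $\a_{j+1}$ on strands $p+1,\dots,p+r$ (so that $e_{i_j}=e_{p,p+1}$ and the image of $\a_j$ lies in ${\rm Y}_{d,p}(q)$), it suffices to show $\Jtrs(\a_j\a_{j+1}e_{p,p+1})=E_D\,\Jtrs(\a_j)\Jtrs(\a_{j+1})$. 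For this I would first use $\Jtrs(ab)=\Jtrs(ba)$ and the commuting of $\a_j$ with $\a_{j+1}$ to rewrite the left-hand side as $\Jtrs(\a_j\,e_{p,p+1}\,\a_{j+1})$, and then \emph{trace out} $\a_{j+1}$ from the top. Since $\a_{j+1}$ is the image of a braid word occupying the highest strands $p+1,\dots,p+r$, Theorem~\ref{specialtrace} guarantees that this reduction uses only rules (iii)--(v) and hence emits no surviving framing, so $\a_{j+1}$ collapses to the scalar $\Jtrs(\a_{j+1})$ with nothing left on strand $p+1$; this yields $\Jtrs(\a_j e_{p,p+1}\a_{j+1})=\Jtrs(\a_{j+1})\,\Jtrs(\a_j e_{p,p+1})$. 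Finally Lemma~\ref{trace_one_ei}, applied with $a$ the image of $\a_j$ in ${\rm Y}_{d,p}(q)$, gives $\Jtrs(\a_j e_{p,p+1})=E_D\,\Jtrs(\a_j)$, and therefore $\Jtrs(\a e_{i_j})=E_D\prod_l\Jtrs(\a_l)=E_D\,\Jtrs(\a)$.

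The one step that needs care — and the likely main obstacle — is the boundary case, because $e_{i_j}$ \emph{bridges} two components and commutes with neither flanking braid: the reordering has to go through $\Jtrs(ab)=\Jtrs(ba)$ rather than through a genuine commutation, and one must check that tracing $\a_{j+1}$ out does not disturb the leg of $e_{i_j}$ on strand $p+1$. This is exactly where it matters that rules (iii)--(v) of Theorem~\ref{specialtrace} are insensitive to whatever lies below the strand being removed, and that a braid word contributes no surviving framing. (Alternatively one could express $e_{p,p+1}$ through $1,\,g_p^2,\,g_p^4$ via Lemma~\ref{gipower} and fall back on the clasp-link computation of Section~\ref{asn2kb}, but the direct argument above is cleaner.)
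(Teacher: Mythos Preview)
Your proof is correct and follows essentially the same route as the paper's. Both arguments split on whether $e_i$ sits inside one block or bridges two, invoke disjoint-block multiplicativity of $\Jtrs$, and finish with Lemma~\ref{lem and conj} in the first case and the rule $\Jtrs(a\,e_n)=E_D\,\Jtrs(a)$ in the second. The paper simply writes the factorization $\Jtrs(\a e_{i_j})=\Jtrs(\a_1\cdots\a_{j-1})\,\Jtrs(\a_j e_{i_j})\,\Jtrs(\a_{j+1}\cdots\a_k)$ and quotes \eqref{later for classical}, whereas you route through the conjugation $\Jtrs(\a_j\a_{j+1}e_p)=\Jtrs(\a_j e_p\a_{j+1})$, trace out $\a_{j+1}$, and then invoke Lemma~\ref{trace_one_ei} (which at $i=n$ is exactly \eqref{later for classical}); your extra care about why the bridging $e_{i_j}$ does not obstruct the factorization is justified, but the underlying mechanism is the same.
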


\begin{proof}

If $i \notin \{i_1, \ldots, i_k\}$, then $i_{j-1} < i < i_j$ for some $j=1,\ldots,k$. Then
$$
\begin{array}{rclll}
\Jtrs(\a e_i)& = &\Jtrs(\a_1 \ldots \a_{j-1}) \, \Jtrs(\a_j e_i) \, \Jtrs(\a_{j+1} \ldots \a_k)  & &\\
& \stackrel{(\ref{good for knots})}{=} &\Jtrs(\a_1 \ldots \a_{j-1}) \, \Jtrs(\a_j) \, \Jtrs(\a_{j+1} \ldots \a_k) & =&  \Jtrs(\a),
\end{array}$$
On the other hand, if $i=i_j$ for some $j=1,\ldots,k$, then
$$
\begin{array}{rclll}
\Jtrs(\a e_{i_j})& = &\Jtrs(\a_1 \ldots \a_{j-1}) \, \Jtrs(\a_j e_{i_j}) \, \Jtrs(\a_{j+1} \ldots \a_k) & &\\
& \stackrel{(\ref{later for classical})}{=} &\Jtrs(\a_1 \ldots \a_{j-1}) \, E_D \, \Jtrs(\a_j) \, \Jtrs(\a_{j+1} \ldots \a_k) & = & E_D\, \Jtrs(\a).
\end{array}
$$
\end{proof}

Let us now investigate how $\Jtrs(\a e_i)$ behaves in more complicated links $\a$, where some components may be linked. For example, 
using Lemma~\ref{gipower}, we obtain the following:

\begin{lemma} Let $i \in \{1,\ldots,n-1\}$ and let $k \in \Z$. Then 
\begin{equation}\label{s_i^{2k}}
\Jtrs( \s_i^{2k}e_i)= \Jtrs(\s_i^{2k}) + E_D -1 .
\end{equation}
\end{lemma}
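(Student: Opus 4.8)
The plan is to expand $\s_i^{2k} = g_i^{2k}$ by Lemma~\ref{gipower}(b), multiply on the right by $e_i$, simplify using the two elementary facts $e_i^2 = e_i$ and $e_ig_i = g_ie_i$, and then apply $\Jtrs$ and compare term by term with $\Jtrs(\s_i^{2k})$.

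Concretely, since $2k$ is even, Lemma~\ref{gipower}(b) gives
$$
g_i^{2k} = 1-e_i + A\,e_ig_i + B\, e_i, \qquad A := \frac{q^{2k}-q^{-2k}}{q+q^{-1}}, \quad B := \frac{q^{2k-1}+q^{-2k+1}}{q+q^{-1}}.
$$
Multiplying by $e_i$ and using $(1-e_i)e_i = e_i - e_i^2 = 0$, $e_ig_ie_i = e_i^2 g_i = e_ig_i$ (the commutation $g_ie_i=e_ig_i$ was recorded right after \eqref{ei}), and $e_i^2 = e_i$, the expression collapses to $g_i^{2k}e_i = A\,e_ig_i + B\,e_i$. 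Applying $\Jtrs$ to both displayed identities and using $\Jtrs(e_i) = E_D$ (Equation~\eqref{ED}), one gets $\Jtrs(g_i^{2k}) = 1 - E_D + A\,\Jtrs(e_ig_i) + B\,E_D$ and $\Jtrs(g_i^{2k}e_i) = A\,\Jtrs(e_ig_i) + B\,E_D$; subtracting the second from the first yields precisely $\Jtrs(\s_i^{2k}e_i) = \Jtrs(\s_i^{2k}) + E_D - 1$, which is \eqref{s_i^{2k}}. Note that the value of $\Jtrs(e_ig_i)$ is irrelevant — it cancels — although one can check it equals $z$ by applying rule (v) of $\Jtrs$ inside $\YH_{i+1}(q)$ with $a=1$, i.e. the relation $\Jtrs(a e_n g_n) = z\,\Jtrs(a)$ of \eqref{later for classical}.

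I do not expect any genuine obstacle: the argument is a one-line algebraic simplification in $\YH(q)$ followed by linearity of the trace. The only points requiring (minor) care are the idempotency $e_i^2=e_i$ and the commutation $e_ig_i=g_ie_i$ used to kill the $1-e_i$ term and fold $e_ig_ie_i$ back to $e_ig_i$, and the identity $\Jtrs(e_i)=E_D$ for every index $i$, which is already available from \eqref{ED} (it follows from the conjugacy of the $e_i$, Lemma~\ref{ei_conj}, together with rule (1) of $\Jtrs$).
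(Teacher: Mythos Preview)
Your proof is correct and is exactly the approach the paper indicates: the lemma is stated as an immediate consequence of Lemma~\ref{gipower}, and your computation fills in precisely those details. The key observation that multiplying the even-power formula by $e_i$ kills the $1-e_i$ term (via $e_i^2=e_i$) is the whole content, and you have carried it out cleanly.
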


Let us now consider a type of links studied in 
Subsection~\ref{asn2kb}.

\begin{lemma} Let $\a \in B_n$, $\b \in B_{n+m} \setminus B_{n+1}$ be such that $\widehat{\a}$, $\widehat{\b}$ are knots. 
Let $i \in \{1,\ldots,n+m-1\}$ and let $k \in \Z$. Then
$$\Jtrs(\a  \s_n^{2k} \b e_i) = \left\{
\begin{array}{ll}
\Jtrs(\a  \s_n^{2k} \b) + (E_D-1) \Jtrs(\a   \b) &\text{ if } i =n ; \\
\Jtrs(\a  \s_n^{2k} \b) &\text{ if } i\neq n.
\end{array}\right.$$
\end{lemma}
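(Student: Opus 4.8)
The plan is to separate the case $i\neq n$ --- where the two strands clasped by $e_i$ belong to the same knot component, so nothing new should happen --- from the case $i=n$, where $e_n$ clasps the two components and produces the extra term $(E_D-1)\Jtrs(\a\b)$. Throughout I will lean on the multiplicativity of $\Jtrs$ used in Subsection~\ref{asn2kb}: if $a$ is a word in $g_1,\dots,g_{n-1}$, $x$ a word in $g_n$, and $b$ a word in $g_{n+1},\dots,g_{n+m-1}$ (this last subalgebra being the shifted copy of $\YH(q)^{\rm (br)}$ on the strands $n+1,\dots,n+m$), then $\Jtrs(axb)=\Jtrs(a)\Jtrs(x)\Jtrs(b)$; in particular $\Jtrs(\a\b)=\Jtrs(\a)\Jtrs(\b)$, $\Jtrs(\a\s_n^{2k}\b)=\Jtrs(\a)\Jtrs(\s_n^{2k})\Jtrs(\b)$, and $\Jtrs(\s_n^{2k})=1+(B-1)E_D+A\,z$ with $A=\frac{q^{2k}-q^{-2k}}{q+q^{-1}}$, $B=\frac{q^{2k-1}+q^{-2k+1}}{q+q^{-1}}$ (the last equality from Lemma~\ref{gipower}(b) and the trace rules). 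I will also use that, since $\widehat\a$ and $\widehat\b$ are knots, Lemma~\ref{lem and conj} together with the conjugacy relations of Lemma~\ref{eik_conj} and the cyclicity of $\Jtrs$ give $\Jtrs(\a\,e_{j,k})=\Jtrs(\a)$ whenever $j,k\in\{1,\dots,n\}$ and $\Jtrs(\b\,e_{j,k})=\Jtrs(\b)$ whenever $j,k\in\{n+1,\dots,n+m\}$.

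For $i<n$: the idempotent $e_i$ commutes with $\b$ (which only involves $\s_j$ with $j\geq n+1>i+1$) and with $\s_n^{2k}$ --- directly when $i\leq n-2$, and when $i=n-1$ because $e_{n-1}$ commutes with $\s_n^2$ (an immediate check from~\eqref{id_2}). Thus $\a\s_n^{2k}\b e_i=\a e_i\,\s_n^{2k}\,\b$, and multiplicativity together with $\Jtrs(\a e_i)=\Jtrs(\a)$ gives $\Jtrs(\a\s_n^{2k}\b e_i)=\Jtrs(\a)\Jtrs(\s_n^{2k})\Jtrs(\b)=\Jtrs(\a\s_n^{2k}\b)$. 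For $i>n$: here $\b e_i$ already lies in the shifted subalgebra, so $\a\s_n^{2k}\b e_i=\a\cdot\s_n^{2k}\cdot(\b e_i)$ and multiplicativity together with $\Jtrs(\b e_i)=\Jtrs(\b)$ gives $\Jtrs(\a\s_n^{2k}\b e_i)=\Jtrs(\a)\Jtrs(\s_n^{2k})\Jtrs(\b)=\Jtrs(\a\s_n^{2k}\b)$.

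For $i=n$ I would expand $\s_n^{2k}$ by Lemma~\ref{gipower}(b) as $g_n^{2k}=1+(B-1)e_n+A\,e_ng_n$, giving
$$
\Jtrs(\a\s_n^{2k}\b e_n)=\Jtrs(\a\b e_n)+(B-1)\Jtrs(\a e_n\b e_n)+A\,\Jtrs(\a e_ng_n\b e_n),
$$
and evaluate the three terms. The first equals $E_D\Jtrs(\a\b)$ by Lemma~\ref{trdis}, since strands $n$ and $n+1$ lie in different components of the split link $\widehat{\a\b}$. For the other two I would push the trailing $e_n$ through $\b$: as $\widehat\b$ is a knot, relation~\eqref{id_2} rewrites $\b e_n$ as $e_{n,q}\b$ with $q\in\{n+2,\dots,n+m\}$, and $e_{n,n+1}e_{n,q}=e_{n,n+1}e_{n+1,q}$ (immediate from~\eqref{good to know}, cf.~Lemma~\ref{many_ei}), with $e_{n+1,q}$ in the shifted subalgebra; combining this with multiplicativity, the identities of the first paragraph, and --- for the third term --- the fact that a single crossing $g_n$ merges $\widehat\a$ and $\widehat\b$ into the single knot $\widehat{\a g_n\b}$, one should obtain $\Jtrs(\a e_n\b e_n)=E_D\Jtrs(\a\b)$ and $\Jtrs(\a e_ng_n\b e_n)=z\Jtrs(\a\b)$. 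Substituting gives $\Jtrs(\a\s_n^{2k}\b e_n)=(BE_D+Az)\Jtrs(\a\b)$, which coincides with $\Jtrs(\a\s_n^{2k}\b)+(E_D-1)\Jtrs(\a\b)=\Jtrs(\a\b)\bigl(1+(B-1)E_D+Az+E_D-1\bigr)$, as desired.

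The step I expect to be the main obstacle is the evaluation of $\Jtrs(\a e_ng_n\b e_n)$: the idempotent $e_n$ commutes with neither $\a$ (which for $n\geq2$ must contain $\s_{n-1}$) nor $\b$ (which must contain $\s_{n+1}$), so bringing the expression to a form where multiplicativity applies will require careful bookkeeping with the conjugacy relations of Lemmas~\ref{ei_conj} and~\ref{eik_conj}, the commutativity~\eqref{id_1}, and the cyclicity of $\Jtrs$, repeatedly exploiting that $\widehat\a$, $\widehat\b$ and $\widehat{\a g_n\b}$ are knots.
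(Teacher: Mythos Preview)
Your treatment of the cases $i\neq n$ is correct and matches the paper's argument almost verbatim (the paper groups the factors slightly differently, writing $\Jtrs(\a e_i)\Jtrs(\s_n^{2k}\b)$ for $i<n$ and $\Jtrs(\a\s_n^{2k})\Jtrs(\b e_i)$ for $i>n$, but the content is identical).

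For $i=n$, however, you take a substantially longer route than the paper. The paper does not expand $g_n^{2k}$ at all: it simply asserts the factorization
\[
\Jtrs(\a\,\s_n^{2k}\,\b\,e_n)=\Jtrs(\a)\,\Jtrs(\s_n^{2k}e_n)\,\Jtrs(\b)
\]
and then applies the preceding lemma $\Jtrs(\s_n^{2k}e_n)=\Jtrs(\s_n^{2k})+E_D-1$ (equation~\eqref{s_i^{2k}}). That is the entire proof of this case. The point you are missing is that the very multiplicativity you invoke --- $\Jtrs(\a\,x\,\b)=\Jtrs(\a)\Jtrs(x)\Jtrs(\b)$ --- extends from $x$ a word in $g_n$ to $x$ any element of the ``middle'' subalgebra generated by $g_n$ and $e_n$; indeed for $q\neq\pm1$ this is automatic since $e_n$ is a polynomial in $g_n$ by Remark~\ref{conj braids and ties}. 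With that in hand, $x=\s_n^{2k}e_n$ is allowed and the result is immediate.

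Your three-term expansion is not wrong --- the values you claim for the three pieces are correct, and your sketch for the ``obstacle'' term $\Jtrs(\a e_ng_n\b e_n)$ via $\b e_n=e_{n,q}\b$, the identity $e_ne_{n,q}=e_ne_{n+1,q}$, and the knot hypothesis on $\widehat\b$ is sound and can be completed with the tools you list. But this obstacle is entirely self-inflicted: by not extending the multiplicativity to include $e_n$, you force yourself to disentangle $e_n$ from $\a$ and $\b$ by hand, which is precisely what the factorization does for you in one stroke.
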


\begin{proof} 
First, we have
$$\begin{array}{rcl}
\Jtrs(\a  \s_n^{2k} \b e_n) & = & \Jtrs(\a) \Jtrs( \s_n^{2k} e_n) \Jtrs(\b) \\ 
&  \stackrel{(\ref{s_i^{2k}})}{=} &\Jtrs(\a) \Jtrs( \s_n^{2k} ) \Jtrs(\b) + (E_D-1)  \Jtrs(\a)\Jtrs(\b)\\  
&= & \Jtrs(\a  \s_n^{2k} \b) + (E_D-1) \Jtrs(\a   \b). 
\end{array}$$
Now, if $i<n$, then
$$\Jtrs(\a  \s_n^{2k} \b e_i) =  \Jtrs(\a e_i) \Jtrs( \s_n^{2k}\b) \stackrel{(\ref{good for knots})}{=} \Jtrs(\a) \Jtrs( \s_n^{2k}\b) =
 \Jtrs(\a  \s_n^{2k} \b).$$
 Similarly, if $i>n$, then
  $$\Jtrs(\a  \s_n^{2k} \b e_i) =  \Jtrs(\a \s_n^{2k})\Jtrs(\b e_i) \stackrel{(\ref{good for knots})}{=} \Jtrs(\a\s_n^{2k}) \Jtrs( \b) =
 \Jtrs(\a  \s_n^{2k} \b).$$
\end{proof}

\begin{rem}
\rm Using Lemma \ref{trdis}, we can obtain a similar result for all links of the form $\widehat{\a \s_n^{2k} \b}$, where $\widehat{\a}$ and $\widehat{\b}$ are disjoint unions of knots.
\end{rem}

\subsection{A special skein relation for links}\label{special_skein}
The behaviour of $\Jtrs$ on the elements $\a e_i$ explored 
in Subsection~\ref{aei} makes the search for a skein relation for the invariants $\Theta_d$ difficult, since $e_i$ does not have by itself a topological interpretation on classical links. However, we shall show that there exists a (non-defining) skein relation for the invariants $\Theta_d$, which can only be applied on crossings involving two different components. The result is  proved via the invariants $\Phi_{d,D}$. Specifically, we have the following.

\begin{prop}\label{skein_ei}
Let $\b \in  {\mathcal F}_n$ and $i \in \{1,\ldots,n-1\}$. Let
$$L_+ = \widehat{\b \s_i},\,\,\,\,\,L_-= \widehat{\b \s_i^{-1}} \,\,\,\,\,\text{and}\,\,\, \,\,L_0 = \widehat{\b}.$$
Suppose we apply the skein relation \eqref{framed_skein} of $\Phi_{d,D}$  on $L_+$ on the crossing $\s_i$ and that the $i$-th and $(i+1)$-st strands (at the region of the crossing) belong to different components. Then the skein relation reduces to the skein relation of the Homflypt polynomial $P=P(q,\lambda_D)$:
\begin{equation}\label{new skein}
\frac{1}{\sqrt{\lambda_D}} \Phi_{d,D}(L_+) - \sqrt{\lambda_D} \Phi_{d,D}(L_-) = (q-q^{-1}) \Phi_{d,D}(L_0),
\end{equation}
see Figure~\ref{fig_skein_ei}. Furthermore, if we take $\b \in B_n$ as a framed braid with all framings zero, then the above skein relation of $\Phi_{d,D}$ also holds for the invariants $\Theta_d$, since it involves only classical links:
\begin{equation}\label{new skein delta}
\frac{1}{\sqrt{\lambda_D}} \Theta_d(L_+) - \sqrt{\lambda_D} \Theta_d(L_-) = (q-q^{-1}) \Theta_d(L_0),
\end{equation}
\end{prop}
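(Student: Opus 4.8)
The plan is to take the framed skein relation \eqref{framed_skein} for $\Phi_{d,D}$ as given and to show that, under the stated hypothesis on the crossing $\s_i$, its right-hand side $\frac{q-q^{-1}}{d}\sum_{s=0}^{d-1}\Phi_{d,D}(\widehat{\b\,t_i^{s}t_{i+1}^{d-s}})$ collapses to $(q-q^{-1})\Phi_{d,D}(L_0)$, where $L_0=\widehat{\b}$; this is precisely \eqref{new skein}, and \eqref{new skein delta} then follows at once by restriction. Throughout, note that all of the framed links $\widehat{\b}$ and $\widehat{\b\,t_i^{s}t_{i+1}^{d-s}}$ have the same number of strands $n$ and the same braiding-exponent sum $\epsilon(\b)$, so for each of them $\Phi_{d,D}$ equals $\Lambda_D^{n-1}(\sqrt{\lambda_D})^{\epsilon(\b)}$ times the corresponding value of $\Jtrs$.

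First I would restate the hypothesis combinatorially. The components of $L_+=\widehat{\b\s_i}$ are recorded by the underlying permutation $\pi(\b)\,s_i$ of $\b\s_i$, and multiplying a permutation by the transposition $s_i=(i,i+1)$ either merges the two cycles through $i$ and $i+1$ or splits one of them; hence ``the $i$-th and $(i+1)$-st strands lie in different components of $L_+$'' is equivalent to ``$i$ and $i+1$ lie in the same cycle of $\pi(\b)$'', i.e.\ in the same component of $L_0=\widehat{\b}$. In particular, in each framed link $\widehat{\b\,t_i^{s}t_{i+1}^{d-s}}$ the two arcs carrying the inserted framings $s$ and $d-s$ lie on one and the same component.

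The crux is then the identity $\Jtrs(\b\,t_i^{s}t_{i+1}^{d-s})=\Jtrs(\b)$ for every $s=0,1,\dots,d-1$. Geometrically this says that one may slide the two inserted framings $s$ and $d-s$ around their common component and combine them into a multiple of $d$, which is invisible to $\Phi_{d,D}$ because $t_j^{d}=1$ in $\YH(q)$; algebraically it is cleanest to observe that, since $i$ and $i+1$ lie in one cycle of $\pi(\b)$, the framing element $t_i^{s}t_{i+1}^{d-s}$ lies in the image of the map $\delta\mapsto \delta^{-1}\b^{-1}\delta\b$ on framing elements — the obstruction to solving this congruence for $\delta$ is carried by the sums of exponents over the cycles of $\pi(\b)$, and these all vanish modulo $d$ here. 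Consequently $\b\,t_i^{s}t_{i+1}^{d-s}$ is conjugate to $\b$ inside $\mathcal F_{d,n}$, so $\Jtrs(\b\,t_i^{s}t_{i+1}^{d-s})=\Jtrs(\b)$ by conjugation-invariance of $\Jtrs$ (rule (1) of Definition~\ref{trdD}), and hence $\Phi_{d,D}(\widehat{\b\,t_i^{s}t_{i+1}^{d-s}})=\Phi_{d,D}(L_0)$. Averaging over $s$ — equivalently, recalling $e_i=\frac1d\sum_{s=0}^{d-1}t_i^{s}t_{i+1}^{d-s}$, this is the statement $\Jtrs(\b e_i)=\Jtrs(\b)$ — and substituting into \eqref{framed_skein} yields \eqref{new skein}; the same conclusion also follows directly from $\Jtrs(\b e_i)=\Jtrs(\b)$ via $g_i^{-1}=g_i-(q-q^{-1})e_i$ of \eqref{invrs}. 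Finally, if $\b\in B_n$ is taken with all framings zero, then $L_+,L_-,L_0$ are all classical links, and since $\Theta_d$ is by definition the restriction of $\Phi_{d,D}$ to classical links, \eqref{new skein} specializes verbatim to \eqref{new skein delta}.

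The step I expect to be the main obstacle is this last identity $\Jtrs(\b\,t_i^{s}t_{i+1}^{d-s})=\Jtrs(\b)$, that is, making precise and rigorous the passage ``slide the framings around the shared component''. It rests on two things: the easy observation that $\Phi_{d,D}$ (and $\Jtrs$) only see the framing of each component modulo $d$, which is immediate from $t_j^{d}=1$ in $\YH(q)$; and the combinatorial fact that the hypothesis — $i$ and $i+1$ in the same cycle of $\pi(\b)$ — is exactly what allows the conjugating framing element $\delta$ to be found. Some care is also needed to match conventions: one must identify $L_0$ as the \emph{oriented} smoothing of the crossing $\s_i$ (in open-braid form, simply the deletion of $\s_i$, giving $\widehat{\b}$), and carry out the cycle-structure bookkeeping that turns the hypothesis about components of $L_+$ into the ``same cycle of $\pi(\b)$'' condition used in the conjugation argument.
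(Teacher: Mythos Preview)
Your proof is correct and follows essentially the same idea as the paper. The paper argues topologically: since the $i$-th and $(i+1)$-st strands of $L_s=\widehat{\b\,t_i^{s}t_{i+1}^{d-s}}$ lie on the same component, the framed braids $\b\,t_i^{s}t_{i+1}^{d-s}$ and $\b\,t_i^{s+d-s}=\b$ close to the same framed link, so $\Phi_{d,D}(L_s)=\Phi_{d,D}(L_0)$ for every $s$ and the average collapses; you prove the same identity algebraically by exhibiting a framing element $\delta$ with $\delta\b\delta^{-1}=\b\,t_i^{s}t_{i+1}^{d-s}$ in $\mathcal{F}_{d,n}$, which is just the group-theoretic avatar of the paper's ``slide the framings along the common component'' and gives $\Jtrs(\b\,t_i^{s}t_{i+1}^{d-s})=\Jtrs(\b)$ directly.
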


\begin{figure}[H]
\begin{center}
\begin{picture}(216,80)
\put(20,53){$\beta$}

\qbezier(6,70)(6,78)(6,86) 
\qbezier(25,70)(25,78)(25,86)
\qbezier(44,70)(44,78)(44,86)

\color[RGB]{0,179,51} \qbezier(5,20)(4,22)(5,24) 
\color[RGB]{153,0,230} \qbezier(25,20)(26,22)(25,24) 

\color[RGB]{0,0,0}

\qbezier(0,44)(0,59)(0,70)
\qbezier(50,44)(50,59)(50,70)

\qbezier(0,70)(25,70)(50,70)
\qbezier(0,44)(25,44)(50,44)

\color[RGB]{0,179,51} \qbezier(5,24)(5,29)(15,34)
\qbezier(15,34)(25,39)(25,44)
\color[RGB]{153,0,230} \qbezier(25,24)(25,28)(20,31)
\qbezier(10,37)(5,40)(5,44)

\color[RGB]{0,0,0} \qbezier(44,20)(44,32)(44,44)

\put(100,53){$\beta$}

\qbezier(86,70)(86,78)(86,86) 
\qbezier(105,70)(105,78)(105,86)
\qbezier(124,70)(124,78)(124,86)

\color[RGB]{0,179,51} \qbezier(85,20)(84,22)(85,24) 
\color[RGB]{153,0,230} \qbezier(105,20)(106,22)(105,24) 

\color[RGB]{0,0,0}

\qbezier(80,44)(80,59)(80,70)
\qbezier(130,44)(130,59)(130,70)

\qbezier(80,70)(105,70)(130,70)
\qbezier(80,44)(105,44)(130,44)

\color[RGB]{0,179,51} \qbezier(85,24)(85,28)(90,31)
\qbezier(100,37)(105,40)(105,44)
\color[RGB]{153,0,230} \qbezier(105,24)(105,29)(95,34)
\qbezier(95,34)(85,39)(85,44)

\color[RGB]{0,0,0} \qbezier(124,20)(124,32)(124,44)

\put(180,53){$\beta$}

\qbezier(166,70)(166,78)(166,86) 
\qbezier(185,70)(185,78)(185,86)
\qbezier(204,70)(204,78)(204,86)

\qbezier(160,44)(160,59)(160,70)
\qbezier(210,44)(210,59)(210,70)

\qbezier(160,70)(185,70)(210,70)
\qbezier(160,44)(215,44)(210,44)

\color[RGB]{153,0,230} \qbezier(166,20)(166,32)(166,44) 
\qbezier(185,20)(185,32)(185,44)
\color[RGB]{0,0,0} \qbezier(204,20)(204,32)(204,44)

\put(5,5){\tiny{$L_{+}=\widehat{\beta \sigma_1}$}}
\put(80,5){\tiny{$L_{-}=\widehat{\beta {\sigma_1}^{-1}}$}}
\put(165,5){\tiny{$L_0=\widehat{\beta}$}}
\end{picture}
\caption{The links in the special 	skein relation in open braid form.}
\label{fig_skein_ei}
\end{center}
\end{figure}
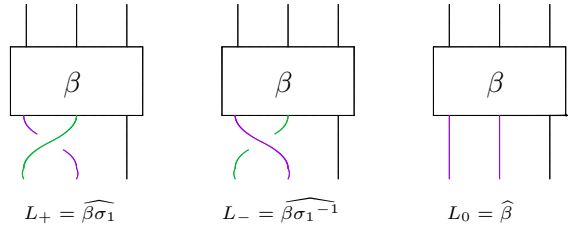

\begin{proof}
The skein relation of $\Phi_{d,D}$ is
$$
\frac{1}{\sqrt{\lambda_D}} \Phi_{d,D}(L_+) - \sqrt{\lambda_D} \Phi_{d,D}(L_-) = \frac{q-q^{-1}}{d} \sum_{s=0}^{d-1} \Phi_{d,D}(L_s),
$$
where $L_s = \widehat{\b t_i^s t_{i+1}^{d-s}}$ (recall Figure~\ref{fig_framed_skein}).  Since the $i$-th and $(i+1)$-st strands belong to different components of $L_+$, then they must belong to the same component of $L_s$ (otherwise, the crossing $\s_i$ would put them in the same component of $L_+$). Thus, the link $L_s$ can be also represented by the braid where the framing of the $(i+1)$-st strand is added to the framing of the $i$-th strand. Hence, both framed braids $\b t_i^s t_{i+1}^{d-s}$ and $\b t_i^{s + d-s}=\b$ represent $L_s$, for all $s=0,\ldots,d-1$. We conclude that
$$
\frac{1}{d} \sum_{s=0}^{d-1} \Phi_{d,D}(L_s)  = \frac{1}{d} \sum_{s=0}^{d-1} \Phi_{d,D}(\widehat{\b}) = \Phi_{d,D}(\widehat{\b}) =  \Phi_{d,D}(L_0).
$$
\end{proof}

\begin{rem} \rm \label{skein_pm2}
Another way of writing the skein relation \eqref{new skein} for $\Phi_{d,D}$ is the following:
\begin{equation}\label{new skein 2}
\Phi_{d,D}(L_\pm) = \lambda_D^{\pm 1} \Phi_{d,D}(L_\mp) \pm (q-q^{-1}) \sqrt{\lambda_D}^{\pm 1} \Phi_{d,D}(L_0).
\end{equation}
\end{rem}

\begin{rem} \rm \label{oldquadr6}
With the old quadratic relations, we would not have been able to derive the skein relation of Proposition~\ref{skein_ei} for the invariants $\Delta_d$. The reason is that the skein relation \eqref{framed_skein_old} for the invariants $\Gamma_{d,D}$ contains also diagrams $L_{s\times}$ of $L_+$ with framings, which cannot be collected together since they belong to different components.
\end{rem}

\bigbreak
Proposition~\ref{skein_ei} allows us to prove that the elements $\b e_i$ have a specific behaviour under the trace $\Jtrs$ when the $i$-th and $(i+1)$-st strands belong to the same component of the link $\widehat{\b}$. 

\begin{prop}\label{trace_ei}
Let $\b \in {\mathcal F}_n$ and $i \in \{1,\ldots,n-1\}$. If the $i$-th and $(i+1)$-st strands of $\b$ belong to the same component of the link  $\widehat{\b}$, then $\Jtrs(\b e_i) = \Jtrs(\b)$. 
\end{prop}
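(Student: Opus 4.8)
The plan is to read the identity off from the special skein relation of Proposition~\ref{skein_ei}, which applies here thanks to the following observation on link components. If the $i$-th and $(i+1)$-st strands of $\b$ lie in the \emph{same} component of $\widehat{\b}$, then in $L_+:=\widehat{\b\s_i}$ these two strands lie in \emph{different} components. Indeed, $L_0=\widehat{\b}$ is obtained from $L_+$ by the oriented smoothing of the crossing $\s_i$, and the oriented smoothing of a crossing whose two arcs belong to a single component of $L_+$ splits that component into two; equivalently, in terms of the projection $\pi\colon B_n\to\mathfrak{S}_n$, the permutations $\pi(\b\s_i)$ and $\pi(\b)$ differ by the transposition $(i,i+1)$, and multiplying by this transposition splits the cycle of $\pi(\b)$ through $i$ and $i+1$ into two whenever $i$ and $i+1$ lie in a common cycle. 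Hence the hypothesis of Proposition~\ref{trace_ei} is exactly what is needed to apply Proposition~\ref{skein_ei} to the Conway triple $L_+=\widehat{\b\s_i}$, $L_-=\widehat{\b\s_i^{-1}}$, $L_0=\widehat{\b}$.

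It then remains to unwind the resulting relation. Using the defining formula $\Phi_{d,D}(\widehat{\a})=\Lambda_D^{n-1}(\sqrt{\lambda_D})^{\epsilon(\a)}\,\Jtrs(\a)$ of Theorem~\ref{invariant Phi}, together with $\epsilon(\b\s_i)=\epsilon(\b)+1$ and $\epsilon(\b\s_i^{-1})=\epsilon(\b)-1$, the skein relation \eqref{new skein} of $\Phi_{d,D}$ for this triple reduces, after cancelling the common factor $\Lambda_D^{n-1}(\sqrt{\lambda_D})^{\epsilon(\b)}$, to
$$
\Jtrs(\b g_i)-\Jtrs(\b g_i^{-1})=(q-q^{-1})\,\Jtrs(\b).
$$
On the other hand, the inverse relation \eqref{invrs} gives $g_i-g_i^{-1}=(q-q^{-1})e_i$, so by linearity of $\Jtrs$ the left-hand side equals $(q-q^{-1})\,\Jtrs(\b e_i)$. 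Cancelling $q-q^{-1}$ then yields $\Jtrs(\b e_i)=\Jtrs(\b)$, as claimed.

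The only step requiring genuine care is the component bookkeeping in the first paragraph: one must check that ``$i$-th and $(i+1)$-st strands in the same component of $\widehat{\b}$'' is precisely the condition ``$i$-th and $(i+1)$-st strands in different components of $\widehat{\b\s_i}$ at the crossing $\s_i$'' under which Proposition~\ref{skein_ei} was proved; everything afterwards is a direct substitution. We remark that the argument is valid verbatim for all $\b\in{\mathcal F}_n$, and that it contains Lemma~\ref{lem and conj} as the special case in which $\widehat{\b}$ is a knot, while, applied componentwise, it recovers the first case of Lemma~\ref{trdis}.
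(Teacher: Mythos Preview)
Your proof is correct and takes essentially the same approach as the paper: both apply Proposition~\ref{skein_ei} to the Conway triple $(L_+,L_-,L_0)=(\widehat{\beta\sigma_i},\widehat{\beta\sigma_i^{-1}},\widehat{\beta})$ after the same component-counting observation, and then pass from $\Phi_{d,D}$ to the trace. The only difference is that the paper uses directly the identity $\frac{1}{d}\sum_{s}\Phi_{d,D}(L_s)=\Phi_{d,D}(L_0)$ established inside the proof of Proposition~\ref{skein_ei} (so that $\Jtrs(\beta e_i)=\Jtrs(\beta)$ drops out without dividing by $q-q^{-1}$), whereas you invoke the resulting skein relation \eqref{new skein} and then unwind via $g_i-g_i^{-1}=(q-q^{-1})e_i$.
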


\begin{proof}
Consider the framed link $\widehat{\b \s_i}$. The $i$-th and $(i+1)$-st strands of $\widehat{\b \s_i}$ belong to different components. By Proposition~\ref{skein_ei}, we have:
$$
\Phi_{d,D}(L_0) = \frac{1}{d} \sum_{s=0}^{d-1} \Phi_{d,D}(L_s),
$$
where $L_s = \widehat{\b t_i^s t_{i+1}^{d-s}}$ and $L_0 = \widehat{\b}$. Since $L_s$ and $L_0$, represented as framed braids, have the same number of strands and the same exponent sum, the definition of the invariant $\Phi_{d,D}$ yields
$$
\Jtrs(\b) = \frac{1}{d} \sum_{s=0}^{d-1} \Jtrs(\b t_i^s t_{i+1}^{d-s}),
$$
Due to the linearity of $\Jtrs$, we obtain the desired result.
\end{proof}

\begin{cor} \label{trace_ei_cor}
Let $\b \in {\mathcal F}_n$ and $i,k \in \{1,\ldots,n\}$. If the $i$-th and $k$-th strands of $\b$ belong to the same component of the link  $\widehat{\b}$, then $\Jtrs(\b e_{i,k}) = \Jtrs(\b)$. 
\end{cor}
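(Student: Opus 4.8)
The plan is to reduce the statement to the case of \emph{adjacent} strands, which is precisely Proposition~\ref{trace_ei}, by conjugating $\b$ with a braiding element that brings the $k$-th strand next to the $i$-th one. First I would dispose of the degenerate cases: since $e_{i,k}=e_{k,i}$ and $e_{i,i}=1$, I may assume $i<k$, and when $k=i+1$ the claim is exactly Proposition~\ref{trace_ei}; so assume from now on that $k\geq i+2$.

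Next, set $w:=\s_{k-1}\s_{k-2}\cdots\s_{i+1}\in B_n$ and let $g_w:=g_{k-1}g_{k-2}\cdots g_{i+1}\in\YH(q)^{\rm (br)}$ be its image. By Lemma~\ref{eik_conj} applied with $j=i+1$ (so that $e_{i,j}=e_i$), we have $e_{i,k}=g_w\,e_i\,g_w^{-1}$ in $\YH(q)$. Using rule (1) of Definition~\ref{trdD} (cyclicity of $\Jtrs$) twice, we obtain
$$
\Jtrs(\b\, e_{i,k})=\Jtrs\big(\b\, g_w\,e_i\,g_w^{-1}\big)=\Jtrs\big(g_w^{-1}\,\b\,g_w\,e_i\big)
\qquad\text{and}\qquad
\Jtrs\big(g_w^{-1}\,\b\,g_w\big)=\Jtrs(\b).
$$
Hence it suffices to prove that $\Jtrs(\b'\,e_i)=\Jtrs(\b')$, where $\b':=g_w^{-1}\,\b\,g_w$ is the image under $\g$ of the framed braid $w^{-1}\b w\in{\mathcal F}_n$. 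For this I would apply Proposition~\ref{trace_ei} with the index $i$; the input it requires is that the $i$-th and $(i+1)$-st strands of $w^{-1}\b w$ lie on the same component of $\widehat{w^{-1}\b w}$.

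Verifying that last point is the only nonroutine step, and the main obstacle. The closure $\widehat{w^{-1}\b w}$ is ambient isotopic to $\widehat{\b}$ via the usual conjugation (Markov) move that slides $w$ around the braid axis to cancel $w^{-1}$; under this isotopy the strand of $w^{-1}\b w$ occupying top position $p$ is identified with the strand of $\b$ occupying the position reached by $p$ after passing through the top block $w^{-1}=\s_{i+1}^{-1}\s_{i+2}^{-1}\cdots\s_{k-1}^{-1}$. A direct trace through $w^{-1}$ shows that position $i$ is left fixed, while position $i+1$ is carried successively to $i+2,i+3,\ldots,k$. Thus the $i$-th and $(i+1)$-st strands of $w^{-1}\b w$ correspond to the $i$-th and $k$-th strands of $\b$, and so they lie on the same component of $\widehat{w^{-1}\b w}$ if and only if the $i$-th and $k$-th strands of $\b$ lie on the same component of $\widehat{\b}$ — which is the hypothesis. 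Granting this, Proposition~\ref{trace_ei} gives $\Jtrs(\b'\,e_i)=\Jtrs(\b')$, and combining with the two displayed identities yields $\Jtrs(\b\,e_{i,k})=\Jtrs(\b)$, as desired.
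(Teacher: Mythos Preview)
Your proof is correct and follows essentially the same strategy as the paper: conjugate by a braiding element that carries $e_{i,k}$ to $e_i$, use cyclicity of $\Jtrs$, verify that the $i$-th and $(i+1)$-st strands of the conjugated braid lie on the same component, and then apply Proposition~\ref{trace_ei}. The paper uses the conjugating element $\s_{k-1}\cdots\s_i$ (which differs from your $w=\s_{k-1}\cdots\s_{i+1}$ only by the harmless extra factor $\s_i$, since $g_i e_i g_i^{-1}=e_i$) and asserts the component statement without the explicit permutation tracking you provide.
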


\begin{proof}We have
$$
\Jtrs(\b e_{i,k}) = \Jtrs(\b \s_{k-1} \ldots \s_i e_i (\s_{k-1} \ldots \s_i)^{-1}) = \Jtrs((\s_{k-1} \ldots \s_i)^{-1} \b \s_{k-1} \ldots \s_i e_i).
$$
The $i$-th and $(i+1)$-st strands of the braid $(\s_{k-1} \ldots \s_i)^{-1} \b \s_{k-1} \ldots \s_i$ belong to the same component
of its closure (which is equivalent to $\widehat{\b}$), so by Proposition~\ref{trace_ei}, we obtain
$$
\Jtrs((\s_{k-1} \ldots \s_i)^{-1} \b \s_{k-1} \ldots \s_i e_i)= \Jtrs((\s_{k-1} \ldots \s_i)^{-1} \b \s_{k-1} \ldots \s_i )= \Jtrs(\b),
$$
as desired.
\end{proof}

The most important consequence of the special skein relation \eqref{new skein delta} is the following: let $L = \widehat{\a}$ be an arbitrary link, in the form of a closed braid. The braid $\a$ can be written as a product of a pure braid followed by a standard permutation braid. Moreover, by conjugation, this can be done so that each component of the link is composed of consecutive strands of the braid. Now, writing the pure braid as a word of the generators of the pure braid group makes recognizable the clasps between any two components of the link. Applying the special skein relation of Proposition~\ref{skein_ei} on a clasp, it will result in a diagram with a local Reidemeister II move and the diagram with the two different components merged into one, see Figure~\ref{fig_clasps}.

\begin{figure}[H]
\begin{center}
\begin{picture}(305,80)
\put(0,40){$\dfrac{1}{\sqrt{\lambda_D}}$}

\put(50,53){$\alpha$}

\qbezier(36,70)(36,78)(36,86)
\qbezier(55,70)(55,78)(55,86)
\qbezier(74,70)(74,78)(74,86)

\color[RGB]{0,179,51} \qbezier(35,20)(34,22)(35,24) 
\color[RGB]{153,0,230} \qbezier(55,20)(56,22)(55,24) 

\color[RGB]{0,0,0}

\qbezier(30,44)(30,59)(30,70)
\qbezier(80,44)(80,59)(80,70)

\qbezier(30,70)(55,70)(80,70)
\qbezier(30,44)(55,44)(80,44)

\color[RGB]{0,179,51} \qbezier(35,24)(35,29)(45,34)
\qbezier(45,34)(55,39)(55,44)
\color[RGB]{153,0,230} \qbezier(55,24)(55,28)(50,31)
\qbezier(40,37)(35,40)(35,44)

\qbezier(35,0)(35,5)(45,10)
\qbezier(45,10)(55,15)(55,20)
\color[RGB]{0,179,51} \qbezier(55,0)(55,4)(50,7)
\qbezier(40,13)(35,16)(35,20)

\color[RGB]{0,0,0}

\qbezier(74,0)(74,32)(74,44)

\put(90,40){$= \sqrt{\lambda_D}$}

\put(150,53){$\alpha$}

\qbezier(136,70)(136,78)(136,86) 
\qbezier(155,70)(155,78)(155,86)
\qbezier(174,70)(174,78)(174,86)

\color[RGB]{0,179,51} \qbezier(135,20)(134,22)(135,24) 
\color[RGB]{153,0,230} \qbezier(155,20)(156,22)(155,24) 

\color[RGB]{0,0,0}

\qbezier(130,44)(130,59)(130,70)
\qbezier(180,44)(180,59)(180,70)

\qbezier(130,70)(155,70)(180,70)
\qbezier(130,44)(155,44)(180,44)

\color[RGB]{0,179,51}\qbezier(135,24)(135,28)(140,31)
\qbezier(150,37)(155,40)(155,44)
\color[RGB]{173,0,230} \qbezier(155,24)(155,29)(145,34)
\qbezier(145,34)(135,39)(135,44)

\color[RGB]{0,0,0}

\color[RGB]{173,0,230} \qbezier(135,0)(135,5)(145,10)
\qbezier(145,10)(155,15)(155,20)
\color[RGB]{0,179,51} \qbezier(155,0)(155,4)(150,7)
\qbezier(140,13)(135,16)(135,20)

\color[RGB]{0,0,0}

\qbezier(174,0)(174,32)(174,44)

\put(190,40){$+ (q-q^{-1})$}
\put(270,53){$\alpha$}

\qbezier(256,70)(256,78)(256,86) 
\qbezier(275,70)(275,78)(275,86)
\qbezier(294,70)(294,78)(294,86)

\qbezier(250,44)(250,59)(250,70) 
\qbezier(300,44)(300,59)(300,70)

\qbezier(250,70)(275,70)(300,70)
\qbezier(250,44)(275,44)(300,44)

\color[RGB]{153,0,230} \qbezier(255,0)(255,5)(265,10)
\qbezier(265,10)(275,15)(275,20)
\qbezier(275,0)(275,4)(270,7)
\qbezier(260,13)(255,16)(255,20)

\qbezier(255,20)(255,32)(255,44) 
\qbezier(275,20)(275,32)(275,44)
\color[RGB]{0,0,0} \qbezier(294,0)(294,32)(294,44)

\end{picture}
\caption{Removing a clasp using the special skein relation.}
\label{fig_clasps}
\end{center}
\end{figure}
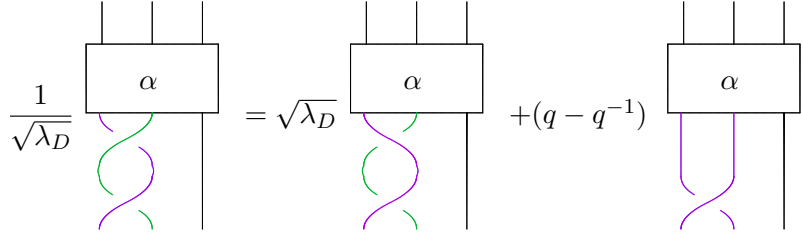

Then by induction on the number of clasps, we can decompose the initial link $L$ into sums of knots and disjoint unions of knots.

\subsection{Behaviour on $2$-component links} \label{behaviour2comp}
We now associate to every link $L$ a weighted graph $G_L$. A vertex of the graph corresponds to a component of the link, and an edge with weight $r$ corresponds to $r$ clasps  between the two components (either with positive or negative crossings). The edges of weight $0$ (which correspond to two unlinked components) are removed from the graph. For example, the graph \begin{picture}(32,12)\put(0,0){\usebox{\claspF}}\end{picture} describes a $2$-component link with $r$ clasps (see Figure~\ref{3clasps} for examples of links with $r=1$ and $r=3$). The clasps may or may not be consecutive. That is, the link cannot be recovered from the graph.

\begin{figure}[H]
\includegraphics[scale=0.15]{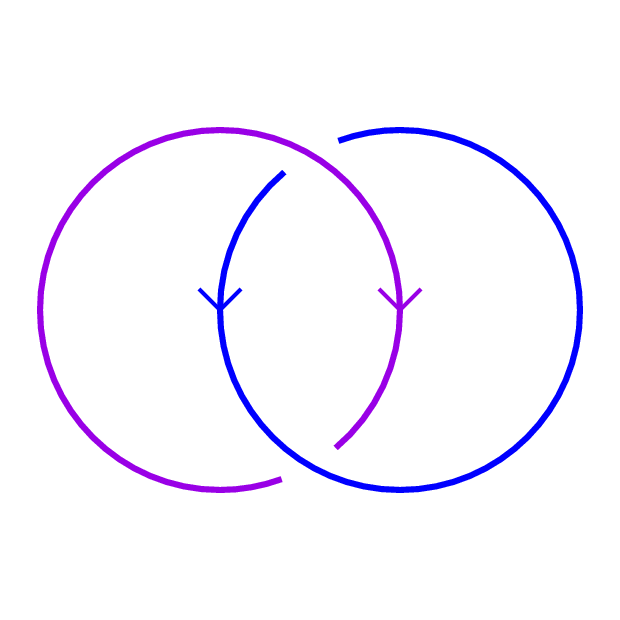} \qquad
\includegraphics[scale=0.45]{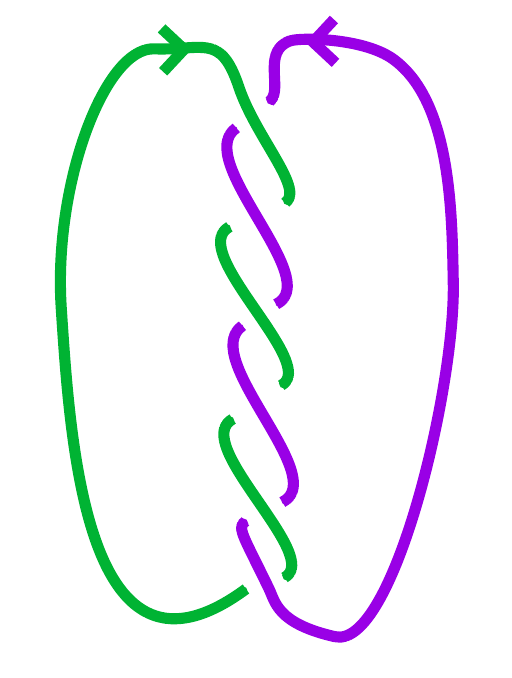}
\caption{The Hopf link \cite{chli} and a 2-component link with three negative clasps.}
\label{3clasps}
\end{figure}

Since the skein relation can be only used on crossings involving different components, it can be used to untie a component of a link. Then, by reducing to the case of $1$-component links, that is, the case of knots, we can compare the invariants $\Theta_d$ to the Homflypt polynomial $P$. 

First, we prove the following result for $2$-component links.

\begin{prop}\label{twocomp}
Let $L$ be a $2$-component link whose components are linked by $r$ clasps. 
Let $I$ be the disjoint union of these two components.
Then
$$
\Theta_d(L) = P(L) + \lambda_D^{\xi(L)} (E_D^{-1} - 1) P(I),
$$
or diagrammatically,
$$
\Theta_d(\text{\begin{picture}(32,12)\put(0,0){\usebox{\claspF}}\end{picture}}) = P(\text{\begin{picture}(32,12)\put(0,0){\usebox{\claspF}}\end{picture}}) + \lambda_D^{\xi(L)} (E_D^{-1} - 1) P(\bullet \, \bullet),
$$
where $\xi(L)$ is the sum of the signs of all clasps.
\end{prop}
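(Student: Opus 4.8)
The statement concerns a $2$-component link $L$ whose two components are joined by $r$ clasps, and I want to show that $\Theta_d(L) = P(L) + \lambda_D^{\xi(L)}(E_D^{-1}-1)P(I)$, where $I$ is the split union of the two components and $\xi(L)$ is the signed count of the clasps. The natural approach is induction on $r = |{\rm weight}(G_L)|$, peeling off clasps one at a time using the special skein relation \eqref{new skein delta} of Proposition~\ref{skein_ei}, which applies precisely because every crossing of a clasp is a crossing between the two distinct components.

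First I would dispose of the base case $r=0$: here $L$ is a split union of two knots, i.e. $L = I$, and $\xi(L)=0$, so the claim reads $\Theta_d(I) = P(I) + (E_D^{-1}-1)P(I) = E_D^{-1}P(I)$, which is exactly Theorem~\ref{disj_links} with $k=2$ (recall $E_D^{1-k} = E_D^{-1}$). So the base case is immediate from the disjoint-union result already proved.

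For the inductive step, suppose $L$ has $r \ge 1$ clasps, and bring it (by conjugation and Alexander/Markov moves, as in Subsection~\ref{special_skein}) into closed-braid form $\widehat{\beta \s_i^{\pm 1}}$ where the displayed crossing $\s_i^{\pm 1}$ is one strand of a chosen clasp and the two local strands lie on different components. Apply \eqref{new skein delta} at that crossing in the form of Remark~\ref{skein_pm2}: $\Theta_d(L) = \lambda_D^{\pm 1}\Theta_d(L') \pm (q-q^{-1})\sqrt{\lambda_D}^{\pm 1}\Theta_d(L_0)$, where the sign matches the sign of the clasp crossing. Here $L_0$ is the smoothing, which merges the two components into a single knot $K$, and $L'$ is the link obtained by switching that crossing. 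The crossing switch turns the chosen clasp into a trivial Reidemeister~II configuration (Figure~\ref{fig_clasps}), so $L'$ is isotopic to a $2$-component link with $r-1$ clasps whose disjoint-union link is still $I$, and with $\xi(L') = \xi(L) \mp 1$ (the removed clasp contributed $\pm 1$). By the induction hypothesis, $\Theta_d(L') = P(L') + \lambda_D^{\xi(L')}(E_D^{-1}-1)P(I)$. Since $L_0 = K$ is a knot, Theorem~\ref{conjHomflypt} gives $\Theta_d(K) = P(K)$. Substituting both into the skein relation and comparing with the Homflypt skein relation \eqref{skein of hom} applied to the same Conway triple $(L, L', K)$ — namely $P(L) = \lambda_D^{\pm 1}P(L') \pm (q-q^{-1})\sqrt{\lambda_D}^{\pm 1}P(K)$ — the terms $P(L') , P(K)$ assemble into $P(L)$, leaving
$$
\Theta_d(L) = P(L) + \lambda_D^{\pm 1}\cdot\lambda_D^{\xi(L')}(E_D^{-1}-1)P(I) = P(L) + \lambda_D^{\xi(L)}(E_D^{-1}-1)P(I),
$$
using $\xi(L') \pm 1 = \xi(L)$ according to the sign. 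The diagrammatic restatement is then just a transcription.

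\textbf{Main obstacle.} The delicate point is not the algebra but the topological bookkeeping: I must ensure that after the crossing switch the link $L'$ genuinely has \emph{one fewer clasp}, that $\xi$ behaves additively under this operation, and that the smoothing $L_0$ is honestly a single-component knot so that Theorem~\ref{conjHomflypt} applies. This requires arranging the braid so that a whole clasp (not a stray single crossing) is isolated, which is exactly the normal-form discussion of Subsection~\ref{special_skein} (pure braid followed by a permutation braid, components on consecutive strands); I would need to invoke that carefully and check that removing one clasp leaves the combinatorial type of $G_L$ with its weight reduced by one in absolute value while the underlying pair of knot components is unchanged up to isotopy. Once that geometric claim is pinned down, the inductive identity follows by the parallel use of the two skein relations as above.
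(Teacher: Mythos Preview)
Your proposal is correct and follows essentially the same route as the paper's proof: induction on the number of clasps, peeling off one clasp at a time via the special skein relation \eqref{new skein 2}, using Theorem~\ref{conjHomflypt} on the resulting knot and the induction hypothesis on the link with one fewer clasp, then matching terms against the Homflypt skein relation. The only cosmetic difference is that you start the induction at $r=0$ (invoking Theorem~\ref{disj_links} directly), whereas the paper starts at $r=1$; your choice makes the inductive step entirely uniform.
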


\begin{proof}
We proceed by induction on $r$, the number of clasps. For $r=1$, we may have either a positive or negative clasp. If we apply the skein relation \eqref{new skein 2} on one crossing of the clasp, we get a new link with the clasp removed and a knot (since the two components will be joined):
$$
\Theta_d(L) = \lambda_D^{\pm 1} \Theta_d(I) \pm (q-q^{-1}) \sqrt{\lambda_D}^{\pm 1} \Theta_d(K),
$$
where $K$ is a knot.
Since $I$ is a disjoint union of $2$ knots and $K$ is a knot, by Theorem \ref{conjHomflypt} and Theorem \ref{disj_links}, we get
$$
\Theta_d(L) = \lambda_D^{\pm 1} E_D^{-1} P(I) \pm (q-q^{-1}) \sqrt{\lambda_D}^{\pm 1} P(K).
$$
Applying the skein relation of the Homflypt polynomial on $K$ yields
$$
\Theta_d(L) = \lambda_D^{\pm 1} E_D^{-1} P(I) + P(L) - \lambda_D^{\pm 1} P(I)=P(L) + \lambda_D^{\pm 1} (E_D^{-1} - 1) P(I).
$$

Assume now that the statement holds for $2$-component links with up to $r-1$ clasps. We have
$$
\Theta_d(L) = \lambda_D^{\pm 1} \Theta_d(\tilde{L}) \pm (q-q^{-1}) \sqrt{\lambda_D}^{\pm 1} \Theta_d(K),
$$
where $\tilde{L}$ is obtained from $L$ with the removal of one clasp and $K$ is a knot.
Following the induction hypothesis,we obtain
$$
\Theta_d(L) = \lambda_D^{\pm 1} \left[P(\tilde{L}) + \lambda_D^{\xi(\tilde{L})} (E_D^{-1} - 1) P(I) \right] \pm (q-q^{-1}) \sqrt{\lambda_D}^{\pm 1} \Theta_d(K).
$$
Since $\xi(L) = \xi(\tilde{L}) \pm 1$ and  $K$ is a knot, we have
$$\Theta_d(L)= \lambda_D^{\pm 1}P(\tilde{L}) + \lambda_D^{\xi(L)} (E_D^{-1} - 1) P(I) + P(L) - \lambda_D^{\pm 1} P(\tilde{L})=
P(L) + \lambda_D^{\xi(L)} (E_D^{-1} - 1) P(I),
$$
as desired.
\end{proof}

Since the invariants $\Theta_d$ are topologically equivalent to 
the Homflypt polynomial on knots, it would be expected that the skein relation of the Homflypt polynomial holds also for the invariants $\Theta_d$ when applied on crossings involving the same component. However, this is not true for the case of knots. Namely:

\begin{cor}\label{no_skein}
The skein relation of the Homflypt polynomial does not hold for the invariants $\Theta_d$ if applied on a knot.
\end{cor}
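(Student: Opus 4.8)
The plan is to argue by contradiction, reducing the hypothetical skein relation on a self-crossing of a knot to the value of $\Theta_d$ on the Hopf link, which has already been computed at the beginning of this section. So suppose that the Homflypt skein relation \eqref{skein of hom}, read at the variables $(q,\lambda_D)$, held for $\Theta_d$ on every crossing belonging to a single component of a link; I will produce a single Conway triple violating it.

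First I would pick the triple obtained by resolving one of the three crossings of the trefoil: $L_+=\widehat{\s_1^3}$ (the trefoil), $L_-=\widehat{\s_1}$ (the unknot) and $L_0=\widehat{\s_1^2}=H$ (the Hopf link). Since $L_+$ is a knot, the resolved crossing is a self-crossing, so the assumption applies and gives
$$\frac{1}{\sqrt{\lambda_D}}\,\Theta_d(L_+)-\sqrt{\lambda_D}\,\Theta_d(L_-)=(q-q^{-1})\,\Theta_d(H).$$
Next, because $L_+$ and $L_-$ are knots, Theorem~\ref{conjHomflypt} (and the remark following it) gives $\Theta_d(L_\pm)=P(L_\pm)$ at the variables $(q,\lambda_D)$. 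Subtracting from the displayed identity the genuine Homflypt skein relation for the same triple,
$$\frac{1}{\sqrt{\lambda_D}}\,P(L_+)-\sqrt{\lambda_D}\,P(L_-)=(q-q^{-1})\,P(H),$$
and dividing by $q-q^{-1}\neq 0$, we would obtain $\Theta_d(H)=P(q,\lambda_D)(H)$. But the computation $\Jtrs(\s_1^2)=1-E_D+E_D\,\tau(\s_1^2)$ recorded at the start of Section~\ref{slinks} gives $\Theta_d(q,z)(H)\neq P(q,z/E_D)(H)=P(q,\lambda_D)(H)$ as soon as $E_D\neq 1$, i.e.\ for $d\geq 2$ (for $d=1$ one has $\Theta_1=P$ and there is nothing to prove). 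This contradiction establishes the corollary.

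I do not expect any genuine obstacle; the one point that needs care is the bookkeeping of the two pairs of variables $(q,\lambda_{\rm H})$ and $(q,\lambda_D)$ (equivalently $(q,z)$ and $(q,z/E_D)$), so that the Homflypt skein relation is applied at exactly the variables for which $\Theta_d$ agrees with $P$ on knots and for which the Hopf-link value was recorded. Alternatively, and even more quickly, one may observe that if the Homflypt skein relation held for $\Theta_d$ on same-component crossings as well, then---combined with Proposition~\ref{skein_ei}, which already gives it on crossings between different components---$\Theta_d$ would be a link invariant satisfying the full Homflypt skein relation at $(q,\lambda_D)$ and taking the value $1$ on the unknot, hence $\Theta_d=P$ by the uniqueness of the Homflypt polynomial, again contradicting the Hopf-link computation (or Proposition~\ref{twocomp}).
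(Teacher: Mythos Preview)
Your proof is correct, but it takes a different route from the paper. The paper does not argue by contradiction via a specific example; instead it takes an arbitrary knot $K_+$, writes $K_-$ (also a knot) and $K_0$ (a $2$-component link), and then uses Theorem~\ref{conjHomflypt} together with Proposition~\ref{twocomp} to compute directly that
\[
\frac{1}{\sqrt{\lambda_D}}\,\Theta_d(K_+)-\sqrt{\lambda_D}\,\Theta_d(K_-)
=(q-q^{-1})\bigl[\Theta_d(K_0)+\lambda_D^{\xi(K_0)}(E_D-1)\,\Theta_d(I)\bigr],
\]
where $I$ is the split version of $K_0$. This exhibits explicitly the nonzero defect term for every self-crossing of every knot (when $E_D\neq 1$), which is strictly more information than the bare statement of the corollary. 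Your argument, by contrast, produces a single Conway triple (trefoil/unknot/Hopf link) and reduces the question to the already-recorded inequality $\Theta_d(H)\neq P(q,\lambda_D)(H)$; your alternative via the uniqueness of the Homflypt polynomial is even quicker. Both of your approaches are perfectly valid for the corollary as stated and are more elementary; the paper's approach, being a direct consequence of Proposition~\ref{twocomp}, has the advantage of identifying the obstruction term in closed form for every knot.
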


\begin{proof}
Let $K_+$ be a knot. Then $K_-$ is a knot and $K_0$ is a $2$-component link. Let $I$ be the disjoint union of the two components of $K_0$. Applying \eqref{skein of hom} we obtain:
\begin{align*}
&\frac{1}{\sqrt{\lambda_D}} P(K_+) - \sqrt{\lambda_D} P(K_-) = (q-q^{-1}) P(K_0)
\\
\stackrel{\text{Thm. }\ref{conjHomflypt}}{\Leftrightarrow}  &\frac{1}{\sqrt{\lambda_D}} \Theta_d(K_+) - \sqrt{\lambda_D} \Theta_d(K_-) = (q-q^{-1}) P(K_0) \\
\stackrel{\text{Prop. }\ref{twocomp}}{\Leftrightarrow} &\frac{1}{\sqrt{\lambda_D}} \Theta_d(K_+) - \sqrt{\lambda_D} \Theta_d(K_-) = (q-q^{-1}) \left[\Theta_d(K_0) - \lambda_D^{\xi(K_0)} (E_D^{-1} - 1) P(I) \right].
\end{align*}
Since $I$ is a disjoint union of two knots, by Theorem~\ref{disj_links}, we have $P(I) = E_D\, \Theta_d(I)$. Therefore, 
$$
\frac{1}{\sqrt{\lambda_D}} \Theta_d(K_+) - \sqrt{\lambda_D} \Theta_d(K_-) = (q-q^{-1}) \left[\Theta_d(K_0) + \lambda_D^{\xi(K_0)} (E_D - 1) \Theta_d(I) \right].
$$
\end{proof}

Due to Corollary~\ref{no_skein}, one cannot assume that the skein relation of the Homflypt polynomial holds for crossings of the same component on arbitrary links.

\subsection{General behaviour on links} \label{behaviourlinks}
We will now examine the behaviour of the invariants $\Theta_d$ on links with arbitrary number of components. Set
\begin{center}
$\mathcal{A}:=\mathbb{Q}[q^{\pm1},\sqrt{\lambda_D}^{\pm 1}]$. 
\end{center}

Using the skein relation \eqref{new skein delta} on a crossing involving two components, we get one link with the same number of components and another link with one component less. This result can be applied to links with an arbitrary number of components, in which case we obtain the following.

\begin{prop}\label{unlink_one}
Let $L$ be an $\ell$-component link. Let $K_1,K_2,\ldots,K_\ell$ denote the components of $L$ and let $i  \in \{1,\ldots,\ell\}$. Denote by ${L}^{(i)}$ the $\ell$-component link obtained from $L$ after unlinking $K_i$ from the rest of the link, that is,  ${L}^{(i)}$ is the disjoint union of $K_i$ and $L \setminus K_i$.
Then $\Theta_d(L)$  is an $\mathcal{A}$-linear combination of  $\Theta_d({L}^{(i)})$ and values of $\Theta_d$ on links with $\ell-1$ components. Further, this linear combination is produced by the skein relation. 
\end{prop}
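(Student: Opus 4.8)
The plan is to induct on the number $m$ of clasps linking $K_i$ to the rest of $L$, using only the special skein relation \eqref{new skein delta} of Proposition~\ref{skein_ei}, in the spirit of the proof of Proposition~\ref{twocomp}. First I would present $L$ as a closed braid $\widehat{\a}$ and, as in the discussion preceding Proposition~\ref{twocomp}, bring $\a$ to the form (pure braid)$\,\cdot\,$(permutation braid) with the strands of each component consecutive, so that $\a$ is a product of pure-braid generators $A_{jk}^{\pm 1}$ --- each such letter being a single clasp between the components containing strands $j$ and $k$ --- followed by a permutation braid. Let $m$ be the number of letters $A_{jk}^{\pm 1}$ with exactly one of $j,k$ in $K_i$; these are the clasps linking $K_i$ with $L\setminus K_i$, and $L^{(i)}$ is obtained from $L$ by deleting all of them.

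For the base case $m=0$, the component $K_i$ is split from the rest, so $L=L^{(i)}$ and there is nothing to prove. For the inductive step, choose one such clasp, say between $K_i$ and $K_j$, and apply the skein relation \eqref{new skein delta} (equivalently the form in Remark~\ref{skein_pm2}) to one of its two crossings. At this crossing the two strands belong to the distinct components $K_i$ and $K_j$, so the relation applies and yields
$$\Theta_d(L)=\lambda_D^{\pm 1}\,\Theta_d(\widetilde{L})\pm(q-q^{-1})\sqrt{\lambda_D}^{\pm 1}\,\Theta_d(L_0),$$
according to the sign of the crossing. Here switching the crossing turns the chosen clasp into a cancelling Reidemeister~II pair, so $\widetilde{L}$ is $L$ with that clasp deleted: it is an $\ell$-component link with only $m-1$ clasps between $K_i$ and the rest, and moreover $\widetilde{L}^{(i)}=L^{(i)}$, since deleting a clasp between $K_i$ and $K_j$ changes neither the knot type of $K_i$ nor the link type of $L\setminus K_i$. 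On the other hand the oriented smoothing $L_0$ merges $K_i$ with $K_j$, so $L_0$ has $\ell-1$ components. Applying the induction hypothesis to $\widetilde{L}$ (with the same distinguished component $K_i$) writes $\Theta_d(\widetilde{L})$ as an $\mathcal{A}$-linear combination of $\Theta_d(L^{(i)})$ and of values of $\Theta_d$ on $(\ell-1)$-component links; substituting into the displayed identity, and noting that all coefficients $\lambda_D^{\pm 1}$ and $\pm(q-q^{-1})\sqrt{\lambda_D}^{\pm 1}$ lie in $\mathcal{A}=\mathbb{Q}[q^{\pm 1},\sqrt{\lambda_D}^{\pm 1}]$, gives the same conclusion for $\Theta_d(L)$. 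Since the argument uses only \eqref{new skein delta}, the resulting linear combination is produced by the skein relation, as claimed.

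The skein manipulation is routine; the only points needing care are the standard diagrammatic reductions --- organizing the inter-component linking of $K_i$ into recognizable clasps (so that a single crossing switch genuinely removes a clasp rather than leaving the diagram essentially unchanged) and checking $\widetilde{L}^{(i)}=L^{(i)}$ --- and I expect this to be the main, though mild, obstacle. The substantive ingredient, namely the existence of a skein relation valid on crossings of different components, has already been supplied by Proposition~\ref{skein_ei}; the rest is bookkeeping parallel to the two-component case of Proposition~\ref{twocomp}.
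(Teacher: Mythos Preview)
Your proposal is correct and follows essentially the same approach as the paper: both proofs repeatedly apply the special skein relation \eqref{new skein delta} to clasps between $K_i$ and the other components, each application producing one $\ell$-component link with one fewer such clasp and one $(\ell-1)$-component link, until $K_i$ is split off. The paper organizes the process component-by-component (first removing all clasps with $K_{j_1}$, then with $K_{j_2}$, etc.) and records the explicit coefficients along the way, while you package the same iteration as a single induction on the total number $m$ of clasps joining $K_i$ to the rest; the underlying argument is identical.
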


\begin{proof} 
Suppose that $K_i$ is linked to $m_i$ other components  $K_{j_1}, \ldots, K_{j_{m_i}}$ with $1 \leq j_1, \ldots, j_{m_i} \leq \ell$. Let $r_1, \ldots, r_{m_i}$ be the number of clasps between $K_i$ and $K_{j_1}, \ldots, K_{j_{m_i}}$ respectively. For all $s=1,\ldots,m_i$ and $t=1,\ldots,r_s$,
denote by $\xi(j_s,t)$ the sign of the $t$-th clasp between $K_i$ and $K_{j_s}$.

If $m_i=0$, then $K_i$ is disjoint from the rest of the link. We have $L={L}^{(i)}$ and so the statement holds automatically.
Let $m_i>0$. First, we try to unlink $K_i$ from the rest of the link by resolving the clasps between $K_i$ and $K_{j_1}$ via the skein relation. As previously, applying the skein relation results in two new links: one with $r_1-1$ clasps between  $K_i$ and $K_{j_1}$, and one with the two components $K_i$ and $K_{j_1}$ joined. We apply now the skein relation to the former link.
Again, we obtain two new links: one with $r_1-2$ clasps between  $K_i$ and $K_{j_1}$, and one with the two components $K_i$ and $K_{j_1}$ joined.
 We continue the same procedure, until we have applied the skein relation $r_1$ times and we are left with:
 \begin{enumerate}[(a)]
\item  $r_1$ knots formed by the union of  $K_i$ and $K_{j_1}$, and so $r_1$ links with $\ell-1$ components, and \smallbreak
\item  one disjoint union of $K_i$ and $K_{j_1}$, and so the link obtained from $L$ after unlinking $K_i$ from $K_1$ (which is also a link with $\ell$ components).
\end{enumerate}
Diagrammatically, we have the following skein tree after we apply the skein relation $r_1$ times:
\begin{figure}[H]
\begin{picture}(300,220)
\put(182.5,208){\usebox{\claspA}}

\put(225,200){\vector(3,-4){20}}
\put(195,200){\vector(-3,-4){20}}
\put(160,185){\tiny $\lambda_D^{\xi(j_1, 1)}$}
\put(240,185){\tiny $\xi(j_1,1) (q-q^{-1}) \sqrt{\lambda_D}^{\xi(j_1,1)}$}

\put(140,155){\usebox{\claspB}}
\put(235,155){$\usebox{\claspE}_1^{(i,j_1)}$}

\put(180,150){\vector(3,-4){20}}
\put(155,150){\vector(-3,-4){20}}
\put(120,135){\tiny $\lambda_D^{\xi(j_1,2)}$}
\put(195,135){\tiny $\xi(j_1,2) (q-q^{-1}) \sqrt{\lambda_D}^{\xi(j_1,2)}$}

\put(130,105){$\vdots$}
\put(190,105){$\usebox{\claspE}_2^{(i,j_1)}$}

\put(145,100){\vector(3,-4){20}}
\put(120,100){\vector(-3,-4){20}}
\put(68,85){\tiny $\lambda_D^{\xi(j_1,r_1-1)}$}
\put(160,85){\tiny $\xi(j_1,r_1-1) (q-q^{-1}) \sqrt{\lambda_D}^{\xi(j_1,r_1-1)}$}

\put(65,55){\usebox{\claspC}}
\put(155,55){$\usebox{\claspE}_{r_1-1}^{(i,j_1)}$}

\put(105,50){\vector(3,-4){20}}
\put(80,50){\vector(-3,-4){20}}
\put(35,35){\tiny $\lambda_D^{\xi(j_1,r_1)}$}
\put(120,35){\tiny $\xi(j_1,r_1) (q-q^{-1}) \sqrt{\lambda_D}^{\xi(j_1,r_1)}$}

\put(35,5){\usebox{\claspD}}
\put(115,5){$\usebox{\claspE}_{r_1}^{(i,j_1)}$}
\end{picture}
\end{figure}
$ $\\
where 
\begin{picture}(48,12)\put(0,0){$\usebox{\claspE}_t^{(i, j_1)}$}\end{picture}\,, for $t=1,\ldots, r_1$, denotes the link  with $\ell-1$ components
which is produced by joining the components $K_i$ and $K_{j_1}$ when applying 
the skein relation on the clasp $t$ (the edges adjacent to the vertex are due to the fact that this component may be linked to more components). Hence, we obtain
$$\Theta_d(L)  = \lambda_D^{\eta(j_1,r_1)} \Theta_d(\text{\begin{picture}(40,12)\put(0,0){\usebox{\claspD}}\end{picture}}) + (q-q^{-1}) \sum_{t=1}^{r_1}\xi(j_1,t) \lambda_D^{\eta(j_1,t-1)+\xi(j_1,t)/2}\Theta_d \left( \text{\begin{picture}(50,12)\put(0,0){$\usebox{\claspE}_t^{(i, j_1)}$}\end{picture}}\right),$$
where $\eta(j_1,0):=0$ and $\eta(j_1,t):=\sum_{j=1}^{t}\xi(j_1,j)$ for all $t=1,\ldots,r_1$. Setting
$$\xi(j_1):=\eta(j_1,r_1) \quad \text{and}\quad \Xi_D(j_1,t):=\xi(j_1,t)\lambda_D^{\eta(j_1,t-1)+\xi(j_1,t)/2} \quad \text{ for all } t=1,\ldots,r_1,$$ we obtain
$$
\Theta_d(L) = \lambda_D^{\xi(j_1)} \Theta_d(\text{\begin{picture}(40,12)\put(0,0){\usebox{\claspD}}\end{picture}}) + (q-q^{-1}) \sum_{t=1}^{r_1} \Xi_D(j_1,t) \Theta_d \left( \text{\begin{picture}(50,12)\put(0,0){$\usebox{\claspE}_t^{(i, j_1)}$}\end{picture}} \right).
$$

Next, we apply the same procedure on the clasps between the components $K_i$ and $K_{j_2}, \ldots, K_{j_{m_i}}$.  We deduce that
\begin{equation}\label{THELINEAR}
\Theta_d(L) = \mu_D^{(m_i)}  \Theta_d(L^{(i)})
+ (q-q^{-1}) \sum_{s=1}^{m_i} \left( \mu_D^{(s-1)}  \sum_{t=1}^{r_s} \Xi_D(j_s,t) \Theta_d \left( \text{\begin{picture}(50,12)\put(0,0){$\usebox{\claspE}_t^{(i, j_s)}$}\end{picture}} \right) \right),
\end{equation}
where $\mu_D^{(0)}:=1$ and $\mu_D^{(s)} := \lambda_D^{\xi(j_1) + \cdots + \xi(j_s)}$ for all $s=1,\ldots,m_i$. 
\end{proof}

We can now apply Proposition~\ref{unlink_one} to the $\ell$-component link $L^{(i)}$ and all $(\ell-1)$-component links appearing in \eqref{THELINEAR}. We can repeat the procedure until $\Theta_d(L)$ is written as an  $\mathcal{A}$-linear combination of  values of $\Theta_d$ on disjoint unions of knots with up to $\ell$ components. 
For $k=1,\ldots,\ell$, let $\mathcal{N}(L)_k$ denote the set of all disjoint unions of $k$ knots appearing in this linear combination.
We thus have
\begin{equation}\label{c_D}
\Theta_d(L) = \sum_{k=1}^{\ell}\,\,\sum_{\widehat{\a} \in \mathcal{N}(L)_k} c(\widehat{\a}) \,\Theta_d(\widehat{\a})
\end{equation}
for some $c(\widehat{\a}) \in \mathcal{A}$. 
Since the above linear combination is produced only with the use of the skein relation, and the skein relation is the same as the one of the Homflypt polynomial $P=P(q,\lambda_D)$, we also have
\begin{equation}\label{c_P}
P(L) = \sum_{k=1}^{\ell}\,\,\sum_{\widehat{\a} \in \mathcal{N}(L)_k} c(\widehat{\a}) \,P(\widehat{\a}).
\end{equation}
The above analysis culminates to the following, which is the main result of this section.

\begin{thm}\label{links}
For any $\ell$-component link $L$, the value $\Theta_d(L)$ is an $\mathcal{A}$-linear combination of $P(L)$ and the values of $P$ on disjoint unions of knots obtained by the skein relation:
$$
\Theta_d(L) =  \sum_{k=1}^{\ell} E_D^{1-k} \sum_{\widehat{\a} \in \mathcal{N}(L)_k} c(\widehat{\a}) \,P(\widehat{\a})    =P(L) + \sum_{k=2}^{\ell} \,\,(E_D^{1-k}-1) \sum_{\widehat{\a} \in \mathcal{N}(L)_k} c(\widehat{\a}) \,P(\widehat{\a}). 
$$
Conversely, the value $P(L)$ is an  $\mathcal{A}$-linear 
combination of $\Theta_d(L)$ and the values of $\Theta_d$   
on disjoint unions of knots obtained by the skein relation:
$$
P(L) =  \sum_{k=2}^{\ell}\,\,E_D^{k-1}\sum_{\widehat{\a} \in \mathcal{N}(L)_k} c(\widehat{\a}) \,\Theta_d(\widehat{\a})=\Theta_d(L) + \sum_{k=2}^{\ell}\,\,(E_D^{k-1}-1)\sum_{\widehat{\a} \in \mathcal{N}(L)_k} c(\widehat{\a}) \,\Theta_d(\widehat{\a}).
$$
\end{thm}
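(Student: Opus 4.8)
The plan is to deduce Theorem~\ref{links} by feeding the knot case (Theorem~\ref{conjHomflypt}) and the disjoint-union case (Theorem~\ref{disj_links}) into the iterated skein decomposition of Proposition~\ref{unlink_one}; all the displayed formulas are then formal rearrangements of the two expansions \eqref{c_D} and \eqref{c_P}.

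First I would recall exactly what the discussion preceding the statement provides. Applying Proposition~\ref{unlink_one} repeatedly --- to $L$, to its unlinked relative ${L}^{(i)}$, then to each link with fewer components appearing in the resulting linear combination, and so on --- terminates, since each step strictly decreases either the number of clasps between a fixed pair of components or the total number of components. The outcome, \eqref{c_D}, expresses $\Theta_d(L)$ as an $\mathcal{A}$-linear combination $\sum_{k=1}^{\ell}\sum_{\widehat{\a}\in\mathcal{N}(L)_k}c(\widehat{\a})\,\Theta_d(\widehat{\a})$ of the values of $\Theta_d$ on a finite family of disjoint unions of knots, sorted by their number $k$ of components. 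The \emph{key observation} --- already noted before the statement --- is that the special skein relation \eqref{new skein delta} of Proposition~\ref{skein_ei} is, term by term, the Homflypt skein relation for $P=P(q,\lambda_D)$; hence the very same skein tree, with the very same coefficients $c(\widehat{\a})\in\mathcal{A}$, computes $P(L)$ from $P$ on the same disjoint unions of knots, yielding \eqref{c_P}. I would emphasize that the coefficients $c(\widehat{\a})$ in \eqref{c_D} and \eqref{c_P} coincide; this is the crux of the argument.

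Next I would invoke Theorem~\ref{disj_links}: for $\widehat{\a}\in\mathcal{N}(L)_k$, which is a disjoint union of $k$ knots, one has $\Theta_d(\widehat{\a}) = E_D^{1-k}\,P(\widehat{\a})$, equivalently $P(\widehat{\a}) = E_D^{k-1}\,\Theta_d(\widehat{\a})$ (the case $k=1$ being Theorem~\ref{conjHomflypt}). Substituting $\Theta_d(\widehat{\a}) = E_D^{1-k}P(\widehat{\a})$ into \eqref{c_D} gives at once the first formula $\Theta_d(L) = \sum_{k=1}^{\ell}E_D^{1-k}\sum_{\widehat{\a}\in\mathcal{N}(L)_k}c(\widehat{\a})\,P(\widehat{\a})$. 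For the second formula I would add and subtract $\sum_{k=1}^{\ell}\sum_{\widehat{\a}}c(\widehat{\a})\,P(\widehat{\a})$, which equals $P(L)$ by \eqref{c_P}; since the $k=1$ contribution to $\sum_{k}(E_D^{1-k}-1)(\cdots)$ vanishes, the correction sum starts at $k=2$, producing $\Theta_d(L) = P(L) + \sum_{k=2}^{\ell}(E_D^{1-k}-1)\sum_{\widehat{\a}\in\mathcal{N}(L)_k}c(\widehat{\a})\,P(\widehat{\a})$. The converse statements are the mirror image: substituting $P(\widehat{\a}) = E_D^{k-1}\Theta_d(\widehat{\a})$ into \eqref{c_P} and then using \eqref{c_D} to rewrite the resulting plain sum $\sum_{k}\sum_{\widehat{\a}}c(\widehat{\a})\,\Theta_d(\widehat{\a})$ as $\Theta_d(L)$ (isolating, as before, the terms that do not cancel) yields the two stated expressions for $P(L)$.

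Finally, a word on difficulty: the theorem carries essentially no obstacle of its own --- it is pure bookkeeping on top of results already established, the only genuine inputs being Proposition~\ref{unlink_one}, Theorem~\ref{conjHomflypt} and Theorem~\ref{disj_links}. The one point deserving care, already handled in Proposition~\ref{unlink_one} and the paragraph following it, is that the iterated skein procedure is well-defined and terminating --- in particular, that once a component $K_i$ has been separated off it remains unlinked while the remaining components are processed, so the recursion really does descend to disjoint unions of knots and the sets $\mathcal{N}(L)_k$ are meaningful. I would cite that discussion rather than reprove it here.
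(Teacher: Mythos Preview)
Your proposal is correct and follows exactly the same approach as the paper: the paper's proof simply reads ``Both statements follow immediately by \eqref{c_D}, \eqref{c_P} and Theorem~\ref{disj_links},'' and you have faithfully unpacked that sentence into the actual substitutions and rearrangements. Your emphasis on the coincidence of the coefficients $c(\widehat{\a})$ in \eqref{c_D} and \eqref{c_P} --- because the special skein relation \eqref{new skein delta} and the Homflypt skein relation are formally identical --- is precisely the point the paper relies on tacitly.
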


\begin{proof}
Both statements follow immediately by \eqref{c_D}, \eqref{c_P} and Theorem~\ref{disj_links}. Note that for $k=1$ the corresponding term 
is zero.
\end{proof}

\begin{rem}\label{remark_disjoint} \rm
Note that the only element in $\mathcal{N}(L)_{\ell}$ is the disjoint union of all components of $L$, that is, $K_1 \sqcup \ldots \sqcup K_{\ell}$. 
We have $c(K_1 \sqcup \ldots \sqcup K_{\ell}) = \mu_D^{(m_1)}\ldots \mu_D^{(m_\ell)}$.
\end{rem}

In fact, Theorem~\ref{links} can be generalized to obtaining any of the invariants $\Theta_{d'}$ by any other invariant $\Theta_d$.
Following the established notation, the set $D$ still denotes $\Z/d\Z$ and the set $D'$ will now be $\Z/d'\Z$, for $d,d' \in \N$.
More precisely, we have the following:

\begin{thm}\label{delta_invs}
Let $d,d' \in \N$.
For any $\ell$-component link $L$,  the value $\Theta_{d'}(L)$  is an $\mathcal{A}$-linear 
combination of $\Theta_d(L)$ and the values of $\Theta_d$ 
on disjoint unions of knots obtained by the skein relation:
$$
\Theta_{d'}(L) = \Theta_d(L) + \sum_{k=2}^{\ell}\, \left( \left( \frac{E_D}{E_{D'}} \right)^{k-1} - 1 \right)\,\sum_{\widehat{\a} \in \mathcal{N}(L)_k} c(\widehat{\a}) \,\Theta_d(\widehat{\a}).
$$
\end{thm}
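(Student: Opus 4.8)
The plan is to prove Theorem~\ref{delta_invs} by combining the two halves of Theorem~\ref{links}: use the forward direction for $\Theta_{d'}$ and the backward (converse) direction for $\Theta_d$, noting that \emph{the coefficients $c(\widehat{\a})$ in the skein decomposition are the same for both} since they come only from the skein relation, which is identical for $\Theta_d$, $\Theta_{d'}$, and $P$. The subtle point to settle first is that the index set $\mathcal{N}(L)_k$ of disjoint unions of knots produced by the skein tree, together with the coefficients $c(\widehat{\a}) \in \mathcal{A}$, genuinely depends only on the link diagram and the skein relation, not on which invariant we are expanding. This is already implicit in the proof of Theorem~\ref{links} (Equations \eqref{c_D} and \eqref{c_P} use the \emph{same} $c(\widehat{\a})$), so I would simply invoke it.

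First I would write, by Theorem~\ref{links} applied with parameter $d'$,
\begin{equation}\label{eq:first}
\Theta_{d'}(L) = \sum_{k=1}^{\ell} E_{D'}^{1-k} \sum_{\widehat{\a} \in \mathcal{N}(L)_k} c(\widehat{\a}) \, P(\widehat{\a}).
\end{equation}
Next, the converse direction of Theorem~\ref{links} (applied with parameter $d$) expresses each $P(\widehat{\a})$ — or rather, directly, it gives $P(L) = \sum_k E_D^{k-1} \sum c(\widehat{\a}) \Theta_d(\widehat{\a})$; but more usefully, Theorem~\ref{disj_links} gives for each disjoint union of $k$ knots $\widehat{\a} \in \mathcal{N}(L)_k$ the clean identity $P(\widehat{\a}) = E_D^{k-1}\Theta_d(\widehat{\a})$. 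Substituting this into \eqref{eq:first} yields
\begin{equation}\label{eq:second}
\Theta_{d'}(L) = \sum_{k=1}^{\ell} E_{D'}^{1-k} E_D^{k-1} \sum_{\widehat{\a} \in \mathcal{N}(L)_k} c(\widehat{\a}) \, \Theta_d(\widehat{\a}) = \sum_{k=1}^{\ell} \left(\frac{E_D}{E_{D'}}\right)^{k-1} \sum_{\widehat{\a} \in \mathcal{N}(L)_k} c(\widehat{\a}) \, \Theta_d(\widehat{\a}).
\end{equation}
Finally, I would split off the $k=1$ and $k\geq 2$ parts: the $k=1$ term has coefficient $(E_D/E_{D'})^0 = 1$, and the $k=1$ part of the same sum without the prefactor is exactly the skein expansion of $\Theta_d(L)$ itself restricted to one-component pieces — but more cleanly, one observes that
\[
\Theta_d(L) = \sum_{k=1}^{\ell} \sum_{\widehat{\a}\in\mathcal{N}(L)_k} c(\widehat{\a})\,\Theta_d(\widehat{\a})
\]
is simply \emph{false} in general; instead I should subtract: from \eqref{eq:second},
\[
\Theta_{d'}(L) - \Theta_d(L) = \sum_{k=2}^{\ell}\left(\left(\frac{E_D}{E_{D'}}\right)^{k-1} - 1\right)\sum_{\widehat{\a}\in\mathcal{N}(L)_k} c(\widehat{\a})\,\Theta_d(\widehat{\a}),
\]
where I use that \eqref{eq:second} with $d'=d$ (i.e.\ the trivial case $\left(\frac{E_D}{E_D}\right)^{k-1}=1$) recovers $\Theta_d(L) = \sum_{k=1}^\ell \sum c(\widehat{\a})\Theta_d(\widehat{\a})$ — so this latter identity \emph{is} valid, being the $d'=d$ instance of \eqref{eq:second}. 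Subtracting the two instances term by term kills the $k=1$ contribution and gives exactly the claimed formula.

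The main obstacle — really the only nontrivial point — is justifying that the decomposition data $(\mathcal{N}(L)_k, c(\widehat{\a}))$ is invariant-independent, so that the \emph{same} $c(\widehat{\a})$ appears when we expand $\Theta_{d'}(L)$, $\Theta_d(L)$, and $P(L)$ via the skein tree. This is secured by Proposition~\ref{skein_ei}/\eqref{new skein delta}: the special skein relation for $\Theta_d$ has literally the same coefficients $1/\sqrt{\lambda_D}$, $\sqrt{\lambda_D}$, $q-q^{-1}$ as the Homflypt skein relation at variables $(q,\lambda_D)$ (and as the one for $\Theta_{d'}$ at $(q,\lambda_{D'})$, after the appropriate re-parametrization in $\lambda$) — the skein tree built in the proof of Proposition~\ref{unlink_one} therefore produces identical combinatorial coefficients, and only the \emph{base values} on disjoint unions of knots differ, governed by $E_D$ versus $E_{D'}$ through Theorem~\ref{disj_links}. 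Everything else is the bookkeeping of exponents already carried out in the proof of Theorem~\ref{links}.
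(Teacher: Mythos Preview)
Your argument is correct and is essentially the paper's own proof, just with the order of operations swapped: the paper first subtracts the two Homflypt expansions from Theorem~\ref{links} and then converts $E_D^{1-k}P(\widehat{\a})$ to $\Theta_d(\widehat{\a})$ via Theorem~\ref{disj_links}, whereas you convert first and subtract afterward. Note that the identity $\Theta_d(L) = \sum_{k=1}^{\ell}\sum_{\widehat{\a}\in\mathcal{N}(L)_k} c(\widehat{\a})\,\Theta_d(\widehat{\a})$ that you momentarily doubted is precisely \eqref{c_D}, so no detour through the $d'=d$ case is needed.
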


\begin{proof} 
Following Theorem~\ref{links}, we have
$$\Theta_{d'}(L) - \Theta_d(L) = \sum_{k=2}^{\ell} \,\,(E_{D'}^{1-k}-E_D^{1-k}) \sum_{\widehat{\a} \in \mathcal{N}(L)_k} c(\widehat{\a}) \,P(\widehat{\a}) .$$
Now, note that, for all $k=2,\ldots,\ell$,
$$E_{D'}^{1-k}-E_D^{1-k} =\left(\left( \frac{E_{D'}}{E_D}\right)^{1-k}-1\right) E_D^{1-k} = \left(\left( \frac{E_{D}}{E_{D'}}\right)^{k-1}-1\right) E_D^{1-k}.$$
By Theorem~\ref{disj_links}, we have
$$E_D^{1-k} \sum_{\widehat{\a} \in \mathcal{N}(L)_k} c(\widehat{\a}) \,P(\widehat{\a}) =\sum_{\widehat{\a} \in \mathcal{N}(L)_k} c(\widehat{\a}) \,E_D^{1-k}  P(\widehat{\a})=
 \sum_{\widehat{\a} \in \mathcal{N}(L)_k} c(\widehat{\a}) \,\Theta_d(\widehat{\a}).$$
We conclude that
$$
\Theta_{d'}(L) = \Theta_d(L) + \sum_{k=2}^{\ell}\, \left( \left( \frac{E_D}{E_{D'}} \right)^{k-1} - 1 \right)\,\sum_{\widehat{\a} \in \mathcal{N}(L)_k} c(\widehat{\a}) \,\Theta_d(\widehat{\a}).
$$
\end{proof}

\subsection{Defining $\Theta_d$ diagrammatically via skein relations}\label{subs_skein}
Let $L$ be an oriented link. By the special skein relation of Proposition~\ref{skein_ei} the value of $\Theta_d$ on $L$ at variables $(q,\lambda_D)$ can be computed  diagrammatically by applying the following procedure:
\begin{enumerate}[\bf Step 1.]
\item Apply the skein relation of Proposition~\ref{skein_ei} on crossings linking different components until the link $L$ is decomposed into disjoint unions of knots.
An algorithmic process for achieving this, different from the one described in Subsection~\ref{special_skein}, is the following: we order the components of $L$ and we select a starting point on each component. Starting from the chosen point of the first component and following its orientation we apply the skein relation on all mixed crossings we encounter, so that the arcs of this component are always overarcs. We proceed similarly with the second component changing all mixed crossing except for crossings involving the first component, and so on. In the end we obtain the split version of the original link.
 \smallbreak
\item Following Theorem~\ref{links} and its notation, we obtain
$$
\Theta_d(L) =  \sum_{k=1}^{\ell} E_D^{1-k} \sum_{\widehat{\a} \in \mathcal{N}(L)_k} c(\widehat{\a}) \,P(\widehat{\a}).
$$   \smallbreak
\item Apply the skein relation \eqref{skein of hom} of the Homflypt polynomial to obtain the value of $P$ on $\widehat{\a}$ at variables $(q,\lambda_D)$, for all disjoint unions of knots $\widehat{\a} \in \mathcal{N}(L)_k$, $k=1,\ldots,\ell$.
\end{enumerate}

Note that the special skein relation of Proposition~\ref{skein_ei} for $\Theta_d$ at variables $(q,\lambda_D)$ is the same as the skein relation of the Homflypt polynomial \eqref{skein of hom} at variables $(q,\lambda_D)$, but it can only be applied to  crossings between different components. So, since the invariants $\Theta_d$ are well-defined via braid methods, we obtain the following:

\begin{thm}\label{skein_defined}
The invariants $\Theta_d$ can be completely defined via the Homflypt skein relation (with the use of Theorem~\ref{links}).
\end{thm}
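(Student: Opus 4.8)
The plan is to synthesize the results already obtained into a single statement. Recall that $\Theta_d$ was constructed in \S\ref{classic} as a genuine isotopy invariant of oriented links, via the specialized trace $\Jtrs$ on $\YH(q)$ and the Markov theorem; so existence and invariance are not in question, and the only thing to be checked is that the values of $\Theta_d$ are \emph{forced} by (a) its behaviour on split unions of knots and (b) the Homflypt skein relation applied on crossings between distinct components. Item (a) is Theorem~\ref{disj_links}, namely $\Theta_d = E_D^{1-k}P$ on a split union of $k$ knots, and item (b) is the special skein relation of Proposition~\ref{skein_ei}, which at the variables $(q,\lambda_D)$ is literally the Homflypt skein relation \eqref{skein of hom} restricted to mixed crossings. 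So first I would simply record that $\Theta_d$ satisfies (a) and (b), and then argue that (a) and (b) together determine $\Theta_d(L)$ for every link $L$.

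Next I would verify that the computation scheme of \S\ref{subs_skein} terminates. Fixing a diagram of $L$ and the algorithmic order described there, one processes the components one at a time, and for the component currently being handled one switches, via rule (b), every mixed crossing at which it passes underneath; each such switch produces, besides the switched diagram, an $L_0$-term with one fewer link component. After finitely many switches the current component lies entirely over the rest and is split off, and one recurses on the remaining components and, separately, on each $L_0$-term. By a double induction --- outer on the number of link components, inner on the number of under-crossings of the component being processed --- this halts with $L$ written as an $\mathcal{A}$-linear combination of split unions of knots. Applying rule (a) to each such split union and then the Homflypt skein relation \eqref{skein of hom} to the resulting knots (which terminates in the usual way, by descending induction on crossing number, with $P(\text{unknot})=1$) yields an explicit element of $\mathcal{A}$.

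Well-definedness is then the only remaining point, and it is handled entirely by Theorem~\ref{links}. A priori the linear combination of split unions of knots produced in Step~1 depends on the chosen diagram of $L$ and on the order in which mixed crossings are resolved; but by \eqref{c_D}, \eqref{c_P} and Theorem~\ref{disj_links} the output of the whole scheme is
\[
\sum_{k=1}^{\ell} E_D^{1-k}\sum_{\widehat{\a}\in\mathcal{N}(L)_k} c(\widehat{\a})\,P(\widehat{\a}) \;=\; \Theta_d(L),
\]
the value of the already-known invariant. Hence the scheme is independent of all choices and reproduces $\Theta_d$; that is, $\Theta_d$ is completely determined by --- and so can be defined by --- rules (a) and (b), i.e. by the Homflypt skein relation with the aid of Theorem~\ref{links}, as claimed.

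The step I expect to carry the real content, rather than being pure bookkeeping, is the part just invoked: that Step~1 genuinely reduces every link to split unions of knots, and that the coefficients $c(\widehat{\a})$ so obtained are the same whether one is computing $\Theta_d$ or $P$. Both facts are, however, already in hand --- the first from the clasp-reduction analysis behind Proposition~\ref{unlink_one}, the second because the reduction uses only the skein relation, so the coefficients are identical for $\Theta_d$ and for $P=P(q,\lambda_D)$, which is exactly what \eqref{c_D}--\eqref{c_P} encode. A fully self-contained, purely skein-theoretic construction of $\Theta_d$ (with no reference to $\YH(q)$) would instead require proving invariance directly from rules (a) and (b); that is the harder route, carried out in \cite{kala}, and it is not needed here.
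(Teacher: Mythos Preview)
Your proposal is correct and follows essentially the same approach as the paper: the paper does not give a separate proof of this theorem, but rather records it as an immediate consequence of the preceding discussion, noting that ``since the invariants $\Theta_d$ are well-defined via braid methods'' the three-step skein procedure of \S\ref{subs_skein} suffices to compute them. Your write-up is more explicit about termination and the role of Theorem~\ref{links} in guaranteeing independence of choices, but the logic is identical.
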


\begin{rem}\label{theta LM} \rm
In this paper the fact that the invariants $\Theta_d$ are well-defined is established by their algebraic construction. However, one could also prove it via the diagrammatic method of Lickorish--Millett for proving that the Homflypt polynomial is well-defined  \cite{LM}. Namely, we start as in the algorithm above for evaluating the invariants $\Theta_d$, that is, we order the components of the link and we select a point on each component. Then we have to show that the evaluation does not depend on the ordering of the components, on the choice of starting points, on the Reidemeister moves or on the crossings on which the skein relation is applied, and this is done by employing inductive arguments. 
\end{rem}

Theorem~\ref{skein_defined} renders the invariants $\Theta_d$ much more easily computable than via the braid approach. Further, as we shall see next, it enables us to make a direct comparison of the invariants $\Theta_d$ with the Homflypt polynomial on links.

\begin{rem} \rm \label{oldquadr7}
Theorem~\ref{skein_defined}, as well as the preceding results of Subsections~\ref{behaviour2comp} and \ref{behaviourlinks}, are not vaild for the invariants $\Delta_d$, due to the lack of a special skein relation for classical links.
\end{rem}

\section{Comparison of the invariants $\Theta_d$ with $P$ on classical links}\label{distinguish}
In this section we prove that the invariants $\Theta_d$ are not topologically equivalent to the Homflypt polynomial.

Let $L_1$ and $L_2$ be two links. 
We will say that $L_1$ and $L_2$ are \textit{$\Theta_d$-equivalent} (respectively \textit{$P$-equivalent}) if $\Theta_d(L_1) = \Theta_d(L_2)$ (respectively $P(L_1) = P(L_2)$). 
Theorem~\ref{links} tells us that the values of the invariants $\Theta_d$ depend on values of $P$. This could be taken a strong lead that the invariants $\Theta_d$ are topologically equivalent to the Homflypt polynomial. On the other hand, again by Theorem~\ref{links}, two $P$-equivalent links may not be necessarily $\Theta_d$-equivalent. Similarly, by Theorem~\ref{delta_invs}, a $\Theta_d$-equivalent pair may not be $\Theta_{d'}$-equivalent if
$d' \neq d$.
Yet, for the case of 2-component links we prove the following: Let $L_1$, $L_2$ be two $2$-component $P$-equivalent links. Then 
$L_1$ and $L_2$ are distinguished by some $\Theta_d$ if and only if they are distinguished by any $\Theta_{d'}$ for
$d,d' \geq 2$.
However, this result cannot be generalized to links with more components.

 Afterwards, we investigate computationally whether any $P$-equivalent pairs of links can be distinguished by the invariants $\Theta_d$. Our computations (using the program mentioned in Remark~\ref{new_program}) showed that six pairs of $P$-equivalent $3$-component links are distinguished by the invariants $\Theta_d$. We present these pairs of links and we give the values of $\Theta_d$ on them in Appendix~\ref{computations}. 
Further, using Theorem~\ref{skein_defined}, for one of the six pairs we give a complete diagrammatic proof that they are not $\Theta_d$-equivalent for every
$d \geq 2$.
This implies that the invariants $\Theta_d$ are not topologically equivalent to the Homflypt polynomial $P$ on links, which is the main result of this paper. 

\subsection{Comparison of the invariants $\Theta_d$ on $P$-equivalent $2$-component links}
We shall now consider the set of links with two components. 
\begin{thm}\label{2comp}
Let $d,d' \geq 2$
and let $L_1$ and $L_2$ be a pair of  $2$-component $P$-equivalent links.
Then $L_1$ and $L_2$ are $\Theta_d$-equivalent if and only if they are $\Theta_{d'}$-equivalent.
\end{thm}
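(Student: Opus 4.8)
The plan is to reduce the statement to the closed formula of Proposition~\ref{twocomp} and then to isolate the dependence on $d$. Recall that, with the conventions fixed after Theorem~\ref{specialtrace}, $D=\Z/d\Z$, so $E_D=1/d$ and $E_D^{-1}-1=d-1$. For a $2$-component link $L$, write $I(L)$ for the disjoint union of its two components and $\xi(L)$ for the sum of the signs of the clasps joining them (equivalently, the linking number of $L$); these are genuine link invariants, independent of $d$. Proposition~\ref{twocomp} then gives the identity, in the coefficient ring $\mathcal{A}=\mathbb{Q}[q^{\pm1},\sqrt{\lambda_D}^{\pm1}]$ of Laurent polynomials in $(q,\lambda_D)$,
\[
\Theta_d(L)=P(L)+(d-1)\,\lambda_D^{\xi(L)}\,P(I(L)),
\]
and the analogous formula for $\Theta_{d'}$ with $d$ replaced by $d'$. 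The key point is that $d$ enters only through the scalar $d-1$.

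Next I would apply this to the pair $L_1,L_2$. Subtracting the two expressions and using the hypothesis $P(L_1)=P(L_2)$ gives
\[
\Theta_d(L_1)-\Theta_d(L_2)=(d-1)\bigl(\lambda_D^{\xi(L_1)}P(I(L_1))-\lambda_D^{\xi(L_2)}P(I(L_2))\bigr).
\]
Since $d\geq2$, the integer $d-1$ is nonzero, hence a non-zero-divisor in the characteristic-zero integral domain $\mathcal{A}$. Therefore $\Theta_d(L_1)=\Theta_d(L_2)$ holds if and only if
\[
\lambda_D^{\xi(L_1)}P(I(L_1))=\lambda_D^{\xi(L_2)}P(I(L_2)),
\]
a condition that no longer involves $d$. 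Running the identical computation for $d'$ shows that $\Theta_{d'}(L_1)=\Theta_{d'}(L_2)$ is equivalent to exactly the same $d$-free condition, and the two implications of the theorem follow at once. (Concretely, this common condition is precisely ``$\Theta_2(L_1)=\Theta_2(L_2)$'', so the theorem says that among $P$-equivalent $2$-component links, $\Theta_d$-equivalence for every $d\geq2$ coincides with $\Theta_2$-equivalence.)

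I do not expect a genuine obstacle here: once Proposition~\ref{twocomp} is available, the rest is careful bookkeeping. The two points requiring attention are (i) that the displayed equalities are equalities of Laurent polynomials in $(q,\lambda_D)$ --- equivalently in the standard Homflypt variables after the usual change of variables, which does not depend on $d$ --- so that cancelling the nonzero integer $d-1$ is legitimate rather than being a statement about numerical evaluations, and (ii) that $\xi(L)$ and $P(I(L))$ are well defined independently of any clasp decomposition of $L$, which holds because $\xi(L)$ is the linking number and $P(I(L))$ is (a fixed nonzero multiple of) the product of the Homflypt polynomials of the two unordered components. Finally, it is worth recording why this argument does not generalize to links with more components: by Theorem~\ref{links} the analogue of Proposition~\ref{twocomp} then involves several terms $E_D^{1-k}$ carrying different powers of $d$, so the $d$-dependence can no longer be stripped off by dividing through by a single scalar --- which is exactly the caveat stated right after the theorem.
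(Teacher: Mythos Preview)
Your proof is correct and follows essentially the same route as the paper: both isolate the factor $E_D^{-1}-1=d-1$ in $\Theta_d(L_1)-\Theta_d(L_2)$ (using $P$-equivalence to kill the leading term) and observe that the remaining factor is independent of $d$. The only cosmetic difference is that the paper invokes the general decomposition of Theorem~\ref{links} (so the $d$-free part is written as a sum over $\mathcal{N}(L_j)_2$), while you appeal directly to the explicit $2$-component formula of Proposition~\ref{twocomp}; for $2$-component links these two formulations coincide, since $\mathcal{N}(L)_2=\{I(L)\}$ and $c(I(L))=\lambda_D^{\xi(L)}$ by Remark~\ref{remark_disjoint}.
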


\begin{proof}
By Theorem~\ref{links}, we have 
$$
\Theta_d(L_1) = P(L_1) + \sum_{\widehat{\a_1} \in \mathcal{N}(L_1)_2} (E_D^{-1}-1) \, c(\widehat{\a_1}) P(\widehat{\a_1}).
$$
and
$$
\Theta_d(L_2) = P(L_2) + \sum_{\widehat{\a_2} \in \mathcal{N}(L_2)_2} (E_D^{-1}-1) \, c(\widehat{\a_2}) P(\widehat{\a_2}).
$$
Hence,
$$
\Theta_d(L_1) - \Theta_d(L_2) = (E_D^{-1}-1) \underbrace{\left[ \sum_{\widehat{\a_1} \in \mathcal{N}(L_1)_2} c(\widehat{\a_1}) P(\widehat{\a_1}) - \sum_{\widehat{\a_2} \in \mathcal{N}(L_2)_2} c(\widehat{\a_2}) P(\widehat{\a_2}) \right]}_{=: T_2(L_1,L_2)}.
$$
If $d \neq 1$, we have $E_D \neq 1$, and so $\Theta_d(L_1) =\Theta_d(L_2)$ if and only if $T_2(L_1,L_2)=0$. Since  $T_2(L_1,L_2)$ is independent of the choice of $d$,
we obtain the desired result.
\end{proof}

\begin{rem} \rm
For links with more than two components, it is not easy to derive directly a statement similar  to that of Theorem~\ref{2comp}. We will demonstrate this by considering $3$-component links: let $L_1$ and $L_2$ be two 3-component $P$-equivalent links. Then, by Theorem~\ref{links}, we obtain
{\small
$$
\Theta_d(L_1) - \Theta_d(L_2) = (E_D^{-1}-1) \Bigg[ T_2(L_1,L_2) + (E_D^{-1} + 1) \underbrace{\left(\sum_{\widehat{\a_1} \in \mathcal{N}(L_1)_3} c(\widehat{\a_1}) P(\widehat{\a_1}) - \sum_{\widehat{\a_2} \in \mathcal{N}(L_2)_3} c(\widehat{\a_2}) P(\widehat{\a_2})\right)}_{:=T_3(L_1,L_2)} \Bigg].
$$}
Now the quantity $T_2(L_1,L_2) + (E_D^{-1} + 1) T_3(L_1,L_2)$ involves $E_D$, so it depends on the choice of $d$.
Hence, we cannot use the same argument as in the proof of Theorem~\ref{2comp}.
\end{rem}

\subsection{Computations on specific $P$-equivalent links}\label{sub-comp}
As mentioned in Remark~\ref{new_program}, a computer program has been developed on Mathematica, which calculates the trace $\Jtrs$
by using only the five rules of Theorem~\ref{specialtrace} and the new quadratic relation. This program has been used to compute the invariants $\Theta_d$ on all 2.978 knots up to 12 crossings and their mirror images, in order to confirm computationally Theorem~\ref{conjHomflypt} and the mirroring property of $\Theta_d$ \cite[Proposition 3.6]{chjakala}. The data were obtained by {\it KnotInfo} \cite{chli}.

Now, out of 4.188 links (with up to 11 crossings),
there are 89 pairs of $P$-equivalent links which do not differ only by orientation, that is, they are different links if considered as unoriented links. Using the data from {\it LinkInfo} \cite{chli}, we computed the invariants $\Theta_d$ on all of them. Out of these 89 $P$-equivalent pairs of links, 83 are still $\Theta_d$-equivalent for generic $d$,
 yet we found that the following \textit{six pairs of $3$-component $P$-equivalent links are not $\Theta_d$-equivalent for every $d \geq 2$}:
\begin{table}[H]
\centering
{\renewcommand{\arraystretch}{1.2}
\begin{tabular}{|c|c|}
\hline
$L11n358\{0,1\}$ & $L11n418\{0,0\}$\\ \hline
$L11a467\{0,1\}$ & $L11a527\{0,0\}$\\ \hline
$L11n325\{1,1\}$ & $L11n424\{0,0\}$\\ \hline
$L10n79\{1,1\}$ & $L10n95\{1,0\}$\\ \hline
$L11a404\{1,1\}$ & $L11a428\{0,1\}$\\ \hline
$L10n76\{1,1\}$ & $L11n425\{1,0\}$\\ \hline
\end{tabular}
}
\caption{Six $P$-equivalent pairs of 3-component links which are not $\Theta_d$-equivalent.}
\label{pairs}
\end{table}

Specifically, for these pairs we computed the differences of the polynomials:

{\small \begin{align*}
&\Theta_d(L11n358\{0,1\})-\Theta_d(L11n418\{0,0\}) = \frac{(E_D-1) (\lambda_D -1) (q-1)^2 (q+1)^2 \left(q^2-\lambda_D \right)
   \left(\lambda_D  q^2-1\right)}{E_D \lambda_D^4 q^4},
\\
&\Theta_d(L11a467\{0,1\})-\Theta_d(L11a527\{0,0\}) = \frac{(E_D-1) (\lambda_D -1) (q-1)^2 (q+1)^2 \left(q^2-\lambda_D \right)
   \left(\lambda_D q^2-1\right)}{E_D \lambda_D^4 q^4},
\\
&\Theta_d(L11n325\{1,1\})-\Theta_d(L11n424\{0,0\}) = -\frac{(E_D-1) (\lambda_D -1) (q-1)^2 (q+1)^2 \left(q^2-\lambda_D \right)
   \left(\lambda_D q^2-1\right)}{E_D \lambda_D ^3 q^4},
\\
&\Theta_d(L10n79\{1,1\})-\Theta_d(L10n95\{1,0\}) = \frac{(E_D-1) (\lambda_D -1) (q-1)^2 (q+1)^2 \left(\lambda_D +\lambda_D
   q^4+\lambda_D  q^2-q^2\right)}{E_D \lambda_D^4 q^4},
\\
&\Theta_d(L11a404\{1,1\})-\Theta_d(L11a428\{0,1\}) = \frac{(E_D-1) (\lambda_D -1) (\lambda_D+1) (q-1)^2 (q+1)^2
   \left(q^4-\lambda_D  q^2+1\right)}{E_D q^4},
\\  
&\Theta_d(L10n76\{1,1\})-\Theta_d(L11n425\{1,0\}) =  \frac{(E_D-1) (\lambda_D -1) (\lambda_D+1) (q-1)^2 (q+1)^2}{E_D \lambda_D^3 q^2}.
\end{align*}}

Note that the factor $(E_D - 1)$ is common to all six pairs. This confirms that the pairs have the same Homflypt polynomial, since for $E_D = 1$ the difference collapses to zero. For a comprehensive list of the links of Table~\ref{pairs} with their braid words and the values of $\Theta_d$ on them, see Appendix~\ref{computations}. Further, all the computations can be found on \url{http://www.math.ntua.gr/~sofia/yokonuma}.

\subsection{Distinguishing diagrammatically the links $L11n358\{0,1\}$ and $L11n418\{0,0\}$}
 We have seen that the value of $\Theta_d$ at variables $(q,\lambda_D)$ can be computed also diagrammatically by applying the procedure
described in Subsection \ref{subs_skein} (Theorem~\ref{skein_defined}).
Notice that the difference between calculating $\Theta_d$ and $P$ on a link diagram using this procedure lies exclusively in Step 2, since Steps 1 and 3 involve only the skein relation of the Homflypt polynomial.
In Step 2 a coefficient which depends on $E_D$ appears when calculating $\Theta_d$; this does not exist for $P$. Further, the difference of the invariants $\Theta_d$ on a $P$-equivalent pair of links depends only on the values of the disjoint unions of knots that appear after we apply the special skein relation on both links, due to Theorem~\ref{links}. Hence, if two $P$-equivalent links decompose into the same disjoint unions of knots with the same coefficients under the special skein relation, then they are $\Theta_d$-equivalent.
If, however, they have different decompositions and they are not $\Theta_d$-equivalent, then this may only be caught in Step 2. 

\bigbreak
We now state the main result of this paper:

\begin{thm}\label{ThetaNotP}
The invariants $\Theta_d$ are not topologically equivalent to the Homflypt polynomial for any $d \geq 2$.
\end{thm}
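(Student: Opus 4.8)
The plan is to prove the statement by exhibiting an explicit pair of links that are $P$-equivalent but \emph{not} $\Theta_d$-equivalent for any $d \geq 2$. Recall that two invariants are topologically equivalent precisely when they distinguish the same pairs of links; hence it suffices to produce links $L_1, L_2$ with $P(L_1) = P(L_2)$ and $\Theta_d(L_1) \neq \Theta_d(L_2)$. I would take $L_1 = L11n358\{0,1\}$ and $L_2 = L11n418\{0,0\}$, the two $3$-component links specified by the braid words in the Introduction (the remaining five pairs of Table~\ref{pairs} are handled identically). Using the program based on the five rules of Theorem~\ref{specialtrace}, one first verifies that $\Jtrs$, and hence $\Theta_d(q,\lambda_D)$, takes the claimed values on $L_1$ and $L_2$; setting $E_D = 1$ in their difference collapses it to zero, which re-confirms $P(L_1) = P(L_2)$ and shows the pair is indeed $P$-equivalent.

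The substance of the argument is then the \emph{diagrammatic} proof, which is what makes the obstruction transparent. Following the procedure of Subsection~\ref{subs_skein}: order the three components of each link, choose a base point on each, and apply the special skein relation of Proposition~\ref{skein_ei} repeatedly on the mixed crossings, making the arcs of successive components into overarcs, until each link is resolved into an $\mathcal{A}$-linear combination of disjoint unions of knots, as in \eqref{c_D} and \eqref{c_P}. By Theorem~\ref{links}, and using $P(L_1) = P(L_2)$, one obtains
$$\Theta_d(L_1) - \Theta_d(L_2) = \sum_{k=2}^{3} \left(E_D^{1-k} - 1\right) \Bigg( \sum_{\widehat{\a} \in \mathcal{N}(L_1)_k} c(\widehat{\a})\,P(\widehat{\a}) \;-\; \sum_{\widehat{\a} \in \mathcal{N}(L_2)_k} c(\widehat{\a})\,P(\widehat{\a}) \Bigg).$$
The bracketed differences are elements of $\mathcal{A}$ independent of $d$, so what remains is to compute them explicitly by resolving both diagrams and then evaluating the resulting Homflypt polynomials of disjoint unions of knots via the skein relation \eqref{skein of hom} (Step~3).

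The main work, and the chief obstacle, is precisely this honest bookkeeping in the skein resolution: tracking the mixed crossings of each braid diagram, the accompanying powers of $\lambda_D$ and factors $(q - q^{-1})$ produced by \eqref{new skein 2}, and identifying by hand the disjoint unions of knots that appear together with their coefficients $c(\widehat{\a})$. Carried out for both $L_1$ and $L_2$, this yields
$$\Theta_d(L_1) - \Theta_d(L_2) = \frac{(E_D - 1)(\lambda_D - 1)(q - 1)^2(q + 1)^2\left(q^2 - \lambda_D\right)\left(\lambda_D q^2 - 1\right)}{E_D\,\lambda_D^4\,q^4},$$
which, for every $d \geq 2$, is a nonzero element of $\mathcal{A}$ since $E_D = 1/d \neq 1$ and the remaining factors are nonzero polynomials in $q$ and $\lambda_D$. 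Hence $L_1$ and $L_2$ are distinguished by $\Theta_d$ while being indistinguishable by $P$, so $\Theta_d$ is not topologically equivalent to the Homflypt polynomial for any $d \geq 2$. A purely braid-theoretic alternative bypasses Steps~1--3 and instead evaluates $\Lambda_D^{\,n-1}(\sqrt{\lambda_D})^{\epsilon(\a)}\,\Jtrs(\a)$ directly on the two given braid words using rules (i)--(v) of Theorem~\ref{specialtrace}, arriving at the same difference.
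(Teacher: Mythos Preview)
Your proposal is correct and follows essentially the same route as the paper: exhibit the $P$-equivalent pair $L11n358\{0,1\}$ and $L11n418\{0,0\}$, resolve each diagrammatically via the special skein relation of Proposition~\ref{skein_ei} into disjoint unions of knots, invoke Theorems~\ref{conjHomflypt} and~\ref{disj_links}, and arrive at the displayed nonzero difference for $d\geq 2$. The paper carries out the explicit skein trees (identifying the intermediate knots $3_1^*$, $6_2^*$, $5_2^*$, $8_{20}$, etc.) rather than leaving the bookkeeping as a described procedure, but the strategy and the target formula are identical.
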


\begin{proof}
Given our computational data, it suffices to prove that the invariants $\Theta_d$ distinguish one pair of links from Table~\ref{pairs}. We now prove diagrammatically, using Theorem~\ref{skein_defined}, that the links $L11n358\{0,1\}$ and $L11n418\{0,0\}$ are not $\Theta_d$-equivalent for every $d \geq 2$.
The diagrams in Figure~\ref{fig_link_diag} (obtained from \cite{chli}) illustrate the above links:
\begin{figure}[H]
\includegraphics[scale=0.15]{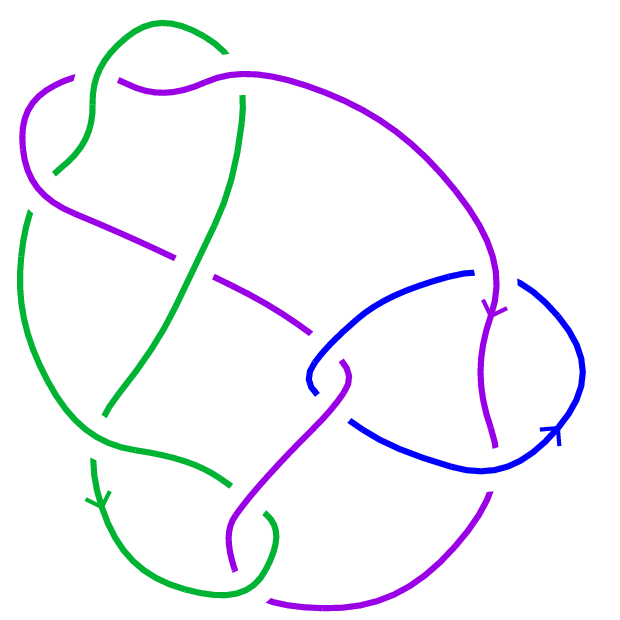} \qquad \qquad
\includegraphics[scale=0.15]{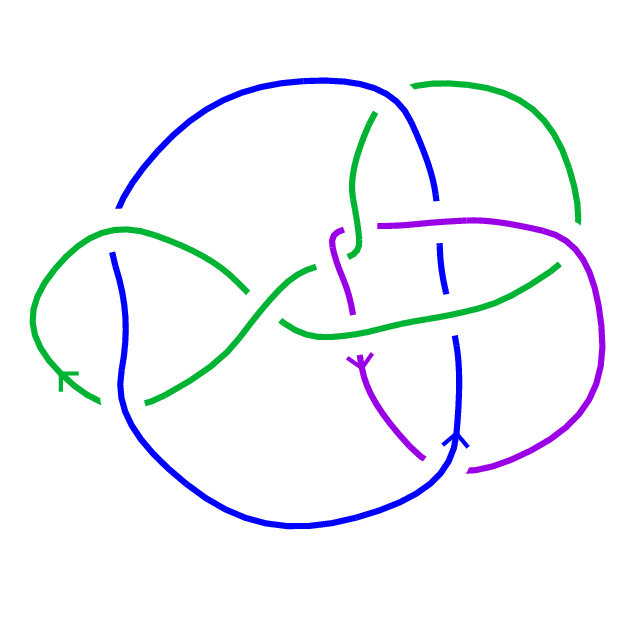}
\caption{The links $L11n358\{0,1\}$ and $L11n418\{0,0\}$.}
\label{fig_link_diag}
\end{figure}

Denote by $U$ the unknot and by $L^*$ the mirror image of a link $L$. Applying the skein relation \eqref{new skein delta} (see also Remark~\ref{skein_pm2}) on each diagram resolves the diagram into disjoint unions of knots. These knots need to be identified. 
The isotopy class of a knot diagram can be easily identified either by comparison with known diagrams or by calculating its Homflypt polynomial; the latter can be done either directly from the diagram or by using a braid presentation obtained by the knot diagram.

Beginning with the diagram of the link $L11n358\{0,1\}$ we obtain the skein tree illustrated in Figure~\ref{skein_tree_358}:

\begin{figure}[H]
\begin{picture}(380,330)

\put(95,230){\includegraphics[scale=0.25]{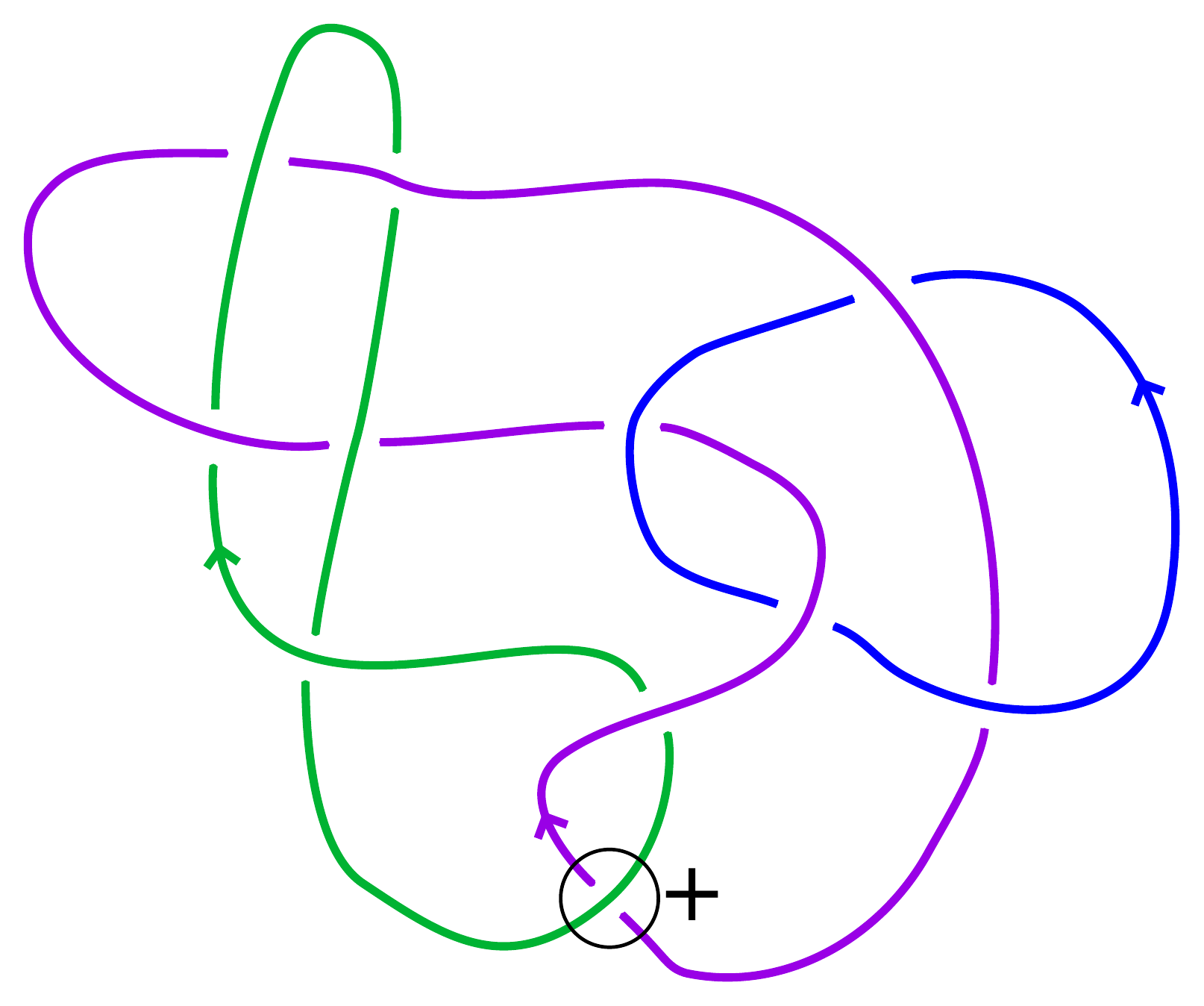}}
\put(215,270){\vector(3,-4){20}}
\put(230,260){$(q-q^{-1}) \sqrt{\lambda_D}$}
\put(110,270){\vector(-3,-4){20}}
\put(85,260){$\lambda_D$}

\put(0,180){\includegraphics[scale=0.25]{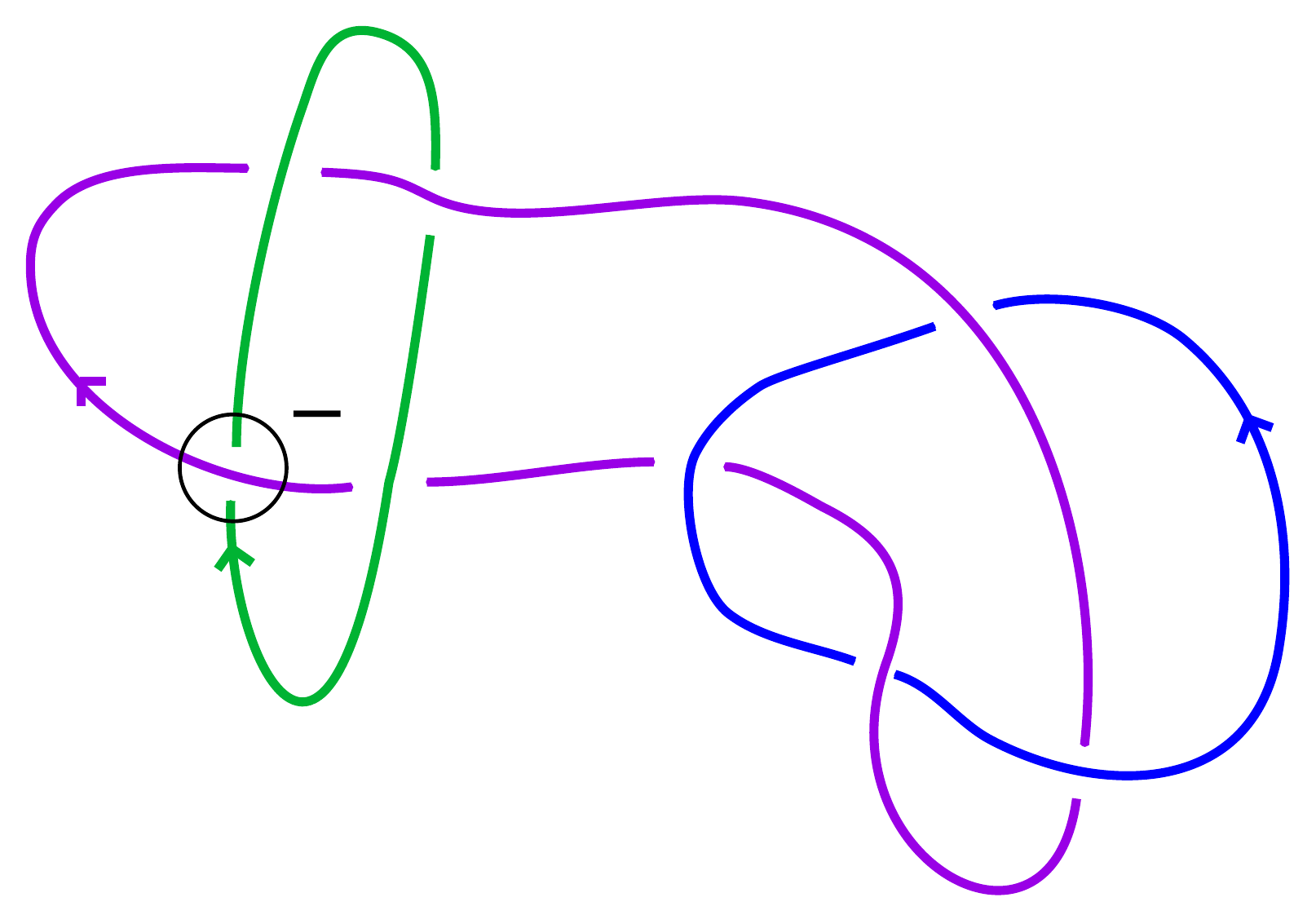}}
\put(190,165){\includegraphics[scale=0.25]{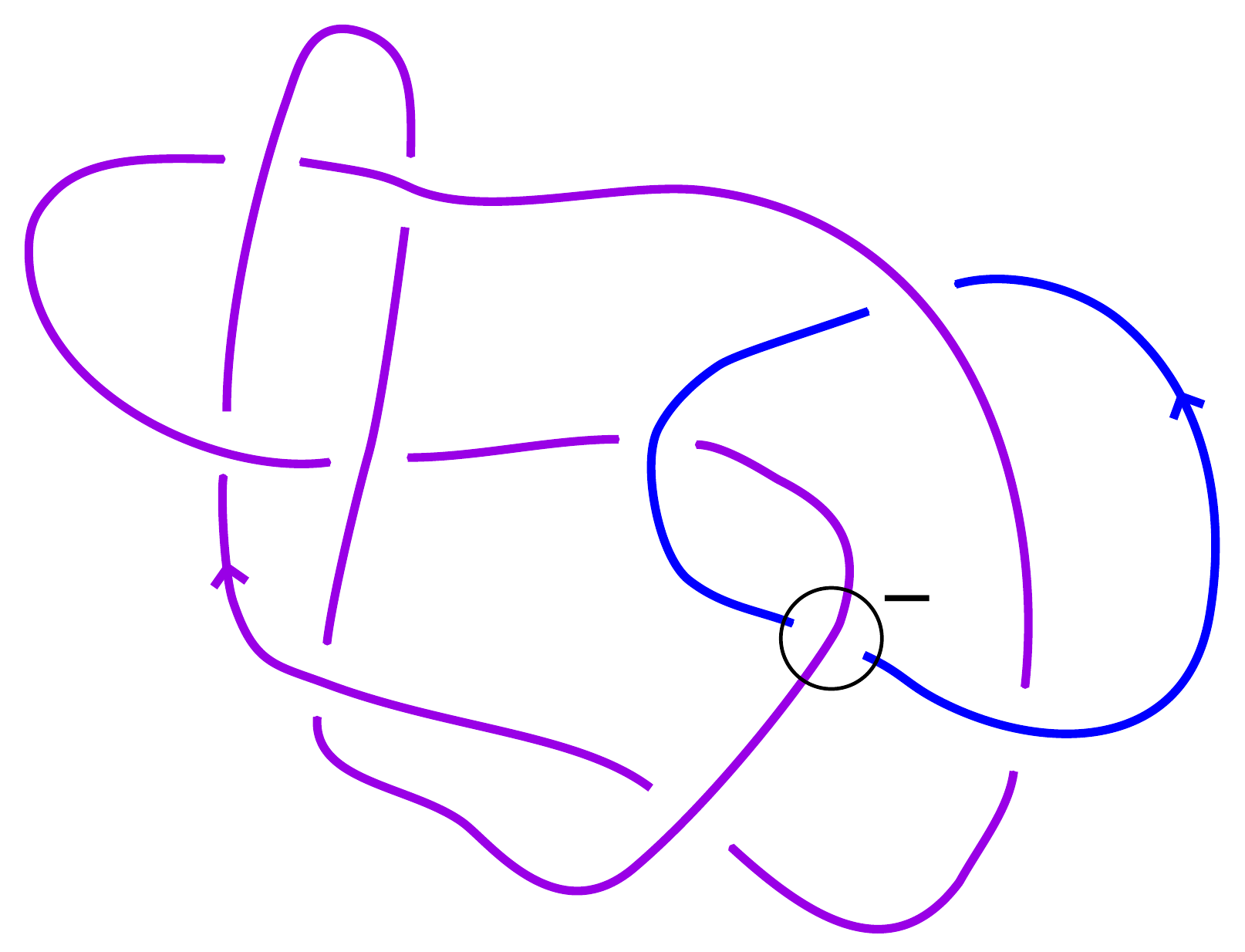}}
\put(40,200){=: $L_1$}
\put(290,180){\vector(0,-4){28}}
\put(295,165){$-\frac{(q-q^{-1})}{\sqrt{\lambda_D}}$}
\put(215,185){\vector(-3,-2){40}}
\put(180,175){$\lambda_D^{-1}$}

\put(70,90){\includegraphics[scale=0.25]{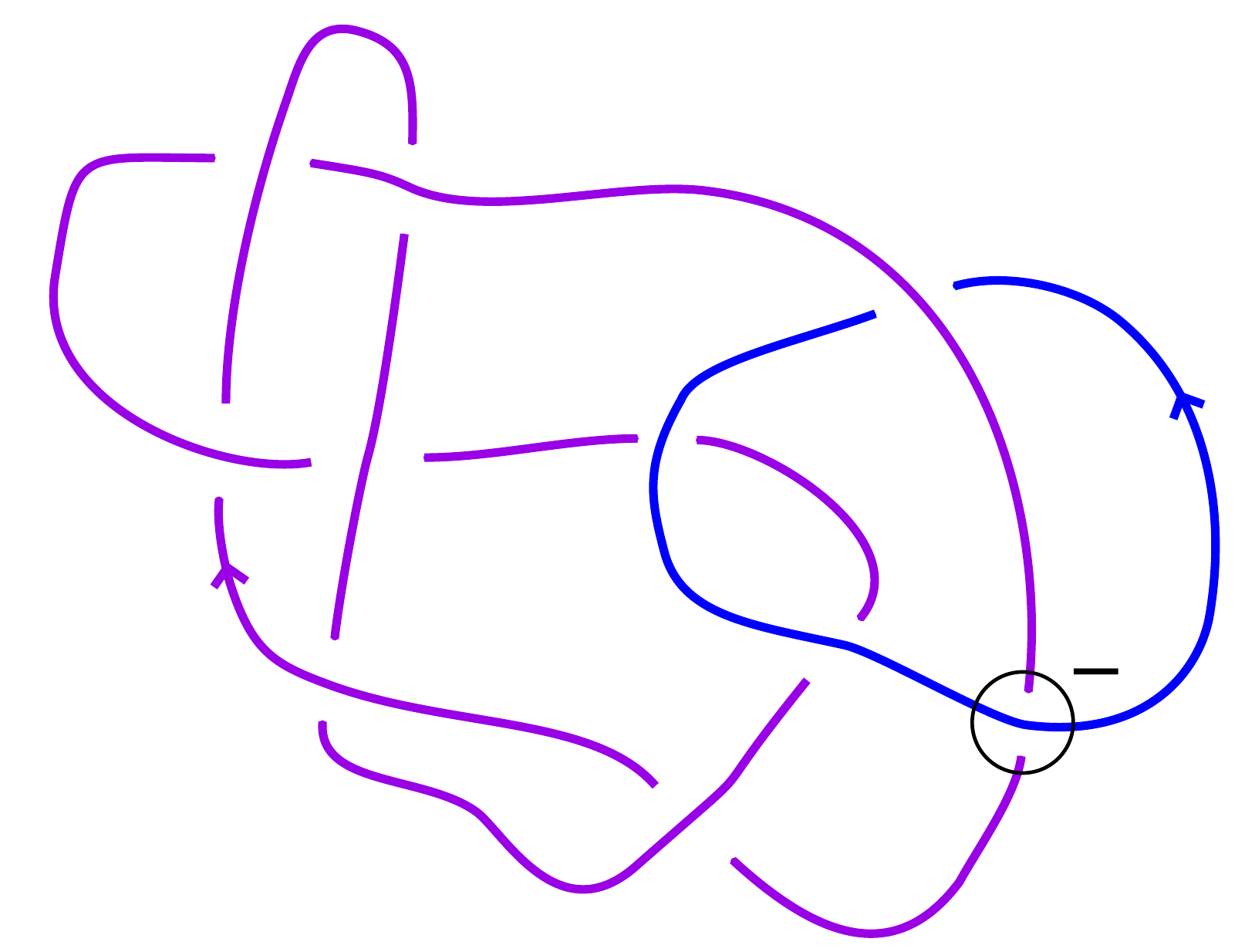}}
\put(240,70){\includegraphics[scale=0.25]{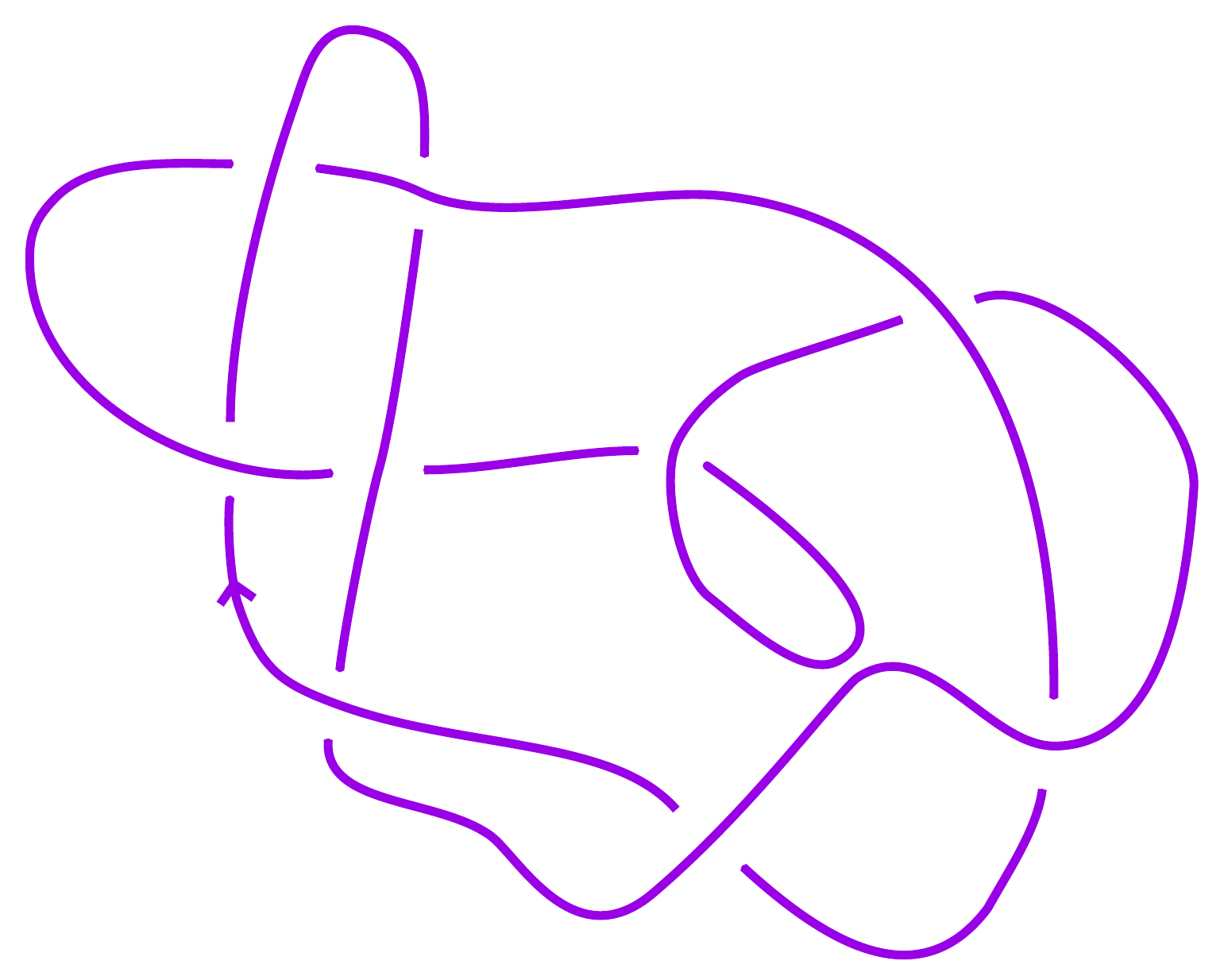}}
\put(335,60){$3_1^*$}
\put(100,100){\vector(-3,-2){40}}
\put(65,90){$\lambda_D^{-1}$}
\put(170,100){\vector(3,-2){40}}
\put(190,90){$-\frac{(q-q^{-1})}{\sqrt{\lambda_D}}$}

\put(0,0){\includegraphics[scale=0.25]{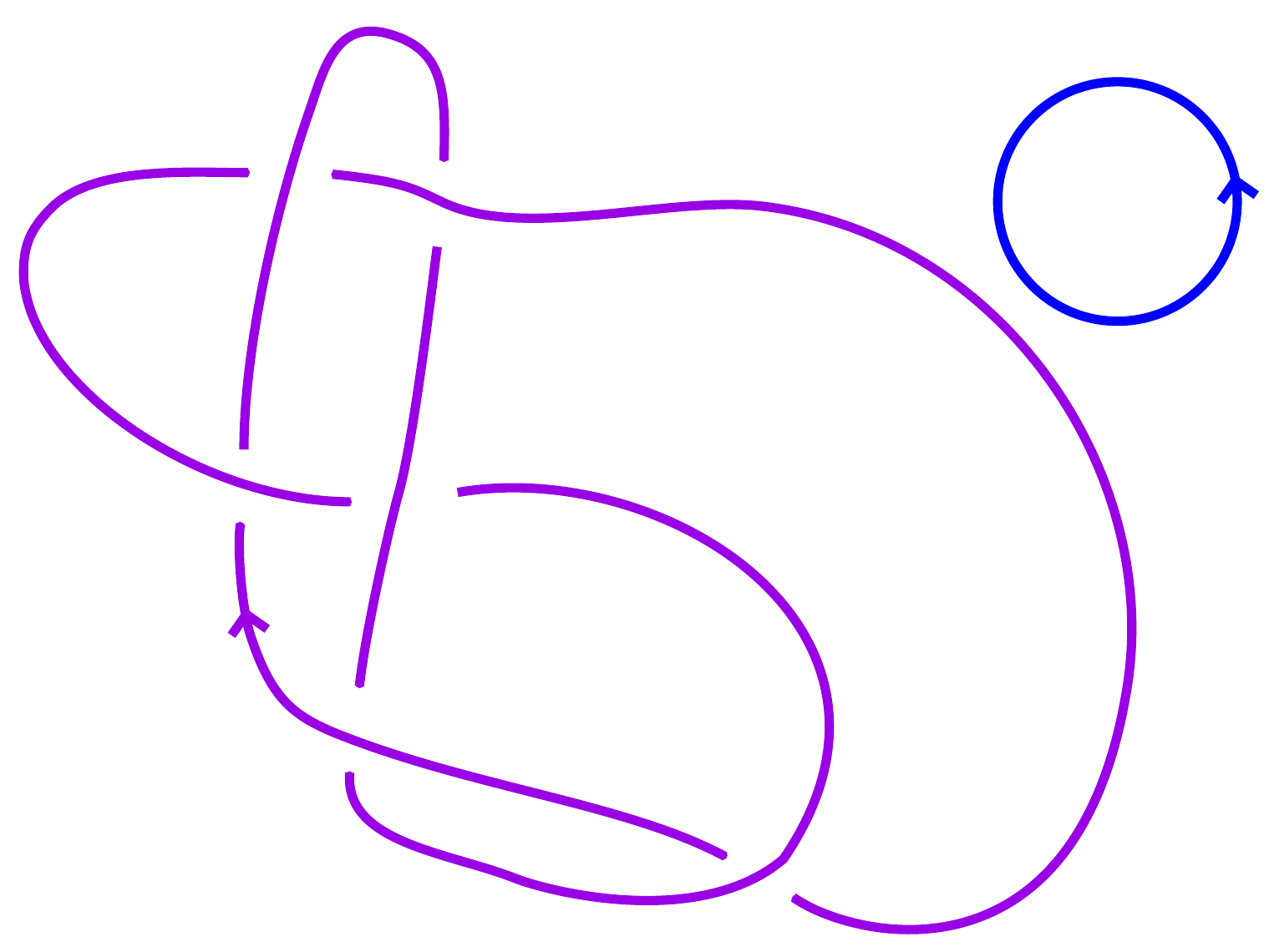}}
\put(190,0){\includegraphics[scale=0.25]{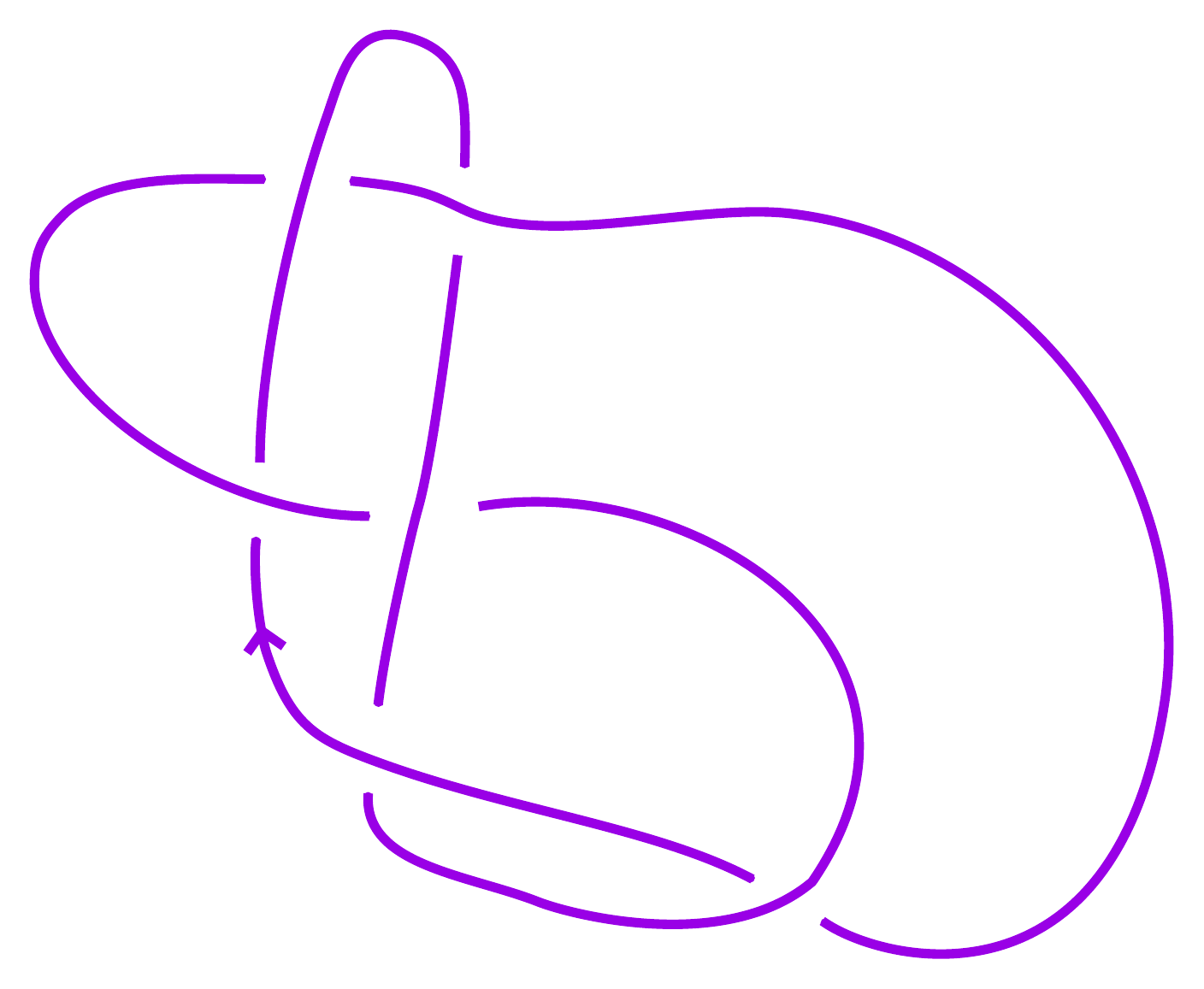}}
\put(0,0){$6_2^* \sqcup U$}
\put(210,0){$6_2^*$}
\end{picture}
\caption{The skein tree of the link $L11n358\{0,1\}$.}
\label{skein_tree_358}
\end{figure}

In the skein tree of Figure~\ref{skein_tree_358} there are resulting knots which need to be identified. For example, the knot $6_2^*$ can be easily identified by its diagram (available at  \url{http://www.indiana.edu/~knotinfo/diagram_display/diagram_display_6_2.html} \cite{chli}). Hence we have:
\begin{align*}
\Theta_d(L11n358\{0,1\}) = &\lambda_D \, \Theta_d(L_1) + (q-q^{-1}) \lambda_D^{-\frac{3}{2}} \, \Theta_d(6_2^* \sqcup U)\\
&-(q-q^{-1})^2 \lambda_D^{-1} \, \Theta_d(6_2^*) - (q-q^{-1})^2 \, \Theta_d(3_1^*).
\end{align*}

Now, the skein tree of the link $L_1$ is illustrated in Figure~\ref{skein_tree_L1}:

\begin{figure}[H]
\begin{picture}(460,380)
\put(150,350){$L_1=$}

\put(180,300){\includegraphics[scale=0.25]{L11n358_L.pdf}}
\put(200,310){\vector(-3,-4){30}}
\put(160,290){$\lambda_D^{-1}$}
\put(280,310){\vector(3,-4){30}}
\put(300,290){$\frac{-(q-q^{-1})}{\sqrt{\lambda_D}}$}

\put(135,200){\includegraphics[scale=0.25]{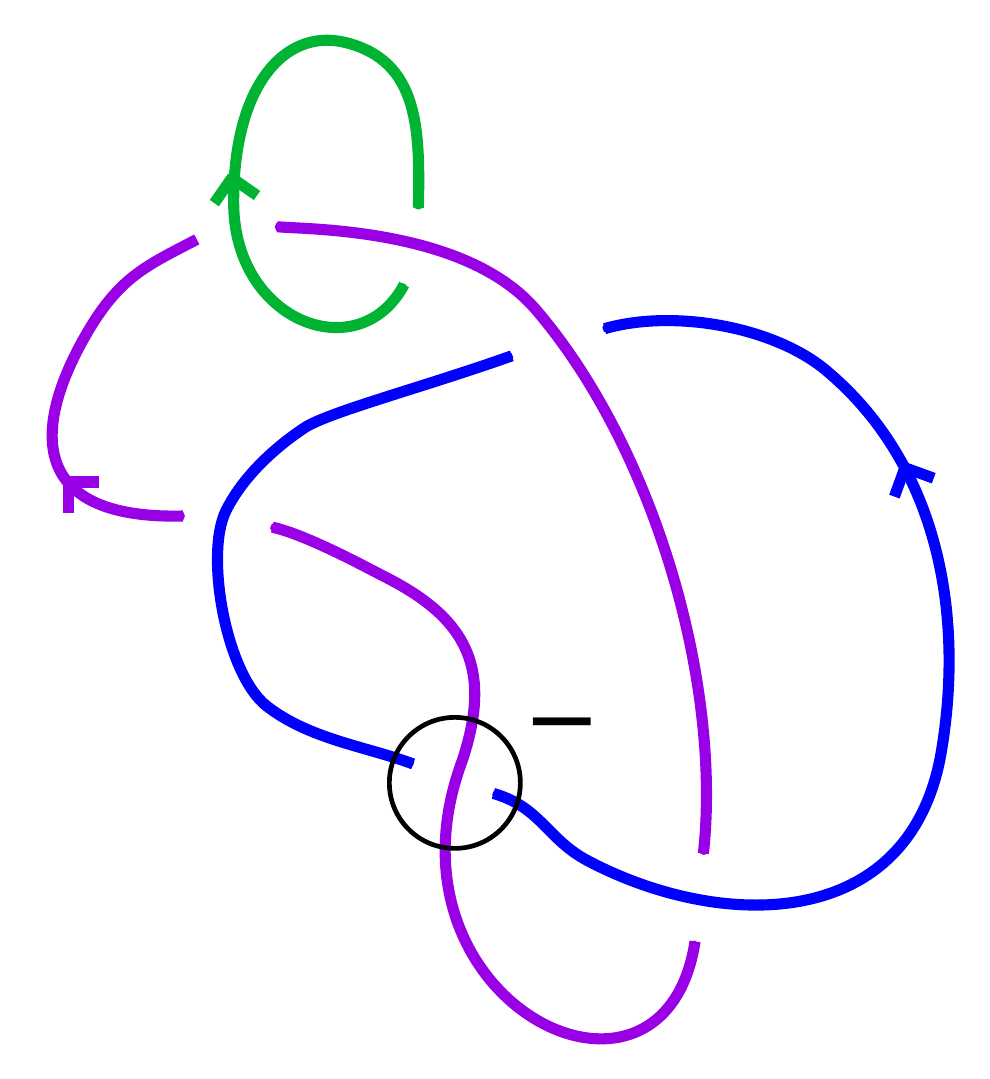}}
\put(300,200){\includegraphics[scale=0.25]{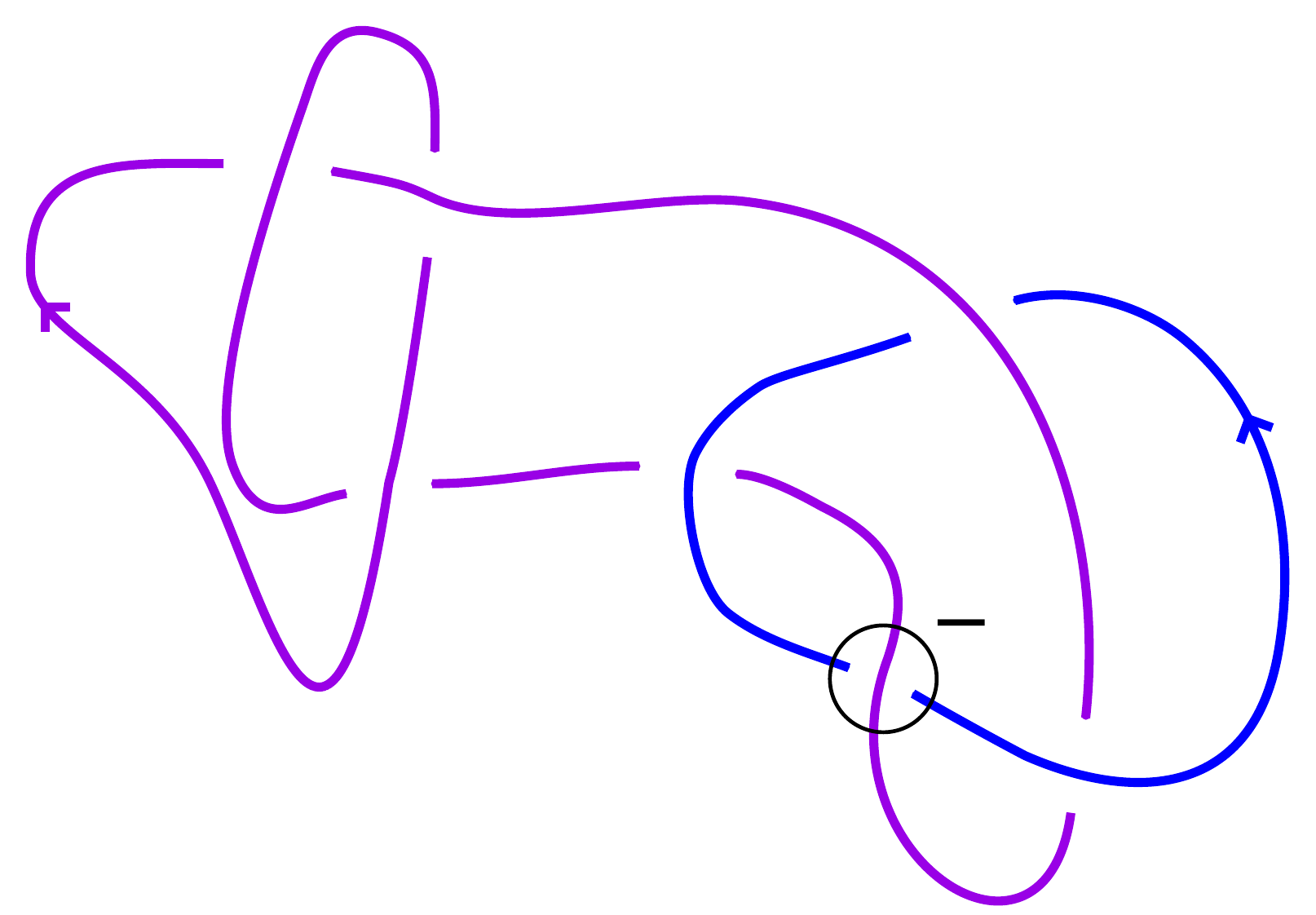}}

\put(150,220){\vector(-3,-4){30}}
\put(110,200){$\lambda_D^{-1}$}
\put(210,220){\vector(3,-4){30}}
\put(225,200){$\frac{-(q-q^{-1})}{\sqrt{\lambda_D}}$}

\put(340,210){\vector(0,-1){30}}
\put(320,190){$\lambda_D^{-1}$}
\put(410,210){\vector(1,-2){15}}
\put(420,190){$\frac{-(q-q^{-1})}{\sqrt{\lambda_D}}$}

\put(80,130){\includegraphics[scale=0.25]{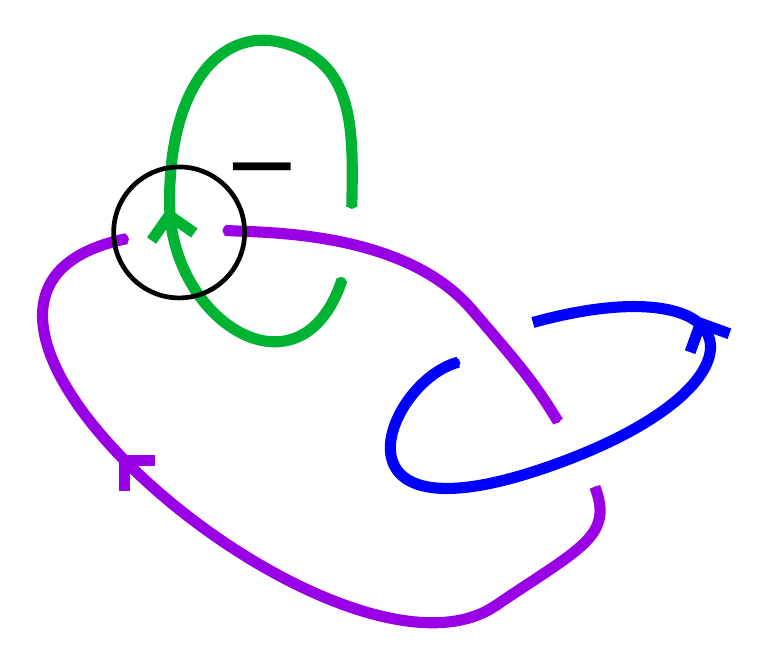}}
\put(190,120){\includegraphics[scale=0.25]{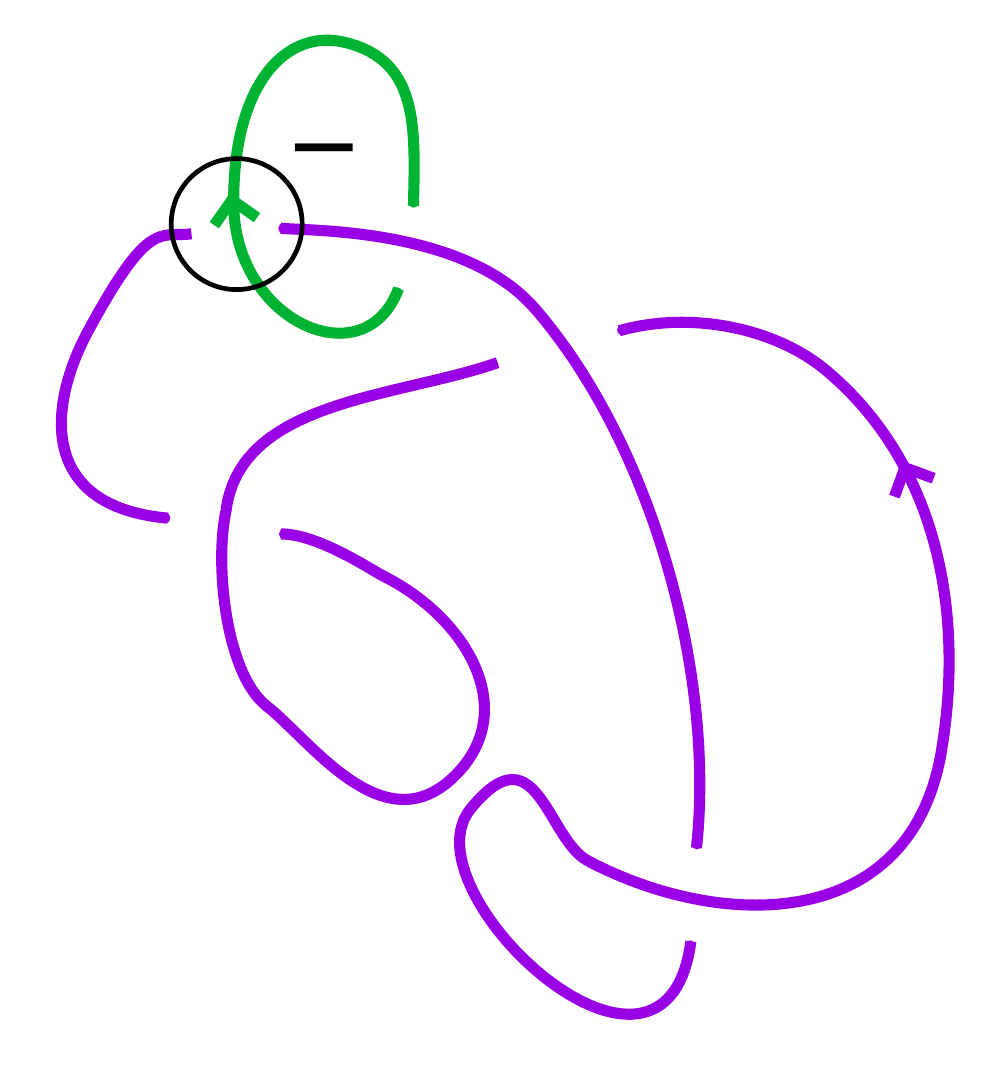}}
\put(325,120){\includegraphics[scale=0.25]{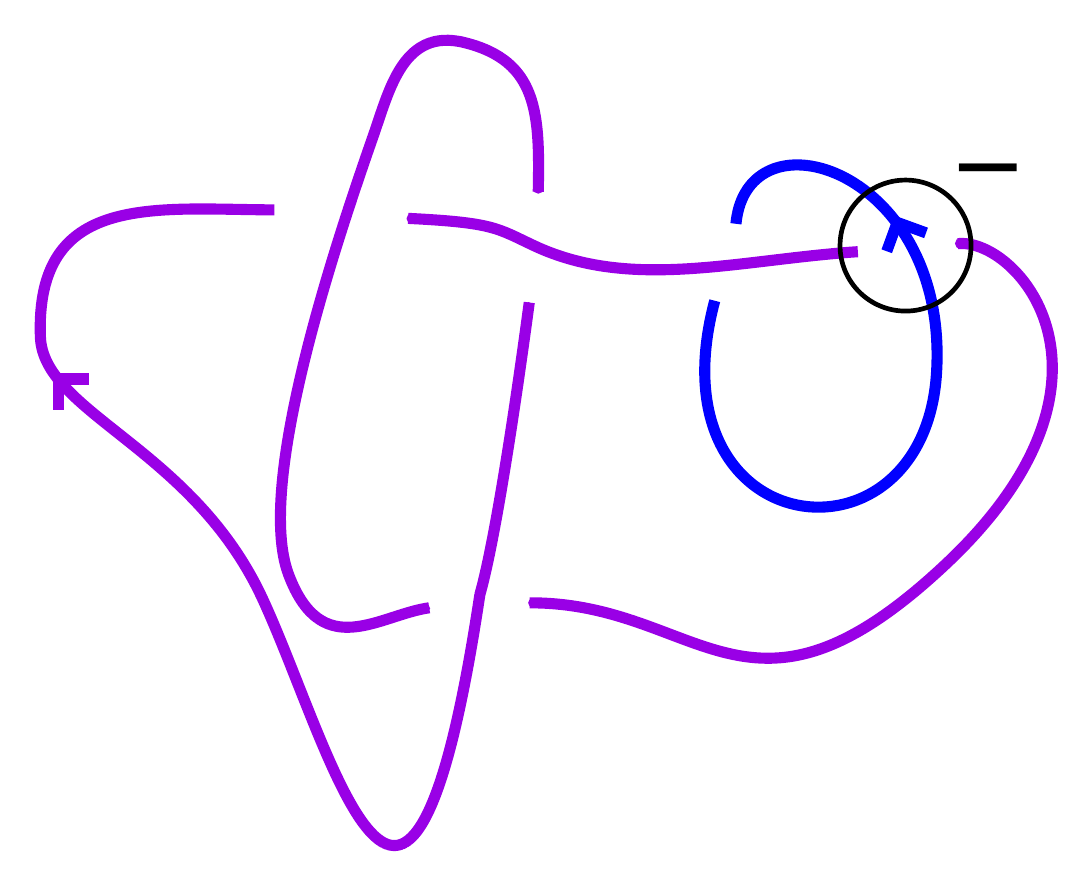}}
\put(430,160){$3_1^*$}

\put(90,120){\vector(-3,-4){30}}
\put(55,100){$\lambda_D^{-1}$}
\put(120,120){\vector(3,-4){30}}
\put(135,100){$\frac{-(q-q^{-1})}{\sqrt{\lambda_D}}$}

\put(220,120){\vector(-1,-2){15}}
\put(190,100){$\lambda_D^{-1}$}
\put(250,120){\vector(1,-2){15}}
\put(260,100){$\frac{-(q-q^{-1})}{\sqrt{\lambda_D}}$}

\put(340,120){\vector(-1,-2){15}}
\put(310,100){$\lambda_D^{-1}$}
\put(370,120){\vector(1,-2){15}}
\put(380,100){$\frac{-(q-q^{-1})}{\sqrt{\lambda_D}}$}

\put(27,30){\includegraphics[scale=0.25]{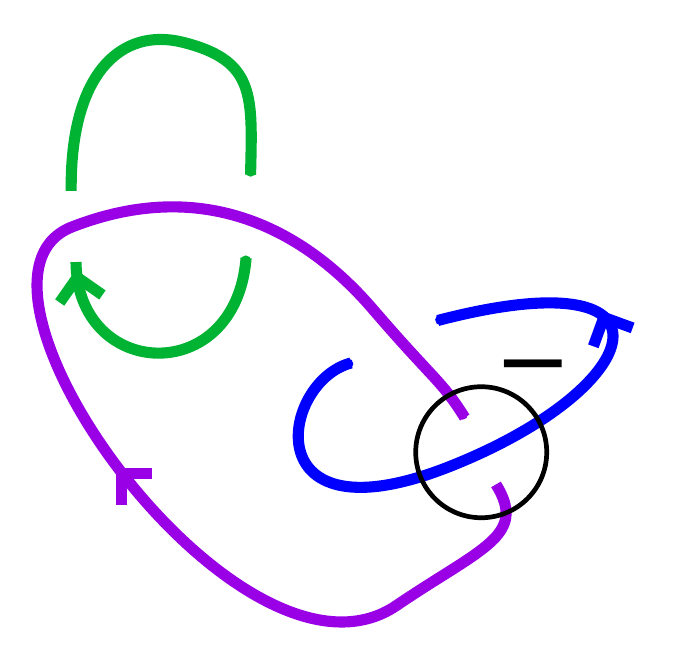}}
\put(155,30){\includegraphics[scale=0.25]{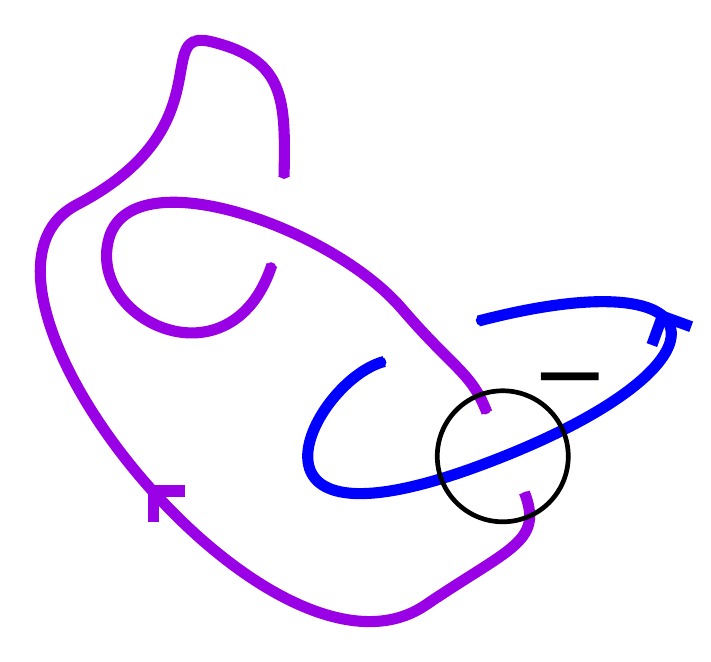}}
\put(190,80){$U \sqcup U$}
\put(260,80){$U$}
\put(310,80){$3_1^* \sqcup U$}
\put(380,80){$3_1^*$}

\put(30,40){\vector(-1,-4){7}}
\put(10,25){$\lambda_D^{-1}$}
\put(70,40){\vector(1,-4){7}}
\put(75,25){$\frac{-(q-q^{-1})}{\sqrt{\lambda_D}}$}

\put(160,40){\vector(-1,-4){7}}
\put(140,25){$\lambda_D^{-1}$}
\put(200,40){\vector(1,-4){7}}
\put(205,25){$\frac{-(q-q^{-1})}{\sqrt{\lambda_D}}$}

\put(00,0){$U \sqcup U \sqcup U$}
\put(60,0){$U \sqcup U$}
\put(140,0){$U \sqcup U$}
\put(205,0){$U$}
\end{picture}
\caption{The skein tree of the link $L_1$ (skein tree of $L11n358\{0,1\}$ continued).}
\label{skein_tree_L1}
\end{figure}

Hence,
\begin{align*}
\Theta_d(L_1) = &\lambda_D^{-4} \, \Theta_d(U \sqcup U \sqcup U) - 2 (q-q^{-1}) \lambda_D^{-\frac{7}{2}} \, \Theta_d(U \sqcup U) + \lambda_D^{-3} (q-q^{-1})^2 \, \Theta_d(U)\\
&- \lambda_D^{-\frac{5}{2}}(q-q^{-1}) \, \Theta_d(U \sqcup U) + \lambda_D^{-2}(q-q^{-1})^2 \, \Theta_d(U) - \lambda_D^{-\frac{5}{2}} (q-q^{-1}) \, \Theta_d(3_1^* \sqcup U)\\
&+ \lambda_D^{-1} (q-q^{-1})^2 \, \Theta_d(3_1^*) + \lambda_D^{-2} (q-q^{-1})^2 \, \Theta_d(3_1^*).
\end{align*}

Now, the value of $\Theta_d$ on knots is known due to Theorem~\ref{conjHomflypt} and the mirroring property of the Homflypt polynomial, that is, 
$P(q,\lambda_D)(L^*) = P(q^{-1}, \lambda_D^{-1})(L)$ 
for a link $L$. Following this, and using Theorem~\ref{disj_links} we obtain:

\begin{align*}
\Theta_d(L11n358\{0,1\}) = &\left(\frac{\left(q-q^{-1}\right)^2}{\lambda_D}-\frac{1-\lambda _D}{E_D \lambda _D^2}\right) \, P(3_1^*) - \left(\frac{\left(q-q^{-1}\right)^2}{\lambda_D} - \frac{1-\lambda _D}{E_D \lambda_D^2}\right) P(6_2^*)\\
&-\frac{\left(\lambda _D^{-2}+2\lambda_D^{-3}\right) \left(1-\lambda_D\right)}{E_D}+\left(q-q^{-1}\right)^2 \left(\lambda _D^{-1}+{\lambda_D^{-2}}\right) +\frac{\left(1-\lambda _D\right)^2}{E_D^2\left(q-q^{-1}\right)^2 \lambda _D^4}.
\end{align*}

The Homflypt polynomial takes the following values on the knots $3_1^*$ and $6_2^*$:

\begin{align*}
P(3_1^*) = &\frac{-q^2 + \lambda_D + q^4 \lambda _D}{q^2 \lambda_D^2}\\
P(6_2^*) = &-\frac{\lambda_D + q^8 \lambda_D + q^4 (1 + 2 \lambda_D) - q^2 (1 + \lambda_D + \lambda_D^2) - q^6(1 + \lambda_D + \lambda_D^2)}{q^4 \lambda_D^2}.
\end{align*}

Putting everything together, we finally obtain that:

\begin{align*}
\Theta_d(L11n358\{0,1\}) = 
&\left(E_D^2 \lambda_D^4 (q-1)^2 q^6 (q+1)^2 \right)^{-1} \left(E_D \lambda_D +E_D \lambda_D q^4-2 E_D \lambda_D q^2+\lambda_D 
   q^2-q^2\right)\\
   &(E_D \lambda_D + E_D \lambda_D q^{12}-E_D \lambda_D^2
   q^{10}-2 E_D \lambda_D q^{10}-E_D q^{10}+3 E_D \lambda_D ^2 q^8\\
   &+4 E_D \lambda_D q^8 +2 E_D q^8-4 E_D \lambda_D ^2 q^6-6 E_D \lambda_D q^6-2 E_D
   q^6+3 E_D \lambda_D^2 q^4 \\
   &+4 E_D \lambda_D q^4 + 2 E_D q^4-E_D \lambda_D ^2
   q^2-2 E_D \lambda_D q^2-E_D q^2+\lambda_D q^6-q^6),
\end{align*}
which can be easily confirmed by any computer algebra system.
\smallbreak
Now we apply the same procedure to the link $L11n418\{0,0\}$. Applying the special skein relation once, we get two new links $L_2$ and $L_3$ as illustrated in Figure~\ref{skein_tree_418}:

\begin{figure}[H]
\begin{picture}(345,120)

\put(120,40){\includegraphics[scale=0.25]{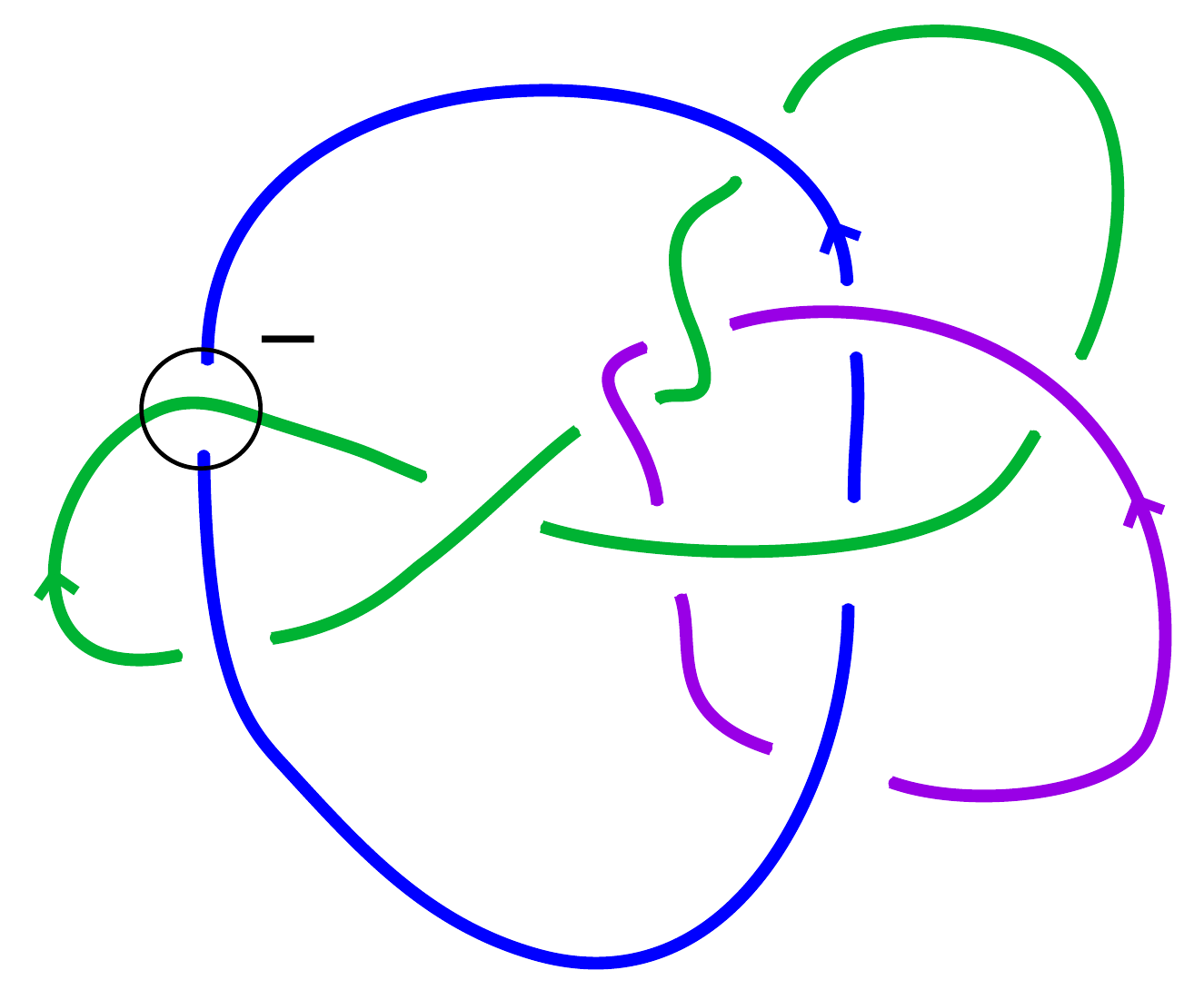}}
\put(220,60){\vector(3,-1){30}}
\put(220,65){$\frac{-(q-q^{-1})}{\sqrt{\lambda_D}}$}
\put(120,60){\vector(-3,-1){30}}
\put(90,60){$\lambda_D^{-1}$}

\put(0,0){\includegraphics[scale=0.25]{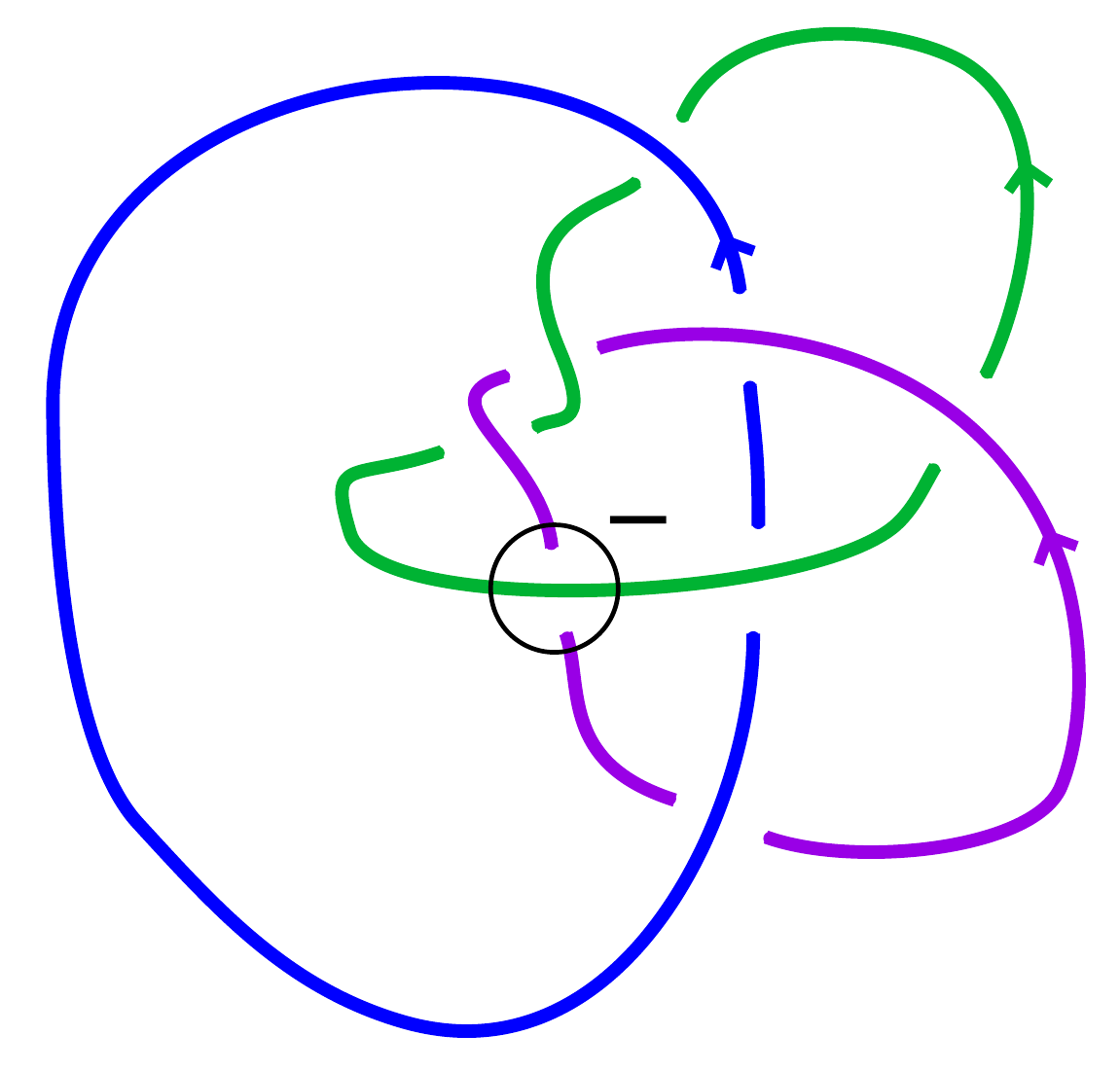}}
\put(250,0){\includegraphics[scale=0.25]{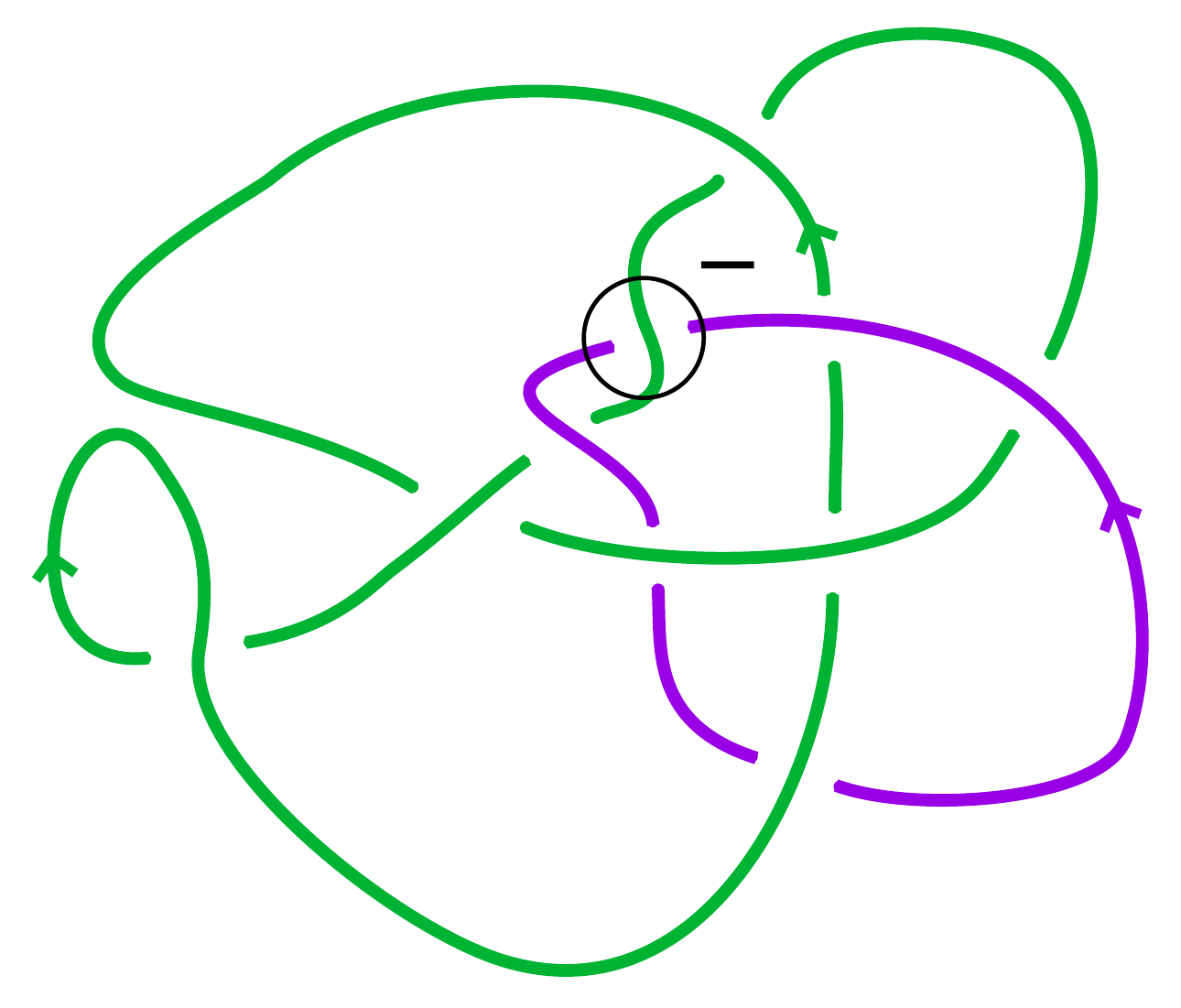}}
\put(50,0){=: $L_2$}
\put(310,0){=: $L_3$}
\end{picture}
\caption{The skein tree of the link $L11n418\{0,0\}$.}
\label{skein_tree_418}
\end{figure}

Hence, we have that:
$$
\Theta_d(L11n418\{0,0\}) = \lambda_D^{-1} \, \Theta_d(L_2) -(q-q^{-1}) \sqrt{\lambda_D}^{-1} \, \Theta_d(L_3).
$$

We analyze first the link $L_2$ into simpler links via the special skein relation, as in Figure~\ref{skein_tree_L2}:

\begin{figure}[H]
\begin{picture}(467,330)
\put(150,300){$L_2 =$}

\put(180,250){\includegraphics[scale=0.25]{L11n418_L.pdf}}

\put(270,270){\vector(3,-1){30}}
\put(270,270){$-(q-q^{-1}) \sqrt{\lambda_D}^{-1}$}
\put(180,270){\vector(-3,-1){30}}
\put(155,270){$\lambda_D^{-1}$}

\put(60,190){\includegraphics[scale=0.25]{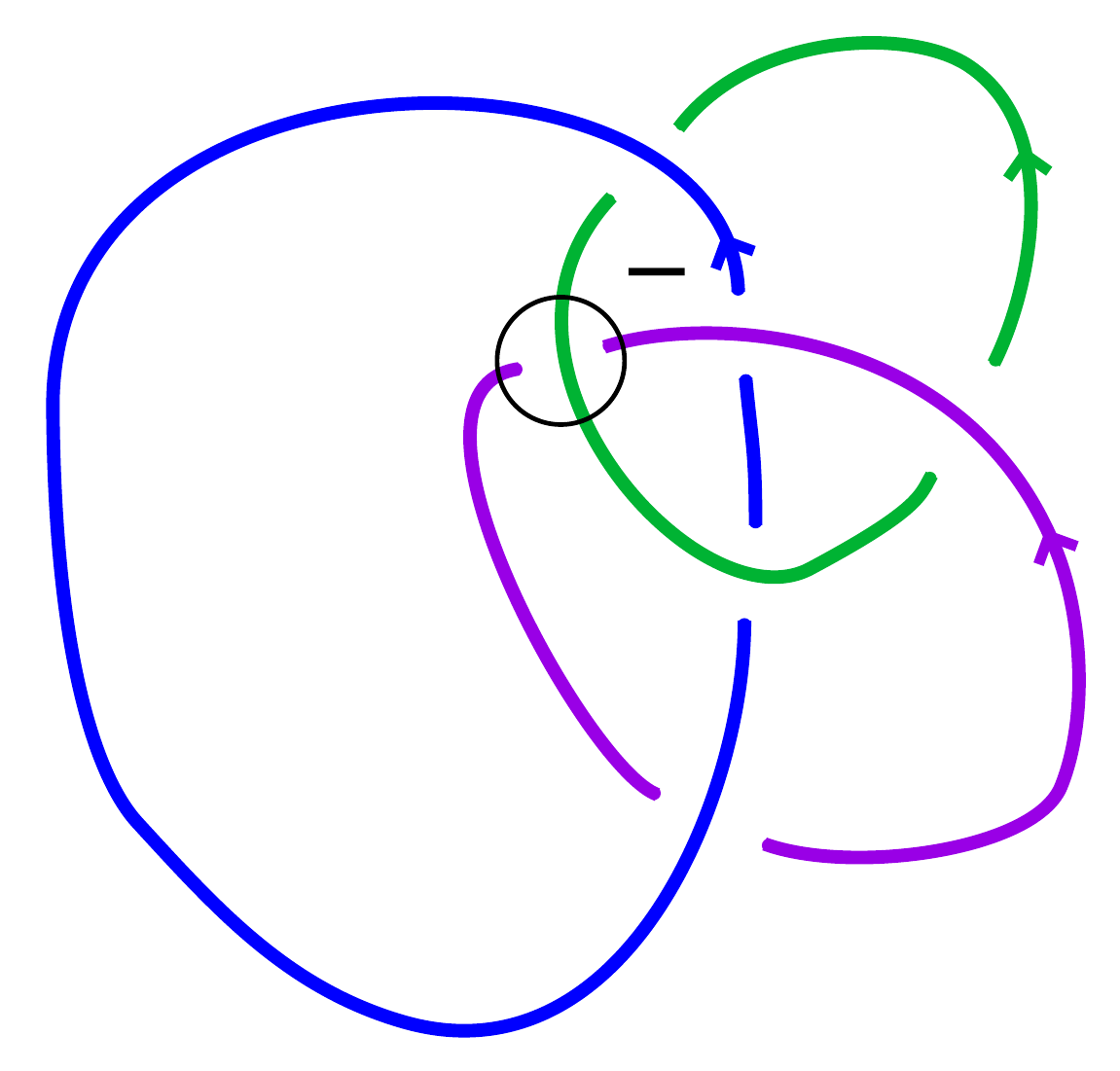}}
\put(305,190){\includegraphics[scale=0.25]{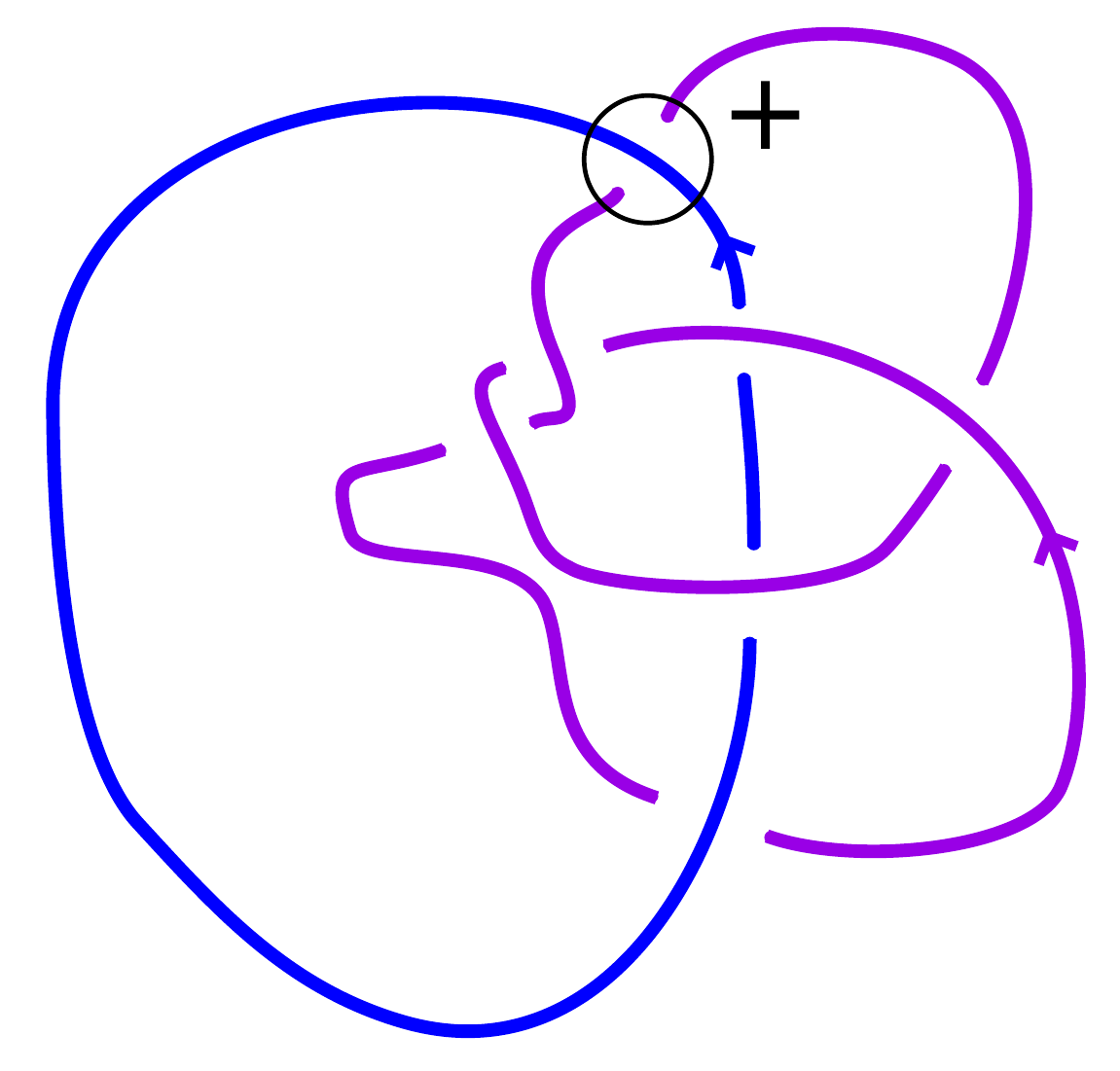}}

\put(390,205){\vector(3,-4){20}}
\put(400,195){$(q-q^{-1}) \sqrt{\lambda_D}$}
\put(310,205){\vector(-3,-4){20}}
\put(285,195){$\lambda_D$}

\put(130,200){\vector(3,-4){20}}
\put(140,190){$-(q-q^{-1}) \sqrt{\lambda_D}^{-1}$}
\put(70,200){\vector(-3,-4){20}}
\put(40,190){$\lambda_D^{-1}$}

\put(0,100){\includegraphics[scale=0.25]{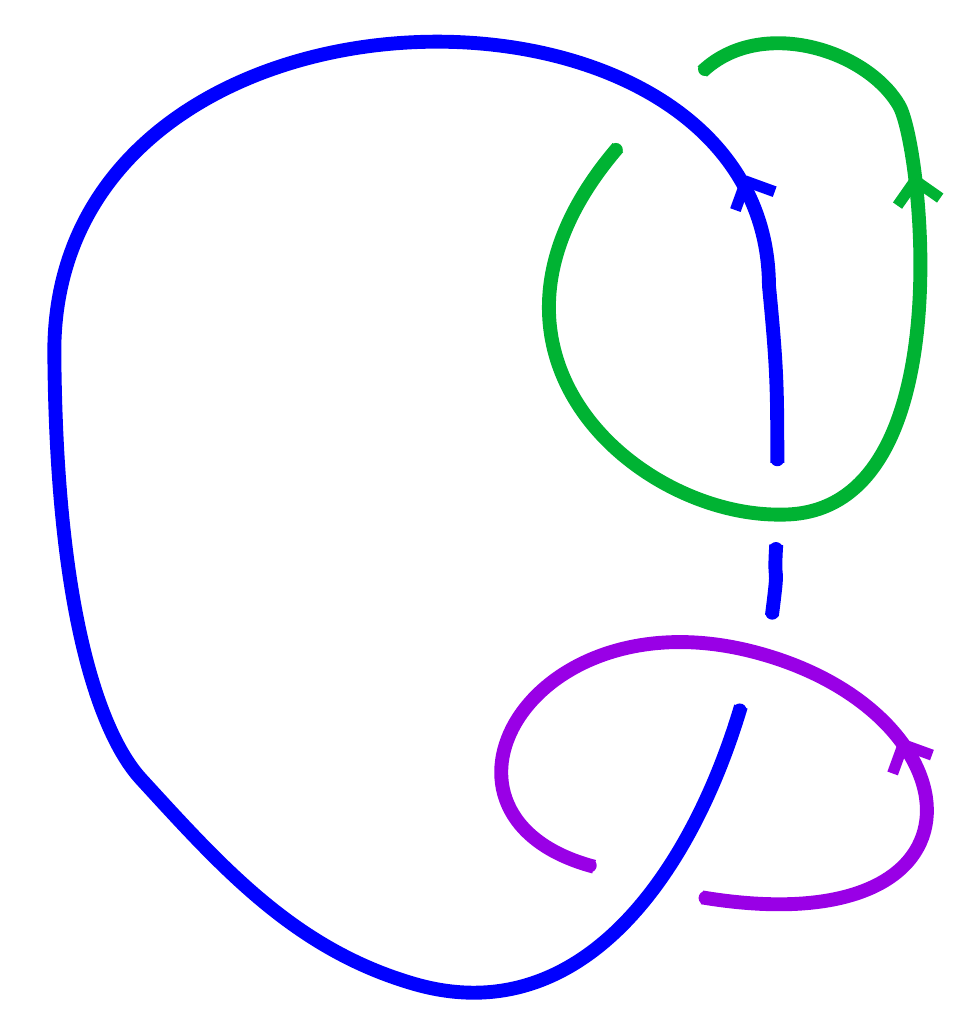}}
\put(110,100){\includegraphics[scale=0.25]{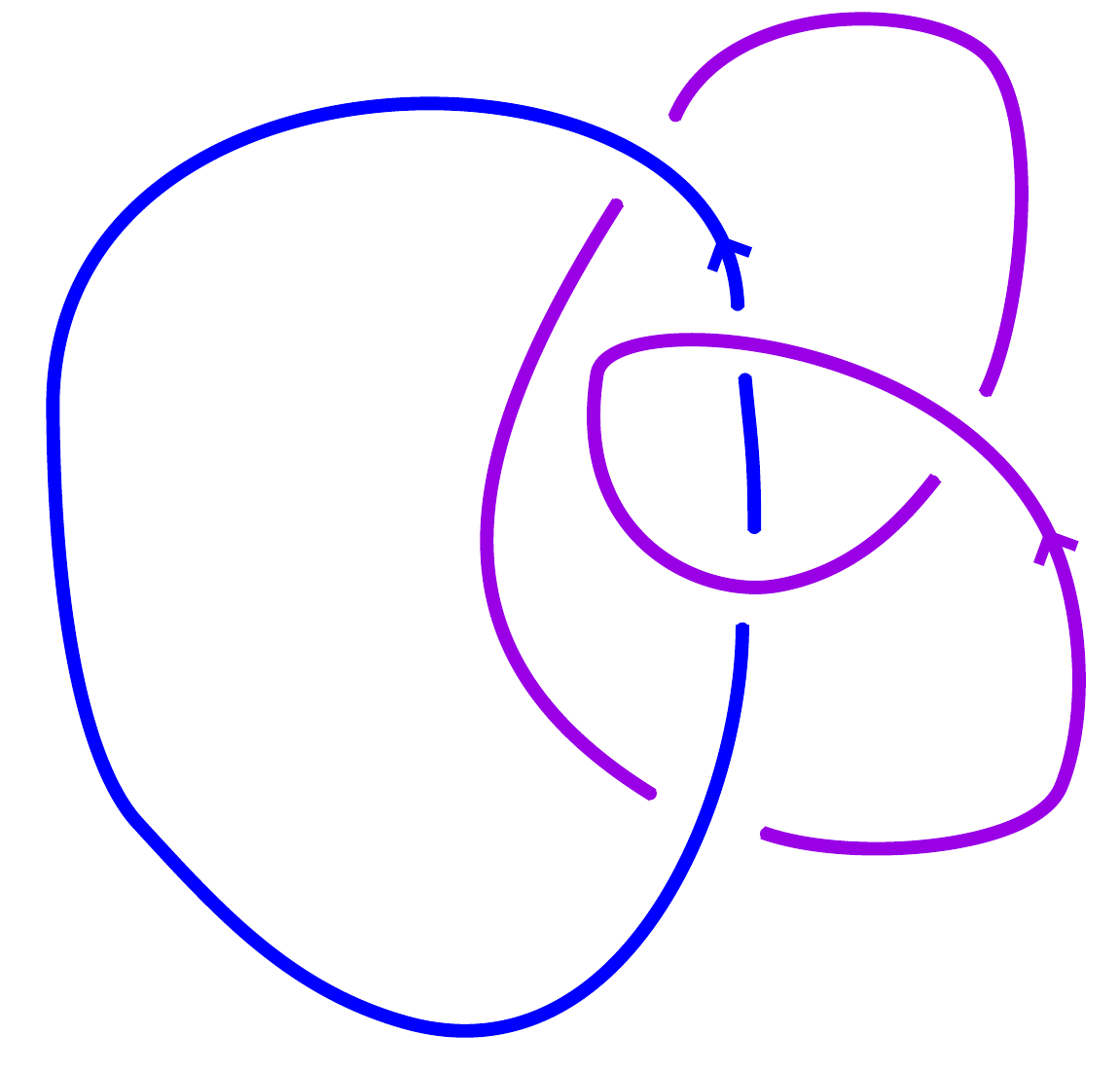}}
\put(50,100){$=: L_4$}
\put(160,100){$U \sqcup U$}

\put(220,100){\includegraphics[scale=0.25]{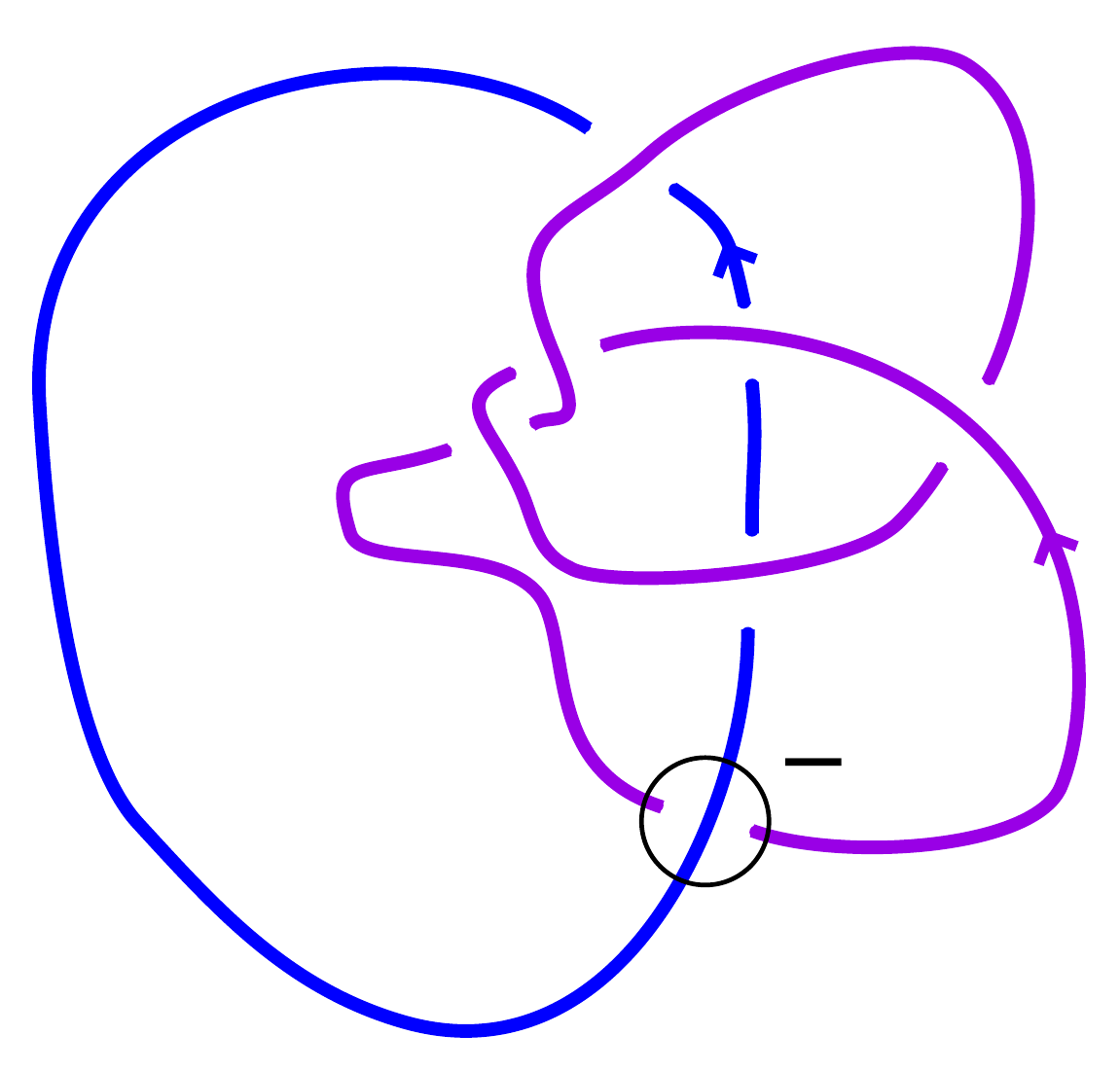}}
\put(360,100){\includegraphics[scale=0.25]{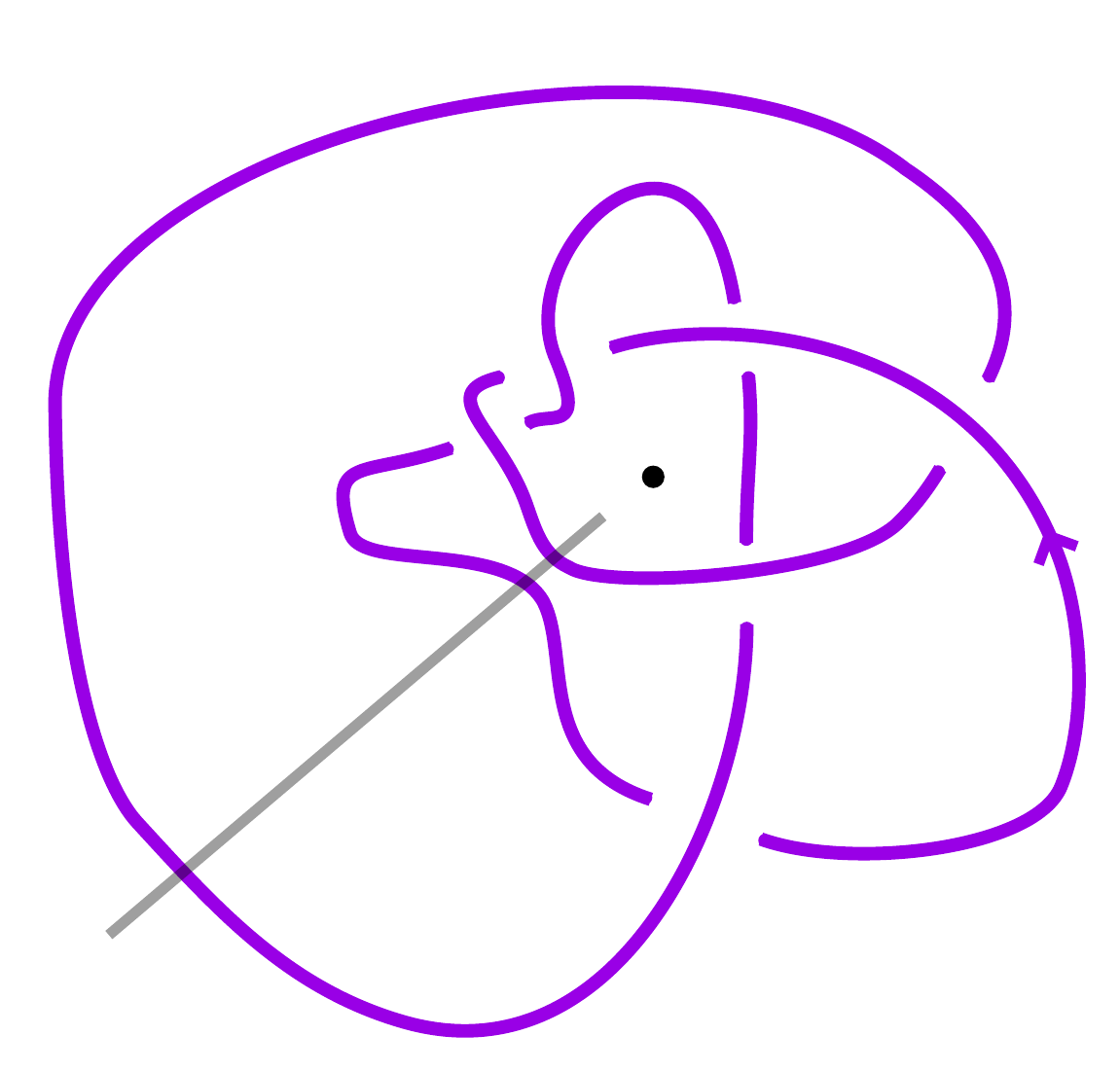}}
\put(420,100){$5_2^*$}

\put(260,100){\vector(3,-4){20}}
\put(270,90){$-(q-q^{-1}) \sqrt{\lambda_D}^{-1}$}
\put(250,100){\vector(-3,-4){20}}
\put(220,90){$\lambda_D^{-1}$}

\put(150,0){\includegraphics[scale=0.25]{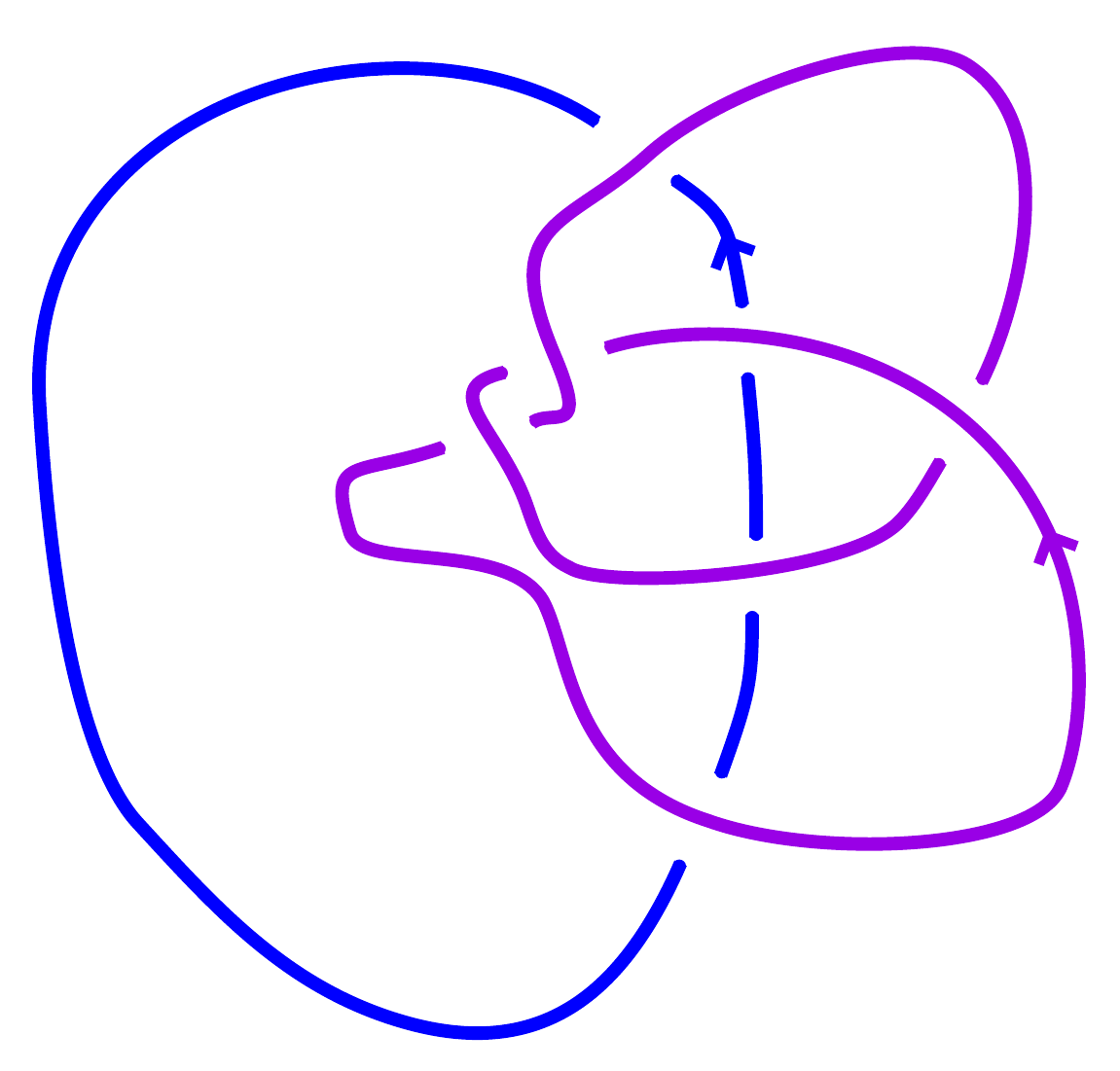}}
\put(270,0){\includegraphics[scale=0.25]{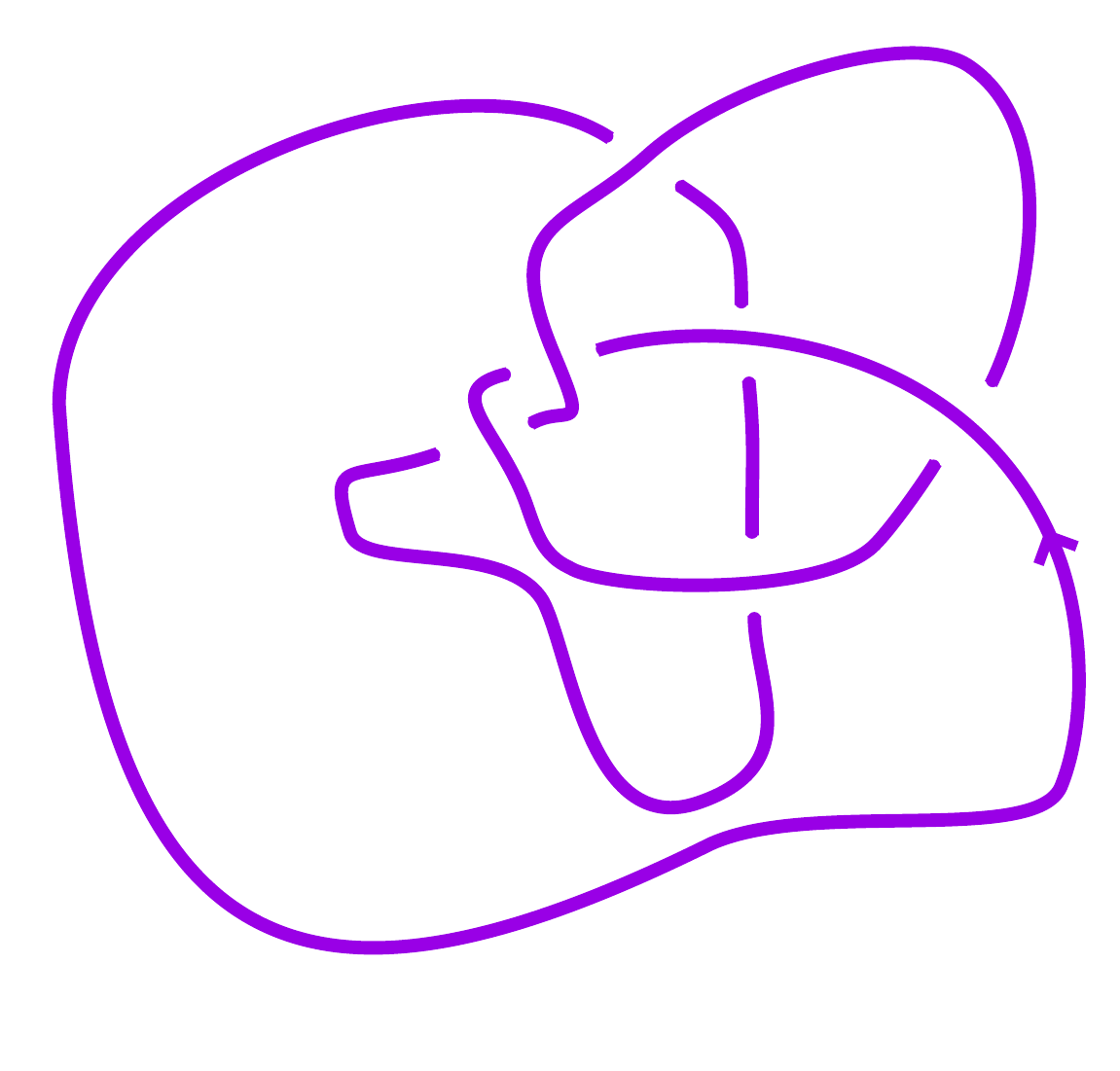}}
\put(200,0){$3_1^* \sqcup U$}
\put(320,0){$3_1^*$}
\end{picture}
\caption{The skein tree of the link $L_2$ (skein tree of $L11n418\{0,0\}$ continued).}
\label{skein_tree_L2}
\end{figure}

By the above decomposition we obtain:

\begin{align*}
\Theta_d(L_2) = \lambda_D^{-2} \, \Theta_d(L_4) &- (q-q^{-1}) \lambda_D^{-\frac{3}{2}} \, \Theta_d(U \sqcup U) - (q-q^{-1}) \sqrt{\lambda_D}^{-1} \, \Theta_d(3_1^* \sqcup U)\\
& +(q-q^{-1})^2 \, \Theta_d(3_1^*) - (q-q^{-1})^2 \, \Theta_d(5_2^*).
\end{align*}

In the skein tree of Figure~\ref{skein_tree_L2}, the knot $5_2^*$ was identified by its value of $P$. Namely, the diagram is braided and it corresponds to the braid word $\s_1^{-1} \s_2 \s_1^{-1} \s_2^{-3}$ (cutting along the grey line on the diagram).  We then computed the Homflypt polynomial on this word through the trace $\tau$, and we found that it corresponds to the knot $5_2^*$.

Further, the link $L_4$ can be easily decomposed, so we have:

$$
\Theta_d(L_4) = \Theta_d(U \sqcup U \sqcup U) + (q-q^{-1}) (\lambda_D^{-\frac{1}{2}} - \lambda_D^{\frac{1}{2}}) \, \Theta_d(U \sqcup U) - (q-q^{-1})^2 \, \Theta_d(U).
$$

Similarly, we decompose in Figure~\ref{skein_tree_L3} the link $L_3$:

\begin{figure}[H]
\begin{picture}(400,390)
\put(170,350){$L_3=$}

\put(200,310){\includegraphics[scale=0.25]{L11n418_R.pdf}}
\put(280,320){\vector(3,-4){30}}
\put(300,300){$\frac{-(q-q^{-1})}{\sqrt{\lambda_D}}$}
\put(220,320){\vector(-3,-4){30}}
\put(180,300){$\lambda_D^{-1}$}

\put(130,200){\includegraphics[scale=0.25]{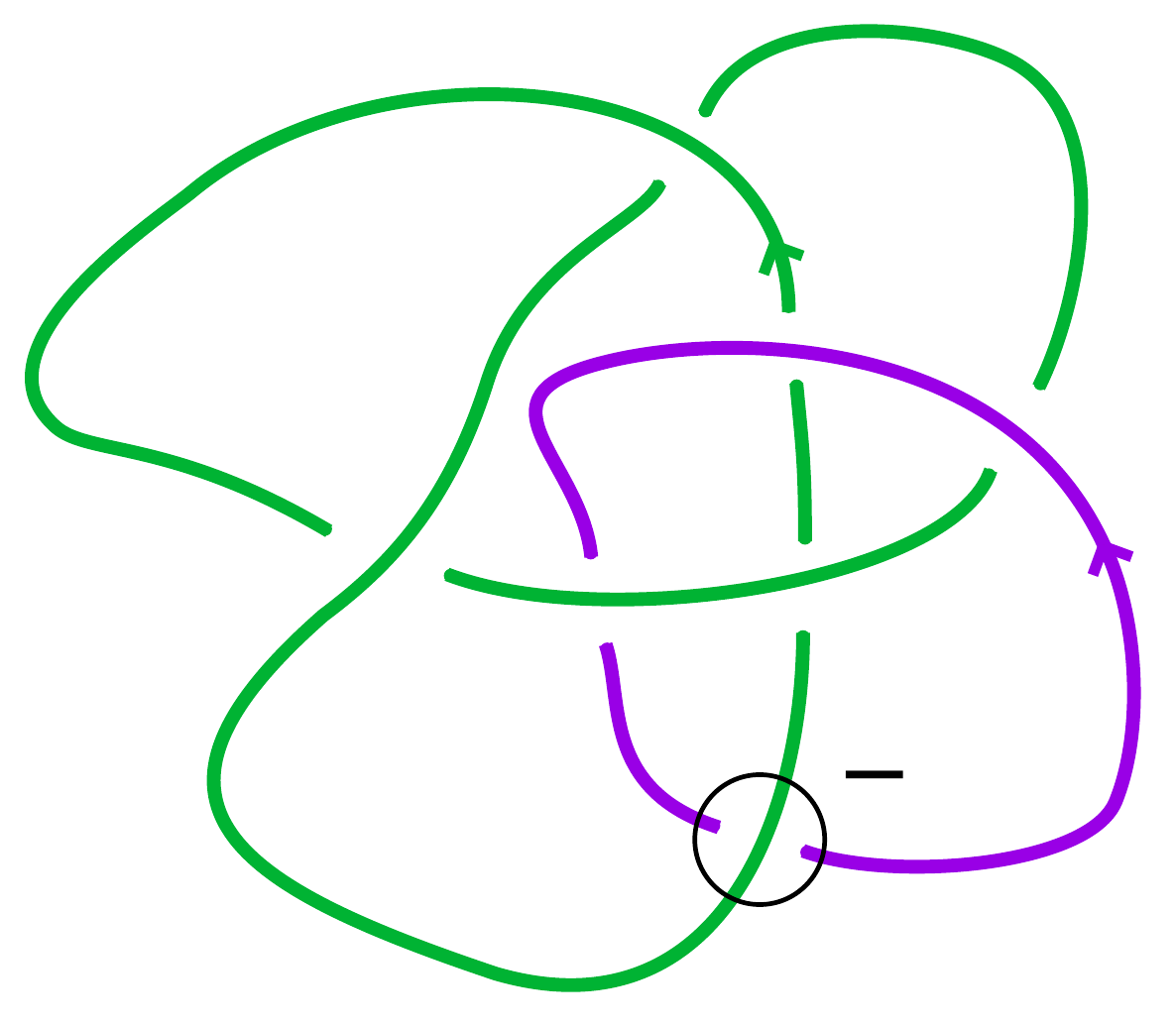}}
\put(280,200){\includegraphics[scale=0.25]{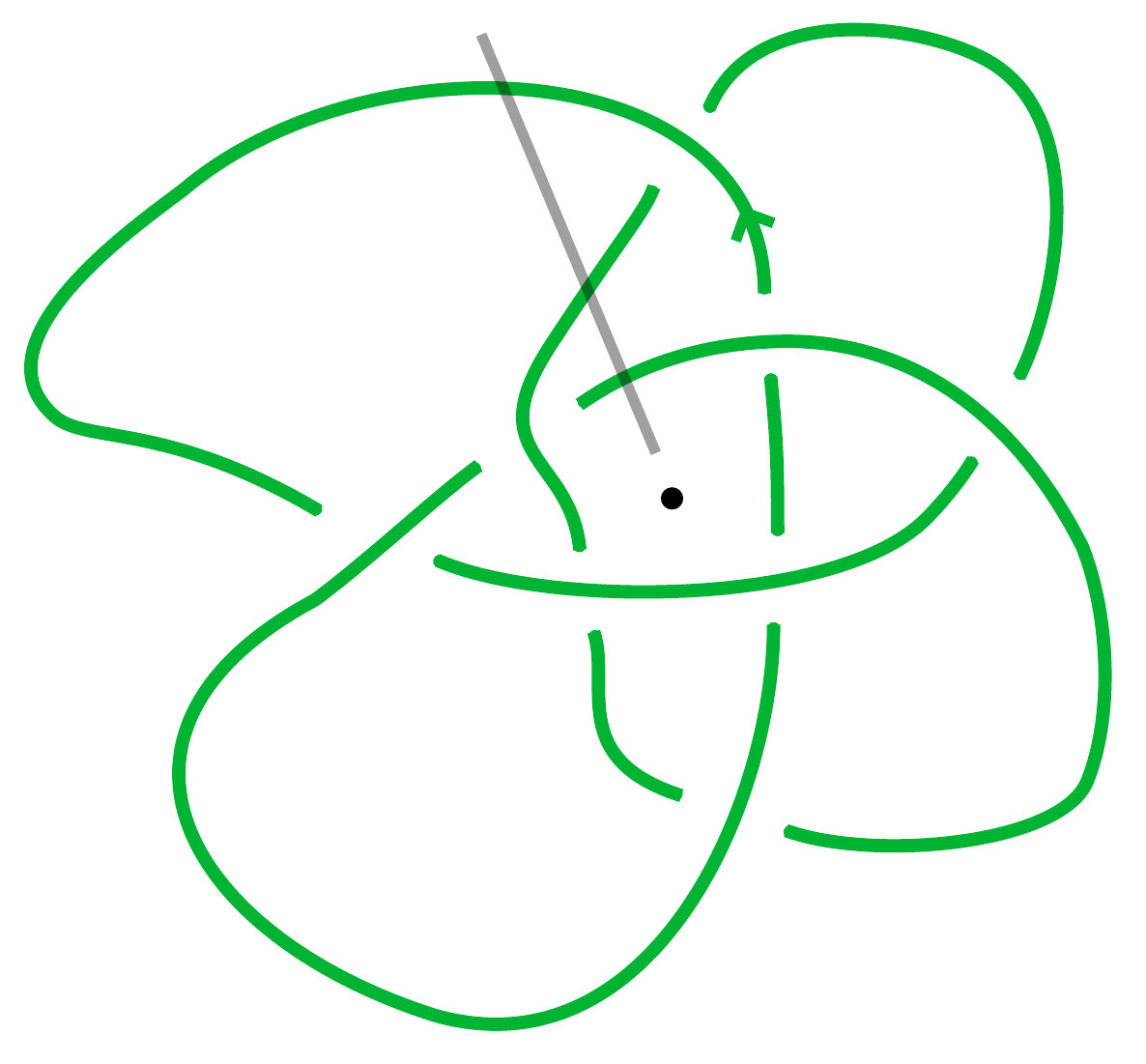}}
\put(340,200){$8_{20}$}

\put(220,210){\vector(3,-4){30}}
\put(235,190){$\frac{-(q-q^{-1})}{\sqrt{\lambda_D}}$}
\put(140,210){\vector(-3,-4){30}}
\put(100,190){$\lambda_D^{-1}$}

\put(65,100){\includegraphics[scale=0.25]{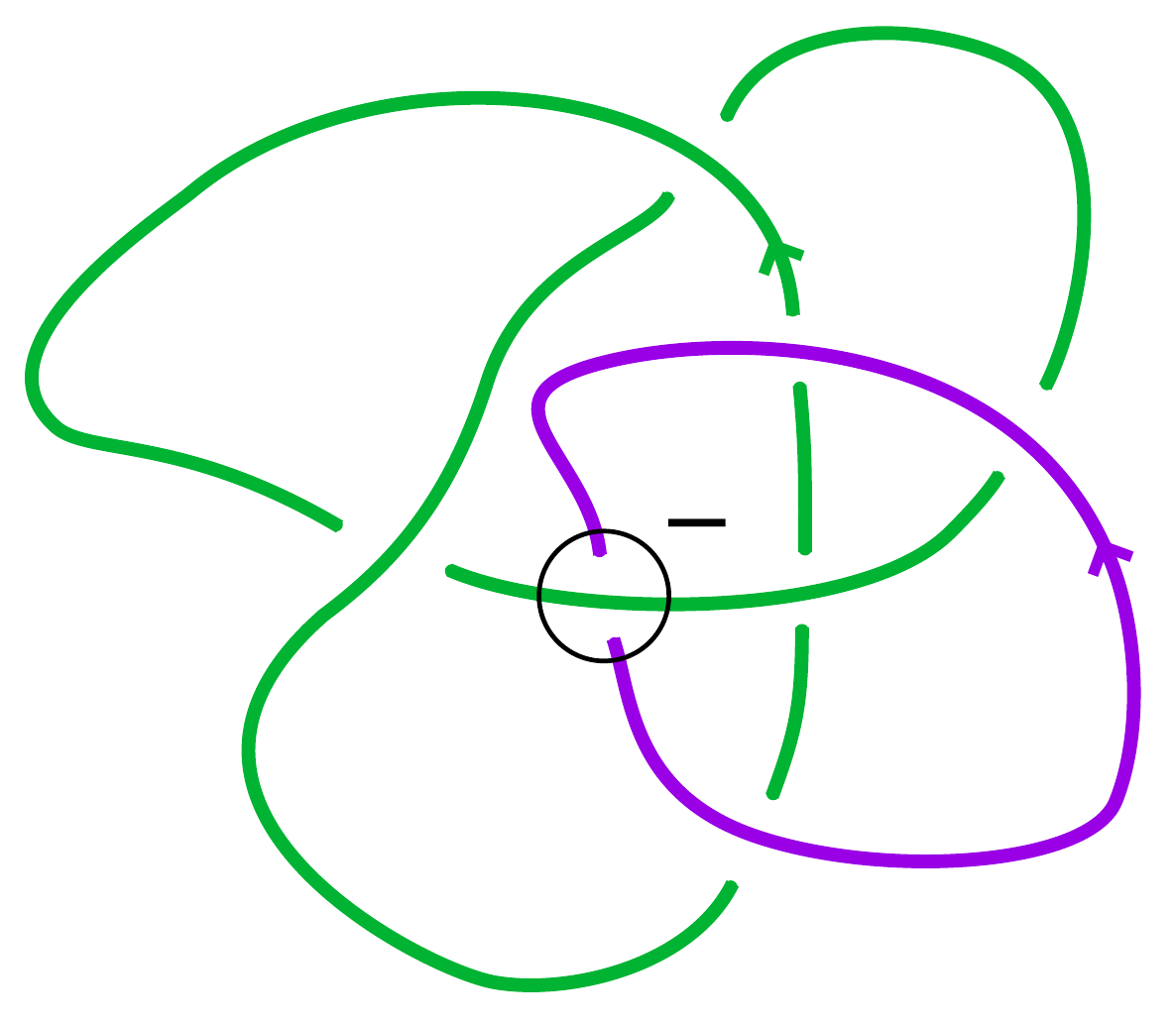}}
\put(220,100){\includegraphics[scale=0.25]{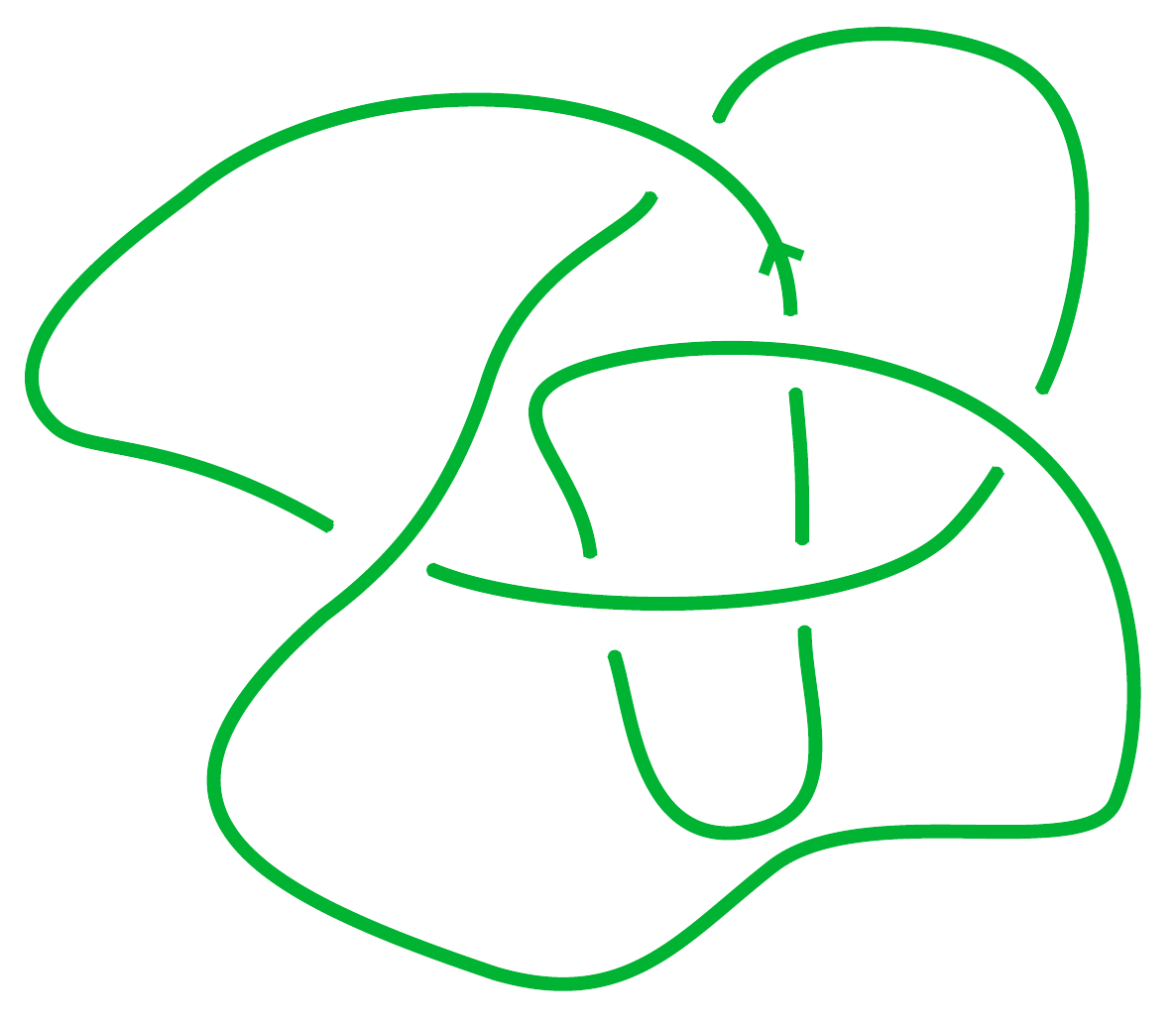}}
\put(300,105){$U$}

\put(160,120){\vector(3,-4){30}}
\put(175,100){$\frac{-(q-q^{-1})}{\sqrt{\lambda_D}}$}
\put(70,120){\vector(-3,-4){30}}
\put(30,100){$\lambda_D^{-1}$}

\put(0,10){\includegraphics[scale=0.25]{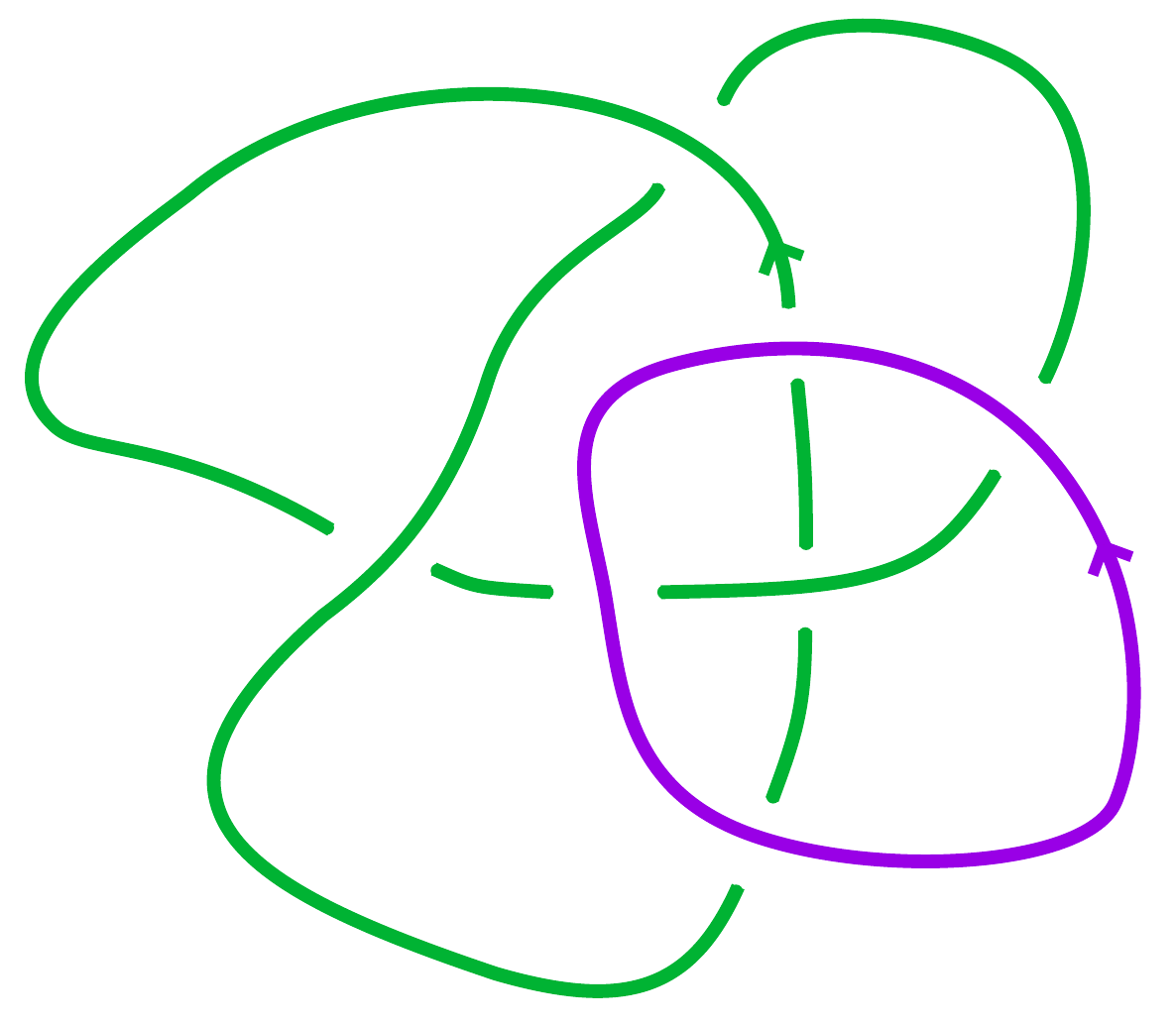}}
\put(150,10){\includegraphics[scale=0.25]{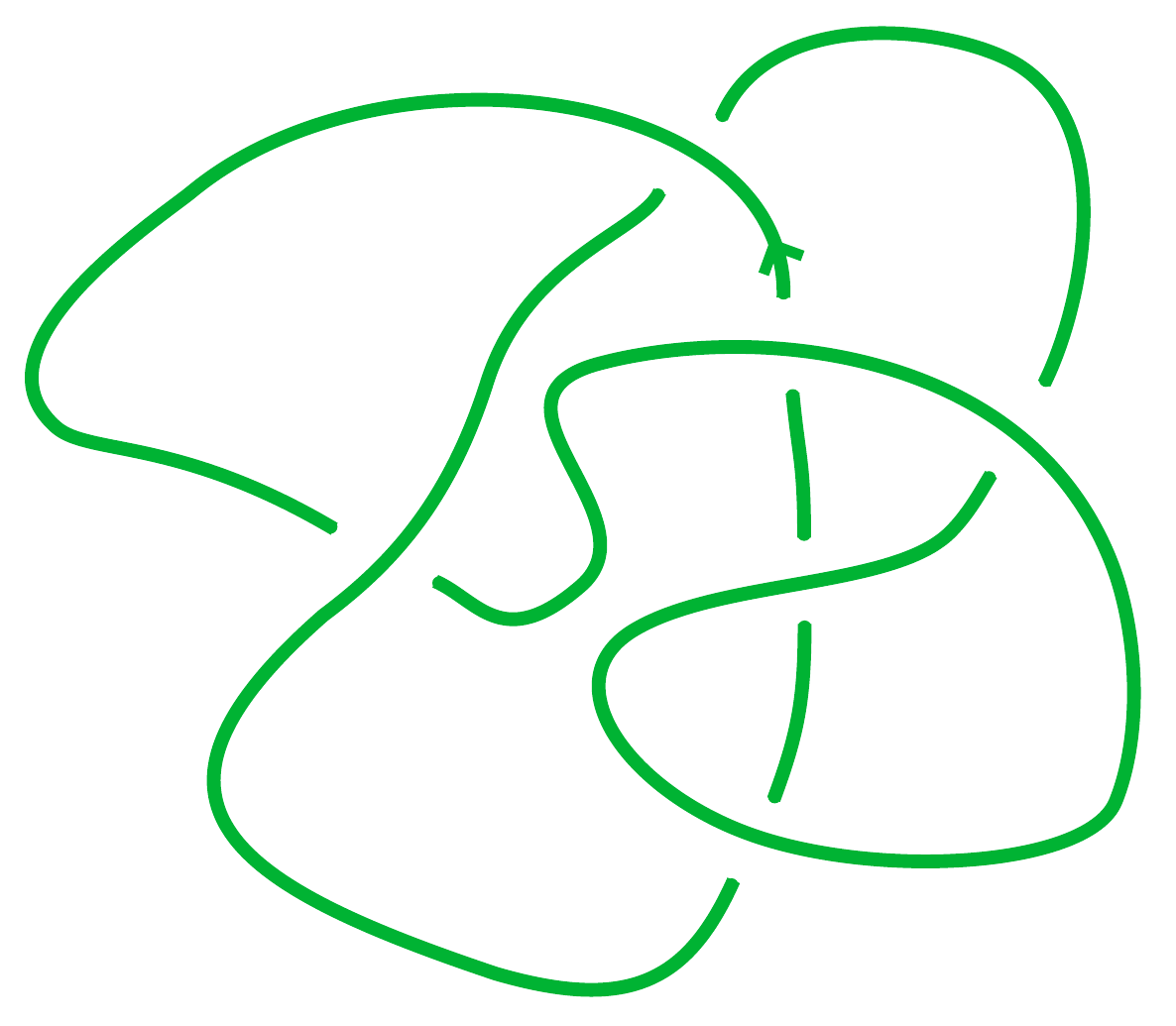}}
\put(50,0){$3_1 \sqcup U$}
\put(200,0){$3_1$}
\end{picture}
\caption{The skein tree of the link $L_3$ (skein tree of $L11n418\{0,0\}$ continued).}
\label{skein_tree_L3}
\end{figure}

The knot $8_{20}$ in Figure~\ref{skein_tree_L3} is identified by its braid word $\s_2^{-1} \s_1 \s_2^{-1} \s_1^{-1} \s_2 \s_1^{-1} \s_2^{-1} \s_1$ (cutting along the grey line). Hence, for the link $L_3$ we have:
\begin{align*}
\Theta_d(L_3) = &\lambda_D^{-3} \, \Theta_d(3_1 \sqcup U) - (q-q^{-1})\lambda_D^{-\frac{5}{2}} \, \Theta_d(3_1)\\
 &-(q-q^{-1})\lambda_D^{-\frac{3}{2}} \, \Theta_d(U) - (q-q^{-1}) \sqrt{\lambda_D}^{-1} \, \Theta_d(8_{20}).
\end{align*}

By Theorem~\ref{conjHomflypt} and Theorem~\ref{disj_links} we now obtain:

\begin{align*}
\Theta_d(L11n418\{0,0\}) = &\left(\frac{\lambda _D-1}{E_D \lambda_D^4}+\frac{\left(q-q^{-1}\right)^2}{\lambda_D^3}\right) P(3_1) + \left(\frac{\lambda _D-1}{E_D \lambda_D^2}+\frac{\left(q-q^{-1}\right)^2}{\lambda_D}\right) P(3_1^*)\\
&-\frac{\left(q-q^{-1}\right)^2}{\lambda _D} P(5_2^*) +\frac{\left(q-q^{-1}\right)^2}{\lambda_D} P(8_{20})+\frac{\left(\lambda _D^{-4}-2\lambda_D^{-3}\right) \left(1-\lambda _D\right)}{E_D}\\
&+\frac{\left(q-q^{-1}\right)^2}{\lambda_D^2}-\frac{\left(q-q^{-1}\right)^2}{\lambda_D^3}+\frac{\left(1-\lambda _D\right)^2}{E_D^2 \left(q-q^{-1}\right)^2 \lambda_D^4}.
\end{align*}

For the knots $3_1$, $5_2^*$ and $8_{20}$ we have that:

\begin{align*}
P(3_1) = &(q^{-2} + q^2 - \lambda_D) \lambda_D\\
P(5_2^*) = &\frac{\lambda_D + \lambda_D^2 + q^4 \lambda_D (1 + \lambda_D) - q^2 (1 + \lambda_D + \lambda_D^2)}{q^2 λ^3}\\
P(8_{20}) = &\frac{\lambda_D + q^8 \lambda_D + q^4 \lambda_D (2 + \lambda_D) - q^2 (1 + \lambda_D^2) - q^6 (1 + \lambda_D^2)}{q^4 \lambda_D^2}.
\end{align*}

Using Theorem~\ref{disj_links} and the value $P(3_1^*)$ from the analysis of the link $L11n358\{0,1\}$, we get the final result:

\begin{align*}
\Theta_d(L11n418\{0,0\}) = &\left( E_D^2 \lambda_D^4 q^6
   \left(q^2-1\right)^2 \right)^{-1} \Big[ E_D^2 \left(q^2-1\right)^4 \Big(\lambda_D^2+\lambda_D^2
   q^8+\lambda_D \left(-\lambda_D^2+\lambda_D -2\right) q^6\\
   &+\left(4
   \lambda_D^2-\lambda_D +1\right) q^4 +\lambda_D \left(-\lambda_D
   ^2+\lambda_D -2\right) q^2\Big)\\
   &+E_D \left(q^2-1\right)^2 q^4
   \big[2 (\lambda_D -1) \lambda_D +2 (\lambda_D -1) \lambda_D
   q^4\\
   &-\left(\lambda_D^3-3 \lambda_D^2+4 \lambda_D-2\right)
   q^2\big]+(\lambda_D -1)^2 q^8 \Big],
\end{align*}
which can be easily confirmed computationally.
\smallbreak
By using a computer algebra system, it is easy to see that the pair of the two links is not $\Theta_d$-equivalent for every 
$d \geq 2$.
The proof of Theorem~\ref{ThetaNotP} is now concluded.
\end{proof}

\begin{rem}\rm
The intrinsic difference in computing the invariants $\Theta_d$ and $P$ lies on the different values 
of these invariants on disjoint unions of knots. In particular, if $K$ is a knot and $U$ is the unknot, for the invariants $\Theta_d$ it holds that:
$$
\Theta_d(K \sqcup U) = \frac{1-\lambda_D}{(q-q^{-1}) \sqrt{\lambda_D} E_D} \Theta_d(K),
$$
while for $P$ we have that:
$$
P(K \sqcup U) = \frac{1-\lambda_D}{(q-q^{-1}) \sqrt{\lambda_D}} P(K).
$$
\end{rem}

Following Theorem~\ref{ThetaNotP}, Theorem~\ref{skein_defined} and Remark~\ref{kauffman polynomial}, we have obtained a new family of skein invariants for classical links. The fact that the invariants $\Theta_d$ can be defined diagrammatically through skein relations is very important for two reasons: 
\begin{enumerate}[(a)]
\item the computation of the invariants $\Theta_d$ becomes much easier, and \smallbreak
\item there are very few other skein invariants in the literature (such as the Alexander--Conway polynomial, the Jones polynomial, the Homflypt polynomial and the Kauffman polynomial).
\end{enumerate}

\subsection{Comparing the properties of the invariants $\Theta_d$ and $P$}\label{properties}
The invariants $\Theta_d$ satisfy some of the properties of the Homflypt polynomial. More specifically, for $L, L'$ arbitrary oriented links, these are:

\begin{itemize}
\item \textit{Reversing orientation:} $$\Theta_d(L) = \Theta_d(\overleftarrow{L}),$$ where $\overleftarrow{L}$ is the link $L$ with reversed orientation on all components.
\item \textit{Split links:} $$\Theta_d(L \sqcup L') = \frac{1-\lambda_D}{(q-q^{-1})\sqrt{\lambda_D} E_D} \Theta_d(L) \Theta_d(L').$$
\item \textit{Connected sums:} $$\Theta_d(L \# L') = \Theta_d(L) \Theta_d(L').$$
\item \textit{Mirror images:} $$\Theta_d(q,\lambda_D)(L^*) = \Theta_d(q^{-1},\lambda_D^{-1})(L),$$ where $L^*$ is the mirror image of $L$.
\end{itemize}

All the above properties have been proved algebraically via the Markov traces $\Jtr$ and $\Jtrs$ in \cite[Section 3]{chjakala}. However, the behaviour of the invariants $\Theta_d$ under \textit{mutation} could not be examined algebraically. The operation of mutation on a link diagram is defined by choosing a disk which intersects the diagram at exactly four points and then rotating 180$^\circ$ the $2$-tangle encircled by the disk. It is known that the Homflypt polynomial, being a skein invariant, does not distinguish mutant links. A natural question is whether the invariants $\Theta_d$ exhibit the same behaviour, since they satisfy a skein relation. Indeed, the invariants $\Theta_d$ do not distinguish links differing by mutation, as stated in the following:

\begin{prop}\label{mutants}
Let $L$ and $L'$ be two mutant links. Then $\Theta_d(L)=\Theta_d(L')$.
\end{prop}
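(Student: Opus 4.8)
The plan is to reduce the invariance of $\Theta_d$ under mutation to the already-known invariance of the Homflypt polynomial $P$ under mutation, using the structural formula of Theorem~\ref{links}. Recall that mutation does not change the Homflypt polynomial of any link, nor the set of linking numbers between components, nor — up to relabelling and orientation adjustments — the isotopy types of the sublinks of $L$. In particular, mutation preserves the underlying ``linking structure'' of the components, so the combinatorial data that govern the skein decomposition in Subsection~\ref{behaviourlinks} are essentially unchanged.

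First I would invoke the closed expression from Theorem~\ref{links}: for an $\ell$-component link $L$,
$$
\Theta_d(L) = P(L) + \sum_{k=2}^{\ell}(E_D^{1-k}-1)\sum_{\widehat{\a}\in\mathcal{N}(L)_k} c(\widehat{\a})\,P(\widehat{\a}),
$$
where $\mathcal{N}(L)_k$ is the set of disjoint unions of $k$ knots produced by resolving all mixed crossings via the special skein relation, and the coefficients $c(\widehat{\a})\in\mathcal{A}$ are determined purely by the pattern of clasps between components (the weighted graph $G_L$ together with the signs of the clasps), as made explicit in formula \eqref{THELINEAR} and Remark~\ref{remark_disjoint}. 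The disjoint unions $\widehat{\a}$ appearing are built from sublinks of $L$ with certain pairs of components merged into single knots. Since a mutation is supported inside a $2$-tangle disk meeting the diagram in four points, it either acts within a single component or exchanges/reorients two components; in all cases it induces a mutation on each merged-component knot and on each sublink that appears in the decomposition, and it does not change which components are clasped together nor the signs $\xi(j_s,t)$ governing the coefficients. Hence the multiset $\{(c(\widehat{\a}),\,\text{mutation class of }\widehat{\a})\}$ is preserved.

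Next I would assemble the argument: the coefficients $c(\widehat{\a})$ for $L$ and $L'$ match term by term (after the canonical identification of components induced by the mutation), and each $\widehat{\a}$ for $L$ is carried to a disjoint union of knots differing from the corresponding one for $L'$ only by mutation. Since the Homflypt polynomial is insensitive to mutation and multiplicative over disjoint unions (combined via \eqref{mult prop} and Theorem~\ref{disj_links}), we get $P(\widehat{\a}) = P(\widehat{\a}')$ for the matched pairs, and also $P(L)=P(L')$. Plugging these equalities into the displayed formula gives $\Theta_d(L)=\Theta_d(L')$.

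The main obstacle I anticipate is making rigorous the claim that a mutation of $L$ induces, compatibly, mutations on all the sublinks and merged-knot diagrams that occur in the skein tree, and that it leaves the clasp data (hence the coefficients $c(\widehat{\a})$) invariant. One must be careful that the mutation disk may straddle the arcs on which the skein relation is applied; the cleanest route is to choose the algorithmic skein-resolution order of Subsection~\ref{subs_skein} so that the mutation disk is disjoint from the crossings being resolved, or to argue that mutation commutes with the skein relation up to a permutation of the resulting terms. Alternatively — and perhaps more robustly — one can bypass the diagrammatic bookkeeping entirely by invoking Lickorish's closed formula (Theorem~\ref{thm_lickorish}): mutation preserves all linking numbers $\nu(\pi)$ and, by the mutation-invariance of $P$ together with the fact that mutation takes sublinks to sublinks (up to mutation), it preserves each product $P(\pi L)$; hence every summand of $\Theta(L)$ is unchanged, and specializing $E\mapsto 1/d$ gives $\Theta_d(L)=\Theta_d(L')$. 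I would present the Theorem~\ref{links} argument as the main proof and remark that the same conclusion follows immediately from Theorem~\ref{thm_lickorish}.
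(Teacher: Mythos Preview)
Your proposal is on the right track and even anticipates the key manoeuvre the paper uses, but the main argument as written has a gap. You want to invoke Theorem~\ref{links}, which expresses $\Theta_d(L)$ as an $\mathcal{A}$-linear combination of $P$-values on disjoint unions of knots. For the coefficients $c(\widehat{\a})$ and the knots $\widehat{\a}$ to match term-by-term for $L$ and $L'$, you correctly suggest applying the special skein relation only on mixed crossings lying \emph{outside} the mutation tangle $T$. But doing so does \emph{not} fully decompose $L$ into disjoint unions of knots: mixed crossings inside $T$ remain, so what you obtain at the leaves of the parallel skein trees are pairs of mutant links, each consisting of some disjoint knots together with a closed $2$-tangle (the ball $T$ closed off by possibly knotted outside arcs). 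At that stage Theorem~\ref{links} is not yet applicable.

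The paper's proof carries out exactly your suggested first step, and then finishes the argument by handling this residual closed $2$-tangle directly. After splitting off any closed components internal to $T$ (again simultaneously on both sides, noting mutation preserves crossing signs), the closed tangle is either a knot---where Theorem~\ref{conjHomflypt} and mutation-invariance of $P$ give the result---or a $2$-component link. In the latter case the paper invokes Proposition~\ref{twocomp}: $\Theta_d$ of a $2$-component link depends only on $P$ of the link, $P$ of its split version, and the total clasp sign $\xi$, all of which are preserved under mutation. Your sketch stops short of this reduction; that is the missing piece.

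Your alternative via the Lickorish formula is a genuinely different route and does work, but two caveats apply. First, in the paper's logical order Theorem~\ref{theta_linking_P} comes after Proposition~\ref{mutants}, so it cannot be cited here. Second, it still requires checking that each sublink $\pi L'$ is a mutant of (or isotopic to) the corresponding sublink $\pi L$: this holds because deleting components either leaves the $2$-tangle intact, or reduces it to a $1$- or $0$-tangle whose $180^\circ$ rotation is realised by an ambient isotopy---easy, but worth stating.
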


\begin{proof}
Consider as previously a diagram of the link $L$ where we apply the mutation operation on a $2$-tangle $T$ to obtain the link diagram of $L'$. Now, we apply on $L$ the $L'$ the splitting algorithm described in Subsection~\ref{subs_skein}, but only on crossings which lie outside the $2$-tangle $T$, simultaneously on both diagrams and after assigning them the same ordering of components. Hence, we obtain the following decompositions:
$$
P(L) = \sum_{k=1}^m c(L_k) P(L_k) \quad\text{ and }\quad P(L') = \sum_{k=1}^m c(L'_k) P(L'_k),
$$
for some $m \in \N$, where $L_k$ and $L'_k$ are the resulting links and $c(L_k), c(L'_k) \in \mathcal{A}$ for every $k$. By the application of the skein relation on the exact same crossings at both links, we have that $c(L_k) = c(L'_k)$. Since $L$ and $L'$ are mutants, the links $L_k$ and $L'_k$ for every $k=1,\ldots,m$ are mutants too, so $P(L_k)=P(L'_k)$. Further, every link pair $L_k$ and $L'_k$ is a disjoint union of the same knots and a closed $2$-tangle, where the closing strands outside of $T$ may be knotted in the same manner. Therefore, due to the Theorem~\ref{disj_links} we can restrict ourselves to the case of closed $2$-tangles as illustrated in Figure~\ref{fig_tangle}, where $K_1$ and $K_2$ are (possibly knotted) $1$-component $1$-tangles. 

\begin{figure}[H]
\begin{center}
\begin{picture}(160,80)
\qbezier(60,25)(80,25)(100,25)
\qbezier(60,55)(80,55)(100,55)
\qbezier(60,25)(60,40)(60,55)
\qbezier(100,25)(100,40)(100,55)

\qbezier(10,30)(20,30)(30,30)
\qbezier(10,50)(20,50)(30,50)
\qbezier(10,30)(10,40)(10,50)
\qbezier(30,30)(30,40)(30,50)

\qbezier(130,30)(140,30)(150,30)
\qbezier(130,50)(140,50)(150,50)
\qbezier(130,30)(130,40)(130,50)
\qbezier(150,30)(150,40)(150,50)

\qbezier(25,50)(45,90)(65,55)
\qbezier(25,30)(45,-10)(65,25)
\qbezier(95,55)(115,90)(135,50)
\qbezier(95,25)(115,-10)(135,30)

\put(77,35){\Large $T$}
\put(13,35){$K_1$}
\put(133,35){$K_2$}

\end{picture}
\caption{A closed $2$-tangle where $K_1$ and $K_2$ are possibly knotted $1$-component $1$-tangles.}
\label{fig_tangle}
\end{center}
\end{figure}
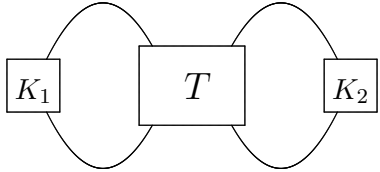

Let $L_T$ be a closed $2$-tangle described as above and let $L'_T$ be the mutant link obtained by applying the mutation operation on $T$, considering the appropriate diagrams. Inside the $2$-tangle $T$ there may be some closed components (\emph{i.e.,} not involving the $1$-tangles $K_1$ and $K_2$). We unlink these components by applying the special skein relation on both diagrams simultaneously
and we then apply Theorem~\ref{disj_links}. 
Note that the operation of mutation does not change the sign of a crossing (positive or negative), thus by the above procedure we get a decomposition of the links $L_T$ and $L'_T$ into pairs of mutant links with the same coefficients. So, as previously, we can reduce to the case where no extra components exist inside the $2$-tangle $T$, since the decomposition by the skein relation will involve the same coefficients for both diagrams.

Now there are only two possibilities for $L_T$ and $L'_T$: either they are both $2$-component links, or they are knots. If they are both knots, then by Theorem~\ref{conjHomflypt}, $\Theta_d(L_T) = P(L_T) = P(L'_T) = \Theta_d(L'_T)$, whence the proposition holds. If they are $2$-component links, using Theorem~\ref{twocomp}, we obtain:
$$
\Theta_d(L_T) = P(L_T) + \lambda_D^{\xi(L_T)} (E_D^{-1} - 1) P(I_T),
$$
and
$$
\Theta_d(L'_T) = P(L'_T) + \lambda_D^{\xi(L'_T)} (E_D^{-1} - 1) P(I'_T),
$$
where $I_T$ and $I_T'$ are the split versions of $L_T$ and $L'_T$ respectively. The links $L_T$ and $L'_T$ have the same value on the polynomial $P$ as mutants;  they also have the same number of clasps, so $\xi(L_T) = \xi(L'_T)$. Further, the links $I_T$ and $I'_T$ are also mutants, since applying the skein relation on a crossing inside $T$ results in two new mutant links, and so $P(I_T) = P(I'_T)$. Consequently, $\Theta_d(L_T) = \Theta_d(L'_T)$.
\end{proof}

\begin{rem} \rm \label{oldquadr8}
Clearly, the diagrammatic analysis made for the invariants $\Theta_d$ on pairs of $P$-equivalent links cannot be implements for the invariants $\Delta_d$. Nevertheless, there are computational indications that the invariants $\Delta_d$ are not topologically equivalent to $P$, and neither to $\Theta_d$; for more details see Section~\ref{threeVar}. Concerning now the properties studied in Subsection \ref{properties}, $\Delta_d$ has the same behaviour as $\Theta_d$ on links with reversed orientation, on split links, on connected sums and on mirror images. However, behaviour of $\Delta_d$ under mutation cannot be checked using the methods of Proposition~\ref{mutants}.
\end{rem}

\section{A new 3-variable skein link invariant}\label{threeVar}
In this section we construct a new 3-variable skein link invariant $\Theta$ generalizing both the invariants $\Theta_d$ and the Homflypt polynomial.

\subsection{The invariant $\Theta$}
Following Theorem~\ref{skein_defined}, the invariants $\Theta_d$ can be completely defined via the skein relation \eqref{new skein delta} and the value of $\Theta_d$ on disjoint unions of knots given by Theorem~\ref{disj_links}, which is in turn a generalization of  the property $\Theta_d(K) = P(K)$,  for $K$ a knot (Theorem \ref{conjHomflypt}). 
It is evident that when the invariants are defined via the skein relation, the value $E_D$ can be generalized to an indeterminate, since the E--system is not involved. Consequently, a 3-variable skein invariant $\Theta(q,\lambda,E)$, generalizing the invariants $\Theta_d(q,\lambda_D)$, can de defined as follows:

\begin{thm}\label{invE}
Let $q,\,\lambda,\, E$ be indeterminates.
There exists a unique isotopy invariant  of classical oriented links $\Theta: \mathcal{L} \rightarrow \C[q^{\pm 1}, \lambda^{\pm 1}, E^{\pm 1}]$ defined by the following rules:
\begin{enumerate}
\item For a disjoint union $L$ of $k$ knots, with $k \geq 1$, it holds that:
$$\Theta(L) =  E^{1-k} \,P(L).$$
\item On crossings involving different components the following skein relation holds:
$$
\frac{1}{\sqrt{\lambda}} \, \Theta(L_+) - \sqrt{\lambda} \, \Theta(L_-) = (q-q^{-1}) \, \Theta(L_0),
$$
where $L_+$, $L_-$, $L_0$ is a Conway triple. \smallbreak
\end{enumerate}
\end{thm}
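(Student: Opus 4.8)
\textbf{Proof proposal for Theorem~\ref{invE}.}
The plan is to establish existence and uniqueness of $\Theta$ by comparing it with an invariant $\overline{\Theta}$ of \emph{tied links}, obtained from the algebra of braids and ties by changing its presentation according to the new quadratic relation, exactly as $\Jtr$ was obtained from $\Jtru$. Concretely, F.~Aicardi and J.~Juyumaya constructed in \cite{AJ2} a Markov trace on the algebra of braids and ties $\mathcal{E}_n$ yielding an invariant $\overline{\Delta}$ of tied links in two variables plus the extra trace parameter; rewriting $\mathcal{E}_n(u)$ in terms of the generators satisfying \eqref{quadr} (with $u=q^2$) and re-scaling the trace with the parameter $E$ (now a free indeterminate, since no E--system needs to be imposed in this setting) produces a well-defined invariant $\overline{\Theta}(q,\lambda,E)$ of tied links. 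The key point is that $\overline{\Theta}$ satisfies, on a \emph{tied} diagram, both a skein relation of Homflypt type on every crossing and a relation governing the ties; forgetting the ties of a link $L$ (equivalently, sending a classical link to the tied link with no ties) gives a candidate value $\Theta(L):=\overline{\Theta}(L)$, and this candidate automatically inherits rule~(2) — the skein relation on mixed crossings — from the skein relation of $\overline{\Theta}$, because a mixed crossing of a tied link can be resolved without introducing ties. This gives \textbf{existence} of an invariant satisfying~(2); one then checks rule~(1), the value on disjoint unions of $k$ knots, by a direct computation with the trace (using the multiplicative behaviour of $\overline{\Theta}$ on split components and the fact that on a single knot the tie structure collapses so that $\overline{\Theta}(K)=P(K)$), which forces the factor $E^{1-k}$.

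For \textbf{uniqueness}, I would argue diagrammatically, in the spirit of the Lickorish--Millett proof that the Homflypt polynomial is well defined \cite{LM} (cf.~Remark~\ref{theta LM}). Given any link diagram $L$ with components ordered and a basepoint chosen on each, rules~(1) and~(2) completely determine $\Theta(L)$: Step~1 of the procedure in Subsection~\ref{subs_skein} uses only rule~(2) to reduce $L$, crossing by crossing, to an $\mathcal{A}$-linear combination of disjoint unions of knots (this terminates by the usual descending induction on the number of mixed crossings and on the complexity of the underlying permutation braid), and Step~2 then evaluates each such disjoint union via rule~(1), where the value of $P$ on a knot or a disjoint union of knots is itself determined by the ordinary Homflypt skein relation. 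Hence any two invariants satisfying~(1) and~(2) agree on every diagram, so $\Theta$ is unique \emph{if} it exists — and it does, by the tied-link construction above. Note that this also shows $\Theta$ specializes to $\Theta_d$ under $E\mapsto E_D=1/d$ (recovering Theorem~\ref{skein_defined}) and to $P$ under $E\mapsto 1$.

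The part requiring genuine work is \textbf{well-definedness}, i.e. that the diagrammatic recipe of Step~1--Step~2 does not depend on the auxiliary choices (ordering of components, basepoints, which mixed crossings are switched, and the Reidemeister moves relating two diagrams of the same link). This is precisely what the detour through $\overline{\Theta}$ is designed to circumvent: since $\overline{\Theta}$ is \emph{a priori} an isotopy invariant of tied links coming from a Markov trace on $\mathcal{E}_n$ (existence and Markov property of that trace being the content of the adapted \cite{AJ2} construction, to be carried out in this section with the new quadratic relation), the induced function $L\mapsto\overline{\Theta}(L)$ is automatically an isotopy invariant of classical links, and the uniqueness argument then guarantees it coincides with the output of the diagrammatic recipe. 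The main obstacle is therefore the algebraic input — verifying that the change of presentation of $\mathcal{E}_n$ carries the Aicardi--Juyumaya trace to a Markov trace with the re-scaling parameter $E$ free, and extracting from it exactly the two rules~(1) and~(2); once this is in place, existence, uniqueness, and the specializations follow as above. An alternative, purely diagrammatic proof of well-definedness (bypassing $\mathcal{E}_n$ entirely, along the lines of \cite{LM}) is possible and is carried out in \cite{kala}.
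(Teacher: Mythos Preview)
Your proposal is correct and follows essentially the same approach as the paper: both establish existence by restricting the tied-link invariant $\overline{\Theta}$ (constructed from the Markov trace $\rho$ on $\mathcal{E}_n(q)$ with the new quadratic relation) to classical links, observe that on a mixed crossing the tie in $L_{0,\sim}$ is non-essential so the tied skein relation collapses to rule~(2), verify rule~(1) by showing $\overline{\Theta}(K)=P(K)$ on knots and then invoking the multiplicativity on split links, and deduce uniqueness from the fact that rules~(1) and~(2) suffice to compute $\Theta$ via the procedure of Subsection~\ref{subs_skein}. The paper gives slightly more detail on why $\overline{\Theta}(K)=P(K)$ --- namely, on a knot one may freely insert ties so that the full tied skein relation \eqref{tied_skein_2} applies at every crossing, reducing to the Homflypt computation --- but your outline captures the argument.
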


It is clear that the invariant $\Theta$ satisfies all the results and properties proved in the previous sections for $\Theta_d$. In particular, $\Theta$ satisfies all properties of the invariants $\Theta_d$ discussed in Subsection~\ref{properties}. Further, by definition, $\Theta$ coincides with the Homflypt polynomial on knots. Finally, the pairs of links of Table~\ref{pairs} are distinguished by all the invariants $\Theta_d$ for $d\geq 2$, and the difference of the values of the invariants on any one of the pairs of links is divisible by the factor $(E_D - 1)$. When computing the invariant $\Theta$, this factor simply changes to $(E-1)$, and so $\Theta$  distinguishes these six pairs too. Hence, Theorem~\ref{ThetaNotP} also holds for the invariant $\Theta$. Since the invariant $\Theta$ contains both the Homflypt polynomial $P$ (for $E=1$) and the invariants $\Theta_d$, we conclude the following:

\begin{thm}\label{DeltaStrongerP}
The invariant $\Theta(q, \lambda, E)$ is stronger than the Homflypt polynomial.
\end{thm}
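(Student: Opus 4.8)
The plan is to show two things: first, that $\Theta$ is at least as strong as the Homflypt polynomial $P$ (i.e.~$\Theta$ determines $P$), and second, that $\Theta$ is strictly stronger, meaning there exist links with the same value of $P$ but different values of $\Theta$. The first point is essentially immediate from the construction: setting $E = 1$ in rule (1) of Theorem~\ref{invE} gives $\Theta(L) = P(L)$ for a disjoint union $L$ of knots, and rule (2) with $E=1$ becomes exactly the Homflypt skein relation \eqref{skein of hom} restricted to mixed crossings. Since $\Theta$ is well-defined (by Theorem~\ref{invE}) and the specialization $E=1$ makes $\Theta$ satisfy the defining rules of $P$ together with the procedure of Subsection~\ref{subs_skein}, we get $\Theta(q,\lambda,1) = P(q,\lambda)$. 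Thus any two links distinguished by $P$ are distinguished by $\Theta$, so $\Theta$ is at least as strong as $P$. I would phrase this cleanly as: the evaluation $E \mapsto 1$ recovers $P$, hence $\Theta(L_1) = \Theta(L_2) \Rightarrow P(L_1) = P(L_2)$.

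Next I would establish strictness. Here I invoke the computational data of Table~\ref{pairs} together with the diagrammatic analysis carried out in the proof of Theorem~\ref{ThetaNotP}: for each of the six listed pairs of $3$-component links, the two links are $P$-equivalent (they have the same Homflypt polynomial), yet the difference $\Theta_d(L_1) - \Theta_d(L_2)$ is a nonzero polynomial for every $d \geq 2$, and in every case this difference carries the factor $(E_D - 1)$. The key observation is that the entire skein-theoretic computation of $\Theta$ on these links is formally identical to that of $\Theta_d$ with $E_D$ replaced throughout by the indeterminate $E$ — this is because, following Subsection~\ref{subs_skein} and Theorem~\ref{links}, the only place where $E_D$ enters is in the coefficient $E_D^{1-k}$ attached to disjoint unions of $k$ knots, and rule (1) of Theorem~\ref{invE} replaces this by $E^{1-k}$. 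Therefore, for each pair in Table~\ref{pairs}, the difference $\Theta(L_1) - \Theta(L_2)$ is obtained from the displayed formulas in Subsection~\ref{sub-comp} by substituting $E$ for $E_D$; in particular it is a nonzero element of $\C[q^{\pm1},\lambda^{\pm1},E^{\pm1}]$ (it specializes to the nonzero $\Theta_2(L_1)-\Theta_2(L_2)$ when $E = 1/2$). Hence $\Theta$ distinguishes at least one $P$-equivalent pair, so it is not topologically equivalent to $P$, and combined with the first paragraph it is strictly stronger.

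The main obstacle, and the point deserving the most care, is justifying the claim that the computation of $\Theta$ is literally the $E_D \rightsquigarrow E$ substitution of the computation of $\Theta_d$ — i.e.~that no hidden dependence on the number-theoretic origin of $E_D = 1/|D|$ sneaks in. This is handled by appealing to Theorem~\ref{skein_defined} and the three-step procedure of Subsection~\ref{subs_skein}: Steps 1 and 3 use only the Homflypt skein relation \eqref{skein of hom} at variables $(q,\lambda)$ and involve no $E$ whatsoever, while Step 2 inserts precisely the factors $E_D^{1-k}$ (now $E^{1-k}$) via Theorem~\ref{links}. Since the linear combination $\sum_k \sum_{\widehat{\a} \in \mathcal{N}(L)_k} c(\widehat{\a}) P(\widehat{\a})$ and the coefficients $c(\widehat{\a}) \in \mathcal{A} = \mathbb{Q}[q^{\pm1},\sqrt{\lambda}^{\pm1}]$ are produced purely by the skein relation and are manifestly independent of $E$, the value $\Theta(L)$ is a polynomial expression in which $E$ appears only through the factors $E^{1-k}$, exactly mirroring $\Theta_d$. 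Thus the six difference formulas transfer verbatim, with $(E-1)$ replacing $(E_D-1)$, and in particular remain nonzero. This completes the proof of Theorem~\ref{DeltaStrongerP}.
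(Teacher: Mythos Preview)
Your proof is correct and follows essentially the same approach as the paper: specialize $E=1$ to recover $P$ (hence $\Theta$ is at least as strong), and then observe that the six $P$-equivalent pairs of Table~\ref{pairs} remain distinguished by $\Theta$ because the computed differences $\Theta_d(L_1)-\Theta_d(L_2)$ carry over verbatim with $E_D$ replaced by $E$. Your third paragraph, justifying why the $E_D\rightsquigarrow E$ substitution is legitimate via the three-step procedure of Subsection~\ref{subs_skein}, is more explicit than the paper's treatment, which simply asserts that ``this factor simply changes to $(E-1)$''; this added care is welcome but not a different argument.
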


Moreover, by Remark~\ref{kauffman polynomial}, the invariant $\Theta$ is different from the Kauffman polynomial. However, despite the generalization of the family $\Theta_d$ to the invariant $\Theta$ we do not expect that $\Theta$ is any stronger than the invariants $\Theta_d$ when $d$ is taken to be generic. 

\begin{rem} \rm
The computer program of Remark~\ref{new_program} computes in reality the invariant $\Theta$, since it treats the value $E_D$ as a parameter.
\end{rem}

We shall now proceed with the proof of Theorem~\ref{invE}. For $E = 1/d$, where $d \in \N$, the invariant $\Theta$ is well-defined, since $\Theta(q,\lambda,1/d) = \Theta_d(q,\lambda_D)$ by Theorem ~\ref{skein_defined}. 
We need to show that $\Theta$ is well-defined for any $E$. A way to prove this is by adapting the diagrammatic method of Lickorish--Millett \cite{LM}, as described in Remark~\ref{theta LM} for the invariants $\Theta_d$. 
Such a proof is given in \cite{kala}, where the invariants $\Theta_d$ and $\Theta$ are placed in a more general context. 
However, we opt here for a different method, namely, by comparing $\Theta$ with a 3-variable invariant $\overline{\Theta}$ for tied links and by showing that $\Theta$ coincides with $\overline{\Theta}$ on classical links.

\subsection{Tied links}
Tied links were introduced and studied by F.~Aicardi and J.~Juyumaya in \cite{AJ,AJ2}.
A \textit{tied link} is a classical link $L$ endowed with a set of ties, containing unordered pairs of points belonging to the components of $L$ \cite[Definition 1]{AJ2}. Diagrammatically, one can visualize a tie as a spring connecting two (not necessarily different) components of $L$. 
The endpoints of a tie are allowed to slide along the components that they are attached to. If two ties join the same two components, one of them can be removed, and any tie on a single component can be also removed. A tie that cannot be removed is called \textit{essential}. 

The invariant $\overline{\Theta}$ of tied links is constructed with the use of a Markov trace on the \emph{algebra of braids and ties} $\mathcal{E}_n(q)$, which in Remark~\ref{conj braids and ties} is conjectured to be isomorphic to the subalgebra $\YH(q)^{\rm (br)}$ of $\YH(q)$ for $d \geq n$. The algebra of braids and ties $\mathcal{E}_n(q)$ is defined as the algebra generated by $g_1, \ldots, g_{n-1}, e_1, \ldots, e_{n-1}$ satisfying the following relations (cf.~\cite[Definition 1]{AJ}): 
\begin{center}
$\begin{array}{rcll}
g_i g_j g_i &=& g_j g_i g_j&  \text{for } \vert i - j \vert = 1\\
g_i g_j &=& g_j g_i& \text{for } \vert i - j \vert > 1\\
e_i e_j &=& e_j e_i&\\
e_i^2 &=& e_i&\\
e_i g_i &=& g_i e_i&\\
e_i g_j &=& g_j e_i& \text{for } \vert i -j \vert > 1\\
e_i e_j g_i &=& g_i e_i e_j&  \text{for } \vert i - j \vert = 1\\
e_i g_j g_i &=& g_j g_i e_j&  \text{for } \vert i - j \vert = 1\\
g_i^2 &=& 1 + (q-q^{-1}) \,e_i g_i \, .
\end{array}$
\end{center}

Diagrammatically, the generators $g_i$ correspond to the classical braiding generators and the elements $e_i$ correspond to ties connecting the $i$-th and the $(i+1)$-th strands. Note that the cancellation properties of ties mentioned above are reflected in the fact that the elements $e_i$ are idempotents. 

A Markov trace $\rho: \bigcup_{n \geq 0} \mathcal{E}_n(q) \rightarrow \C[q^{\pm 1}, z^{\pm 1}, E^{\pm 1}]$ can be defined satisfying the following rules (cf.~\cite[Theorem 3]{AJ}):
$$
\begin{array}{crcll}
{\rm (i)} & \rho(a b) & = & \rho(b a) & \qquad a, b\in \mathcal{E}_n(q) \\
{\rm (ii)} & \rho(1) &  = & 1 & \qquad 1 \in \mathcal{E}_n(q) \\
{\rm (iii)} & \rho(a g_n)  & = & z\, \rho( a) & \qquad a\in \mathcal{E}_n(q) \quad (\textit{Markov  property} ) \\
{\rm (iv)} & \rho(a e_n) & = & E\, \rho( a) & \qquad a \in \mathcal{E}_n(q) \\
{\rm (v)} & \rho(a e_n g_n)  & = & z\, \rho( a) & \qquad a\in \mathcal{E}_n(q) \,.
\end{array}
$$
\noindent
Notice the resemblance of the above rules with the five rules of $\Jtrs$ in Theorem~\ref{specialtrace}.  Further, the Markov trace $\rho$  satisfies Lemma~\ref{lem and conj}, Proposition~\ref{trace_ei} and Corollary~\ref{trace_ei_cor}. The proof lies in the fact that if the $i$-th and $k$-th strands of a braid belong to the same component, then $e_{i,k}$ is a tie connecting these two strands, so it can be removed.

Now, in \cite{AJ2} the \emph{tied braid monoid} $TB_n$ is defined; it is generated by the braiding generators $\s_1, \ldots, \s_{n-1}$ and the generating ties $\eta_1, \ldots, \eta_{n-1}$, where $\eta_i$ connects the $i$-th and the $i+1$-th strands of a tied braid. Denote by $\bar{\pi}: \C TB_n \rightarrow \mathcal{E}_n(q)$ the natural surjection defined by $\s_i \mapsto g_i$ and $\eta_i \mapsto e_i$. Then the invariant $\overline{\Theta}$ is defined as:

\begin{thm}\label{tied_inv}
For any tied braid $\a \in TB_n$, we define
$$
\overline{\Theta}(\widehat{\a}) := \left( \frac{1}{z \sqrt{\lambda}} \right)^{n-1} \sqrt{\lambda}^{\epsilon(\a)} (\rho \circ \bar{\pi})(\a) \, ,
$$
 where $\lambda = \frac{z-(q-q^{-1}) E}{z}$ and $\epsilon(\a)$ is the sum of the exponents of the braiding generators $\s_i$ in the word $\a$. Then the map
 $\overline{\Theta}$ is a $3$-variable isotopy invariant of oriented tied links.
\end{thm}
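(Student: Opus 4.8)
The plan is to establish Theorem~\ref{tied_inv} by the standard Jones-type argument: verify that the rescaled trace $\overline{\Theta}$ is invariant under the Markov moves for \emph{tied} braids, so that it descends to a well-defined function on isotopy classes of tied links. First I would recall the tied Markov theorem of Aicardi--Juyumaya \cite{AJ2}: two tied braids have isotopic closures if and only if they are related by a finite sequence of (a) conjugation in $TB_n$, (b) positive/negative stabilization $\a \sim \a\s_n^{\pm 1}$ in $TB_{n+1}$, and (c) the tied stabilization $\a \sim \a\eta_n$ (or its combination $\a\eta_n\s_n$), together with the relations of the tied braid monoid. It therefore suffices to check that the quantity
$$
\overline{\Theta}(\widehat{\a}) = \Bigl(\frac{1}{z\sqrt{\lambda}}\Bigr)^{n-1}\sqrt{\lambda}^{\,\epsilon(\a)}\,(\rho\circ\bar\pi)(\a)
$$
is unchanged under each of these moves.

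The key steps, in order. \emph{Step 1:} Conjugation invariance is immediate, since $\rho$ is a trace (rule (i)) and both $n$ and $\epsilon(\a)$ are conjugation-invariant; the surjection $\bar\pi$ respects conjugation. \emph{Step 2:} For positive stabilization, $\epsilon(\a\s_n) = \epsilon(\a)+1$ and the number of strands increases by $1$; using rule (iii), $\rho(\bar\pi(\a) g_n) = z\,\rho(\bar\pi(\a))$, and one checks that the factors $z\cdot\sqrt{\lambda}\cdot\frac{1}{z\sqrt{\lambda}} = 1$ cancel, so $\overline{\Theta}(\widehat{\a\s_n}) = \overline{\Theta}(\widehat{\a})$. \emph{Step 3:} For negative stabilization, $\epsilon(\a\s_n^{-1}) = \epsilon(\a)-1$; here one writes $g_n^{-1} = g_n - (q-q^{-1})e_n$ (relation~\eqref{invrs}, which holds in $\mathcal{E}_n(q)$ by its last defining relation exactly as in $\YH(q)$), so that
$$
\rho(\bar\pi(\a)g_n^{-1}) = z\,\rho(\bar\pi(\a)) - (q-q^{-1})E\,\rho(\bar\pi(\a)) = \bigl(z-(q-q^{-1})E\bigr)\rho(\bar\pi(\a)) = z\lambda\,\rho(\bar\pi(\a))
$$
by rules (iii) and (iv) and the definition $\lambda = \tfrac{z-(q-q^{-1})E}{z}$. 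The rescaling factors then contribute $\frac{1}{z\sqrt{\lambda}}\cdot\sqrt{\lambda}^{-1}\cdot z\lambda = 1$, giving invariance. This is precisely the point where the choice $\lambda = \tfrac{z-(q-q^{-1})E}{z}$ is forced, exactly as $\lambda_D$ is forced in Theorem~\ref{invariant Phi}. \emph{Step 4:} For the tie-stabilization move, $\epsilon(\a\eta_n) = \epsilon(\a)$ while the strand number increases by $1$; rule (iv) gives $\rho(\bar\pi(\a)e_n) = E\,\rho(\bar\pi(\a))$, and the normalization factor $\frac{1}{z\sqrt{\lambda}}$ together with the relation $z\sqrt\lambda = \Lambda^{-1}$-type bookkeeping must be arranged so the $E$ is absorbed; concretely one checks $\frac{1}{z\sqrt{\lambda}}\cdot E$ against the defining normalization so that the move preserves $\overline{\Theta}$ — here one may also need rule (v), $\rho(ae_ng_n) = z\rho(a)$, to handle the mixed tied-and-braided stabilization. \emph{Step 5:} Finally, one must check that $\overline{\Theta}$ respects the relations of $TB_n$ not already covered, i.e. that $\bar\pi$ being a well-defined algebra homomorphism (which it is by construction of $\mathcal{E}_n(q)$) suffices; since Alexander's theorem for tied links \cite{AJ2} guarantees every tied link is a tied-braid closure, existence follows.

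The main obstacle I expect is \emph{Step 4} — getting the bookkeeping of the normalization factor right for the tie-stabilization move, since this move has no analogue in the classical Homflypt setting and the exponent $\epsilon$ does not change while the strand count does. One has to be careful about exactly which form of the tied Markov theorem is being used (whether the elementary move is $\a\sim\a\eta_n$, $\a\sim\a\eta_n\s_n$, or both), because the correct power of $z\sqrt{\lambda}$ to insert depends on that. A secondary, more routine obstacle is verifying that all the relations in the defining presentation of $\mathcal{E}_n(q)$ (in particular the mixed relations $e_ie_jg_i = g_ie_ie_j$ and $e_ig_jg_i = g_jg_ie_j$ for $|i-j|=1$) are consistent with $\rho$ being a trace — but this is subsumed in the already-cited existence of $\rho$ (\cite[Theorem 3]{AJ}, adapted to the new quadratic relation). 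Once all five moves are checked, uniqueness is automatic: any isotopy invariant satisfying the rescaled-trace formula on braid closures is determined on all tied links by Alexander's theorem, so $\overline{\Theta}$ is the unique such invariant.
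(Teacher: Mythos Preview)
The paper does not give a self-contained proof of this theorem; it simply states (Remark~\ref{tiedinvs_nosame}) that the construction of $\overline{\Theta}$ is ``completely analogous to the construction of $\overline{\Delta}$'' carried out in \cite{AJ2}. Your Jones-type approach via the tied Markov theorem is therefore exactly the intended method, and Steps~1--3 are correct as written.

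The genuine gap is Step~4. The move $\a \sim \a\eta_n$ (with $\a \in TB_n$ and $\a\eta_n \in TB_{n+1}$) is \emph{not} a Markov move for tied braids: the closure $\widehat{\a\eta_n}$ consists of $\widehat{\a}$ together with an additional unknotted component tied to it, which is a different tied link from $\widehat{\a}$. Consistently, $\overline{\Theta}$ does not respect this move: invariance would force $\tfrac{1}{z\sqrt{\lambda}}\cdot E = 1$, i.e.\ $E = z\sqrt{\lambda}$, which fails for generic parameters. So the difficulty you flag as the ``main obstacle'' is not an obstacle to be overcome but a signal that the move should be discarded altogether.

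The tied Markov theorem in \cite{AJ2} uses only conjugation and the ordinary stabilizations $\a \sim \a\s_n^{\pm 1}$. To the extent that a tied variant $\a \sim \a\eta_n\s_n^{\pm 1}$ appears, it is handled directly: rule~(v) gives $\rho(ae_ng_n) = z\,\rho(a)$, the same factor as rule~(iii), and $e_ng_n^{-1} = e_ng_n - (q-q^{-1})e_n$ yields $\rho(ae_ng_n^{-1}) = z\lambda\,\rho(a)$, the same factor as in your Step~3. Rule~(iv) is needed not for any Markov move but simply so that the inductive definition of $\rho$ goes through on the larger algebra $\mathcal{E}_n(q)$. Once Step~4 is deleted (or replaced by the harmless $\a\eta_n\s_n^{\pm 1}$ check), your argument is complete and matches what the paper has in mind.
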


Note that, for $E=1$, $\overline{\Theta}$ specializes to the Homflypt polynomial when restricted to classical links.

\begin{rem}\label{tiedinvs_nosame}  \rm
In \cite{AJ, AJ2}, F.~Aicardi and J.~Juyumaya work on the algebra of braids and ties  $\mathcal{E}_n(u)$, generated by 
elements $\wt{g}_1,\ldots,\wt{g}_{n-1},e_1,\ldots,e_{n-1}$, with the braiding generators satisfying the old quadratic relations \eqref{THE oldquadr}. They define a Markov trace $\wt{\rho}$ on $\cup_{n\geq 0} \mathcal{E}_n(u)$\cite[Theorem 3]{AJ}, which also satisfies Lemma~\ref{lem and conj}, Proposition~\ref{trace_ei} and Corollary~\ref{trace_ei_cor}. This Markov trace gives rise 
to a  $3$-variable isotopy invariant of  tied links, which is denoted by  $\overline{\Delta}$. Our construction of $\overline{\Theta}$ is completely analogous to the construction of $\overline{\Delta}$. For $E=1$, $\overline{\Delta}$ also specializes to the Homflypt polynomial when restricted to classical links.

In \cite{AJ2}, $\overline{\Delta}$ is re-defined diagrammatically via a skein relation, which applies to any crossing in the link diagram. It is proved to be well-defined via the standard Lickorish--Millett method. Note that in \cite{AJ} $\overline{\Delta}$ is defined only on classical links, while in \cite{AJ2} $\overline{\Delta}$ is extended to the class of tied links.

The invariant $\overline{\Delta}$ has not been identified topologically. One obstruction to this is the fact that the old quadratic relation for the algebra of braids and ties is used. Therefore, it was impossible to derive a special skein relation which only involves classical links (with no ties).

Despite the fact that the algebras $\mathcal{E}_n(u)$ and $\mathcal{E}_n(q)$ are isomorphic, 
the invariants $\overline{\Delta}$ and $\overline{\Theta}$ are not necessarily equivalent (as we have already observed about the invariants $\Delta_d$ and $\Theta_d$ in Remark~\ref{invs_nosame}). Indeed, we have computational indications \cite{Ai} that the invariant $\overline{\Delta}$ is stronger than the Homflypt polynomial and that there are pairs of $\overline{\Theta}$-equivalent links which are distinguished by $\overline{\Delta}$ and vice versa. These pairs exhibit the same behaviour when replacing $\overline{\Theta}$ by  $\Theta_d$ and $\overline{\Delta}$ by $\Delta_d$.
\end{rem}

Now we proceed to the proof of Theorem~\ref{invE}.

\subsection{Proof of Theorem~\ref{invE}} \label{invE_proof}
We will prove that $\Theta$ and $\overline{\Theta}$ coincide on classical links and that they are defined by the same rules. 
In \cite{AJ2} it is shown that a skein relation holds for the invariant $\overline{\Delta}$, where the crossing involved is \textit{any crossing in the link diagram}. Let $L_+, L_-, L_0$ be a Conway triple of tied links. With the old quadratic relation, this skein relation involves the links $L_+, L_-$ and the links $L_{0,\sim}$ and $L_{+,\sim}$, which are the links $L_0$ and $L_+$ with a tie connecting the two arcs involved.
However, with the use of the new quadratic relation, the skein relation transforms to:
\begin{equation}\label{tied_skein}
\frac{1}{\sqrt{\lambda}}\overline{\Theta}(L_+) - \sqrt{\lambda} \, \overline{\Theta}(L_-) = (q-q^{-1})\, \overline{\Theta}(L_{0,\sim}).
\end{equation}

If we apply the skein relation \eqref{tied_skein} on a crossing of a tied link $L$ involving different components, then the link $L_{0,\sim}$ will have a tie connecting the same component. Such a tie is not essential, and thus it can be removed. Hence, we obtain $L_{0,\sim} = L_0$. So, if the original link $L$ contains no ties, then the skein relation \eqref{tied_skein} involves only classical links when applied on crossings between \textit{different components}. Thus, \eqref{tied_skein} is satisfied by the invariant $\overline{\Theta}$ when restricted on classical links, and so it coincides with the special skein relation satisfied by the invariant $\Theta$.

We will now look into the behaviour of $\overline{\Theta}$ on knots and on disjoint unions of knots.
For this, we need the following observation: when every 
 two components of a link $L$ are connected by a tie, then the skein relation \eqref{tied_skein} can be equivalently transformed to the following:
\begin{equation}\label{tied_skein_2}
\frac{1}{\sqrt{\lambda}}\overline{\Theta}(L_{+,\sim}) - \sqrt{\lambda} \, \overline{\Theta}(L_{-,\sim}) = (q-q^{-1})\, \overline{\Theta}(L_{0,\sim}).
\end{equation}
This is due to the fact that on any crossing we can introduce a non-essential tie, since any two components are already tied. Relation~\eqref{tied_skein_2} holds in particular for any classical \textit{knot} $K$, since ties can be introduced freely on knots.
Trying to resolve the skein tree for $K$ using relation~\eqref{tied_skein_2}, in order to compute $\overline{\Theta}(K)$, the process will terminate to a sequence of tied unlinks, whose components are all tied together. 
The invariant $\overline{\Theta}$ on any of these tied unlinks can be computed by \eqref{tied_skein} inductively on the number of components. During all this process for $\overline{\Theta}$ we have only used coefficients of the Homflypt polynomial, hence $\overline{\Theta}(K) = P(K)$ for any knot $K$, exactly as our invariant $\Theta$. Using now this property and the definition of $\overline{\Theta}$  given by Theorem~\ref{tied_inv}, we can repeat the proof of Theorem \ref{disj_links} and obtain that, for any disjoint union $L$ of $k$ knots, we have
\begin{equation}\label{tied_disj_link}
\overline{\Theta}(L) =  E^{1-k} \,P(L).
\end{equation}

We conclude that the invariant $\overline{\Theta}$ satisfies rules (1) and (2) of Theorem~\ref{invE}. Following the procedure described in Subsection~\ref{subs_skein} (replacing
everywhere $\Theta_d$ with $\overline{\Theta}$), these two rules suffice for the computation of the value of $\overline{\Theta}$ on any classical link. Since $\Theta$ is defined using the exact same properties, it must coincide with the invariant $\overline{\Theta}$, and thus it is a well-defined invariant of classical links.
\qed

\begin{rem} \rm
The invariant $\Theta$ does not involve complicated constructions such as the E--system, even though it contains the invariants $\Theta_d$ where the E-system is needed. Apart from the proof above another way of proving that $\Theta$ is well-defined would be to take the algebraic approach and show that the subalgebra ${\rm Y}_{d,n}^{\rm (br)}(q)$ of $\YH(q)$ is isomorphic to the algebra of braids and ties $\mathcal{E}_n(q)$ for $d \geq n$ (recall Remark~\ref{E as a param}). Note that the E-system would not appear via this method either, since $E$ would be taken to be a parameter. We also note that, a diagrammatic skein theoretic proof is given in \cite{kala}.
\end{rem}

\begin{rem} \rm \label{oldquadr9}
Similarly to the proof of Theorem~\ref{invE}, one can prove \eqref{tied_disj_link} for the invariant $\overline{\Delta}$. Given the isomorphism between the subalgebra ${\rm Y}_{d,n}^{\rm (br)}(u)$ of $\YH(u)$ and the algebra of braids and ties $\mathcal{E}_n(u)$ for $d \geq n$, the invariant $\overline{\Delta}$, when restricted to classical links, contains the invariants $\Delta_d  = \Delta_{d',D'}$ for any $d' \geq d$ and $|D'|=d$  (recall Remark~\ref{E as a param u}).
Consequently, also the invariants $\Delta_d$  are equivalent to the Homflypt polynomial on knots and on disjoint unions of knots. 
\end{rem}

\subsection{A closed formula for $\Theta$}
In Appendix~\ref{lickorish}, W.B.R. Lickorish proves in Theorem~\ref{theta_linking_P} that the 3-variable invariant $\Theta$ is a sum of Homflypt polynomials of sublinks of an oriented link, which involves also linking numbers between different components of the link. More precisely, for an $n$-component oriented link $L$:
\begin{equation}\label{lickorish_forumla}
\Theta (L) = \sum_{k=1}^n \mu^{k-1}E_k \sum_\pi \lambda^{\nu(\pi)}P(\pi L),
\end{equation}
where the second summation is over all partitions $\pi$ of the components of $L$ into $k$ (unordered) subsets and $P(\pi L)$ denotes the product of the Homflypt polynomials of the $k$ sublinks of $L$ defined by $\pi$. Furthermore, $\nu(\pi)$ is the sum of all linking numbers of pairs of components of $L$ that are in distinct sets of $\pi$, $E_k = (E^{-1} - 1)(E^{-1} - 2) \cdots (E^{-1} - k + 1)$, with $E_1 =1$, and $\mu = \frac{\lambda^{-{1/2}} - \lambda^{{1/ 2}}}{q - q^{-1}}$.

\smallbreak
Theorem~\ref{theta_linking_P} gives a better topological interpretation of the invariant $\Theta$ than the one obtained by the special skein relation. \textit{The invariant $\Theta$ is completely determined by the linking matrix of a link $L$ and the values of $P$ on each sublink of $L$}. Hence, pairs of $P$-equivalent links containing sublinks which are not $P$-equivalent are prime candidates to be distinguished by $\Theta$. Such an example is the pair of links of Theorem~\ref{ThetaNotP}; although both links consist of linked unknots with same linking numbers, there are 2-component sublinks of the pair that are not $P$-equivalent. Indeed, the link $L11n358\{0,1\}$ contains a disjoint union of two unknots as a sublink, whereas $L11n418\{0,0\}$ does not.

\begin{rem} \rm
Theorem~\ref{theta_linking_P} has been proved independently in \cite{pawa} using representation theory techniques.
\end{rem}

Further, Theorem~\ref{theta_linking_P} answers the question of how the invariants $\Theta_d$ compare among themselves for different values of $d$. In detail:

\begin{prop}\label{theta_d_n}
Let $L$ and $L'$ be two $n$-component links which are not $\Theta$-equivalent. Then they are not $\Theta_d$-equivalent for $d \geq n$.
\end{prop}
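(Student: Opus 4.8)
The plan is to use the closed formula of Theorem~\ref{theta_linking_P} (equation \eqref{lickorish_forumla}) and observe that the only dependence on the parameter $E$ enters through the coefficients $E_k = (E^{-1}-1)(E^{-1}-2)\cdots(E^{-1}-k+1)$, for $k=1,\ldots,n$, where $n$ is the number of components of $L$. Indeed, writing
$$
\Theta(L) = \sum_{k=1}^n \mu^{k-1} E_k \, S_k(L), \qquad S_k(L) := \sum_\pi \lambda^{\nu(\pi)} P(\pi L),
$$
the quantities $S_k(L)$ depend only on $q$ and $\lambda$ (through the Homflypt polynomials of sublinks and the linking numbers), and not on $E$. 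Hence $\Theta(L) = \Theta(L')$ as polynomials in $q,\lambda,E$ if and only if $\sum_{k=1}^n \mu^{k-1} E_k\big(S_k(L)-S_k(L')\big) = 0$ identically.

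First I would record that for $d\geq n$, specializing $E = 1/d$ gives $E_k = (d-1)(d-2)\cdots(d-k+1)$, which is a nonzero rational number for every $k=1,\ldots,n$ (since $k-1 \leq n-1 \leq d-1$, none of the factors vanish); and these are exactly the values $E_k$ appearing in the formula \eqref{lickorish_forumla} for $\Theta_d(L) = \Theta(q,\lambda,1/d)$. So $\Theta_d(L)-\Theta_d(L') = \sum_{k=1}^n \mu^{k-1} E_k|_{E=1/d}\,\big(S_k(L)-S_k(L')\big)$. The heart of the argument is a linear-algebra / linear-independence step: I want to conclude that if $\Theta_d(L)=\Theta_d(L')$ for some fixed $d\geq n$, then already $\Theta(L)=\Theta(L')$. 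This does not follow from a single value of $d$ directly — rather, the cleanest route is to argue contrapositively: if $L$ and $L'$ are not $\Theta$-equivalent, then the vector $\big(S_1(L)-S_1(L'),\ldots,S_n(L)-S_n(L')\big)$ is nonzero, and I must show that the linear functional $E_k|_{E=1/d}$-weighted combination $\sum_{k=1}^n \mu^{k-1}(d-1)\cdots(d-k+1)\big(S_k(L)-S_k(L')\big)$ is nonzero for every $d \geq n$.

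The key observation making this work is that the terms $\mu^{k-1}$ are linearly independent over the field $\mathbb{Q}(q,\lambda)$: recall $\mu = \frac{\lambda^{-1/2}-\lambda^{1/2}}{q-q^{-1}}$, which is transcendental over $\mathbb{Q}$ jointly with $q,\lambda$ in the relevant function field, so the powers $1,\mu,\mu^2,\ldots,\mu^{n-1}$ are linearly independent over $\mathbb{Q}(q)$ (and over the subfield containing the $S_k$'s and linking-number powers of $\lambda$). Therefore $\sum_{k=1}^n \mu^{k-1} E_k(S_k(L)-S_k(L'))$ vanishes if and only if $E_k(S_k(L)-S_k(L'))=0$ for each $k$ separately. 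For $E$ an indeterminate, $E_k$ is a nonzero polynomial, so $\Theta(L)=\Theta(L')$ iff $S_k(L)=S_k(L')$ for all $k$; and for $E=1/d$ with $d\geq n$, $E_k|_{E=1/d}$ is a nonzero scalar, so $\Theta_d(L)=\Theta_d(L')$ iff $S_k(L)=S_k(L')$ for all $k$. The two conditions coincide, which proves the proposition. The main obstacle is to pin down precisely the linear independence of $1,\mu,\ldots,\mu^{n-1}$ over the coefficient ring in which the $S_k(L)$ live — i.e.\ to set up the ambient field (rational functions in $q$ and $\lambda^{1/2}$) carefully so that $\mu$ is genuinely transcendental over the subfield generated by the Homflypt polynomials and integer powers of $\lambda$ — and to handle the bookkeeping that $\nu(\pi)$ contributes only integer powers of $\lambda$ while $\mu$ carries a genuine $\lambda^{1/2}$; once the grading by powers of $\mu$ is justified the rest is immediate.
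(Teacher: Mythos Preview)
Your approach is the same as the paper's: both invoke the Lickorish formula $\Theta(L)=\sum_{k}\mu^{k-1}E_k\,S_k(L)$ and the nonvanishing of $E_k|_{E=1/d}=(d-1)\cdots(d-k+1)$ for $d\ge n$. The paper's argument simply asserts that, because some term differs and all the $E_{k,d}$ are nonzero, the specialised sum must be nonzero; you go further and correctly isolate the missing ingredient as a linear-independence statement for $1,\mu,\ldots,\mu^{n-1}$ over the field in which the differences $S_k(L)-S_k(L')$ live.

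However, your proposed justification of that independence fails. You suggest separating the terms by the $\sqrt{\lambda}$-grading, noting that $\nu(\pi)$ contributes only integer powers of $\lambda$ while $\mu$ carries a genuine $\sqrt{\lambda}$. The trouble is that the Homflypt polynomial of an $m$-component sublink itself has $\sqrt{\lambda}$-parity $m-1$ (already the $2$-component unlink has $P=\mu$); hence $S_k(L)=\sum_\pi\lambda^{\nu(\pi)}P(\pi L)$ has parity $n-k$, and $\mu^{k-1}S_k(L)$ has parity $(k-1)+(n-k)=n-1$, the \emph{same} for every $k$. So the grading cannot separate the summands. More directly,
\[
\mu^2=\frac{\lambda+\lambda^{-1}-2}{(q-q^{-1})^2}\in\mathbb{Q}(q,\lambda),
\]
so $\mu$ is only quadratic over $\mathbb{Q}(q,\lambda)$, and the powers $1,\mu,\ldots,\mu^{n-1}$ are linearly \emph{dependent} over any field containing $\lambda$ as soon as $n\ge 3$. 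Thus the ``iff'' you claim for $\Theta_d$ at a single fixed $d\ge n$ does not follow from your argument, and the proof stalls at exactly the same point as the paper's --- which, as the paper itself remarks, is established independently in \cite{pa,pawa} by representation-theoretic means.
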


\begin{proof}
Since $L$ and $L'$ are not $\Theta$-equivalent, by Theorem~\ref{theta_linking_P} we have that:
$$
\sum_{k=1}^n \mu^{k-1}E_k \sum_\pi \left[ \lambda^{\nu(\pi)}P(\pi L) - \lambda^{\nu'(\pi)}P(\pi L') \right] \neq 0,
$$
where the notation is as in Theorem~\ref{theta_linking_P} and $\nu'(\pi)$ denotes the corresponding sum of linking numbers (as in $\nu(\pi)$) for the link $\pi L'$. Due to the above relation, there exists a $k \in \{1, \ldots, n\}$ and a partition $\pi$ such that $\nu(\pi) \neq \nu'(\pi)$ and/or $P(\pi L) \neq P(\pi L')$.

Denote now by $E_{k,d}$ the quantity $E_k$ evaluated at $E = 1/d$, that is:
$$
E_{k,d} = (d - 1) \cdots (d - k + 1).
$$
Substituting in Theorem~\ref{theta_linking_P} the quantity $E_k$ by $E_{k,d}$ we get the corresponding formula for the invariant $\Theta_d$. Since for $d \geq n$ it holds that $E_{k,d} \neq 0$ for all $k \in \{1, \ldots, n\}$ and from the above it follows that:
\begin{equation}\label{proof_theta_d_n}
\sum_{k=1}^n \mu^{k-1}E_{k,d} \sum_\pi \left[ \lambda^{\nu(\pi)}P(\pi L) - \lambda^{\nu'(\pi)}P(\pi L') \right] \neq 0, \qquad \text{ for } d \geq n,
\end{equation}
that is $\Theta_d(L) \neq \Theta_d(L')$ for $d \geq n$.
\end{proof}

Proposition~\ref{theta_d_n} tells us that, for $d \geq n$ we do not lose any topological information, since $E_{k,d} \neq 0$. Indeed, for a $d < n$ and for a pair of $P$-equivalent links $L$ and $L'$, which are not $\Theta$-equivalent, in the formula \eqref{proof_theta_d_n} the coefficients $E_{k,d}$, for $k \geq d$, will be equal to zero. Furthermore, for the remaining summands, for which $E_{k,d}$ is not zero, it could well happen that the corresponding sublinks $\pi L$ and $\pi L'$ are $P$-equivalent sublinks and that $\nu(\pi)=\nu'(\pi)$. In that case, the invariant $\Theta_d$ will not distinguish this pair for this value of $d < n$.

\begin{rem} \rm
The above result has also been proved in \cite{pa} using representation theory techniques. It is worth noting, that the isomorphism of $\YH^{\rm{(br)}}$ with the algebra of braids and ties \cite{EsRy} holds for $d \geq n$, which is the same condition as in the statement of Proposition~\ref{theta_d_n}.
\end{rem}

Proposition~\ref{theta_linking_P} enables us to re-define using in yet a new way the invariants $\Theta_d$ and $\Theta$. Namely, one could define the invariant $\Theta$ combinatorially by Equation~\ref{lickorish_forumla} and then prove that the definition is equivalent to the algebraic one or to the skein-theoretic one. More precisely:

\begin{thm} \label{theta_new_def}
Suppose that the invariant $\Theta$ is defined by Equation~\ref{lickorish_forumla}. Then, this definition is equivalent to the definition of Theorem~\ref{invE}.
\end{thm}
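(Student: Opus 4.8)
The plan is to show that the function defined by Equation~\eqref{lickorish_forumla}, which I will temporarily call $\Theta'$, satisfies the two defining rules of Theorem~\ref{invE}, namely that it takes the value $E^{1-k}P(L)$ on a disjoint union of $k$ knots and that it obeys the special skein relation on crossings between different components. Since Theorem~\ref{invE} asserts that there is a \emph{unique} invariant with these two properties, this will force $\Theta' = \Theta$, giving the equivalence. (One should first check that $\Theta'$ is genuinely an isotopy invariant: it is a finite sum over partitions of the component set of the link, weighted by linking numbers and products of Homflypt polynomials of sublinks, all of which are manifestly isotopy invariants; the partition index set depends only on the number of components, so $\Theta'$ is well-defined on $\mathcal{L}$.)

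First I would verify rule (1). If $L$ is a disjoint union of $n$ knots, then every linking number between distinct components is zero, so $\lambda^{\nu(\pi)} = 1$ for every partition $\pi$. Moreover $P$ is multiplicative on split unions, so for a partition $\pi$ into $k$ blocks one has $P(\pi L) = \prod (\text{Homflypt of each block}) $, and summing the block-products over all partitions into $k$ blocks, together with the combinatorial identity that makes $\sum_{k} \mu^{k-1} E_k \cdot (\text{Stirling-type sum})$ collapse, should reduce the whole expression to $E^{1-n}P(L)$. Concretely, writing $p_i = P(K_i)$ for the $i$-th component knot and $P(L) = \mu^{n-1}\prod_i p_i$ (here $\mu = \frac{\lambda^{-1/2}-\lambda^{1/2}}{q-q^{-1}}$ is exactly the factor by which $P$ of a split union exceeds the product of the factors), the claim becomes a polynomial identity in the $p_i$, $\mu$ and $E$; it follows from the known fact that $\sum_{k=1}^n E_k\, S(n,k) x^{k} $-type sums telescope, using $E_k = (E^{-1}-1)(E^{-1}-2)\cdots(E^{-1}-k+1)$. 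This is a purely formal check.

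Next I would verify rule (2). Let $L_+,L_-,L_0$ be a Conway triple where the modified crossing lies between two distinct components $K_a$, $K_b$ of $L_+$ (so in $L_0$ these two merge into a single component and the total number of components drops by one). The key point is that for any partition $\pi$ of the components of $L_+$, the two behaviours split cleanly: if $\pi$ puts $K_a$ and $K_b$ in the \emph{same} block, then that block is itself a link on which the crossing change is internal, so the Homflypt skein relation applied to $P(\pi L_+), P(\pi L_-), P(\pi L_0')$ (with the merged component) relates these three, and the linking-number exponent $\nu(\pi)$ is the same for $L_+$ and $L_-$ while $\nu$ of the corresponding partition of $L_0$ is $\nu(\pi)$ with the two now-merged components' mutual linking absorbed — one must track exactly how $\nu$ changes, and this is where care is needed. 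If $\pi$ puts $K_a$ and $K_b$ in \emph{different} blocks, then $P(\pi L_+) = P(\pi L_-)$ (changing a crossing between components in different blocks does not touch either sublink) but the linking number $\mathrm{lk}(K_a,K_b)$ changes by $\pm 1$, so $\lambda^{\nu(\pi)}$ differs between $L_+$ and $L_-$ by a factor $\lambda^{\pm 1}$; summing $\frac{1}{\sqrt\lambda}\lambda - \sqrt\lambda = \frac{1-\lambda}{\sqrt\lambda}\cdot(\text{something})$ over such $\pi$ should produce precisely the $(q-q^{-1})\Theta'(L_0)$ term, using the identity $\frac{\lambda^{-1/2}-\lambda^{1/2}}{q-q^{-1}} = \mu$ and the recursion $E_{k} = (E^{-1}-k+1)E_{k-1}$ that matches a partition of $n-1$ components refining to one of $n$ by splitting off a block. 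Assembling the two cases and matching coefficients of $\mu^{k-1}E_k$ term by term should yield exactly $\frac{1}{\sqrt\lambda}\Theta'(L_+) - \sqrt\lambda\,\Theta'(L_-) = (q-q^{-1})\Theta'(L_0)$.

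The main obstacle will be the bookkeeping in rule (2): one must set up a bijection between partitions of the $n$ components of $L_\pm$ and partitions of the $n-1$ components of $L_0$ (those for which $K_a,K_b$ lie in one block map to partitions of $L_0$; those for which they lie in separate blocks are the ones producing the $\lambda^{\pm1}$ discrepancy), and then verify that under this bijection the weights $\mu^{k-1}E_k \lambda^{\nu(\pi)} P(\pi L)$ transform correctly, including the shift $k \mapsto k-1$ or $k\mapsto k$ in the index and the precise change in $\nu(\pi)$ coming from the merged pair. Once the bijection and the linking-number accounting are pinned down, the remaining computation is an identity among the $E_k$'s and is routine. I would therefore organize the proof as: (a) $\Theta'$ is an isotopy invariant; (b) $\Theta'$ satisfies rule (1); (c) $\Theta'$ satisfies rule (2); (d) conclude by the uniqueness clause of Theorem~\ref{invE} that $\Theta' = \Theta$.
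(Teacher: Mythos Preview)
Your overall strategy---show that the Lickorish formula defines an isotopy invariant satisfying both rules of Theorem~\ref{invE}, then invoke uniqueness---is correct and is exactly what the paper does. Your treatment of rule~(1) via Stirling numbers is also on the right track and matches the paper's argument.

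However, your analysis of rule~(2) has the two cases reversed. When $K_a$ and $K_b$ lie in \emph{different} blocks of $\pi$, you correctly note that $P(\pi L_+)=P(\pi L_-)$ and that $\nu(\pi)$ shifts by $\pm 1$; but then $\lambda^{-1/2}\lambda^{\nu(\pi)(L_+)}P(\pi L_+)-\lambda^{1/2}\lambda^{\nu(\pi)(L_-)}P(\pi L_-)$ is identically zero (your own expression $\frac{1}{\sqrt\lambda}\lambda-\sqrt\lambda$ already equals~$0$). These terms cancel; they do \emph{not} produce the $(q-q^{-1})\Theta'(L_0)$ contribution, and no recursion on the $E_k$'s is needed anywhere. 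Conversely, when $K_a$ and $K_b$ lie in the \emph{same} block, $\nu(\pi)$ is unchanged between $L_+$ and $L_-$, and since their mutual linking number was never counted in $\nu(\pi)$ (it is an intra-block quantity), one has $\nu(\pi')=\nu(\pi)$ for the induced partition $\pi'$ of $L_0$---nothing is ``absorbed''. Applying the Homflypt skein relation inside that block gives $\lambda^{-1/2}P(\pi L_+)-\lambda^{1/2}P(\pi L_-)=(q-q^{-1})P(\pi' L_0)$, and since the map $\pi\mapsto\pi'$ is a bijection onto \emph{all} partitions of the $(n-1)$-component link $L_0$ preserving the number of blocks~$k$, summing over $\pi$ yields exactly $(q-q^{-1})\Theta'(L_0)$. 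Once you swap the roles of the two cases and drop the spurious $E_k$ recursion, the argument goes through cleanly.
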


\begin{proof}
The formula~\ref{lickorish_forumla} clearly defines uniquely an oriented link invariant. The proof of Theormem,~\ref{theta_linking_P} uses precisely the defining rules of $\Theta$ as given in the statement of Theorem~\ref{invE}. So, one direction of the statement is done. For the other direction, we shall show that the defining equation~\eqref{lickorish_forumla} implies the two rules of Theorem~\ref{invE}. The proof uses similar arguments as those in the proof of Theorem~\ref{theta_linking_P}. Indeed, let $L$ be a disjoint union of $n$ knots $K_1, \ldots, K_n$ with $n \geq 1$. Then, since for every partition $\pi$ associated to some $k \in \{1,\ldots,n\}$ we have that $P(\pi L) = \mu^{1-k} P(L)$ and $\nu(\pi) = 0$, Equation~\eqref{lickorish_forumla} yields:
$$
\Theta (L) =  P(L)  \sum_{k=1}^n S(n,k) E_k,
$$
where $S(n,k)$ is the Stirling number of second kind. By the relation~\eqref{stirling} used in the proof of Theorem~\ref{theta_linking_P}, we obtain:
$$
\Theta (L) = E^{1-n} \, P(L) ,
$$
which is exactly the first rule of Theorem~\ref{invE}.

\smallbreak
Now, let $L_+, L_-, L_0$ be a Conway triple, where the crossing of $L_+$ involves two different components, denoted by $L_1$ and $L_2$. Using Equation~\eqref{lickorish_forumla}, it holds that:
\begin{equation}\label{theta_L_plus}
\Theta (L_+) = \sum_{k=1}^n \mu^{k-1}E_k \left[ \sum_\pi \lambda^{\nu(\pi)}P(\pi L_+) + \sum_\rho \lambda^{\nu(\rho)} P(\rho L_+) \right],
\end{equation}
where $\pi$ are the partitions where $L_1$ and $L_2$ belong to the same subset and $\rho$ are those were the two components belong to different subsets. For the sublinks corresponding to the partitions $\rho$ it holds that $P(\rho L_+) = P(\rho L_-)$, since the selected crossing is not there neither in $\rho L_+$ nor in $\rho L_-$, and $\nu(\rho)(\rho L_+)  = \nu(\rho)(\rho L_-) +1$, hence, $\lambda^{\nu(\rho)} P(\rho L_+) = \lambda^{\nu(\rho)+1} P(\rho L_-)$. For a partition $\pi$ the selected crossing is present both in $\pi L_+$ and in $\pi L_-$. Now, a partition $\pi$ on $L_+$ induces a corresponding partition on $L_-$ and vice versa and it also induces a partition $\pi'$ on $L_0$, since the components $L_1$ and $L_2$ belong to the same subset. Now, we apply the Homflypt skein relation \eqref{skein of hom} on the link $\pi L_+$ and we substitute in \eqref{theta_L_plus}. Hence, by the above facts Equation~\ref{theta_L_plus} becomes:
$$
\Theta(L_+) = \sum_{k=1}^n \mu^{k-1}E_k \left[ \sum_\pi \lambda^{\nu(\pi)} \lambda P(\pi L_-) + \sum_{\pi'} \lambda^{\nu(\pi')} \lambda^{1/2} (q-q^{-1}) P(\pi' L_0) + \sum_\rho \lambda^{\nu(\rho)+1} P(\rho L_-) \right].
$$
The partitions $\pi$ and $\rho$ run over all the partitions of the link $L_-$. Using now Equation~\eqref{lickorish_forumla} for the link diagrams $L_-$ and $L_0$ we obtain:
$$
\Theta(L_+) = \lambda \, \Theta(L_-) + \lambda^{1/2} (q-q^{-1}) \, \Theta(L_0),
$$
which is exactly the special skein relation in the second rule of Theorem~\ref{invE}.
\end{proof}

\begin{rem} \rm The proof of Theorem~\ref{theta_new_def} gives a new insight for Corollary~\ref{no_skein}. Indeed, ff we tried to repeat the same proof for a crossing of $L_+$ involving the same component, then the partitions of $L_0$ induced by the partitions of $L_+$ would not run over all partitions of $L_0$, since $L_0$ has one more component (compare with the proof of Corollary~\ref{no_skein}). Hence, we would not obtain the Homflypt skein relation on a crossing involving the same component.
\end{rem}

To summarize, in this paper we defined the invariant $\Theta$ as a generalization of the invariant $\Theta_d$, \textit{skein-theoretically} (Theorem~\ref{invE}) and we proved its well-definedness via the theory of tied braids and tied links. A direct skein-theoretic proof of the well-definedness of $\Theta$ is given in \cite{kala}. Alternatively, the invariant $\Theta$ could be established purely \textit{algebraically} using the trace $\Jtrs$ on the algebra $\YH$, the isomorphism of $\YH^{\rm{(br)}}$ with the algebra of braids and ties \cite{EsRy}, and then considering the value $E_D$ as a parameter $E$. Finally, the invariant $\Theta$ could be established purely \textit{combinatorially} using via the closed formula  of Theorem~\ref{theta_linking_P}.

\section{Further research directions}\label{new research directions}
The results of this paper open to several research directions. To point out a few:

\subsection{Exploring further the invariants $\Theta_d$, $\Theta$ and $\Delta_d$, $\overline{\Delta}$}
We believe and we hope that the six pairs of links of Homflypt-equivalent links that are detected by the invariants $\Theta_d$ and $\Theta$, and possibly some more still to be found, will shed light on some of the geometric properties and topological limitations of the Homflypt polynomial. Further, it would be interesting to know whether Theorem~\ref{2comp} generalizes to $P$-equivalent links with more than $2$ components. Another important question is whether and when $\Theta_d$-equivalence implies $P$-equivalence. In \cite{kala} the invariants $\Theta_d$ and $\Theta$ are placed in a more general skein-theoretic context and associated state sum models for the new invariants are discussed.
Finally, it is worth  exploring further  the relation between the invariants $\Theta_d$ and $\Delta_d$, and between $\overline{\Delta}$ and $\Theta$.

\subsection{Revisiting the invariants for framed links}
Under the light of our new results, one could investigate further the framed link invariants $\Phi_{d,D}(q,z)$, in particular with respect to the special skein relation of Proposition~\ref{skein_ei}. In the same spirit one can investigate further the classical and framed link invariants derived from the Framization of the Temperley--Lieb algebra, ${\rm FTL}_{d,n}(q)$ \cite{gojukolaf, chpo2}. In \cite{gojukolaf} it has been checked that the related invariants also distinguish the six pairs of $P$-equivalent links. So, it would make sense to construct invariants for 3--manifolds, in analogy to the Witten invariants. Finally, it might also be meaningful to explore further the transverse link invariants from the Yokonuma--Hecke algebras defined in \cite{chjakala}.

\subsection{Exploring further the $p$-adic framed link invariants}In \cite{jula1} the $p$-adic framed braid group ${\mathcal F}_{\infty ,n}$ and the $p$-adic Yokonuma-Hecke algebra ${\rm Y}_{\infty,n}(u)$ were constructed as inverse limits of the modular framed braid groups and the classical Yokonuma-Hecke algebras respectively. Topologically dense sub-structures were explored and approximations of $p$-adic elements through the dense sub-structures were found \cite{jula2}. Thus, a $p$-adic Markov trace was constructed on the algebras  ${\rm Y}_{\infty,n}(u)$ using the traces $\Jtru$.  Moreover, solutions of the {\rm E}--system lift to solutions on the $p$-adic level \cite{jula1}. Therefore, the specialized traces $\Jtrsu$ also yield  Markov traces on the $p$-adic Yokonuma--Hecke algebras.  Consequently, a $p$-adic  invariant for oriented framed links, $\Gamma_{p^{\infty}}(u,\wt{z})$, and a $p$-adic invariant for oriented classical links, $\Delta_{p^{\infty}}(u,\wt{z})$, were constructed through the invariants $\Gamma_{d,D}(u,\wt{z})$ and $\Delta_d(u,\wt{z})$. Further, in \cite{jula3} the above were adapted to the adelic setting.  For more details, see \cite{jula2,jula3}. One could check whether Theorem~\ref{specialtrace} carries through to the specialized $p$-adic (respectively adelic) trace.
Moreover, one could adapt the above construction to the new presentation of the algebra $\YH(q)$ and obtain $p$-adic (respectively adelic) link invariants $\Phi_{p^{\infty}}(q,z)$ and $\Theta_{p^{\infty}}(q,z)$. Then one could investigate whether our main results apply to these invariants, especially when restricted to the topologically dense sub-structures that are isomorphic to ${\mathcal F}_n$, which are explored in \cite{jula2}, and for which there is no modular restriction on the framings.

\subsection{Invariants from other framization algebras}
Apart from the framization algebras related to the Temperley--Lieb algebra studied in \cite{gojukola, gojukolaf, chpo1,chpo2},
which are related to the Jones polynomial, one could study further the new algebras defined in \cite{jula5,chpa2}, which are framizations of known knot algebras: of the BMW algebra, of the cyclotomic $B$--type Hecke algebras and the affine Hecke algebra of type $A$, and of the singular Hecke algebra.
These algebras are related respectively to: the  Kauffman polynomial, the Lambropoulou invariants for the solid torus, and the  Kauffman--Vogel and Paris--Rabenda invariants for singular links. For each of these framization algebras, it is worth exploring the possibility of constructing link invariants via Markov traces, since, by our results, we are likely to obtain new link invariants different from the above. Further, one could look into the direction of constructing appropriate quotients of these algebras, which would lead to unoriented link invariants. Adapting the skein methods described in this paper to appropriate presentations of the framization algebras, one would most likely obtain stronger invariants than the above.

\vfill
\pagebreak
\appendix

\section{Computational data}\label{computations}
For every pair of links of Table~\ref{pairs} we give the corresponding braid words and the value of the invariants $\Theta_d$ on them. For the braid words we use the notation $\{ a_1, \ldots, a_n \}$ which corresponds to the word $\s_{a_1}^{{\rm sign}(a_1)} \cdots \s_{a_n}^{{\rm sign}(a_n)}$. For example, $\{1, -2, 1, -2\}$ corresponds to the braid word $\s_1 \s_2^{-1} \s_1 \s_2^{-1}$. The braid words have been obtained from {\it LinkInfo} \cite{chli}.

\small
\smallbreak
\begin{center}
\def\arraystretch{1.5}
\begin{tabular}{|c|c|}\hline
$L11n358\{0,1\}$ & $\{1, -2, -3, -4, 3, 3, -5, 4, -3, 2, -1, -3, -2, -4, 3, -2, -2, -2, 5, 4, -3\}$\\\hline
\multicolumn{2}{|l|}{\rule{0pt}{7ex} $\begin{aligned}
	&\left(E_D^2 \lambda_D^4 (q-1)^2 q^6 (q+1)^2 \right)^{-1} \left(E_D \lambda_D +E_D \lambda_D q^4-2 E_D \lambda_D q^2+\lambda_D 
   q^2-q^2\right)\\
   &(E_D \lambda_D + E_D \lambda_D q^{12}-E_D \lambda_D^2
   q^{10}-2 E_D \lambda_D q^{10}-E_D q^{10}+3 E_D \lambda_D ^2 q^8
   +4 E_D \lambda_D q^8 +2 E_D q^8-4 E_D \lambda_D ^2 q^6\\
   &-6 E_D \lambda_D q^6 -2 E_D q^6+3 E_D \lambda_D^2 q^4
   +4 E_D \lambda_D q^4 + 2 E_D q^4-E_D \lambda_D ^2
   q^2-2 E_D \lambda_D q^2-E_D q^2+\lambda_D q^6-q^6)
   \end{aligned}$}\\\hline
$L11n418\{0,0\}$ & $\{-1, -2, 3, -2, -3, 2, -1, -3, -3, 2, -3\}$\\\hline
\multicolumn{2}{|l|}{\rule{0pt}{8ex} $\begin{aligned}&\left( E_D^2 \lambda_D^4 q^6
   \left(q^2-1\right)^2 \right)^{-1} \Big[ E_D^2 \left(q^2-1\right)^4 \Big(\lambda_D^2+\lambda_D^2
   q^8+\lambda_D \left(-\lambda_D^2+\lambda_D -2\right) q^6
   +\left(4\lambda_D^2-\lambda_D +1\right) q^4\\
   &+\lambda_D \left(-\lambda_D
   ^2+\lambda_D -2\right) q^2\Big)
   +E_D \left(q^2-1\right)^2 q^4
   \big(2 (\lambda_D -1) \lambda_D +2 (\lambda_D -1) \lambda_D
   q^4\\
   &-\left(\lambda_D^3-3 \lambda_D^2+4 \lambda_D-2\right)
   q^2\big)+(\lambda_D -1)^2 q^8 \Big]
\end{aligned}$}\\\hline
\end{tabular}
\smallbreak
\begin{tabular}{|c|c|}\hline
$L11n467\{0,1\}$ & $\{1, -2, -3, 4, 3, -2, 3, 3, -2, -4, 5, 4, 3, -2, -1, -2, -3, -2, -2, -4, 3, -2, -5\}$\\\hline
\multicolumn{2}{|l|}{\rule{0pt}{8ex} $\begin{aligned}&-\left(E_D^2 \lambda_D ^4 \left(q^2-1\right)^5 \right)^{-1} \left(q^{-2}-1\right)^3 \Big(E_D^2 \lambda_D 
   \left(q^2-1\right)^4 \big(\lambda_D  (\lambda_D +2) +\lambda_D (\lambda_D +2) q^8\\
   &-\left(\lambda_D ^3+3 \lambda_D ^2+2 \lambda_D
   +2\right) q^6+\left(\lambda_D ^3+4 \lambda_D ^2+6 \lambda_D +1\right)
   q^4 -\left(\lambda_D ^3+3 \lambda_D ^2+2 \lambda_D +2\right)
   q^2\big)\\
   &+E_D (\lambda_D -1) \left(q^2-1\right)^2 q^2 \Big(\lambda_D
   +\lambda_D  q^8 -\left(\lambda_D ^2+1\right) q^6+\lambda_D  (\lambda_D
   +4) q^4-\left(\lambda_D ^2+1\right) q^2\Big) +(\lambda_D -1)^2
   q^8\Big)
   \end{aligned}$}\\\hline
$L11n527\{0,0\}$ & $\{1, 2, -3, -4, -3, 5, 4, -3, -2, -1, -3, -4, -3, 2, -3, -3, -5, 4, -3, 2, 2\}$\\\hline
\multicolumn{2}{|l|}{\rule{0pt}{8ex} $\begin{aligned}&-\left( E_D^2 \lambda_D ^4
   \left(q^2-1\right)^5 \right)^{-1} \left(q^{-2}-1\right)^3 \Big(E_D^2
   \left(q^2-1\right)^4 \big(\lambda_D ^2 (\lambda_D +2) +\lambda_D ^2
   (\lambda_D +2) q^8\\
   &-\lambda_D  \left(\lambda_D ^3+3 \lambda_D ^2+\lambda_D
   +3\right) q^6 +\left(\lambda_D ^4+3 \lambda_D ^3+7 \lambda_D
   ^2+1\right) q^4 \lambda_D  \left(\lambda_D ^3+3 \lambda_D ^2+\lambda_D
   +3\right) q^2\big)\\
   &+E_D (\lambda_D -1) \left(q^2-1\right)^2 q^4
   \left(2 \lambda_D +2 \lambda_D  q^4-\left(\lambda_D ^2-2 \lambda_D
   +2\right) q^2\right) +(\lambda_D -1)^2 q^8\Big)
\end{aligned}$}\\\hline
\end{tabular}
\smallbreak
\begin{tabular}{|c|c|}\hline
$L11n325\{1,1\}$ & $\{-1, 2, -1, 2, -1, -2, -2, 3, -2, 3, -2\}$\\\hline
\multicolumn{2}{|l|}{\rule{0pt}{6ex} $\begin{aligned}
   &\Big(\lambda_D^{-3}E_D^2 \left(q^2-1\right)^4 \left(q^2-\lambda_D \right)
   \left(\lambda_D  q^2-1\right) \left(q^4-(\lambda_D +1)
   q^2+1\right)-\lambda_D^{-3} \big(E_D (\lambda_D -1)
   \left(q^2-1\right)^2 q^2 (\lambda_D\\
   &+\lambda_D q^8 -\left(\lambda_D ^2+1\right) q^6 -(\lambda_D -2) \lambda_D 
   q^4-\left(\lambda_D ^2+1\right) q^2)\big)+\left(\lambda_D^{-1}-1\right)^2 q^8\Big)
   \left(E_D^2
   \left(q^{-2}-1\right)^2 q^{10}\right)^{-1}
   \end{aligned}$}\\\hline
$L11n424\{0,0\}$ & $\{-1, 2, -1, -2, 3, -2, -2, 1, -2, 3, -2\}$\\\hline
\multicolumn{2}{|l|}{\rule{0pt}{6ex} $\begin{aligned}&\left(E_D^2
   \left(q^{-2}-1\right)^2 q^{10}\right)^{-1}
   \Big(\lambda_D^{-3} E_D^2 \left(q^2-1\right)^4 \left(q^2-\lambda_D \right)
   \left(q^4-2 \lambda_D  q^2+1\right) \left(\lambda_D 
   q^2-1\right)\\
   &-\lambda_D^{-3}E_D (\lambda_D -1)
   \left(q^2-1\right)^2 q^4 \left(2 \lambda_D +2 \lambda_D  q^4-\left(3
   \lambda_D ^2+2\right) q^2\right)+\left(\lambda_D^{-1}-1\right)^2 q^8\Big)
\end{aligned}$}\\\hline
\end{tabular}
\smallbreak
\begin{tabular}{|c|c|}\hline
$L10n79\{1,1\}$ & $\{-1, 2, -1, 2, -1, -2, -2, -2, -2, -2\}$\\\hline
\multicolumn{2}{|l|}{\rule{0pt}{6ex} $\begin{aligned}&\left(E_D^2 \lambda_D ^2 \left(q^{-2}-1\right)^2 q^{10}\right)^{-1} \Big(-\lambda_D^{-2} E_D^2 \left(q^2-1\right)^4 \left(q^4+1\right)
   \left(q^2-\lambda_D \right) \left(\lambda_D  q^2-1\right)\\
   &+\lambda_D^{-2} E_D (\lambda_D -1) \left(q^2-1\right)^2
   \left(q^4+q^2+1\right) q^2 \left(\lambda_D +\lambda_D 
   q^4-q^2\right)+\left(\lambda_D^{-1}-1\right)^2
   q^8\Big)
   \end{aligned}$}\\\hline
$L10n95\{1,0\}$ & $\{-1, 2, -1, -2, -2, -2, 1, -2, -2, -2\}$\\\hline
\multicolumn{2}{|l|}{\rule{0pt}{8ex} $\begin{aligned}&\left(E_D^2 \lambda_D ^2 \left(q^{-	2}-1\right)^2
   q^{10}\right)^{-1} \Big(\Big(E_D^2 \left(q^2-1\right)^4 (\lambda_D +\lambda_D 
   q^8+\left(-2 \lambda_D ^2+\lambda_D -1\right) q^6-\left(\lambda_D ^2-4\lambda_D +1\right) q^4\\
   &+\left(-2 \lambda_D ^2+\lambda_D -1\right)
   q^2)-\lambda_D^{-2} \Big) +\lambda_D ^{-2} E_D (\lambda_D -1)
   \left(q^2-1\right)^2 q^4 \left(2 \lambda_D +2 \lambda_D  q^4+(2
   \lambda_D -3) q^2\right)\\
   &+\left(\lambda_D^{-1}-1\right)^2 q^8\Big)
\end{aligned}$}\\\hline
\end{tabular}
\smallbreak
\begin{tabular}{|c|c|}\hline
$L11a404\{1,1\}$ & $\{-1, -1, 2, 2, -1, 3, 2, 2, -1, 2, 2, -3, 2\}$\\\hline
\multicolumn{2}{|l|}{\rule{0pt}{8ex} $\begin{aligned}&\left(E_D^2 q^8 \left(q^2-1\right)^2\right)^{-1} \Big(-E_D^2 \left(q^2-1\right)^4 \left(q^4-\lambda_D  q^2+1\right)
   \Big(\lambda_D +\lambda_D  q^8-\left(\lambda_D ^2+1\right)
   q^6+\lambda_D  (\lambda_D +4) q^4\\
   & -\left((\lambda_D ^2+1\right)
   q^2\Big) +E_D (\lambda_D -1) \left(q^2-1\right)^2 q^4 \Big(\lambda_D
   +\lambda_D  q^8-\left(\lambda_D ^2+3\right) q^6\\
   &+(5 \lambda_D +1\Big) q^4-\left(\lambda_D ^2+3\right) q^2)+(\lambda_D -1)^2 q^8
   \left(q^4-\lambda_D  q^2+1\right)\Big)
   \end{aligned}$}\\\hline
$L11a428\{0,1\}$ & $\{1, -2, 3, -2, 1, 1, 1, -2, 3, -2, 1\}$\\\hline
\multicolumn{2}{|l|}{\rule{0pt}{8ex} $\begin{aligned}&\left(E_D^2 q^8 \left(q^2-1\right)^2\right)^{-1} \Big(-E_D^2 \left(q^2-1\right)^4 \left(q^4-\lambda_D  q^2+1\right)
   \Big(\lambda_D +\lambda_D  q^8 -\left(\lambda_D ^2+1\right) q^6+(4
   \lambda_D +1) q^4\\
   &-\left(\lambda_D ^2+1\right) q^2\Big) -E_D (\lambda_D
   -1) \left(q^2-1\right)^2 q^4 \Big(q^8+(1-3 \lambda_D )
   q^6\\
   &+(\lambda_D  (2 \lambda_D -1)+1) q^4+(1-3 \lambda_D )
   q^2+1\Big)+(\lambda_D -1)^2 q^8 \left(q^4-\lambda_D 
   q^2+1\right)\Big)
\end{aligned}$}\\\hline
\end{tabular}
\smallbreak
\begin{tabular}{|c|c|}\hline
$L10n76\{1,1\}$ & $\{1, 2, -3, 4, -3, -2, -1, -3, -2, 3, -2, 3, -2, -3, -4, -3\}$\\\hline
\multicolumn{2}{|l|}{\rule{0pt}{6ex} $\begin{aligned}&\left(E_D^2 \lambda_D ^4
   \left(q^{-2}-1\right)^2 q^8\right)^{-1} \Big(-E_D^2 (\lambda_D +1) \left(q^2-1\right)^4 \left(q^2-\lambda_D
   \right) \left(\lambda_D  q^2-1\right)\\
   &+E_D (\lambda_D -1)
   \left(q^2-1\right)^2 q^2 \left(\lambda_D  (\lambda_D +1)+\lambda_D 
   (\lambda_D +1) q^4-q^2\right) +(\lambda_D -1)^2 q^6\Big)
   \end{aligned}$}\\\hline
$L11n425\{1,0\}$ & $\{-1, 2, -1, -3, -3, -2, 1, 3, -2, -3, -3\}$\\\hline
\multicolumn{2}{|l|}{\rule{0pt}{6ex} $\begin{aligned}&\left(E_D^2 \lambda_D ^4 \left(q^{-2}-1\right)^2 q^8\right)^{-1} \Big(-E_D^2 (\lambda_D +1) \left(q^2-1\right)^4 (\lambda_D +\lambda_D
    q^4+\left(-2 \lambda_D ^2+\lambda_D -1\right) q^2)\\
    &+E_D \left(2 \lambda_D ^3-3 \lambda_D +1\right) \left(q^2-1\right)^2 q^4+(\lambda_D
   -1)^2 q^6\Big)
\end{aligned}$}\\\hline
\end{tabular}
\end{center}
\normalsize

\vfill
\pagebreak

\section{A formula for the invariant $\Theta$ by W.B.R.~Lickorish}\label{lickorish}

Recall that Theorem~\ref{invE} asserts the following. An invariant $\Theta$ of oriented links, with indeterminates $q, \lambda, E$ is given by:

\begin{enumerate}
\item $\Theta (L) = E^{1-k}P(L)$ when $L$ is the union of $k$ {\it unlinked} knots;
\item $\lambda^{-{1/2}}\Theta (L_+) - \lambda^{{1/ 2}}\Theta (L_-) = (q - q^{-1})\Theta (L_0)$, for any Conway triple $L_+, L_-, L_0$ in which the arcs of $L_+$ are in {\it different} components of the link;
\item $P$, the Homflypt polynomial, is defined by
\begin{center}
$\lambda^{-{1/2}}P (L_+) - \lambda^{{1/ 2}}P(L_-) = (q - q^{-1})P(L_0),$
\end{center}
with no restriction on the arcs in $L_+$ and $P({\rm unknot}) = 1$.
\end{enumerate}

\smallbreak

\begin{thm}\label{theta_linking_P}
Let $L$ be an oriented link with $n$ components.  Then
\begin{equation}
\Theta (L) = \sum_{k=1}^n \mu^{k-1}E_k \sum_\pi \lambda^{\nu(\pi)}P(\pi L)
\end{equation}
where the second summation is over all partitions $\pi$ of the components of $L$ into $k$ (unordered) subsets and $P(\pi L)$ denotes the product of the Homflypt polynomials of the $k$ sublinks of $L$ defined by $\pi$. Furthermore, $\nu(\pi)$ is the sum of all linking numbers of pairs of components of $L$ that are in distinct sets of $\pi$, $E_k = (E^{-1} - 1)(E^{-1} - 2) \cdots (E^{-1} - k + 1)$, with $E_1 =1$, and $\mu = \frac{\lambda^{-{1/2}} - \lambda^{{1/ 2}}}{q - q^{-1}}$.
\end{thm}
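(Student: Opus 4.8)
The plan is to introduce $\Theta'(L)$ as a name for the right-hand side of the claimed formula, to verify that $\Theta'$ obeys the two defining rules of $\Theta$ (rules (1) and (2) of Theorem~\ref{invE}), and then to deduce $\Theta'=\Theta$ from the uniqueness assertion of Theorem~\ref{invE}. I would first observe that $\Theta'$ is automatically an isotopy invariant of oriented links, since it is assembled from Homflypt polynomials of sublinks, from linking numbers of pairs of components, and from the purely combinatorial data of set partitions of the components, all of which are isotopy invariant. Thus the entire content of the argument is the verification of the two rules.

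First I would check rule (1). If $L$ is a disjoint union of $k$ knots $K_1,\dots,K_k$ then all linking numbers vanish, so $\nu(\pi)=0$ for every partition $\pi$. Using the skein relation for $P$ together with $P(\mathrm{unknot})=1$, the Homflypt polynomial of a split union of $m$ knots equals $\mu^{\,m-1}\prod_i P(K_i)$, and hence $P(\pi L)=\mu^{\,1-j}P(L)$ whenever $\pi$ has $j$ blocks. Therefore $\Theta'(L)=P(L)\sum_{j=1}^{k}S(k,j)\,E_j$, where $S(k,j)$ is the Stirling number of the second kind. Writing $x=E^{-1}$, so that $E_j=(x-1)(x-2)\cdots(x-j+1)=x^{\underline{j}}/x$, the required identity $\sum_{j=1}^{k}S(k,j)E_j=E^{1-k}$ becomes $x^{k-1}=\tfrac{1}{x}\sum_{j}S(k,j)\,x^{\underline{j}}$, that is, the classical expansion $x^{k}=\sum_{j}S(k,j)\,x^{\underline{j}}$ of a monomial in falling factorials.

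Next I would verify rule (2). Take a Conway triple $(L_+,L_-,L_0)$ in which the two arcs of $L_+$ lie in distinct components $C_1,C_2$; then $L_+$ and $L_-$ have the same $n$ components, while $L_0$ has $n-1$ components, $C_1$ and $C_2$ having been merged. I would split the partitions $\pi$ of the components of $L_\pm$ into those for which $C_1$ and $C_2$ lie in the same block and those for which they lie in distinct blocks. In the second case the distinguished crossing appears in no sublink, so $P(\pi L_+)=P(\pi L_-)$, while the only linking number that changes is that of the counted pair $\{C_1,C_2\}$, so the value of $\nu(\pi)$ for $L_+$ exceeds that for $L_-$ by exactly $1$; a short computation then shows that the combined contribution of these $\pi$ to $\lambda^{-1/2}\Theta'(L_+)-\lambda^{1/2}\Theta'(L_-)$ vanishes. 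In the first case the distinguished crossing lies inside the sublink cut out by the block containing $C_1$ and $C_2$; applying the Homflypt skein relation to that sublink, and using that merging $C_1$ with $C_2$ sets up a bijection between such partitions of $L_\pm$ and all partitions of $L_0$ under which both $\nu$ and $E_k$ are preserved, gives exactly $(q-q^{-1})\,\Theta'(L_0)$. Combining the two cases establishes rule (2).

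Once rules (1) and (2) are in place, Theorem~\ref{invE} gives $\Theta'=\Theta$, which is the assertion. The hard part will be the bookkeeping in rule (2): one must carefully track how the block structure of a partition and the linking contribution $\nu(\pi)$ transform under merging $C_1$ with $C_2$, and confirm that the Homflypt skein relation is being applied to a genuine Conway triple of sublinks. By contrast, the Stirling-number identity needed in rule (1) is entirely routine.
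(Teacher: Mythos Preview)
Your proof is correct, but it takes a different route from the paper's proof of Theorem~\ref{theta_linking_P}. The paper argues directly: it fixes a diagram of $L$ and proceeds by a double induction on the number of components $n$ and on the number $u$ of crossing changes between distinct components required to separate $L$ into $n$ unlinked knots. The base case $u=0$ is exactly your Stirling-number computation; the inductive step applies the defining skein relation of $\Theta$ to one such crossing, invokes the induction hypothesis for $L'$ (one fewer crossing change) and for $L^0$ (one fewer component), and then reorganises the resulting double sum using the same partition dichotomy (same block versus different blocks for the two components at the crossing) that you use in verifying rule~(2).

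Your approach instead defines $\Theta'$ by the formula, checks the two defining rules of Theorem~\ref{invE}, and invokes uniqueness. This is precisely the argument the paper gives for the converse direction, Theorem~\ref{theta_new_def}. What your approach buys is that no induction on a crossing-change count is needed; on the other hand, it leans on the uniqueness clause of Theorem~\ref{invE}, whose proof (Subsection~\ref{invE_proof}, via the algebra of braids and ties) is itself non-trivial. The paper's double induction is more self-contained in that it uses only the defining rules of $\Theta$ and not the full well-definedness machinery. Both arguments hinge on the same two computations: the Stirling identity $\sum_j S(n,j)E_j=E^{1-n}$ and the partition split at a mixed crossing.
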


\begin{proof}
Suppose that a diagram of $L$ is given. The proof is by induction on $n$ and on the number, $u$, of crossing changes between distinct components required to change $L$ to $n$ unlinked knots. If $n=1$ there is nothing to prove. So assume the result true for $n-1$ components and $u-1$ crossing changes and prove it true for $n$ and $u$.

The induction starts when $u = 0$. Then $L$ is the unlinked union of $n$ components $L_1, L_2, \dots, L_n$ and all linking numbers are zero. A classic elementary result concerning the Homflypt polynomial shows that $P(L) = \mu^{n-1}P(L_1)P(L_2)\cdots P(L_n)$. Furthermore, in this situation, for any $k$ and $\pi$, $P(\pi L) = \mu^{n-k}P(L_1)P(L_2)\cdots P(L_n)$. So it is required to prove that
\begin{equation}\label{stirling}
E^{1-n} = \sum_{k=1}^n S(n,k)(E^{-1} - 1)(E^{-1} - 2) \cdots (E^{-1} - k + 1)
\end{equation}
where $S(n,k)$ is the number of partitions of a set of $n$ elements into $k$ subsets. However, in the theory of combinatorics, $S(n,k)$ is known as a Stirling number of the second kind and this required formula is a well known result about such numbers.

Now suppose that $u > 0$. Suppose that in a sequence of $u$ crossing changes that changes $L$, as above, into unlinked knots, the first change is to a crossing $c$ of sign $\epsilon$ between components $L_1$ and $L_2$. Let $L^\prime$ be $L$ with the crossing changed and $L^0$ be $L$ with the crossing annulled. Now, from the definition of $\Theta$,
$$\Theta (L) = \lambda ^\epsilon \Theta (L^\prime) + \epsilon \lambda^{\epsilon/2}(q - q^{-1})\Theta (L^0).$$

\noindent The induction hypotheses imply that the result is already proved for $L^\prime$ and $L^0$ so
\begin{equation}\label{star}
\Theta (L) = \lambda ^\epsilon \sum_{k=1}^n \mu^{k-1}E_k \sum_{\pi^\prime} \lambda^{\nu(\pi^\prime)}P(\pi^\prime L^\prime) + \epsilon \lambda^{\epsilon/2}(q - q^{-1})\sum_{k=1}^{n-1} \mu^{k-1}E_k \sum_{\pi^0} \lambda^{\nu(\pi^0)}P(\pi^0 L^0),
\end{equation}
where $\pi^\prime$ runs through the partitions of the components of $L^\prime$ and $\pi^0$ those of $L^0$.

A sublink $X^0$ of $L^0$ can be regarded as a sublink $X$ of $L$ containing $L_1$ and $L_2$ but with $L_1$ and $L_2$ fused together by annulling the crossing at $c$. Let $X^\prime$ be the sublink of $L^\prime$ obtained from $X$ by changing the crossing at $c$. Then
$$
P (X) = \lambda ^\epsilon P (X^\prime) + \epsilon \lambda^{\epsilon/2}(q - q^{-1}) P(X^0).
$$
This means that the second (big) term in \eqref{star} is
\begin{equation}\label{big_term}
\sum_{k=1}^{n-1} \mu^{k-1}E_k \sum_{\rho} \lambda^{\nu(\rho)} \biggl( P(\rho L) - \lambda^\epsilon P(\rho^\prime L^\prime)\biggr),
\end{equation}
where the summation is over all partitions $\rho$ of the components of $L$ for which $L_1$ and $L_2$ are in the same subset and $\rho^\prime$ is the corresponding partition of the components of $L^\prime$.

Note that, for any partition $\pi$ of the components of $L$ inducing partition $\pi^\prime$ of $L^\prime$, if $L_1$ and $L_2$ are in the same subset then $\nu (\pi) = \nu (\pi^\prime)$, otherwise $\nu (\pi) = \nu (\pi^\prime) + \epsilon$. So, when $L_1$ and $L_2$ are in different subsets
\begin{equation}\label{diff_subsets}
\lambda^{\nu(\pi^\prime) + \epsilon} P(\pi^\prime L^\prime) = \lambda^{\nu(\pi)}P(\pi L). 
\end{equation}
Thus, substituting \eqref{big_term} in \eqref{star} we obtain:
$$
\Theta (L) = \sum_{k=1}^n \mu^{k-1}E_k \biggl( \sum_{\pi^\prime} \lambda^{\nu(\pi^\prime) + \epsilon}P(\pi^\prime L^\prime) + \sum_\rho\bigl(\lambda^{\nu(\rho)}P(\rho L) - \lambda^{\nu(\rho )+ \epsilon}P(\rho^\prime L^\prime)\bigr) \biggr),
$$
where $\pi^\prime$ runs through all partitions of $L^\prime$ and $\rho$ through partitions of $L$ for which $L_1$ and $L_2$ are in the same subset. Note that, for $k=n$ the second sum is zero. Hence, using also \eqref{diff_subsets}, we obtain:
$$
\Theta (L) = \sum_{k=1}^n \mu^{k-1}E_k \sum_\pi \lambda^{\nu(\pi)}P(\pi L)
$$
and the induction is complete.

\end{proof}


\begin{thebibliography}{ABCD}

\bibitem[Ai]{Ai} Private communication with F.~Aicardi.

\bibitem[AiJu1]{AJ} F.~Aicardi, J.~Juyumaya, {\em Markov trace on the algebra of braids and ties}, Moscow Mathematical Journal, Volume 16, Issue 3, July-September 2016, pp. 397–431.

\bibitem[AiJu2]{AJ2} F.~Aicardi, J.~Juyumaya, {\em Tied Links}, \verb#arXiv:1503.00527v3 [math.GT]#.

\bibitem[ChaLi]{chli} J.~C.~Cha, C.~Livingston, LinkInfo: Table of Knot Invariants, \url{http://www.indiana.edu/~linkinfo}, April 16, 2015. 

\bibitem[ChLa]{chla} M.~Chlouveraki, S.~Lambropoulou, {\em The Yokonuma-Hecke algebras and the HOMFLYPT polynomial}, J. Knot Theory and its Ramifications {\bf 22} No. 14 (2013) 1350080 (35 pages).

\bibitem[ChPo1]{chpo1} M.~Chlouveraki, G.~Pouchin, {\it 
Determination of the representations and a basis for the Yokonuma--Temperley--Lieb algebra}, Algebras and Representation Theory {\bf 18}(2) (2015), 421--447.

\bibitem[ChPo2]{chpo2} M.~Chlouveraki, G.~Pouchin, {\it Representation theory and an isomorphism theorem for the Framisation of the Temperley-Lieb algebra}, 
submitted for publication. See also \verb#arXiv:1503.03396#.

\bibitem[ChPdA1]{chpa} M.~Chlouveraki, L.~Poulain d'Andecy, {\it 
Representation theory of the Yokonuma--Hecke algebra}, Advances in Mathematics {\bf  259} (2014), 134--172.

\bibitem[ChPdA2]{chpa2} M.~Chlouveraki, L.~Poulain d'Andecy, {\it Markov traces on affine and cyclotomic Yokonuma--Hecke algebras},  Int.~Math.~Res.~Notices (2015), doi:10.1093/imrn/rnv257. See also \verb#arXiv:1406.3207#.

\bibitem[ChmJaKaLa]{chjakala} S.~Chmutov, S.~Jablan, K.~Karvounis, S.~Lambropoulou, {\em On the knot invariants from the Yokonuma--Hecke algebras}, to appear in J. Knot Theory and its Ramifications, special issue dedicated to the memory of Slavik Jablan.

\bibitem[EsRy]{EsRy} J.~Espinoza, S.~Ryom-Hansen, {\em Cell structures for the Yokonuma--Hecke algebra and the algebra of braids and ties}, 
submitted for publication. See also \verb#arXiv:1506.00715#.

\bibitem[GJKL1]{gojukola} D. Goundaroulis, J. Juyumaya, A. Kontogeorgis, S. Lambropoulou, {\rm The Yokonuma--Temperley--Lieb algebras}, {\it Banach Center Pub.} {\bf 103} (2014).

\bibitem[GJKL2]{gojukolaf} D. Goundaroulis, J. Juyumaya, A. Kontogeorgis, S. Lambropoulou,
{\em Framization of the Temperley--Lieb algebra}, to appear in Mathematical Research Letters. See also \verb#arXiv:1304.7440#.

\bibitem[Jo]{jo} V.~F.~R. Jones, {\em Hecke algebra representations of braid groups and link polynomials}, Annals of Math. {\bf 126} (1987),  no. 2, 335--388.


\bibitem[Ju1]{ju1} J.~Juyumaya, {\em Sur les nouveaux g\'en\'erateurs de l'alg\`ebre de Hecke $H(G,U,1)$}, J.~Algebra {\bf 204} (1998) 49--68.

\bibitem[Ju2]{ju} J.~Juyumaya, {\em Markov trace on the Yokonuma-Hecke algebra},
   J.~Knot Theory and its Ramifications {\bf 13} (2004) 25--39.
   
\bibitem[JuKan]{ju2} J.~Juyumaya, S.~S.~Kannan, {\em Braid relations in the Yokonuma--Hecke algebra}, J.~Algebra {\bf 239} (2001) 272--297.

\bibitem[JuLa1]{jula1} J.~Juyumaya, S.~Lambropoulou, {\em $p$--adic framed braids},
  Topology and its Applications {\bf 154} (2013) 149--191.

\bibitem[JuLa2]{jula2} J.~Juyumaya, S.~Lambropoulou, {\em $p$--adic  framed
   braids II}, Advances in Mathematics {\bf 234} (2013) 149--191.

\bibitem[JuLa3]{jula3} J.~Juyumaya, S.~Lambropoulou, {\em An adelic
   extension of the Jones polynomial},
   M. Banagl, D. Vogel (eds.) The mathematics of knots, Contributions
   in the Mathematical and Computational Sciences, Vol.~1, Springer.
   Preprint \verb#arXiv:0909.2545v2 [math.GT]#.

\bibitem[JuLa4]{jula4} J.~Juyumaya, S.~Lambropoulou, {\em An invariant for
   singular knots},
   J. Knot Theory and its Ramifications, {\bf 18}(6) (2009) 825--840.
   
\bibitem[JuLa5]{jula5} J.~Juyumaya, S.~Lambropoulou, {\em On the framization of knot algebras}, in New Ideas in Low-Dimensional Topology, L.~H.~Kaufffman, V.~Manturov Eds; Ser. Knots Everything, World Scientific Press, 2014.

\bibitem[Ka]{ka} K.~Karvounis, {\em Enabling computations for link invariants coming from the Yokonuma-Hecke algebras}, to appear in J. Knot Theory and its Ramifications, special issue dedicated to the memory of Slavik Jablan.

\bibitem[Kau]{kau} L.~H.~Kauffman, {\em An invariant of regular isotopy}, Trans. Am. Math. Soc. {\bf 318}, 417-471, 1990.

\bibitem[KauLa]{kala} L.~H.~Kauffman, S.~Lambropoulou, {\em New invariants of links and their state sum models}, in preparation.
   
\bibitem[KoSm]{KS} K.~H.~Ko, L.~Smolinsky, The framed braid group and $3$--manifolds,
{\it Proceedings of the AMS} {\bf 115}, No. 2 (1992), 541--551.

\bibitem[LiMi]{LM} W~.B.~R.~Lickorish, K~.C.~Millett, {\em The new polynomial invariants of knots and links}, Mathematics Magazine {bf 61}, No. 1 (Feb., 1988), pp. 3--23 

\bibitem[PdA]{pa} L.~Poulain d'Andecy {\it Invariants for links from classical and affine Yokonuma--Hecke algebras}, \verb#arXiv:1602.05429 [math.GT]#.

\bibitem[PdAWa]{pawa} L.~Poulain d'Andecy, E.~Wagner {\em The HOMFLYPT polynomials of sublinks and the Yokonuma-Hecke algebras}. Preprint \verb#arXiv:1606.00237v1 [math.GT]#.

\bibitem[Ra]{ra} S.~Ramanujan, {\em On certain trigonometric sums and their applications in the theory of numbers}, Transactions of the Cambridge Philosophical Society, {\bf 22}, No 13 (1918), 297--276.

\bibitem[Ry]{Ry} S.~Ryom-Hansen, {\em On the representation theory of an algebra of braids and ties}, J.~Algebr.~Comb.~{\bf 33} (2011), 57--79.

\bibitem[Th]{thi} N.~Thiem, {\em Unipotent Hecke algebras of \,${\rm GL}_n(\mathbb{F}_q)$}, J.~Algebra {\bf 284} (2005) 559--577. 

\bibitem[Yo]{yo} T.~Yokonuma, {\em Sur la structure des anneaux de Hecke
   d'un groupe de Chevalley fini},
   C.R. Acad. Sc. Paris, {\bf 264},  344--347 (1967).

\end{thebibliography}
\end{document}